\providecommand{\algorithmname}{Algorithm}
\theoremstyle{plain}
\newtheorem{thm}{\protect\theoremname}[section]
\theoremstyle{definition}
\newtheorem{defn}[thm]{\protect\definitionname}
\theoremstyle{plain}
\newtheorem{lem}[thm]{\protect\lemmaname}
\newenvironment{proof}[1][\protect\proofname]{\par
	\normalfont\topsep6\p@\@plus6\p@\relax
	\trivlist
	\itemindent\parindent
	\item[\hskip\labelsep\scshape #1]\ignorespaces
}{%
	\endtrivlist\@endpefalse
}
\providecommand{\proofname}{Proof}
\theoremstyle{plain}
\newtheorem{cor}[thm]{\protect\corollaryname}
\author{ 
	Zijian Liu\thanks{Equal contribution. Stern
		School of Business, New York University, 	\texttt{zl3067@nyu.edu}.}  \and 
	Ta Duy Nguyen\thanks{Equal contribution. Department of Computer Science, Boston University,
		\texttt{{taduy@bu.edu}.}}  \and 
	Thien H. Nguyen\thanks{Equal contribution. Khoury College of Computer Sciences, Northeastern University,
		\texttt{{nguyen.thien@northeastern.edu}.}}  \and 
	Alina Ene\thanks{Department of Computer Science, Boston University, \texttt{{aene@bu.edu}.}}  \and 
	Huy L. Nguyen\thanks{Khoury College of Computer Sciences, Northeastern University,
		\texttt{{hu.nguyen@northeastern.edu}.}}}
\providecommand{\corollaryname}{Corollary}
\providecommand{\definitionname}{Definition}
\providecommand{\lemmaname}{Lemma}
\providecommand{\theoremname}{Theorem}
\begin{document}
\global\long\def\E{\mathbb{\mathbb{E}}}%
\global\long\def\F{\mathcal{F}}%
\global\long\def\R{\mathbb{R}}%
\global\long\def\tn{\widetilde{\nabla}f}%
\global\long\def\hn{\widehat{\nabla}f}%
\global\long\def\n{\nabla f}%
\global\long\def\indicator{\mathbf{1}}%
\global\long\def\mf{f(x^{*})}%
\global\long\def\breg{\mathbf{D}_{\psi}}%
\global\long\def\dom{\mathcal{X}}%
\global\long\def\norm#1{\left\lVert #1\right\rVert }%
\global\long\def\nf{\nabla f}%

\title{High Probability Convergence of Stochastic Gradient Methods }
\maketitle
\begin{abstract}
In this work, we describe a generic approach to show convergence
with high probability for both stochastic convex and non-convex optimization
with sub-Gaussian noise. In previous works for convex optimization,
either the convergence is only in expectation or the bound depends
on the diameter of the domain. Instead, we show high probability convergence
with bounds depending on the initial distance to the optimal solution.
The algorithms use step sizes analogous to the standard settings and
are universal to Lipschitz functions, smooth functions, and their
linear combinations. This method can be applied to the non-convex
case. We demonstrate an $O((1+\sigma^{2}\log(1/\delta))/T+\sigma/\sqrt{T})$
convergence rate when the number of iterations $T$ is known and an
$O((1+\sigma^{2}\log(T/\delta))/\sqrt{T})$ convergence rate when
$T$ is unknown for  SGD, where $1-\delta$ is the desired success
probability. These bounds improve over existing bounds in the literature.
Additionally, we demonstrate that our techniques can be used to obtain
high probability bound for AdaGrad-Norm \citep{ward2019adagrad} that
removes the bounded gradients assumption from previous works. Furthermore,
our technique for AdaGrad-Norm extends to the standard per-coordinate
AdaGrad algorithm \citep{duchi2011adaptive}, providing the first
noise-adapted high probability convergence for AdaGrad. 
\end{abstract}

\section{Introduction}

Stochastic optimization is a fundamental area with extensive applications
in many domains, ranging from machine learning to algorithm design
and beyond. The design and analysis of iterative methods for stochastic
optimization has been the focus of a long line of work, leading to
a rich understanding of the convergence of paradigmatic iterative
methods such as stochastic gradient descent, mirror descent, and accelerated
methods for both convex and non-convex optimization. However, most
of these works only establish convergence guarantees that hold only
in expectation. Although very meaningful, these results do not fully
capture the convergence behaviors of the algorithms when we perform
only a small number of runs of the algorithm, as it is typical in
modern machine learning applications where there are significant computational
and statistical costs associated with performing multiple runs of
the algorithm \citep{harvey2019tight,madden2020high,davis2021low}.
Thus, an important direction is to establish convergence guarantees
for a single run of the algorithm that hold not only in expectation
but also with high probability. 

Compared to the guarantees that hold in expectation, high probability
guarantees are significantly harder to obtain and they hold in more
limited settings with stronger assumptions on the problem settings
and the stochastic noise distribution. Most existing works that establish
high probability guarantees focus on the setting where the length
of the stochastic noise follows a light-tail (sub-Gaussian) distribution
\citep{JuditskyNT11,lan2012optimal,lan2020first,li2020high,madden2020high,kavis2021high}.
Recent works also study the more challenging heavy-tail setting, notably
under a bounded variance \citep{nazin2019algorithms,gorbunov2020stochastic,cutkosky2021high}
or bounded $p$-moment assumption \citep{cutkosky2021high} on the
length of the stochastic noise. Both settings are highly relevant
in practice: \citet{zhang2020adaptive} empirically studied the noise
distribution for two common tasks, training a ResNet model for computer
vision and a BERT transformer model for natural language processing,
and they observed that the noise distribution in the former task is
well-approximated by a sub-Gaussian distribution, and it appears to
be heavy-tailed in the latter task.

Despite this important progress, the convergence of cornerstone methods
is not fully understood even in the more structured light-tailed noise
setting. Specifically, the existing works for both convex and non-convex
optimization rely on strong assumptions on the optimization domain
and the gradients that significantly limit their applicability:

\emph{The problem domain is restricted to either the unconstrained
domain or a constrained domain with bounded Bregman diameter.} The
convergence guarantees established depend on the Bregman diameter
of the domain instead of the initial distance to the optimum. Even
for compact domains, since the diameter can be much larger than the
initial distance, these guarantees are pessimistic and diminish the
benefits of good initializations. Thus an important direction remains
to establish high probability guarantees for general optimization
that scale only with the initial Bregman distance.

\emph{The gradients or stochastic gradients are assumed to be bounded
even in the smooth setting.} These additional assumptions are very
restrictive and they significantly limit the applicability of the
algorithm, e.g., they do not apply to important settings such as quadratic
optimization. Moreover, the stochastic gradient assumption is more
restrictive than other commonly studied assumptions, such as the gradients
and the stochastic noise being bounded almost surely.

The above assumptions are not merely an artifact of the analysis,
and they stem from important considerations and technical challenges.
The high probability convergence guarantees are established via martingale
concentration inequalities that impose necessary conditions on how
much the martingale sequence can change in each step. However, the
natural martingale sequences that arise in optimization depend on
quantities such as the distance between the iterates and the optimum
and the stochastic gradients, which are not a priori bounded. The
aforementioned assumptions ensure that the concentration inequalities
can be readily applied due to the relevant stochastic terms being
all bounded almost surely. These difficulties are even more pronounced
for adaptive algorithms in the AdaGrad family that set the step sizes
based on the stochastic gradients. The adaptive step sizes introduce
correlations between the step sizes and the update directions, and
a crucial component is the analysis of the evolution of the adaptive
step sizes and the cumulative stochastic noise. If the gradients are
bounded, both of these challenges can be overcome by paying error
terms proportional to the lengths of the gradients and stochastic
gradients. Removing the bounded gradient assumptions requires new
technical insights and tools.

In addition to requiring stronger assumptions, due to the technical
challenges involved, several of the prior works are only able to establish
convergence guarantees that are slower than the ideal sub-Gaussian
rates. For example, a common approach is to control the relevant stochastic
quantities across all $T$ iterations of the algorithm via repeated
applications of the concentration inequalities, leading to convergence
rates that have additional factors that are poly-logarithmic in $T$.
Additionally, achieving noise-adaptive rates that improve towards
the deterministic rate as the amount of noise decreases is very challenging
with existing techniques.

\textbf{Our contributions:} This work aims to contribute to this line
of work and overcome the aforementioned challenges. To this end, we
introduce a novel generic approach to show convergence with high probability
under sub-Gaussian gradient noise. Our approach is very general and
flexible, and it can be used both in the convex and non-convex setting.
Using our approach, we establish high-probability convergence guarantees
for several fundamental settings:

In the \emph{convex setting}, we analyze stochastic mirror descent
and stochastic accelerated mirror descent for general optimization
domains and Bregman distances, and we analyze the classical algorithms
without any changes. These well studied algorithms encompass the main
algorithmic frameworks for convex optimization with non-adaptive step
sizes \citep{lan2020first}. Our convergence guarantees scale with
only the Bregman distance between the initial point and the optimum,
and thus they leverage good initializations. Our high-probability
convergence rates are analogous to known results for convergence in
expectation \citep{JuditskyNT11,lan2012optimal}. The algorithms are
universal for both Lipschitz functions and smooth functions.

In the \emph{non-convex setting}, we analyze the SGD as well as the
AdaGrad-Norm algorithm \citep{ward2019adagrad}. Compared to existing
works for SGD \citep{madden2020high,li2020high}, our rates have better
dependency on the time horizon and the success probability. For AdaGrad-Norm,
our approach allows us to remove the restrictive assumption on the
gradients as made in previous work \citep{kavis2021high}. More importantly,
the technique employed to show high probability convergence of AdaGrad-Norm
readily extends to the standard coordinate version of AdaGrad; we
obtain the first results for the high probability convergence guarantee
for AdaGrad \citep{duchi2011adaptive}. 

Although we only focus on sub-Gaussian gradient noise -- a more structured
setting where there still remain significant gaps in our understanding
-- we believe our approach could potentially be applied in more general
settings such as heavy tails noise.

\subsection{Our techniques}

Compared to prior works that rely on black-box applications of martingale
concentration inequalities such as Freedman's inequality and its extensions
\citep{freedman1975tail,harvey2019tight,madden2020high}, we introduce
here a ``white-box'' concentration argument that leverages existing
convergence analyses for first-order methods. The high-level approach
is to define a novel martingale sequence derived from the standard
convergence analyses and analyze its moment generating function from
first principles. By leveraging the structure of the optimization
problem, we are able to overcome a key difficulty associated with
black-box applications of martingale concentration results: these
results pose necessary conditions on how much the martingale sequence
can change, which do not a priori hold for the natural martingales
that arise in optimization. By seamlessly combining the optimization
and probability tool-kits, we obtain a flexible analysis template
that allows us to handle general optimization domains with very large
or even unbounded diameter, general objectives that are not globally
Lipschitz, and adaptive step sizes.

Our technique is inspired by classical works in concentration inequalities,
specifically a type of martingale inequalities where the variance
of the martingale difference is bounded by a linear function of the
previous value. This technique is first applied by \citet{harvey2019tight}
to show high probability convergence for SGD in the strongly convex
setting. Our proof is inspired by the proof of Theorem 7.3 by \citet{chung2006concentration}.
In each time step with iterate $x_{t}$, let $\xi_{t}:=\widehat{\nabla}f\left(x_{t}\right)-\nabla f\left(x_{t}\right)$
be the stochastic error in our gradient estimate. Classical proofs
of convergence evolve around analyzing the sum of $\left\langle \xi_{t},x^{*}-x_{t}\right\rangle $,
which can be viewed as a martingale sequence. Assuming a bounded domain,
the concentration of the sum can be shown via classical martingale
inequalities. The key new insight is that instead of analyzing this
sum, we analyze a related sum where the coefficients decrease over
time to account for the fact that we have a looser grip on the distance
to the optimal solution as time increases. Nonetheless, the coefficients
are kept within a constant factor of each others and the same asymptotic
convergence is attained with high probability.

\subsection{Related work}

\paragraph{Convex optimization:}

\citet{nemirovski2009robust,lan2012optimal} establish high probability
bounds for stochastic mirror descent and accelerated stochastic mirror
descent with sub-Gaussian noise. These rates match the best rates
known in expectation, but they depend on the Bregman diameter $\max_{x,y\in\dom}\breg\left(x,y\right)$
of the domain, which can be very large or even unbounded. Similarly,
\citet{kakade2008generalization,rakhlin2011making,hazan2014beyond,harvey2019tight,dvurechensky2016stochastic}
study the high-probability convergence of SGD that also assume that
the domain has bounded diameter or the function is strongly convex.
In contrast, our work complements its predecessors with a novel concentration
argument that establishes convergence for the general setting of convex
functions under sub-Gaussian gradient noise (as considered in \citet{lan2020first})
that depends only on the distance $\breg\left(x^{*},x_{1}\right)$
from the initial point to the optimum instead of the diameter of the
problem or having to assume that the objective is strongly convex. 

On a different note, \citet{nazin2019algorithms,gorbunov2020stochastic}
consider the more general setting of bounded variance noise. However,
their problem settings are more restricted than ours. Specifically,
\citet{nazin2019algorithms} analyze stochastic mirror descent only
in the setting where the optimization domain has bounded Bregman diameter.
\citet{gorbunov2020stochastic} analyze modified versions of stochastic
gradient descent and accelerated stochastic gradient descent (such
as with clipping), but only for unconstrained optimization with the
$\ell_{2}$ setup. In contrast, our work applies to a more general
optimization setup: we analyze the classical stochastic mirror descent
and accelerated mirror descent without any modifications under general
Bregman distances and arbitrary optimization domains that are possibly
unbounded. Finally,\citet{davis2021low} provides an algorithm to
achieve high probability convergence by solving an auxiliary optimization
problem in each iteration. However, their analysis is restricted to
well-conditioned objectives that are both smooth and strongly convex
and the expensive optimization subroutine can be impractical. 

\paragraph{Non-convex optimization:}

\citet{li2020high} demonstrate a high probability bound for an SGD
algorithm with momentum, while \citet{madden2020high} and \citet{li2022high}
show high probability bounds for vanilla SGD that generalize to the
family of sub-Weibull noise. However, these existing bounds are not
optimal due to the multiplicative dependency $O\left(\log T\log\frac{1}{\delta}\right)$.
In our work, we improve the high-probability convergence for SGD in
the non-convex setting via our novel approach. 

For algorithms with adaptive step size like AdaGrad, \citet{li2020high,kavis2021high}
provide some of the first high probability guarantees in the non-convex
setting. However, there still remains significant gaps in our understanding:
\citet{li2020high} is not fully adaptive due to the dependence of
the initial step size on the problem parameters, whereas \citet{kavis2021high}
requires that the gradients and/or stochastic gradients to be uniformly
bounded almost surely, a strong assumption that excludes even the
quadratic function. In contrast, we establish convergence in high
probability of AdaGrad-Norm \citep{ward2019adagrad,faw2022power}
without further restrictive assumptions. Notably, a key distinction
from prior work is that our does not involve the division by the step
size: this allows a direct extension of our analysis for AdaGrad-Norm
(in which the step size is a scalar) to the general AdaGrad \citep{duchi2011adaptive}
algorithm (where the step size varies for each coordinate). There,
to the best of our knowledge, \citet{defossez2020simple} is the only
work to provide an \emph{in expectation} guarantee for vanilla Adagrad
albeit under strong assumptions. We provide a more detailed comparison
with prior work in the subsequent sections.

Convergence guarantees for the heavy tail noise regime has also been
studied for non-convex objectives. However, some form of gradient
clipping is required in most works to deal with the large variance.
The work \citet{zhang2020adaptive} proposes a gradient clipping algorithm
that can converge \emph{in expectation} for noise distributions with
heavier tail---that is the $p$-moment is bounded for $1<p\le2$.
\citet{cutkosky2021high} propose a more complex clipped SGD algorithm
with momentum under the same noise assumption, for which they show
a high probability convergence. However, \citet{cutkosky2021high}
rely on the bounded moments of the stochastic gradients for the non-convex
setting; a restrictive assumption that excludes quadratic objectives.
In contrast, we focus on standard algorithms (albeit under sub-Gaussian
noise) that have been more widely used: stochastic mirror descent,
stochastic gradient descent, and AdaGrad-Norm. Our technique is general
and we believe that it is possible to extend it to the heavy-tail
noise setting.

\section{Preliminaries}

We consider the problem $\min_{x\in\dom}f(x)$ where $f:\R^{d}\to\R$
is the objective function and $\dom$ is the domain of the problem.
In the convex case, we consider the general setting where $f$ is
potentially not strongly convex and the domain $\dom$ is convex but
not necessarily compact. The distance between solutions in $\dom$
is measured by a general norm $\left\Vert \cdot\right\Vert $. Let
$\left\Vert \cdot\right\Vert _{*}$ denote the dual norm of $\left\Vert \cdot\right\Vert $.
In the non-convex case, we consider the setting where $\dom$ is $\R^{d}$
and $\left\Vert \cdot\right\Vert $ is the $\ell_{2}$ norm.

In this paper, we use the following assumptions:

\textbf{(1) Existence of a minimizer}: In the convex setting, we assume
that there exists $x^{*}=\arg\min_{x\in\dom}f(x)$.

\textbf{(1') Existence of a minimizer}: In the nonconvex setting,
we assume that $f$ admits a finite lower bound $\inf_{x\in\dom}f(x)\coloneqq f_{*}>-\infty$.

\textbf{(2) Unbiased estimator}: We assume to have access to a history
independent, non-biased gradient estimator $\hn(x)$ for any $x\in\dom$,
that is $\E\left[\hn(x)\mid x\right]=\n(x)$.

\textbf{(3) Sub-Gaussian noise}: $\left\Vert \hn(x)-\n(x)\right\Vert _{*}$
is a $\sigma$-sub-Gaussian random variable (Definition \ref{def:subgaussian-random-variable}).

There are several equivalent definitions of sub-Gaussian random variables
up to an absolute constant scaling (see, e.g., Proposition 2.5.2 in
\citet{vershynin2018high}). For convenience, we use the following
property as the definition. 
\begin{defn}
\label{def:subgaussian-random-variable}A random variable $X$ is
$\sigma$-sub-Gaussian if
\[
\E\left[\exp\left(\lambda^{2}X^{2}\right)\right]\leq\exp\left(\lambda^{2}\sigma^{2}\right)\text{ for all }\lambda\text{ such that }\left|\lambda\right|\leq\frac{1}{\sigma}.
\]
\end{defn}
We will also use the following helper lemma whose proof we defer
to the Appendix.
\begin{lem}
\label{lem:helper-taylor-vector}Suppose $X\in\R^{d}$ such that $\E\left[X\right]=0$
and $\left\Vert X\right\Vert $ is a $\sigma$-sub-Gaussian random
variable, then for any $a\in\R^{d}$, $0\le b\le\frac{1}{2\sigma}$,
\begin{align*}
\E\left[\exp\left(\left\langle a,X\right\rangle +b^{2}\left\Vert X\right\Vert ^{2}\right)\right] & \le\exp\left(3\left(\left\Vert a\right\Vert _{*}^{2}+b^{2}\right)\sigma^{2}\right).
\end{align*}
Especially, when $b=0$, we have 
\[
\E\left[\exp\left(\left\langle a,X\right\rangle \right)\right]\le\exp\left(2\left\Vert a\right\Vert _{*}^{2}\sigma^{2}\right).
\]
\end{lem}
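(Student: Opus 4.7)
The plan is to prove the two bounds of the lemma in sequence: first establish the specialized $b=0$ bound $\mathbb{E}[\exp(\langle a, X\rangle)] \leq \exp(2\|a\|_*^2\sigma^2)$ for arbitrary $a$, and then derive the general bound by H\"older's inequality, using the specialized bound as a black box. The zero-mean hypothesis is used only in the first step.

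For the $b=0$ case I would split on the magnitude of $\|a\|_*\sigma$. When $\|a\|_*\sigma \leq 1$, I would use a symmetrization argument: introduce an independent copy $X'$ of $X$, apply Jensen's inequality to $-\langle a, X'\rangle$ (which is the step that uses $\mathbb{E}[X']=0$) to obtain $\mathbb{E}[\exp(\langle a, X\rangle)] \leq \mathbb{E}[\exp(\langle a, X-X'\rangle)]$, exploit the symmetry of $X-X'$ to rewrite the right-hand side as $\mathbb{E}[\cosh(\langle a, X-X'\rangle)]$, apply $\cosh(y) \leq \exp(y^2/2)$, bound $\langle a, X-X'\rangle^2 \leq \|a\|_*^2\|X-X'\|^2 \leq 2\|a\|_*^2(\|X\|^2 + \|X'\|^2)$, factor the expectation using independence, and finally apply the sub-Gaussian definition to each of $X, X'$ with $\lambda = \|a\|_*$ (valid because $\|a\|_*\sigma \leq 1$). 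In the complementary regime $\|a\|_*\sigma > 1$ zero mean is no longer needed; here I would use the crude $\exp(\langle a, X\rangle) \leq \exp(\|a\|_*\|X\|)$, decouple via the weighted AM-GM inequality $\|a\|_*\|X\| \leq \tfrac{1}{4}\|a\|_*^2\sigma^2 + \|X\|^2/\sigma^2$, and apply the sub-Gaussian definition at $\lambda = 1/\sigma$; the slack from $\|a\|_*^2\sigma^2 > 1$ then absorbs the resulting additive constant so that the combined bound still fits inside $\exp(2\|a\|_*^2\sigma^2)$.

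For the general case, H\"older's inequality with conjugate exponents $p = 3/2$ and $q = 3$ gives
\[
\mathbb{E}\!\left[\exp(\langle a, X\rangle + b^2\|X\|^2)\right] \leq \mathbb{E}\!\left[\exp\!\left(\tfrac{3}{2}\langle a, X\rangle\right)\right]^{2/3}\, \mathbb{E}\!\left[\exp(3b^2\|X\|^2)\right]^{1/3}.
\]
Applying the already-established $b=0$ bound with $\tfrac{3}{2}a$ in place of $a$ controls the first factor by $\exp(2(3/2)^2\|a\|_*^2\sigma^2)^{2/3} = \exp(3\|a\|_*^2\sigma^2)$, while the sub-Gaussian definition with $\lambda = \sqrt{3}\,b$ controls the second by $\exp(b^2\sigma^2)$; the hypothesis $b \leq 1/(2\sigma)$ guarantees $\sqrt{3}\,b \leq 1/\sigma$ so that the sub-Gaussian MGF is admissible. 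Multiplying and loosening $b^2 \leq 3b^2$ yields the claimed $\exp(3(\|a\|_*^2 + b^2)\sigma^2)$.

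The main obstacle is the first step: symmetrization naturally produces the Gaussian-type quadratic exponent $\|a\|_*^2\sigma^2$ but only inside the validity range $\|a\|_*\sigma \leq 1$ of the sub-Gaussian MGF inequality, forcing the separate coarser argument in the large-$\|a\|_*$ regime and a verification that the absorption constant fits. The specific H\"older exponents $(3/2, 3)$ are not arbitrary—they are tuned so that the factor of $2$ produced by symmetrization is amplified to exactly $3$ on the target exponent, while at the same time $\sqrt{3}\,b$ remains within the admissible range $\lambda \leq 1/\sigma$ under the hypothesis $b \leq 1/(2\sigma)$.
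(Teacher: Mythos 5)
Your proposal is correct; every step checks out. In the $b=0$ small-$\|a\|_*$ regime the symmetrization chain $\E[\exp(\langle a,X\rangle)]\le\E[\exp(\langle a,X-X'\rangle)]=\E[\cosh(\langle a,X-X'\rangle)]\le\E[\exp(\tfrac12\langle a,X-X'\rangle^2)]\le\E[\exp(\|a\|_*^2\|X\|^2)]^2\le\exp(2\|a\|_*^2\sigma^2)$ is valid with $\lambda=\|a\|_*\le 1/\sigma$ admissible; in the large-$\|a\|_*$ regime one gets $\exp(\tfrac14\|a\|_*^2\sigma^2+1)\le\exp(2\|a\|_*^2\sigma^2)$ precisely because $\|a\|_*^2\sigma^2>1>\tfrac47$; and in the H\"older step, $\sqrt{3}\,b\le\tfrac{\sqrt 3}{2\sigma}<\tfrac1\sigma$ keeps the quadratic MGF admissible while $(3/2)^2\cdot 2\cdot\tfrac23=3$ and $3\cdot\tfrac13=1$ reproduce the target constants.

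Your route is, however, genuinely different from the paper's. The paper reduces to a \emph{scalar} helper (Lemma~\ref{lem:helper-taylor}) that bounds $\E[1+b^2X^2+\sum_{i\ge2}\tfrac{1}{i!}(aX+b^2X^2)^i]$ directly by term-by-term manipulation of the Taylor series — crucially it never assumes $\E[X]=0$, but simply omits the linear term; the vector lemma is then obtained by Cauchy--Schwarz, and it is only there that the mean-zero hypothesis kills the linear term. The paper's case split is on $a\gtrless\tfrac{1}{2\sigma}$ and treats $a$ and $b$ jointly throughout, using ad hoc inequalities $e^x-x\le e^{x^2}$ and $x(e^{x^2}-1)\le e^{2x^2}-1$. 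By contrast you prove the $b=0$ bound first — via the classical symmetrization/$\cosh$ device, which is where you spend the zero-mean hypothesis — and then decouple the quadratic perturbation by H\"older with tuned exponents $(3/2,3)$. Your argument is more modular and leans on standard concentration tools rather than bespoke Taylor identities, at the cost of committing to a specific H\"older split that just barely respects the admissibility constraint $b\le\tfrac{1}{2\sigma}$; the paper's single scalar lemma avoids H\"older entirely but is messier to verify term-by-term. Both yield exactly the stated constants $2$ and $3$.
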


\section{Convex case: Stochastic Mirror Descent and Accelerated Stochastic
Mirror Descent \label{sec:convex}}

In this section, we analyze the Stochastic Mirror Descent algorithm
(Algorithm \ref{alg:md}) and Accelerated Stochastic Mirror Descent
algorithm (Algorithm \ref{alg:acc-md}) for convex optimization. We
define the Bregman divergence $\breg\left(x,y\right)=\psi\left(x\right)-\psi\left(y\right)-\left\langle \nabla\psi\left(y\right),x-y\right\rangle $
where $\psi:\R^{d}\to\R$ is an $1$-strongly convex mirror map with
respect to $\left\Vert \cdot\right\Vert $ on $\dom$. We remark that
the domain of $\psi$ is defined as $\R^{d}$ for simplicity, though
which is not necessary.

\subsection{Analysis of Stochastic Mirror Descent}

\begin{algorithm}
\caption{Stochastic Mirror Descent Algorithm}

\label{alg:md}

\textbf{Parameters:} initial point $x_{1}\in\dom$, step sizes $\left\{ \eta_{t}\right\} $,
strongly convex mirror map $\psi$

for $t=1$ to $T$:

$\quad$$x_{t+1}=\arg\min_{x\in\dom}\left\{ \eta_{t}\left\langle \widehat{\nabla}f\left(x_{t}\right),x\right\rangle +\breg\left(x,x_{t}\right)\right\} $

return $\frac{1}{T}\sum_{t=1}^{T}x_{t}$
\end{algorithm}

The end result of this section is the convergence guarantee of Algorithm
\ref{alg:md} for constant step sizes (when the time horizon $T$
is known) and time-varying step sizes (when $T$ is unknown) presented
in Theorem \ref{thm:md-convergence}. However, we will emphasize more
on presenting the core idea of our approach, which will serve as the
basis for the analysis in subsequent sections. For simplicity, here
we consider the non-smooth setting, and assume that $f$ is $G$-Lipschitz
continuous, i.e., we have $\left\Vert \nabla f(x)\right\Vert _{*}\leq G$
for all $x\in\dom$. However, this is not necessary. The analysis
for the smooth setting follows via a simple modification to the analysis
presented here as well as the analysis for the accelerated setting
given in the next section. 
\begin{thm}
\label{thm:md-convergence}Assume $f$ is $G$-Lipschitz continuous
and satisfies Assumptions (1), (2), (3), with probability at least
$1-\delta$, the iterate sequence $(x_{t})_{t\ge1}$ output by Algorithm
\ref{alg:md} satisfies

(1) Setting $\eta_{t}=\sqrt{\frac{\breg\left(x^{*},x_{1}\right)}{6\left(G^{2}+\sigma^{2}\left(1+\log\left(\frac{1}{\delta}\right)\right)\right)T}}$,
then $\breg\left(x^{*},x_{T+1}\right)\leq4\breg\left(x^{*},x_{1}\right)$,
and
\begin{align*}
\frac{1}{T}\sum_{t=1}^{T}\left(f\left(x_{t}\right)-f\left(x^{*}\right)\right) & \le\frac{4\sqrt{6}}{\sqrt{T}}\sqrt{\breg\left(x^{*},x_{1}\right)\left(G^{2}+\sigma^{2}\left(1+\log\left(\frac{1}{\delta}\right)\right)\right)}.
\end{align*}

(2) Setting $\eta_{t}=\sqrt{\frac{\breg\left(x^{*},x_{1}\right)}{6\left(G^{2}+\sigma^{2}\left(1+\log\left(\frac{1}{\delta}\right)\right)\right)t}}$,
then $\breg\left(x^{*},x_{T+1}\right)\leq2(2+\log T)\breg\left(x^{*},x_{1}\right)$,
and
\begin{align*}
\frac{1}{T}\sum_{t=1}^{T}\left(f\left(x_{t}\right)-f\left(x^{*}\right)\right) & \le\frac{2\sqrt{6}}{\sqrt{T}}(2+\log T)\sqrt{\breg\left(x^{*},x_{1}\right)\left(G^{2}+\sigma^{2}\left(1+\log\left(\frac{1}{\delta}\right)\right)\right)}.
\end{align*}

\end{thm}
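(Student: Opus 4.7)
The plan is to execute the ``white-box martingale'' strategy outlined in Section~1.2 by combining the standard mirror-descent energy inequality with a direct conditional MGF computation derived from Lemma~\ref{lem:helper-taylor-vector}. Write $D_t := \breg(x^*, x_t)$ and $\xi_t := \widehat{\nabla}f(x_t) - \nabla f(x_t)$. Applying the one-step mirror-descent three-point lemma at $u=x^*$, using convexity of $f$, and bounding $\|\widehat{\nabla}f(x_t)\|_*^2 \le 2G^2 + 2\|\xi_t\|_*^2$, we get
\[
\eta_t\bigl(f(x_t)-f(x^*)\bigr) + D_{t+1} \le D_t + \eta_t^2 G^2 + R_t,\qquad R_t := \eta_t^2\|\xi_t\|_*^2 + \eta_t\langle \xi_t, x^*-x_t\rangle.
\]
Telescoping over $t=1,\dots,T$ reduces the whole theorem to a high-probability upper bound on $\sum_{t=1}^T R_t$; the telescoped left-hand side contains $D_{T+1}$ in addition to the weighted function gap, so a single tail bound on $\sum_t R_t$ will simultaneously yield both the stated average-gap rate and the bootstrap claim on $\breg(x^*,x_{T+1})$.

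For the conditional MGF I would apply Lemma~\ref{lem:helper-taylor-vector} to $X=\xi_t$ with $a = \lambda\eta_t(x^*-x_t)$ and $b = \eta_t\sqrt{\lambda}$, and use $1$-strong convexity of $\psi$ in the form $\|x^*-x_t\|^2 \le 2D_t$ to obtain, for every $\lambda\le 1/(4\sigma^2\eta_t^2)$,
\[
\E\!\left[\exp(\lambda R_t)\,\middle|\,\F_{t-1}\right] \le \exp\!\bigl(6\lambda^2\sigma^2\eta_t^2 D_t + 3\lambda\sigma^2\eta_t^2\bigr).
\]
This realises the ``variance linear in the current potential'' phenomenon highlighted in Section~1.2. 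Because $D_t$ itself admits the telescoped bound $D_1 + G^2\sum_{s<t}\eta_s^2 + \sum_{s<t}R_s$, iterating this MGF estimate backward in time inflates the exponential weight on the partial sum $\sum_{s<t}R_s$ by a factor $(1+6\lambda\sigma^2\eta_t^2)$ at each step. Choosing $\lambda$ so that the cumulative inflation $\prod_t(1+6\lambda\sigma^2\eta_t^2)=O(1)$---equivalently $\lambda\sigma^2\sum_t\eta_t^2 = O(1)$---keeps the effective exponential parameter within a constant factor of $\lambda$ and yields an estimate of the form $\E[\exp(c\lambda\sum_t R_t)] \le \exp\!\bigl(c'\lambda(D_1 + (G^2+\sigma^2)\sum_t\eta_t^2)\bigr)$ for absolute constants $c,c'$. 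Markov's inequality and the standard $\log(1/\delta)/\lambda$ concentration penalty then give a high-probability bound on $\sum_t R_t$.

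It remains to tune $\lambda$ under the joint constraints $\lambda\le 1/(4\sigma^2\eta_t^2)$ (per-step admissibility) and $\lambda\sigma^2\sum_t\eta_t^2 = O(1)$ (inflation control), optimising against $\log(1/\delta)/\lambda$. For part~(1), the constant choice with $T\eta^2 \asymp \breg(x^*,x_1)/(G^2+\sigma^2(1+\log(1/\delta)))$ saturates both constraints up to constants, balances the drift $(G^2+\sigma^2)\sum_t\eta_t^2$ against $\breg(x^*,x_1)$, and after Markov and rearranging gives both the stated $1/\sqrt{T}$ rate and the bootstrap $\breg(x^*,x_{T+1})\le 4\breg(x^*,x_1)$. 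For part~(2), $\eta_t\propto 1/\sqrt{t}$ makes $\sum_t\eta_t^2 = \Theta(\log T)$ and the identical argument goes through with an extra $\log T$ factor appearing in both the rate and the bootstrap bound; the conversion from the weighted sum $\sum_t\eta_t(f(x_t)-f(x^*))$ to the Ces\`aro average $T^{-1}\sum_t(f(x_t)-f(x^*))$ is routine and uses $\eta_t \ge \eta_T$.

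The principal obstacle is making the MGF recursion rigorous: since $D_t$ is itself built out of the earlier $R_s$, the backward unrolling of the MGF inequality is self-referential, and a naive iteration would compound factors $(1+6\lambda\sigma^2\eta_t^2)$ into an unbounded product as $T$ grows. Controlling this product is precisely the ``decreasing coefficients'' insight referred to in Section~1.2, and it is what forces the presence of $1+\log(1/\delta)$ inside the square root in the step-size formula rather than a bare $\log(1/\delta)$. A secondary point worth flagging is that the bootstrap on $D_{T+1}$ requires no separate argument---it already sits on the left-hand side of the telescoped descent inequality---and the specific constants $4$ and $2(2+\log T)$ drop out mechanically once the prescribed step sizes are substituted.
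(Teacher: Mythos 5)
Your plan is correct and, despite the different packaging, is essentially the paper's own argument. The recursion you discover by backward unrolling of the conditional MGF, $\lambda_{t-1} = \lambda_t + 6\lambda_t^2\sigma^2\eta_t^2$ together with the per-step admissibility constraint $\lambda_t\eta_t^2 \le 1/(4\sigma^2)$, is verbatim the paper's choice of weights $w_{t-1} = w_t + 6\sigma^2\eta_t^2 w_t^2$ and its condition $w_t\eta_t^2 \le 1/(4\sigma^2)$ in Theorem \ref{thm:md-concentration-subgaussian} and Corollary \ref{cor:md-convergence}; your observation that the product $\prod_t(1+6\lambda\sigma^2\eta_t^2)$ must stay $O(1)$ is exactly what the paper enforces through the inductive bound $w_t \le 1/(C + 6\sigma^2\sum_{i\le t}\eta_i^2)$ keeping $w_0/w_T \le 2$. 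The one presentational difference worth noting: the paper avoids the self-referential unrolling you flag as the ``principal obstacle'' by subtracting $v_t\breg(x^*,x_t)$ with $v_t = 6\sigma^2\eta_t^2 w_t^2$ directly in the definition of $Z_t$, so the Bregman term cancels exactly at each step of a clean backward induction on $\E[\exp(S_t)\mid\F_t]$, and the telescoping of the $\breg$ terms is then recovered after Markov via the condition $w_t + v_t \le w_{t-1}$. This buys a tidier induction (no need to repeatedly substitute the random upper bound $D_t \le D_1 + G^2\sum_{s<t}\eta_s^2 + \sum_{s<t}R_s$ inside an exponent) and gives the stated constants directly, whereas your unrolling deposits a $D_1$ contribution at every step and would require a small extra bookkeeping pass to recover the precise constants $4$ and $2(2+\log T)$; but both routes lead to the same rate and step-size prescription.
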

We define $\xi_{t}:=\widehat{\nabla}f\left(x_{t}\right)-\nabla f\left(x_{t}\right)$
and let $\F_{t}=\sigma\left(\xi_{1},\dots,\xi_{t-1}\right)$ denote
the natural filtration. Note that $x_{t}$ is $\F_{t}$-measurable.
The starting point of our analysis is the following inequality that
follows from the standard stochastic mirror descent analysis (see,
e.g., \citet{lan2020first}). We include the proof in the Appendix
for completeness.
\begin{lem}
\citet{lan2020first}\label{lem:md-basic-analysis} For every iteration
$t$, we have
\begin{align*}
A_{t} & \coloneqq\eta_{t}\left(f\left(x_{t}\right)-f\left(x^{*}\right)\right)-\eta_{t}^{2}G^{2}+\breg\left(x^{*},x_{t+1}\right)-\breg\left(x^{*},x_{t}\right)\\
 & \leq\eta_{t}\left\langle \xi_{t},x^{*}-x_{t}\right\rangle +\eta_{t}^{2}\left\Vert \xi_{t}\right\Vert _{*}^{2}.
\end{align*}
\end{lem}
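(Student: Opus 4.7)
The plan is to derive the stated inequality from the standard mirror descent prox-inequality, combined with the convexity of $f$, the $1$-strong convexity of $\psi$, and the $G$-Lipschitz bound on $\nabla f$. The key intermediate fact I would establish first is the prox-inequality
\[
\eta_t\langle \widehat{\nabla}f(x_t),\, x_{t+1}-x^*\rangle + \breg(x_{t+1},x_t) \;\leq\; \breg(x^*,x_t) - \breg(x^*,x_{t+1}),
\]
which follows by writing the first-order optimality condition for the update $x_{t+1}=\arg\min_{x\in\dom}\{\eta_t\langle \widehat{\nabla}f(x_t),x\rangle + \breg(x,x_t)\}$, specializing the resulting variational inequality to the competitor $x=x^*$, and rewriting the inner product $\langle \nabla\psi(x_{t+1})-\nabla\psi(x_t),\,x^*-x_{t+1}\rangle$ via the standard three-point identity for Bregman divergences.

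Next I would peel out the $x_t$ term through the decomposition $\langle \widehat{\nabla}f(x_t), x_{t+1}-x^*\rangle = \langle \widehat{\nabla}f(x_t), x_t - x^*\rangle + \langle \widehat{\nabla}f(x_t), x_{t+1}-x_t\rangle$, and apply Fenchel--Young in the $(\|\cdot\|,\|\cdot\|_*)$ pairing to the cross term: $\eta_t\langle \widehat{\nabla}f(x_t), x_{t+1}-x_t\rangle \geq -\tfrac{\eta_t^2}{2}\|\widehat{\nabla}f(x_t)\|_*^2 - \tfrac{1}{2}\|x_{t+1}-x_t\|^2$. The quadratic $\tfrac{1}{2}\|x_{t+1}-x_t\|^2$ is then absorbed by $\breg(x_{t+1},x_t)$ via the $1$-strong convexity of $\psi$, which produces the clean intermediate bound
\[
\eta_t\langle \widehat{\nabla}f(x_t), x_t-x^*\rangle \;\leq\; \breg(x^*,x_t) - \breg(x^*,x_{t+1}) + \tfrac{\eta_t^2}{2}\|\widehat{\nabla}f(x_t)\|_*^2.
\]

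Finally I would inject the noise decomposition $\widehat{\nabla}f(x_t) = \nabla f(x_t) + \xi_t$ on both sides. On the left, convexity of $f$ gives $\langle \nabla f(x_t), x_t-x^*\rangle \geq f(x_t)-f(x^*)$, and the residual inner product $\eta_t\langle \xi_t, x_t-x^*\rangle$ is transported to the right-hand side as $\eta_t\langle \xi_t, x^*-x_t\rangle$. On the right, the bound $\|\widehat{\nabla}f(x_t)\|_*^2 \leq 2\|\nabla f(x_t)\|_*^2 + 2\|\xi_t\|_*^2 \leq 2G^2 + 2\|\xi_t\|_*^2$ (using the Lipschitz assumption) turns $\tfrac{\eta_t^2}{2}\|\widehat{\nabla}f(x_t)\|_*^2$ into exactly the $\eta_t^2 G^2 + \eta_t^2\|\xi_t\|_*^2$ terms appearing in the statement. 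Rearranging yields the claimed inequality. There is no genuine obstacle here beyond careful bookkeeping; the only delicate points are getting the sign conventions right in the three-point identity and in Fenchel--Young, and ensuring that the constant $\tfrac{1}{2}$ produced by Fenchel--Young is compatible with the $\tfrac{1}{2}$ from strong convexity so that the $\|x_{t+1}-x_t\|^2$ terms cancel cleanly.
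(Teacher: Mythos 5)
Your proposal is correct and follows essentially the same route as the paper: the prox-inequality from the optimality condition and the three-point identity, Fenchel--Young on the cross term $\eta_t\langle \widehat{\nabla}f(x_t), x_t-x_{t+1}\rangle$ absorbed by the strong-convexity term $\breg(x_{t+1},x_t)\geq\tfrac{1}{2}\|x_{t+1}-x_t\|^2$, convexity of $f$ to introduce the function-value gap and the noise term $\eta_t\langle\xi_t,x^*-x_t\rangle$, and finally $\|\widehat{\nabla}f(x_t)\|_*^2\leq 2G^2+2\|\xi_t\|_*^2$. No gaps; the bookkeeping you describe matches the paper's proof step for step.
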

We now turn our attention to our main concentration argument. Towards
our goal of obtaining a high-probability convergence rate, we analyze
the moment generating function for a random variable that is closely
related to the left-hand side of the inequality above. We let $\{w_{t}\}$
be a sequence where $w_{t}\ge0$ for all $t$. We define
\begin{align*}
Z_{t} & =w_{t}A_{t}-v_{t}\breg\left(x^{*},x_{t}\right), & \forall\,1\le t\le T\\
\mbox{where }v_{t} & =6\sigma^{2}\eta_{t}^{2}w_{t}^{2}\\
\text{and }S_{t} & =\sum_{i=t}^{T}Z_{i}, & \forall\,1\le t\le T+1
\end{align*}
Before proceeding with the analysis, we provide intuition for our
approach. If we consider $S_{1}$, we see that it combines the gains
in function value gaps with weights given by the sequence $\left\{ w_{t}\right\} $
and the losses given by the Bregman divergence terms $\breg\left(x^{*},x_{t}\right)$
with coefficients $v_{t}$ chosen based on the step size $\eta_{t}$
and $w_{t}$. The intuition here is that we want to transfer the error
from the stochastic error terms on the RHS of Lemma \ref{lem:md-basic-analysis}
into the loss term $v_{t}\breg\left(x^{*},x_{t}\right)$ then leverage
the progression of the Bregman divergence $\breg\left(x^{*},x_{t+1}\right)-\breg\left(x^{*},x_{t}\right)$
to absorb this loss. For the first step, we can do that by setting
the coefficient $v_{t}$ to equalize coefficient of divergence term
that will appear from the RHS of Lemma \ref{lem:md-basic-analysis}.
For the second step, we can aim at making all the divergence terms
telescope, by selecting $v_{t}$ and $w_{t}$ such that $w_{t}+v_{t}\le w_{t-1}$
to have a telescoping sum of the terms $w_{t}\breg\left(x^{*},x_{t+1}\right)-w_{t-1}\breg\left(x^{*},x_{t}\right)$.
In the end we will obtain a bound for the function value gaps in terms
of only the deterministic quantities, namely $\eta_{t},w_{t},G$ and
the initial distance. In Theorem \ref{thm:md-concentration-subgaussian},
we upper bound the moment generating function of $S_{1}$ and derive
a set of conditions for the weights $\left\{ w_{t}\right\} $ that
allow us to absorb the stochastic errors. In Corollary \ref{cor:md-convergence},
we show how to choose the weights $\left\{ w_{t}\right\} $ and obtain
a convergence rate that matches the standard rates that hold in expectation.

We now give our main concentration argument that bounds the moment
generating function of $S_{t}$ inspired by the proof of Theorem 7.3
in \citet{chung2006concentration}.
\begin{thm}
\label{thm:md-concentration-subgaussian}Suppose that $w_{t}\eta_{t}^{2}\leq\frac{1}{4\sigma^{2}}$
for every $1\leq t\leq T$. For every $1\le t\leq T+1$, we have
\begin{align*}
\E\left[\exp\left(S_{t}\right)\mid\F_{t}\right]\le & \exp\left(3\sigma^{2}\sum_{i=t}^{T}w_{i}\eta_{i}^{2}\right).
\end{align*}
\end{thm}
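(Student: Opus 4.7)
The plan is to prove the bound by backward induction on $t$, from $t=T+1$ down to $t=1$. The base case is trivial: $S_{T+1}=0$ (an empty sum), so $\E[\exp(S_{T+1})\mid\F_{T+1}]=1=\exp(0)$, matching the RHS.

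For the inductive step, I would write $S_t = Z_t + S_{t+1}$. Observe that $Z_t$ is $\F_{t+1}$-measurable (since $A_t$ depends on $x_{t+1}$, which is $\F_{t+1}$-measurable, while $\breg(x^*,x_t)$ is already $\F_t$-measurable). Then by the tower property,
\begin{align*}
\E[\exp(S_t)\mid\F_t]
  &= \E\bigl[\exp(Z_t)\,\E[\exp(S_{t+1})\mid\F_{t+1}]\,\big|\,\F_t\bigr] \\
  &\le \exp\!\Bigl(3\sigma^2\sum_{i=t+1}^{T}w_i\eta_i^2\Bigr)\,\E[\exp(Z_t)\mid\F_t],
\end{align*}
using the inductive hypothesis. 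So the task reduces to showing $\E[\exp(Z_t)\mid\F_t]\le\exp(3\sigma^2 w_t\eta_t^2)$.

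For this key step, I would plug in the upper bound from Lemma \ref{lem:md-basic-analysis}, giving
\[
Z_t \le w_t\eta_t\langle \xi_t,x^*-x_t\rangle + w_t\eta_t^2\|\xi_t\|_*^2 - v_t\breg(x^*,x_t).
\]
Since the last term is $\F_t$-measurable it factors out. The remaining expectation has exactly the form addressed by Lemma \ref{lem:helper-taylor-vector} with $a=w_t\eta_t(x^*-x_t)$ and $b^2=w_t\eta_t^2$; the hypothesis $w_t\eta_t^2\le\frac{1}{4\sigma^2}$ is precisely the condition $b\le\frac{1}{2\sigma}$ needed to invoke that lemma. Applying it yields a bound $\exp\bigl(3\sigma^2(w_t^2\eta_t^2\|x^*-x_t\|^2 + w_t\eta_t^2)\bigr)$. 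Using $1$-strong convexity of $\psi$ to get $\|x^*-x_t\|^2\le 2\breg(x^*,x_t)$, the quadratic-in-distance term becomes $6\sigma^2 w_t^2\eta_t^2\breg(x^*,x_t)$, which is exactly cancelled by $-v_t\breg(x^*,x_t)$ by design of $v_t=6\sigma^2\eta_t^2 w_t^2$. What remains is $\exp(3\sigma^2 w_t\eta_t^2)$, closing the induction.

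I expect the main conceptual obstacle to have been the design of $Z_t$ and $v_t$ themselves — the proof, once those choices are fixed, is a short chain of Lemma \ref{lem:md-basic-analysis} followed by Lemma \ref{lem:helper-taylor-vector} and strong convexity. The only subtlety to watch is verifying measurability so that the tower property applies cleanly, and checking that the sub-Gaussian lemma's parameter constraint $b\le\frac{1}{2\sigma}$ exactly matches the hypothesis $w_t\eta_t^2\le\frac{1}{4\sigma^2}$; both are immediate but deserve to be flagged explicitly.
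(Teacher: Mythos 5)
Your proof is correct and follows exactly the paper's own argument: backward induction on $t$, the tower property to expose $Z_t$ conditionally on $\F_{t+1}$, substitution of the bound from Lemma~\ref{lem:md-basic-analysis}, application of Lemma~\ref{lem:helper-taylor-vector} with $a=w_t\eta_t(x^*-x_t)$ and $b^2=w_t\eta_t^2$, and finally $1$-strong convexity of $\psi$ so that the $6\sigma^2 w_t^2\eta_t^2\breg(x^*,x_t)$ term is cancelled by $-v_t\breg(x^*,x_t)$. The parameter check $b^2=w_t\eta_t^2\le\frac{1}{4\sigma^2}\iff b\le\frac{1}{2\sigma}$ and the $\F_{t+1}$-measurability of $Z_t$ are both correctly flagged and verified, and these are exactly the two places the paper is also careful.
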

\begin{proof}
We proceed by induction on $t$. Consider the base case $t=T+1$.
We have the inequality holds true trivially. Next, we consider $1\leq t\leq T$.
We have
\begin{align}
\E\left[\exp\left(S_{t}\right)\mid\F_{t}\right] & =\E\left[\exp\left(Z_{t}+S_{t+1}\right)\mid\F_{t}\right]\nonumber \\
 & =\E\left[\E\left[\exp\left(Z_{t}+S_{t+1}\right)\mid\F_{t+1}\right]\mid\F_{t}\right].\label{eq:1}
\end{align}
We now analyze the inner expectation. Conditioned on $\F_{t+1}$,
$Z_{t}$ is fixed. Using the inductive hypothesis, we obtain
\begin{align}
\E\left[\exp\left(Z_{t}+S_{t+1}\right)\mid\F_{t+1}\right] & \le\exp\left(Z_{t}\right)\exp\left(3\sigma^{2}\sum_{i=t+1}^{T}w_{i}\eta_{i}^{2}\right).\label{eq:2}
\end{align}
Plugging into (\ref{eq:1}), we obtain
\begin{align}
\E\left[\exp\left(S_{t}\right)\mid\F_{t}\right] & \le\E\left[\exp\left(Z_{t}\right)\mid\F_{t}\right]\exp\left(3\sigma^{2}\sum_{i=t+1}^{T}w_{i}\eta_{i}^{2}\right).\label{eq:3}
\end{align}
By Lemma \ref{lem:md-basic-analysis}
\begin{align*}
\exp\left(Z_{t}\right) & =\exp\bigg(w_{t}\big(\eta_{t}\left(f\left(x_{t}\right)-f\left(x^{*}\right)\right)-\eta_{t}^{2}G^{2}+\breg\left(x^{*},x_{t+1}\right)-\breg\left(x^{*},x_{t}\right)\big)-v_{t}\breg\left(x^{*},x_{t}\right)\bigg)\\
 & \le\exp\bigg(w_{t}\eta_{t}\left\langle \xi_{t},x^{*}-x_{t}\right\rangle +w_{t}\eta_{t}^{2}\left\Vert \xi_{t}\right\Vert _{*}^{2}\bigg)\exp\left(-v_{t}\breg\left(x^{*},x_{t}\right)\right).
\end{align*}
Next, we analyze the first term in the last line of the above inequality
in expectation. Since $\E\left[\left\langle \xi_{t},x^{*}-x_{t}\right\rangle \mid\F_{t}\right]=0$
we can use Lemma \ref{lem:helper-taylor-vector} to obtain
\begin{align}
\E\left[\exp\left(w_{t}\eta_{t}\left\langle \xi_{t},x^{*}-x_{t}\right\rangle +w_{t}\eta_{t}^{2}\left\Vert \xi_{t}\right\Vert _{*}^{2}\right)\mid\F_{t}\right] & \le\exp\left(3\sigma^{2}\left(w_{t}^{2}\eta_{t}^{2}\left\Vert x^{*}-x_{t}\right\Vert ^{2}+w_{t}\eta_{t}^{2}\right)\right)\nonumber \\
 & \le\exp\left(3\sigma^{2}\left(2w_{t}^{2}\eta_{t}^{2}\breg\left(x^{*},x_{t}\right)+w_{t}\eta_{t}^{2}\right)\right)\label{eq:4}
\end{align}
where in the last line we used that $\breg\left(x^{*},x_{t}\right)\geq\frac{1}{2}\left\Vert x^{*}-x_{t}\right\Vert ^{2}$
from the strong convexity of $\psi$.

Plugging back into (\ref{eq:3}) and using that $v_{t}=6\sigma^{2}\eta_{t}^{2}w_{t}^{2}$,
we obtain the desired inequality
\begin{align*}
\E\left[\exp\left(S_{t}\right)\mid\F_{t}\right]\le & \exp\left(\left(6\sigma^{2}\eta_{t}^{2}w_{t}^{2}-v_{t}\right)\breg\left(x^{*},x_{t}\right)+3\sigma^{2}\sum_{i=t}^{T}w_{i}\eta_{i}^{2}\right)\\
= & \exp\left(3\sigma^{2}\sum_{i=t}^{T}w_{i}\eta_{i}^{2}\right).
\end{align*}
\end{proof}
Using Theorem \ref{thm:md-concentration-subgaussian} and Markov's
inequality, we obtain the following convergence guarantee.
\begin{cor}
\label{cor:md-convergence}Suppose the sequence $\left\{ w_{t}\right\} $
satisfies the conditions of Theorem \ref{thm:md-concentration-subgaussian}
and that $w_{t}+6\sigma^{2}\eta_{t}^{2}w_{t}^{2}\le w_{t-1}.$ For
any $\delta>0$, with probability at least $1-\delta$:
\begin{align*}
\sum_{t=1}^{T}w_{t}\eta_{t}\left(f\left(x_{t}\right)-f\left(x^{*}\right)\right)+w_{T}\breg\left(x^{*},x_{T+1}\right) & \leq w_{0}\breg\left(x^{*},x_{1}\right)+\left(G^{2}+3\sigma^{2}\right)\sum_{t=1}^{T}w_{t}\eta_{t}^{2}+\log\left(\frac{1}{\delta}\right).
\end{align*}
\end{cor}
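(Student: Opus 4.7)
The plan is to combine Theorem \ref{thm:md-concentration-subgaussian} with Markov's inequality and then unpack the definition of $S_1$ to extract the desired bound. Concretely, I first specialize Theorem \ref{thm:md-concentration-subgaussian} to $t=1$; since $\mathcal{F}_1$ is trivial, the conditional statement becomes the unconditional MGF bound $\E[\exp(S_1)] \leq \exp(3\sigma^2 \sum_{t=1}^T w_t \eta_t^2)$. By Markov's inequality applied to the random variable $\exp(S_1)$, with probability at least $1-\delta$ we have
\begin{equation*}
S_1 \;\leq\; 3\sigma^2\sum_{t=1}^{T} w_t\eta_t^2 + \log\!\left(\tfrac{1}{\delta}\right).
\end{equation*}

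Next I would substitute the definitions $Z_t = w_t A_t - v_t \breg(x^*,x_t)$ and $A_t = \eta_t(f(x_t)-f(x^*)) - \eta_t^2 G^2 + \breg(x^*,x_{t+1}) - \breg(x^*,x_t)$ into $S_1 = \sum_{t=1}^T Z_t$. Collecting the coefficients of the Bregman terms, the sum involves $\sum_{t=1}^T \bigl[w_t \breg(x^*,x_{t+1}) - (w_t + v_t)\breg(x^*,x_t)\bigr]$. Here the second hypothesis $w_t + 6\sigma^2 \eta_t^2 w_t^2 \leq w_{t-1}$ (recalling $v_t = 6\sigma^2\eta_t^2 w_t^2$) is exactly what I need: it lets me replace $w_t + v_t$ with something at most $w_{t-1}$, after which the resulting sum telescopes to $w_T \breg(x^*,x_{T+1}) - w_0\breg(x^*,x_1)$.

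Moving the deterministic $-G^2\sum w_t\eta_t^2$ and the telescoped Bregman terms to the other side and combining the noise term $3\sigma^2\sum w_t\eta_t^2$ on the right yields exactly the stated inequality. There is no real obstacle here; the only care required is in the direction of the inequality when using $w_t + v_t \leq w_{t-1}$, since the Bregman divergences are nonnegative, so dropping the slack only strengthens the bound. No further probabilistic argument is needed beyond the single application of Markov, which is the point of having packaged all the noise-handling work into Theorem \ref{thm:md-concentration-subgaussian}.
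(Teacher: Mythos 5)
Your proposal is correct and follows essentially the same argument as the paper: apply Theorem \ref{thm:md-concentration-subgaussian} at $t=1$, use Markov's inequality on $\exp(S_{1})$, and then lower bound $S_{1}$ by a telescoping sum via $w_{t}+v_{t}\le w_{t-1}$ together with nonnegativity of the Bregman divergence. No gaps.
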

With the above result in hand, we complete the convergence analysis
by showing how to define the sequence $\left\{ w_{t}\right\} $ with
the desired properties. For the stochastic Mirror Descent algorithm
with fixed step sizes $\eta_{t}=\frac{\eta}{\sqrt{T}}$, we set $w_{T}=\frac{1}{12\sigma^{2}\eta^{2}}$
and $w_{t-1}=w_{t}+\frac{6}{T}\sigma^{2}\eta^{2}w_{t}^{2}$ for all
$1\leq t\leq T$. For Stochastic Mirror Descent algorithm with time-varying
step sizes $\eta_{t}=\frac{\eta}{\sqrt{t}}$, we set $w_{T}=\frac{1}{12\sigma^{2}\eta^{2}\left(\sum_{t=1}^{T}\frac{1}{t}\right)}$
and $w_{t-1}=w_{t}+6\sigma^{2}\eta_{t}^{2}w_{t}^{2}$ for all $1\leq t\leq T$.
In the appendix, we show that these choices have the give us the results
in Theorem \ref{thm:md-convergence}.

\subsection{Analysis of Accelerated Stochastic Mirror Descent}

\begin{algorithm}
\caption{Accelerated Stochastic Mirror Descent Algorithm \citet{lan2020first}.}

\label{alg:acc-md}

\textbf{Parameters:} initial point $x_{0}=y_{0}=z_{0}\in\dom$, step
size $\eta$, strongly convex mirror map $\psi$

for $t=1$ to $T$:

$\quad$Set $\alpha_{t}=\frac{2}{t+1}$

$\quad$$x_{t}=\left(1-\alpha_{t}\right)y_{t-1}+\alpha_{t}z_{t-1}$

$\quad$$z_{t}=\arg\min_{x\in\dom}\left(\eta_{t}\left\langle \widehat{\nabla}f(x_{t}),x\right\rangle +\breg\left(x,z_{t-1}\right)\right)$

$\quad$$y_{t}=\left(1-\alpha_{t}\right)y_{t-1}+\alpha_{t}z_{t}$

return $y_{T}$
\end{algorithm}

In this section, we extend the analysis detailed in the previous section
to analyze the Accelerated Stochastic Mirror Descent Algorithm (Algorithm
(\ref{alg:acc-md})). We assume that $f$ satisfies the following
condition: for all $x,y\in\dom$
\begin{equation}
f(y)\le f(x)+\left\langle \nabla f\left(x\right),y-x\right\rangle +G\left\Vert y-x\right\Vert +\frac{L}{2}\left\Vert y-x\right\Vert ^{2}.\label{eq:acc-md-condition}
\end{equation}

Note that $L$-smooth functions, $G$-Lipschitz functions, and their
sums all satisfy the above condition. The full convergence guarantees
are given in Theorem \ref{thm:acc-md-convergence}. We will only highlight
the application of the previous analysis in this case. As before,
we define $\xi_{t}:=\widehat{\nabla}f\left(x_{t}\right)-\nabla f\left(x_{t}\right)$.

We also start with the inequalities shown in the standard analysis,
e.g, from \citet{lan2020first} (proof in the Appendix).
\begin{lem}
\citet{lan2020first} \label{lem:acc-md-basic-analysis}For every
iteration $t$, we have
\begin{align*}
B_{t} & \coloneqq\frac{\eta_{t}}{\alpha_{t}}\left(f\left(y_{t}\right)-f\left(x^{*}\right)\right)-\frac{\eta_{t}\left(1-\alpha_{t}\right)}{\alpha_{t}}\left(f\left(y_{t-1}\right)-f\left(x^{*}\right)\right)\\
 & \quad-\frac{\eta_{t}^{2}}{1-L\alpha_{t}\eta_{t}}G^{2}+\breg\left(x^{*},z_{t}\right)-\breg\left(x^{*},z_{t-1}\right)\\
 & \leq\eta_{t}\left\langle \xi_{t},x^{*}-z_{t-1}\right\rangle +\frac{\eta_{t}^{2}}{1-L\alpha_{t}\eta_{t}}\left\Vert \xi_{t}\right\Vert _{*}^{2}.
\end{align*}
\end{lem}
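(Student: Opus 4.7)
The plan is to mirror the deterministic accelerated mirror descent analysis and then separate the stochastic gradient into its true gradient plus a noise component, using Young's inequality to push the noise-induced quadratic terms into the Bregman divergence coming from the mirror step. I would start from the assumed composite-smoothness inequality in \eqref{eq:acc-md-condition} applied at $y_t$ around $x_t$: since $y_t - x_t = \alpha_t(z_t - z_{t-1})$ by the definitions of $x_t$ and $y_t$, this gives
\[
f(y_t) \le f(x_t) + \alpha_t\langle \n(x_t), z_t - z_{t-1}\rangle + \alpha_t G\norm{z_t - z_{t-1}} + \tfrac{L\alpha_t^2}{2}\norm{z_t - z_{t-1}}^2.
\]

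Next, I would use convexity of $f$ twice, combining $(1-\alpha_t)\bigl(f(y_{t-1}) - f(x_t) - \langle \n(x_t), y_{t-1} - x_t\rangle\bigr) \ge 0$ and $\alpha_t\bigl(f(x^*) - f(x_t) - \langle \n(x_t), x^* - x_t\rangle\bigr) \ge 0$. The key algebraic identity is $(1-\alpha_t)(y_{t-1} - x_t) + \alpha_t(x^* - x_t) = \alpha_t(x^* - z_{t-1})$, which turns the resulting bound into $f(x_t) \le (1-\alpha_t)f(y_{t-1}) + \alpha_t f(x^*) + \alpha_t\langle \n(x_t), z_{t-1} - x^*\rangle$. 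Plugging into the previous display, rearranging, and multiplying by $\eta_t/\alpha_t$ produces the desired LHS of the lemma modulo the $-\eta_t^2 G^2/(1-L\alpha_t\eta_t)$ and the two Bregman terms, with $\eta_t\langle \n(x_t), z_t - x^*\rangle$ on the right.

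Now I would decompose $\n(x_t) = \hn(x_t) - \xi_t$ and invoke the standard three-point inequality from the optimality condition of the mirror step $z_t = \arg\min_{x\in\dom}\{\eta_t\langle \hn(x_t),x\rangle + \breg(x, z_{t-1})\}$, which yields
\[
\eta_t\langle \hn(x_t), z_t - x^*\rangle \le \breg(x^*, z_{t-1}) - \breg(x^*, z_t) - \breg(z_t, z_{t-1}).
\]
For the noise piece I would split $-\eta_t\langle \xi_t, z_t - x^*\rangle = \eta_t\langle \xi_t, x^* - z_{t-1}\rangle + \eta_t\langle \xi_t, z_{t-1} - z_t\rangle$; the first summand is exactly what appears on the RHS of the lemma statement.

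The remaining task, and the main bookkeeping obstacle, is to handle the three quadratic contributions in $\norm{z_t - z_{t-1}}$: the term $\alpha_t G\norm{z_t - z_{t-1}}$ from \eqref{eq:acc-md-condition} (after multiplication by $\eta_t/\alpha_t$ this becomes $\eta_t G\norm{z_t - z_{t-1}}$), the term $\eta_t\langle \xi_t, z_{t-1} - z_t\rangle$, and the smoothness term $\tfrac{L\alpha_t\eta_t}{2}\norm{z_t - z_{t-1}}^2$. Applying Young's inequality with the symmetric choice $c = c' = \tfrac{1 - L\alpha_t\eta_t}{2}$ yields $\eta_t G\norm{z_t - z_{t-1}} \le \tfrac{\eta_t^2 G^2}{1-L\alpha_t\eta_t} + \tfrac{c'}{2}\norm{z_t - z_{t-1}}^2$ and similarly $\eta_t\langle \xi_t, z_{t-1} - z_t\rangle \le \tfrac{\eta_t^2}{1-L\alpha_t\eta_t}\norm{\xi_t}_*^2 + \tfrac{c}{2}\norm{z_t - z_{t-1}}^2$. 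Summing the $\norm{z_t - z_{t-1}}^2$ coefficients gives exactly $\tfrac{1}{2}$, which is fully absorbed by $-\breg(z_t, z_{t-1}) \le -\tfrac{1}{2}\norm{z_t - z_{t-1}}^2$ from the $1$-strong convexity of $\psi$. Moving the $-\tfrac{\eta_t^2 G^2}{1-L\alpha_t\eta_t}$ and $\breg(x^*,z_t) - \breg(x^*,z_{t-1})$ terms to the left-hand side yields $B_t$ on the left and the claimed stochastic bound on the right. The delicate point is precisely this constant matching: one needs Young's coefficients that simultaneously leave exactly $\tfrac{1}{2}$ on the quadratic side and produce the same denominator $1 - L\alpha_t\eta_t$ on both the $G^2$ and $\norm{\xi_t}_*^2$ error terms.
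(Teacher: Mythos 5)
Your proposal is correct and follows essentially the same route as the paper: the composite-smoothness bound at $y_t$, the convex-combination identity $(1-\alpha_t)(y_{t-1}-x_t)+\alpha_t(x^*-x_t)=\alpha_t(x^*-z_{t-1})$, the three-point optimality inequality for the mirror step, the split of $\alpha_t\langle\xi_t,x^*-z_t\rangle$, and absorption of $\norm{z_t-z_{t-1}}^2$ via $\breg(z_t,z_{t-1})\ge\tfrac12\norm{z_t-z_{t-1}}^2$. The only cosmetic difference is in the final absorption: the paper completes the square once against the combined linear term $\alpha_t\norm{z_t-z_{t-1}}\left(\left\Vert\xi_t\right\Vert_*+G\right)$ and then invokes $(a+b)^2\le 2(a^2+b^2)$, whereas you apply Young's inequality separately to the $G$ and $\xi_t$ pieces with each consuming half of the quadratic slack $\tfrac{1-L\alpha_t\eta_t}{2}$, and both routes give the identical final constants.
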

We now turn our attention to our main concentration argument. Similar
to the previous section, we define 
\begin{align*}
Z_{t} & =w_{t}B_{t}-v_{t}\breg\left(x^{*},z_{t-1}\right), & \forall\,1\leq t\leq T\\
\mbox{where }v_{t} & =6\sigma^{2}w_{t}^{2}\eta_{t}^{2}\\
\text{and }S_{t} & =\sum_{i=t}^{T}Z_{i}, & \forall\,1\le t\leq T+1
\end{align*}
Notice that here we are following the exact same step as before. By
transferring the error terms in the RHS of Lemma \ref{lem:acc-md-basic-analysis}
into the Bregman divergence terms $\breg\left(x^{*},z_{t-1}\right)$,
we can absorb them by setting the coefficients appropriately. In the
same manner, we can show the following theorem.
\begin{thm}
\label{thm:acc-md-concentration-subgaussian}Suppose that $\frac{w_{t}\eta_{t}^{2}}{1-L\alpha_{t}\eta_{t}}\leq\frac{1}{4\sigma^{2}}$
for every $0\leq t\leq T$. For every $1\leq t\leq T+1$, we have
\begin{align*}
\E\left[\exp\left(S_{t}\right)\mid\F_{t}\right] & \le\exp\Bigg(3\sigma^{2}\sum_{i=t}^{T}w_{i}\frac{\eta_{i}^{2}}{1-L\alpha_{i}\eta_{i}}\Bigg).
\end{align*}
\end{thm}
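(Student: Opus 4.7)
The plan is to mirror the proof of Theorem \ref{thm:md-concentration-subgaussian} essentially verbatim, with two cosmetic changes dictated by the accelerated setting: the ``anchor'' iterate whose Bregman divergence to $x^*$ controls the error is $z_{t-1}$ rather than $x_t$, and the coefficient multiplying $\|\xi_t\|_*^2$ in Lemma \ref{lem:acc-md-basic-analysis} is $\tfrac{\eta_t^2}{1-L\alpha_t\eta_t}$ rather than $\eta_t^2$. This explains both the form of the hypothesis and the form of the claimed bound.

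First, I would proceed by backward induction on $t$. The base case $t=T+1$ is trivial since $S_{T+1}$ is an empty sum. For the inductive step, I would first pin down measurability. In Algorithm \ref{alg:acc-md}, the iterate $x_t$ is built from $y_{t-1},z_{t-1}$ before $\xi_t$ is drawn, and $y_t,z_t$ are produced only after querying $\widehat{\nabla}f(x_t)$; hence $z_{t-1}$ is $\F_t$-measurable while $Z_t$ is $\F_{t+1}$-measurable. Applying the tower property and the inductive hypothesis at $t+1$ yields
\[
\E[\exp(S_t)\mid\F_t] \;\le\; \E[\exp(Z_t)\mid\F_t]\cdot\exp\Bigl(3\sigma^{2}\sum_{i=t+1}^{T}\tfrac{w_i\eta_i^{2}}{1-L\alpha_i\eta_i}\Bigr).
\]

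Second, I would bound $\E[\exp(Z_t)\mid\F_t]$ by substituting Lemma \ref{lem:acc-md-basic-analysis} into the definition of $Z_t$. The deterministic $B_t$-terms on the left-hand side of that lemma cancel against the upper bound, leaving
\[
\exp(Z_t) \;\le\; \exp\!\Bigl(w_t\eta_t\langle\xi_t,x^*-z_{t-1}\rangle + \tfrac{w_t\eta_t^{2}}{1-L\alpha_t\eta_t}\|\xi_t\|_*^{2}\Bigr)\exp\!\bigl(-v_t\breg(x^*,z_{t-1})\bigr).
\]
Since $z_{t-1}$ and $w_t,\eta_t,\alpha_t$ are $\F_t$-measurable, I can apply Lemma \ref{lem:helper-taylor-vector} conditionally on $\F_t$ with $a=w_t\eta_t(x^*-z_{t-1})$ and $b^{2}=\tfrac{w_t\eta_t^{2}}{1-L\alpha_t\eta_t}$. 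The hypothesis $\tfrac{w_t\eta_t^{2}}{1-L\alpha_t\eta_t}\le\tfrac{1}{4\sigma^{2}}$ is exactly the condition $b\le\tfrac{1}{2\sigma}$ needed to apply the lemma. This yields a factor of $\exp\!\bigl(3\sigma^{2}(w_t^{2}\eta_t^{2}\|x^*-z_{t-1}\|^{2}+\tfrac{w_t\eta_t^{2}}{1-L\alpha_t\eta_t})\bigr)$, and the $1$-strong convexity of $\psi$ turns $\|x^*-z_{t-1}\|^{2}$ into $2\breg(x^*,z_{t-1})$.

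Combining everything, the overall exponent bounding $\E[\exp(S_t)\mid\F_t]$ is
\[
(6\sigma^{2}w_t^{2}\eta_t^{2}-v_t)\breg(x^*,z_{t-1}) + 3\sigma^{2}\tfrac{w_t\eta_t^{2}}{1-L\alpha_t\eta_t} + 3\sigma^{2}\sum_{i=t+1}^{T}\tfrac{w_i\eta_i^{2}}{1-L\alpha_i\eta_i},
\]
and the choice $v_t=6\sigma^{2}w_t^{2}\eta_t^{2}$ kills the Bregman term, giving the claimed bound as a single sum from $i=t$ to $T$. I do not anticipate any real obstacle: once one verifies that $z_{t-1}$ is $\F_t$-measurable (so that $a$ can be treated as deterministic in Lemma \ref{lem:helper-taylor-vector}) and recognizes that $\tfrac{\eta_t^{2}}{1-L\alpha_t\eta_t}$ takes over the role previously played by $\eta_t^{2}$, the argument transcribes line-by-line from Theorem \ref{thm:md-concentration-subgaussian}.
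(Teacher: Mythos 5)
Your proposal is correct and follows the paper's proof essentially verbatim: backward induction, tower property plus the inductive hypothesis to peel off $S_{t+1}$, Lemma \ref{lem:acc-md-basic-analysis} to bound $\exp(Z_t)$, and Lemma \ref{lem:helper-taylor-vector} with $a=w_t\eta_t(x^*-z_{t-1})$ and $b^2=\tfrac{w_t\eta_t^2}{1-L\alpha_t\eta_t}$, followed by strong convexity and the cancellation $v_t=6\sigma^2 w_t^2\eta_t^2$. The explicit measurability remarks you add (that $z_{t-1}$ is $\F_t$-measurable and $Z_t$ is $\F_{t+1}$-measurable) are correct and are left implicit in the paper.
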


\begin{cor}
\label{cor:acc-md-convergence}Suppose the sequence $\left\{ w_{t}\right\} $
satisfies the conditions of Theorem \ref{thm:acc-md-concentration-subgaussian}.
For any $\delta>0$, the following event holds with probability at
least $1-\delta$:
\begin{align*}
 & \sum_{t=1}^{T}w_{t}\left(\frac{\eta_{t}}{\alpha_{t}}\left(f\left(y_{t}\right)-f\left(x^{*}\right)\right)-\frac{\eta_{t}\left(1-\alpha_{t}\right)}{\alpha_{t}}\left(f\left(y_{t-1}\right)-f\left(x^{*}\right)\right)\right)+w_{T}\breg\left(x^{*},z_{T}\right)\\
\leq & w_{0}\breg\left(x^{*},z_{0}\right)+\left(G^{2}+3\sigma^{2}\right)\sum_{t=1}^{T}w_{t}\frac{\eta_{t}^{2}}{1-L\alpha_{t}\eta_{t}}+\log\left(\frac{1}{\delta}\right).
\end{align*}
\end{cor}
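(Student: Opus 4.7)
The plan is to mirror the proof of Corollary \ref{cor:md-convergence} essentially verbatim, substituting $B_t$ for $A_t$ and the iterate $z_{t-1}$ for $x_t$ in the Bregman terms, then use the acceleration-aware MGF bound from Theorem \ref{thm:acc-md-concentration-subgaussian} in place of the mirror descent version. As with the MD corollary, I am implicitly assuming the sequence $\{w_t\}$ also satisfies the telescoping condition $w_t + v_t \le w_{t-1}$ with $v_t = 6\sigma^2 w_t^2 \eta_t^2$, analogous to the explicit hypothesis there.

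First, I apply Markov's inequality to $\exp(S_1)$. Taking total expectation in Theorem \ref{thm:acc-md-concentration-subgaussian} at $t=1$ gives
\[
\E\bigl[\exp(S_1)\bigr] \le \exp\Bigl(3\sigma^2 \sum_{t=1}^{T} w_t \tfrac{\eta_t^2}{1-L\alpha_t\eta_t}\Bigr),
\]
so Markov's inequality yields, with probability at least $1-\delta$,
\[
S_1 \le \log(1/\delta) + 3\sigma^2 \sum_{t=1}^{T} w_t \tfrac{\eta_t^2}{1-L\alpha_t\eta_t}.
\]

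Next, I unpack $S_1 = \sum_{t=1}^T w_t B_t - \sum_{t=1}^T v_t \breg(x^*, z_{t-1})$ by substituting the definition of $B_t$ from Lemma \ref{lem:acc-md-basic-analysis}. The function-value gap terms $\frac{\eta_t}{\alpha_t}(f(y_t)-f(x^*)) - \frac{\eta_t(1-\alpha_t)}{\alpha_t}(f(y_{t-1})-f(x^*))$ carry over unchanged into the final bound. The $G^2$ contribution $-\sum_t w_t \tfrac{\eta_t^2}{1-L\alpha_t\eta_t} G^2$ gets moved to the right-hand side and combined with the $3\sigma^2$ term to give the $(G^2+3\sigma^2)$ coefficient in the statement.

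The only nontrivial step is to handle the Bregman terms. Rearranging,
\[
\sum_{t=1}^T w_t\bigl[\breg(x^*,z_t)-\breg(x^*,z_{t-1})\bigr] - \sum_{t=1}^T v_t \breg(x^*,z_{t-1}) = \sum_{t=1}^T w_t\breg(x^*,z_t) - \sum_{t=1}^T (w_t+v_t)\breg(x^*,z_{t-1}).
\]
Reindexing the first sum (pulling off $t=T$) and the second (pulling off $t=1$) gives
\[
w_T \breg(x^*,z_T) - (w_1+v_1)\breg(x^*,z_0) + \sum_{t=2}^T \bigl[w_{t-1} - w_t - v_t\bigr]\breg(x^*,z_{t-1}).
\]
Under the assumed condition $w_{t-1} \ge w_t + v_t$ for $t=1,\dots,T$, every coefficient in the residual sum is non-negative, and $w_1+v_1 \le w_0$, so this expression is at least $w_T\breg(x^*,z_T) - w_0\breg(x^*,z_0)$, where I discard the non-negative terms since they appear with the correct sign. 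Plugging this lower bound back into the inequality for $S_1$ and moving $-w_0\breg(x^*,z_0)$ and the $G^2$ sum to the right-hand side yields exactly the claimed bound.

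The main obstacle here is not analytic but bookkeeping: the acceleration introduces the mixed function-value differences and the $1-L\alpha_t\eta_t$ denominator, so care is needed to route the $G^2$ term and the $\sigma^2$ term with the correct coefficient through the rearrangement. Once the telescoping identity above is set up and the condition $w_t+v_t \le w_{t-1}$ is invoked, the rest is direct substitution from Lemma \ref{lem:acc-md-basic-analysis} and Theorem \ref{thm:acc-md-concentration-subgaussian}.
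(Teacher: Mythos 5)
Your proof is correct and follows the paper's argument essentially verbatim: Markov's inequality applied to $\E[\exp(S_1)]$ via Theorem \ref{thm:acc-md-concentration-subgaussian}, then telescoping the Bregman terms under the condition $w_t + v_t \le w_{t-1}$. You are right that this telescoping condition is invoked but not stated in the hypothesis of the corollary — the paper's own proof also uses it silently (the sequences constructed in Corollaries \ref{cor:acc-md-convergence-final} and \ref{cor:acc-md-convergence-final-vary} are chosen to satisfy it), and your explicit flagging of the implicit assumption is a minor improvement in rigor.
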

With the above result in hand, we can complete the convergence analysis
by showing how to define the sequence $\left\{ w_{t}\right\} $ with
the desired properties. Theorem \ref{thm:acc-md-convergence} can
be obtained from corollaries \ref{cor:acc-md-convergence-final} and
\ref{cor:acc-md-convergence-final-vary} provided in the appendix,
for constant and time-varying step sizes.

\section{Non-convex case: Stochastic Gradient Descent and AdaGrad \label{sec:nonconvex}}

In this section, we consider non-convex objectives and analyze the
Stochastic Gradient Descent algorithm (Algorithm \ref{alg:sgd}) along
with two versions of AdaGrad: (1) AdaGrad-Norm \citet{ward2019adagrad}
(Algorithm \ref{alg:adagrad-norm}), where the step-size is a scalar,
and (2) the original AdaGrad algorithm \citet{duchi2011adaptive}
(Algorithm (\ref{alg:adagrad-coord})), where the step-size for each
coordinates varies. Since AdaGrad-Norm is simpler to analyze, most
results for AdaGrad have been for this scalar version either in-expectation
\citet{ward2019adagrad,faw2022power,li2020high,li2019convergence,liu2022convergence,ene2021adaptive}
or high-probability \citet{kavis2021high}. For the standard AdaGrad
algorithm, to the best of our knowledge, \citet{defossez2020simple}
is the only work that has analyzed the standard version of AdaGrad
in expectation, but their result does not adapt to noise and requires
a strong assumption: the stochastic gradients are uniformly bounded.
On the other hand, our high probability result for vanilla AdaGrad
adapts to noise and holds under relatively mild assumptions. 

Recall that, we assume that the optimization problem has domain $\dom=\R^{d}$.
As usual in non-convex analysis, we assume that $f$ is an $L$-smooth
function: $\left\Vert \n(x)-\n(y)\right\Vert \le L\left\Vert x-y\right\Vert $
for all $x,y\in\R^{d}$. Smoothness implies the following quadratic
upperbound that we will utilize: for all $x,y\in\R^{d}$
\begin{align}
f(y)-f(x) & \le\left\langle \nabla f(x),y-x\right\rangle +\frac{L}{2}\left\Vert y-x\right\Vert ^{2}.\label{eq:smoothness}
\end{align}

\subsection{Analysis of Stochastic Gradient Descent \label{sec:sgd}}

\begin{algorithm}
\caption{Stochastic Gradient Descent (SGD)}
\label{alg:sgd}

\textbf{Parameters}: initial point $x_{1}$, step sizes $\left\{ \eta_{t}\right\} $

for $t=1$ to $T$ do

$\quad$$x_{t+1}=x_{t}-\eta_{t}\hn(x_{t})$ 
\end{algorithm}

In this section, we will prove the following convergence guarantee
of Algorithm \ref{alg:sgd}. 
\begin{thm}
\label{thm:sgd-convergence}Assume $f$ is $L$-smooth and satisfies
Assumptions (1'), (2), (3). Let $\Delta_{1}\coloneqq f(x_{1})-f_{*}$.
With probability at least $1-\delta$, the iterate sequence $(x_{t})_{t\ge1}$
output by Algorithm \ref{alg:sgd} satisfies

(1) Setting $\eta_{t}=\min\left\{ \frac{1}{L};\sqrt{\frac{\Delta_{1}}{\sigma^{2}LT}}\right\} $,
\begin{align*}
\frac{1}{T}\sum_{t=1}^{T}\left\Vert \nabla f(x_{t})\right\Vert ^{2} & \le\frac{2\Delta_{1}L}{T}+5\sigma\sqrt{\frac{\Delta_{1}L}{T}}+\frac{12\sigma^{2}\log\frac{1}{\delta}}{T};
\end{align*}

(2) Setting $\eta_{t}=\frac{1}{L\sqrt{t}}$,
\begin{align*}
\frac{1}{T}\sum_{t=1}^{T}\left\Vert \nabla f(x_{t})\right\Vert ^{2} & \le\frac{2\Delta_{1}L+3\sigma^{2}\left(1+\log T\right)+12\sigma^{2}\log\frac{1}{\delta}}{\sqrt{T}}.
\end{align*}

\end{thm}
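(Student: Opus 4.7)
The plan is to transplant the white-box MGF template from Section \ref{sec:convex} into the non-convex setting, with the $L$-smoothness descent inequality playing the role of Lemma \ref{lem:md-basic-analysis} and the function-value gap $f(x_t)-f_*$ playing the role of $\breg(x^*, x_t)$. Starting from (\ref{eq:smoothness}) applied to the SGD update $x_{t+1} = x_t - \eta_t \hn(x_t)$, and writing $\xi_t = \hn(x_t) - \n(x_t)$, a short computation using $\eta_t \le 1/L$ (true for both step-size schedules in the theorem) yields an analog of Lemma \ref{lem:md-basic-analysis}:
\[
A_t \;:=\; \tfrac{\eta_t}{2}\norm{\n(x_t)}^2 + f(x_{t+1}) - f(x_t) \;\le\; -\eta_t(1-L\eta_t)\langle \n(x_t), \xi_t\rangle + \tfrac{L\eta_t^2}{2}\norm{\xi_t}^2.
\]

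Next, in analogy with the convex MGF argument, I would define $Z_t = w_t A_t - v_t \norm{\n(x_t)}^2$ with $v_t := 3\sigma^2 w_t^2 \eta_t^2 (1-L\eta_t)^2$ and $S_t = \sum_{i=t}^{T} Z_i$, and prove by backward induction on $t$ that
\[
\E[\exp(S_t) \mid \F_t] \;\le\; \exp\!\left(\tfrac{3\sigma^2 L}{2} \sum_{i=t}^{T} w_i \eta_i^2\right).
\]
The inductive step mirrors the proof of Theorem \ref{thm:md-concentration-subgaussian}: after conditioning on $\F_{t+1}$ and using $A_t \le \text{(stochastic RHS)}$, apply Lemma \ref{lem:helper-taylor-vector} with $a = -w_t \eta_t(1-L\eta_t) \n(x_t)$ (which is $\F_t$-measurable) and $b^2 = w_t L \eta_t^2 / 2$; the validity condition $b \le 1/(2\sigma)$ becomes $w_t L \eta_t^2 \le 1/(2\sigma^2)$. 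The resulting contribution to the MGF is $\exp\!\big(3\sigma^2 w_t^2\eta_t^2(1-L\eta_t)^2\norm{\n(x_t)}^2 + \tfrac{3\sigma^2 L}{2} w_t\eta_t^2\big)$, and the choice of $v_t$ is precisely what cancels the random $\norm{\n(x_t)}^2$ factor in $Z_t$.

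Applying Markov's inequality to $\exp(S_1)$ and expanding $\sum_t Z_t$ gives, with probability at least $1-\delta$,
\[
\sum_{t=1}^{T}\left(\tfrac{w_t \eta_t}{2} - v_t\right)\norm{\n(x_t)}^2 + \sum_{t=1}^{T} w_t\bigl(f(x_{t+1}) - f(x_t)\bigr) \;\le\; \tfrac{3\sigma^2 L}{2} \sum_{t=1}^T w_t \eta_t^2 + \log\tfrac{1}{\delta}.
\]
I would take $w_t = w$ constant so that the second sum telescopes to $w(f(x_{T+1}) - f(x_1)) \ge -w\Delta_1$; and impose $w \eta_t \le \tfrac{1}{12\sigma^2}$ (which dominates the MGF-validity condition when $\eta_t \le 1/L$), ensuring $v_t \le w\eta_t/4$ so that the leading coefficient stays $\ge w\eta_t/4$. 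Rearranging then gives
\[
\tfrac{1}{T}\sum_{t=1}^{T}\eta_t \norm{\n(x_t)}^2 \;\le\; \tfrac{4\Delta_1}{T} + \tfrac{6\sigma^2 L}{T}\sum_{t=1}^T \eta_t^2 + \tfrac{4\log(1/\delta)}{w T}.
\]
For part (1), I pick $\eta_t \equiv \eta$ and $w = 1/(12\sigma^2 \eta)$; substituting $\eta = \min\{1/L,\sqrt{\Delta_1/(\sigma^2 L T)}\}$ balances the first two terms into $O(\sigma\sqrt{\Delta_1 L/T})$ and yields the $2\Delta_1 L/T + 5\sigma\sqrt{\Delta_1 L/T} + 12\sigma^2\log(1/\delta)/T$ bound. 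For part (2), I fix $w = L/(12\sigma^2)$ (so that $w\eta_1 = 1/(12\sigma^2)$), use $\eta_t = 1/(L\sqrt{t}) \ge 1/(L\sqrt{T})$ to lower-bound $\sum \eta_t \norm{\n(x_t)}^2 \ge (L\sqrt{T})^{-1}\sum \norm{\n(x_t)}^2$, and $\sum_{t=1}^T 1/t \le 1+\log T$ to bound $\sum \eta_t^2$.

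The main obstacle, as in the convex case, is the choice of $v_t$ that simultaneously (i) absorbs the $\norm{\n(x_t)}^2$ term that Lemma \ref{lem:helper-taylor-vector} produces from the bilinear-plus-quadratic noise form $-\eta_t(1-L\eta_t)\langle \n(x_t),\xi_t\rangle + \tfrac{L\eta_t^2}{2}\norm{\xi_t}^2$, and (ii) still leaves a positive residual coefficient on $\norm{\n(x_t)}^2$ after subtraction from $\tfrac{w_t\eta_t}{2}$. Once the form $v_t = 3\sigma^2 w_t^2 \eta_t^2 (1-L\eta_t)^2$ is identified, both the MGF-admissibility constraint $w_t L\eta_t^2 \le 1/(2\sigma^2)$ and the residual-positivity constraint $w_t\eta_t \le 1/(12\sigma^2)$ are automatically compatible with $\eta_t \le 1/L$, so the analysis goes through uniformly for both the constant and time-varying schedules.
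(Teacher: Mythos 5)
Your proposal follows the paper's route essentially step for step: same starting point from smoothness (Lemma \ref{lem:sgd-basic-inequality}), same definition $v_t = 3\sigma^2 w_t^2 \eta_t^2 (1-L\eta_t)^2$, the same application of Lemma \ref{lem:helper-taylor-vector} inside a backward MGF induction, the same Markov step, and the same choice of constant weights $w_t \equiv w$ so that the $\Delta_t$ differences telescope. The structure is correct and the argument would go through.

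However, there is one lossy simplification that prevents you from reaching the theorem's stated constants. You coarsen the gradient coefficient to $\tfrac{\eta_t}{2}\|\n(x_t)\|^2$ at the outset (i.e., you drop $\eta_t(1-\tfrac{L\eta_t}{2})$ to $\tfrac{\eta_t}{2}$ before introducing $v_t$), and then require the residual $\tfrac{w\eta_t}{2} - v_t \ge \tfrac{w\eta_t}{4}$, which forces $w\eta_t \le \tfrac{1}{12\sigma^2}$. With $w=\tfrac{1}{12\sigma^2\eta}$ and $\eta_t\equiv\eta$, your own displayed inequality gives
\begin{align*}
\frac{1}{T}\sum_{t=1}^{T}\|\n(x_t)\|^2 \le \frac{4\Delta_1}{\eta T}+6\sigma^2 L\eta+\frac{48\sigma^2\log(1/\delta)}{T},
\end{align*}
which, after substituting $\eta=\min\{\tfrac{1}{L},\sqrt{\Delta_1/(\sigma^2 LT)}\}$, yields roughly $\tfrac{4\Delta_1 L}{T}+10\sigma\sqrt{\Delta_1 L/T}+\tfrac{48\sigma^2\log(1/\delta)}{T}$, not the claimed $\tfrac{2\Delta_1 L}{T}+5\sigma\sqrt{\Delta_1 L/T}+\tfrac{12\sigma^2\log(1/\delta)}{T}$ (and likewise for part (2)). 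The issue is that with $\tfrac{\eta_t}{2}$ in place of $\eta_t(1-\tfrac{L\eta_t}{2})$, the paper's choice $w=\tfrac{1}{6\sigma^2\eta}$ (equivalently $3\sigma^2 w\eta_t = \tfrac{1}{2}$) makes your residual coefficient exactly zero when $L\eta_t\to 0$. The fix is simply to not coarsen: keep $\eta_t(1-\tfrac{L\eta_t}{2})$, so that the residual coefficient becomes $w\eta_t\bigl[(1-\tfrac{L\eta_t}{2}) - 3\sigma^2 w\eta_t(1-L\eta_t)^2\bigr]$. With $3\sigma^2 w\eta_t=\tfrac{1}{2}$ this equals $w\eta_t\bigl[\tfrac{1}{2}+\tfrac{L\eta_t}{2}(1-L\eta_t)\bigr]\ge \tfrac{w\eta_t}{2}$ whenever $0\le L\eta_t\le 1$, which supports the paper's larger $w$ and delivers the stated constants. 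The rest of your argument is fine as written.
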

\textbf{Comparison with prior works:} When the time horizon $T$ is
known to the algorithm, by choosing the step size $\eta$ in part
$(1)$ of Theorem \ref{thm:sgd-convergence}, the bound is adaptive
to noise, i.e, when $\sigma=0$ we recover $O(\frac{1}{T})$ convergence
rate of the (deterministic) gradient descent algorithm. Notice that
the bound in this case does not have a $\log T$ term incurred. When
$T$ is unknown, the extra $\log T$ appears as a result of setting
a time-varying step size $\eta_{t}=\frac{1}{L\sqrt{t}}$. This $\log T$
appears as an additive term to the $\log\frac{1}{\delta}$ term, as
opposed to being multiplicative, i.e, $\log T\log\frac{1}{\delta}$
as in previous works \citet{li2020high,madden2020high,li2022high}.

\textbf{Analysis: }To proceed, we define for $t\geq1$
\begin{align*}
\Delta_{t} & :=f(x_{t})-f_{*};\quad\xi_{t}:=\hn(x_{t})-\nabla f(x_{t}).
\end{align*}
We let $\F_{t}:=\sigma\left(\xi_{1},\dots,\xi_{t-1}\right)$ denote
the natural filtration. Note that $x_{t}$ is $\F_{t}$-measurable.
The following lemma serves as a fundamental step of our analysis;
the proof of which can be found in the appendix.
\begin{lem}
\label{lem:sgd-basic-inequality}For $t\ge1$, we have
\begin{align}
C_{t} & \coloneqq\eta_{t}\left(1-\frac{L\eta_{t}}{2}\right)\left\Vert \nabla f(x_{t})\right\Vert ^{2}+\Delta_{t+1}-\Delta_{t}\nonumber \\
 & \le\left(L\eta_{t}^{2}-\eta_{t}\right)\left\langle \nabla f(x_{t}),\xi_{t}\right\rangle +\frac{L\eta_{t}^{2}}{2}\left\Vert \xi_{t}\right\Vert ^{2}.\label{eq:sgd-basic-inequality}
\end{align}
\end{lem}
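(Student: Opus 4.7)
The plan is to derive the stated inequality as a direct consequence of the $L$-smoothness quadratic upper bound (\ref{eq:smoothness}) applied to the consecutive iterates $x_t$ and $x_{t+1}$, together with the SGD update rule $x_{t+1} - x_t = -\eta_t \hn(x_t)$ and the noise decomposition $\hn(x_t) = \nabla f(x_t) + \xi_t$. Nothing probabilistic is needed at this stage; we are only establishing a deterministic per-iterate descent-style inequality, and concentration will be layered on top later in the same way as in the convex sections.

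First, I would instantiate (\ref{eq:smoothness}) with $y = x_{t+1}$ and $x = x_t$ to obtain
\[
\Delta_{t+1} - \Delta_t = f(x_{t+1}) - f(x_t) \le \langle \nabla f(x_t),\, x_{t+1}-x_t\rangle + \frac{L}{2}\|x_{t+1}-x_t\|^2.
\]
Plugging in the update rule gives the right-hand side $-\eta_t \langle \nabla f(x_t), \hn(x_t)\rangle + \tfrac{L\eta_t^2}{2}\|\hn(x_t)\|^2$. Then I would substitute $\hn(x_t) = \nabla f(x_t) + \xi_t$ in both terms and expand the square, producing
\[
-\eta_t\|\nabla f(x_t)\|^2 - \eta_t\langle \nabla f(x_t),\xi_t\rangle + \frac{L\eta_t^2}{2}\Big(\|\nabla f(x_t)\|^2 + 2\langle \nabla f(x_t),\xi_t\rangle + \|\xi_t\|^2\Big).
\]

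Finally, I would collect like terms: the $\|\nabla f(x_t)\|^2$ terms combine to $-\eta_t(1 - \tfrac{L\eta_t}{2})\|\nabla f(x_t)\|^2$, the inner-product terms combine to $(L\eta_t^2 - \eta_t)\langle \nabla f(x_t),\xi_t\rangle$, and the $\|\xi_t\|^2$ term remains $\tfrac{L\eta_t^2}{2}\|\xi_t\|^2$. Moving the $\|\nabla f(x_t)\|^2$ contribution to the left-hand side to join $\Delta_{t+1}-\Delta_t$ yields exactly the definition of $C_t$ on the left and the claimed bound on the right, which completes the proof.

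There is no real obstacle here; the lemma is purely a bookkeeping identity derived from smoothness and the update rule. The only thing to be careful about is keeping the sign of the coefficient $(L\eta_t^2 - \eta_t)$ on the inner-product term intact rather than folding it into an absolute value, since subsequent concentration arguments (in the spirit of Theorem~\ref{thm:md-concentration-subgaussian}) will exploit the exact linear-in-$\xi_t$ structure of this term when bounding a moment generating function via Lemma~\ref{lem:helper-taylor-vector}.
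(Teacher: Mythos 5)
Your proof is correct and takes exactly the same route as the paper: apply the smoothness quadratic upper bound to $x_t, x_{t+1}$, substitute the SGD update rule, decompose $\hn(x_t) = \nabla f(x_t) + \xi_t$, expand, and collect terms. Your closing remark about preserving the signed coefficient on the inner-product term is also consistent with how the paper subsequently uses the lemma.
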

Now we can follow the similar concentration argument from the convex
setting. The difference now is the error term in the RHS of (\ref{eq:sgd-basic-inequality})
can be transferred into the gradient term $\left\Vert \nabla f(x_{t})\right\Vert ^{2}$
instead of a function value gap term. This actually makes things easier
since this term can be readily absorbed by the gradient term in $C_{t}$,
and we do not have to carefully impose an additional condition on
$w_{t}$ to make a telescoping sum. For $w_{t}\ge0$, we define
\begin{align*}
Z_{t} & =w_{t}C_{t}-v_{t}\left\Vert \nabla f(x_{t})\right\Vert ^{2}, & \forall\,1\le t\le T\\
\mbox{where }v_{t} & =3\sigma^{2}w_{t}^{2}\eta_{t}^{2}(\eta_{t}L-1)^{2}\\
\mbox{and }S_{t} & =\sum_{i=t}^{T}Z_{i}. & \forall\,1\le t\le T+1
\end{align*}
Using the same technique as in the previous Section, we can prove
the following key inequality.
\begin{thm}
\label{thm:sgd-moment-inequality}Suppose for all $1\le t\le T$,
$\eta_{t},w_{t}$ satisfying $0\leq w_{t}\eta_{t}^{2}L\leq\frac{1}{2\sigma^{2}}$
then
\begin{equation}
\E\left[\exp\left(S_{t}\right)\mid\F_{t}\right]\leq\exp\left(3\sigma^{2}\sum_{s=t}^{T}\frac{w_{t}\eta_{t}^{2}L}{2}\right).\label{eq:sgd-key-inequality}
\end{equation}
\end{thm}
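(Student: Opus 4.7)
The plan is to mirror exactly the induction used for Theorem \ref{thm:md-concentration-subgaussian}, only with the Bregman divergence role now played by $\|\nabla f(x_t)\|^2$. I induct downward on $t$, starting from the trivial base case $t=T+1$ where $S_{T+1}=0$ is an empty sum, so the claimed bound reduces to $1\le 1$.

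For the inductive step, I write $S_t=Z_t+S_{t+1}$ and tower the expectation:
\begin{align*}
\E\left[\exp(S_t)\mid\F_t\right]
=\E\Big[\E\left[\exp(Z_t)\exp(S_{t+1})\mid\F_{t+1}\right]\,\Big|\,\F_t\Big].
\end{align*}
Since $Z_t$ is $\F_{t+1}$-measurable, it pulls outside, and the inductive hypothesis bounds $\E[\exp(S_{t+1})\mid\F_{t+1}]$ by $\exp\bigl(3\sigma^2\sum_{s=t+1}^T \tfrac{w_s\eta_s^2 L}{2}\bigr)$. So the problem reduces to bounding $\E[\exp(Z_t)\mid\F_t]$ by $\exp\bigl(\tfrac{3\sigma^2 w_t\eta_t^2 L}{2}\bigr)$.

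To get this single-step bound, I substitute Lemma~\ref{lem:sgd-basic-inequality} into the definition of $Z_t$. Since $x_t$ is $\F_t$-measurable, the deterministic factor $\exp(-v_t\|\nabla f(x_t)\|^2)$ comes out of the conditional expectation, and I am left with
\begin{align*}
\E\left[\exp\bigl(w_t(L\eta_t^2-\eta_t)\langle\nabla f(x_t),\xi_t\rangle+\tfrac{w_t L\eta_t^2}{2}\|\xi_t\|^2\bigr)\,\Big|\,\F_t\right].
\end{align*}
This has exactly the form $\E[\exp(\langle a,\xi_t\rangle+b^2\|\xi_t\|^2)]$ required by Lemma~\ref{lem:helper-taylor-vector}, with $a=w_t(L\eta_t^2-\eta_t)\nabla f(x_t)$ and $b^2=\tfrac{w_t L\eta_t^2}{2}$. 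The hypothesis $w_t\eta_t^2 L\le\tfrac{1}{2\sigma^2}$ gives $b\le\tfrac{1}{2\sigma}$, so the lemma applies and produces the factor $\exp\bigl(3\sigma^2(\|a\|^2+b^2)\bigr)=\exp\bigl(3\sigma^2 w_t^2\eta_t^2(L\eta_t-1)^2\|\nabla f(x_t)\|^2+\tfrac{3\sigma^2 w_t L\eta_t^2}{2}\bigr)$.

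The key design choice, and the only thing one really has to verify, is that $v_t=3\sigma^2 w_t^2\eta_t^2(\eta_t L-1)^2$ was picked precisely so that the gradient-squared coefficient $3\sigma^2 w_t^2\eta_t^2(L\eta_t-1)^2$ is cancelled exactly by $-v_t$. After this cancellation only $\exp\bigl(\tfrac{3\sigma^2 w_t L\eta_t^2}{2}\bigr)$ survives, which combines with the inductive factor to give the claimed bound. Compared to the mirror-descent argument in Theorem~\ref{thm:md-concentration-subgaussian}, this step is actually easier: because the leading positive term in $C_t$ is $\|\nabla f(x_t)\|^2$ itself, I do not need any telescoping-type relation between $w_t$ and $w_{t-1}$; the stochastic error is simply absorbed pointwise at each time step. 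The only obstacle worth mentioning is bookkeeping the sub-Gaussianity condition $b\le 1/(2\sigma)$ so that it matches the stated hypothesis on $w_t\eta_t^2 L$.
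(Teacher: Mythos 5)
Your proof is correct and follows essentially the same route as the paper: downward induction from the trivial base case $t=T+1$, towering the conditional expectation, pulling out the $\F_t$-measurable factor $\exp(-v_t\|\nabla f(x_t)\|^2)$, applying Lemma \ref{lem:helper-taylor-vector} with the same $a$ and $b^2$, and verifying that $v_t$ is chosen to exactly cancel the $\|\nabla f(x_t)\|^2$ term. The bookkeeping of the sub-Gaussian condition $b\le 1/(2\sigma)$ from $w_t\eta_t^2 L\le 1/(2\sigma^2)$ also matches.
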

Markov's inequality gives us the following guarantee.
\begin{cor}
\textup{\label{cor:sgd-general-guarantee}For all $1\le t\le T$,
if $\eta_{t}L\le1$ and }$0\leq w_{t}\eta_{t}^{2}L\leq\frac{1}{2\sigma^{2}}$\textup{
}then\textup{
\begin{align}
 & \sum_{t=1}^{T}\left[w_{t}\eta_{t}\left(1-\frac{\eta_{t}L}{2}\right)-v_{t}\right]\left\Vert \nabla f(x_{t})\right\Vert ^{2}+w_{T}\Delta_{T+1}\nonumber \\
\leq & w_{1}\Delta_{1}+\left(\sum_{t=2}^{T}(w_{t}-w_{t-1})\Delta_{t}+3\sigma^{2}\sum_{t=1}^{T}\frac{w_{t}\eta_{t}^{2}L}{2}\right)+\log\frac{1}{\delta}.\label{eq:sgd-general-bound}
\end{align}
}
\end{cor}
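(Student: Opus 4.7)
The plan is to deduce Corollary \ref{cor:sgd-general-guarantee} directly from Theorem \ref{thm:sgd-moment-inequality} by (i) turning the conditional moment-generating-function bound into a high-probability tail bound via Markov's inequality, and then (ii) rewriting the resulting sum $S_1$ into the form displayed in the corollary through summation by parts on the $\Delta_t$ terms. The hypotheses $\eta_t L\le 1$ and $0\le w_t\eta_t^2L\le\frac{1}{2\sigma^2}$ are exactly what is required to invoke Theorem \ref{thm:sgd-moment-inequality} (the second one) and to keep the gradient coefficient $w_t\eta_t(1-\eta_tL/2)-v_t$ well-posed in the final statement (the first one).

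First, I would specialize Theorem \ref{thm:sgd-moment-inequality} to $t=1$. Since $\F_1$ is the trivial $\sigma$-algebra, this yields the unconditional bound
\[
\E\!\left[\exp(S_1)\right]\le \exp\!\left(3\sigma^2\sum_{t=1}^{T}\frac{w_t\eta_t^2L}{2}\right).
\]
Next, Markov's inequality applied to the nonnegative random variable $\exp(S_1)$ gives, for any $a\in\R$,
\[
\Pr\!\left[S_1\ge a\right]\le \E\!\left[\exp(S_1)\right]\exp(-a).
\]
Choosing $a=\log(1/\delta)+3\sigma^2\sum_{t=1}^{T}\frac{w_t\eta_t^2L}{2}$ makes the right-hand side equal to $\delta$, so with probability at least $1-\delta$,
\[
S_1\ \le\ \log\frac{1}{\delta}+3\sigma^2\sum_{t=1}^{T}\frac{w_t\eta_t^2L}{2}.
\]

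Finally, I would unpack $S_1=\sum_{t=1}^{T}Z_t=\sum_{t=1}^{T}\bigl[w_tC_t-v_t\|\nabla f(x_t)\|^2\bigr]$ using the definition of $C_t$ from Lemma \ref{lem:sgd-basic-inequality}, isolating the gradient coefficient $w_t\eta_t(1-\eta_tL/2)-v_t$, and then handling the telescoping-looking piece $\sum_{t=1}^{T}w_t(\Delta_{t+1}-\Delta_t)$ via Abel summation:
\[
\sum_{t=1}^{T}w_t(\Delta_{t+1}-\Delta_t)=w_T\Delta_{T+1}-w_1\Delta_1-\sum_{t=2}^{T}(w_t-w_{t-1})\Delta_t.
\]
Substituting this identity into the high-probability inequality for $S_1$ and rearranging so that the $w_T\Delta_{T+1}$ term stays on the left while $w_1\Delta_1$ and the $(w_t-w_{t-1})\Delta_t$ terms move to the right yields exactly \eqref{eq:sgd-general-bound}.

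There is no real obstacle here: Theorem \ref{thm:sgd-moment-inequality} does all of the probabilistic heavy lifting, and the remaining work is the standard Chernoff-style Markov argument combined with a summation-by-parts bookkeeping step. The only point that deserves care is to ensure the weight conditions are consistent with the hypotheses of the theorem being invoked, and that after Abel summation the coefficient $w_t\eta_t(1-\eta_tL/2)-v_t$ on $\|\nabla f(x_t)\|^2$ appears in the form stated (which is immediate once $v_t=3\sigma^2w_t^2\eta_t^2(\eta_tL-1)^2$ is plugged in without simplification).
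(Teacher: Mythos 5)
Your proposal is correct and follows essentially the same route as the paper: specialize Theorem~\ref{thm:sgd-moment-inequality} at $t=1$, apply Markov's inequality to $\exp(S_1)$ with the threshold shifted by the drift term $3\sigma^2\sum_t w_t\eta_t^2L/2$, and then unfold $S_1$ and perform Abel summation on $\sum_t w_t(\Delta_{t+1}-\Delta_t)$ to isolate $w_T\Delta_{T+1}$ on the left and $w_1\Delta_1$ plus $\sum_{t=2}^T(w_t-w_{t-1})\Delta_t$ on the right.
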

Equipped with Lemmas \ref{lem:sgd-basic-inequality} and \ref{thm:sgd-moment-inequality},
we are ready to prove Theorem \ref{thm:sgd-convergence} by specifying
the choice of $w_{t}$ that satisfy the condition of Lemma \ref{thm:sgd-moment-inequality}.
In the first case, we choose $\eta_{t}=\eta$, $w_{t}=w=\frac{1}{6\sigma^{2}\eta}$
where $\eta=\min\{\frac{1}{L};\sqrt{\frac{\Delta_{1}}{\sigma^{2}LT}}\}$.
In the second case, we set $\eta_{t}=\frac{\eta}{\sqrt{t}}$, $w_{t}=w=\frac{1}{6\sigma^{2}\eta}$
where $\eta=\frac{1}{L}$. We show the full proof in the appendix.

\subsection{High probability convergence of AdaGrad-Norm and AdaGrad \label{subsec:main-body-adagradnorm-and-adagrad}}

In this section, we present our main results for the high probability
convergence for non-convex objectives of AdaGrad-Norm \citet{ward2019adagrad}
(Algorithm \ref{alg:adagrad-norm}) as well as the standard AdaGrad
\citet{duchi2011adaptive} algorithm (Algorithm \ref{alg:adagrad-coord})
that updates each coordinate separately. Here, $d\in\mathbb{N}$ denotes
the dimension of the problem, $v_{i}$ denotes the $i$-th coordinate
of a vector $v$, and $\widehat{\nabla}_{i}f(x_{t})$ denotes the
$i$-th coordinate of the stochastic gradient at time $t$. 

\begin{figure*}[t]

\begin{minipage}[t]{0.475\columnwidth}%
\begin{algorithm}[H]
\caption{AdaGrad-Norm}
\label{alg:adagrad-norm}

\textbf{Parameters}: $x_{1},\eta>0$.

for $t=1$ to $T$

$\quad$$b_{t}=\sqrt{b_{0}^{2}+\sum_{i=1}^{t}\|\widehat{\nabla}f(x_{i})\|^{2}}$

$\quad$$x_{t+1}=x_{t}-\frac{\eta}{b_{t}}\widehat{\nabla}f(x_{t})$
\end{algorithm}
\end{minipage}\hfill{}%
\begin{minipage}[t]{0.475\columnwidth}%
\begin{algorithm}[H]
\caption{AdaGrad}

\label{alg:adagrad-coord}

\textbf{Parameters}: $x_{1},b_{0}\in\R^{d}$ and $\eta\in\R$.

for $t=1$ to $T$ do

$\quad$$b_{t,i}=\sqrt{b_{0,i}^{2}+\sum_{j=1}^{t}\widehat{\nabla}_{i}f(x_{j})^{2}},$
for $i\in[d]$.

$\quad$$x_{t+1,i}=x_{t,i}-\frac{\eta}{b_{t,i}}\widehat{\nabla}_{i}f(x_{t})$,
for $i\in[d]$.
\end{algorithm}
\end{minipage}

\end{figure*}

\paragraph{Comparison with prior works.}

\citet{ward2019adagrad,faw2022power} show the convergence of AdaGrad-Norm
with polynomial dependency on $\mathrm{poly}\left(\frac{1}{\delta}\right)$
where $1-\delta$ is the success probability. The latter relaxes several
assumptions made in the former, including the boundedness of the gradients
and noise variance. When assuming a sub-Gaussian noise, \citet{kavis2021high}
show a convergence in high probability, but still assume that the
gradients are bounded which circumvents many of the difficulties due
to the error term. We remove this assumption and establish the convergence
of AdaGrad-Norm in the theorem \ref{thm:adagrad-nonconvex}. Unlike
existing work, the technique employed to prove this theorem readily
extends to the standard version of AdaGrad (Algorithm \ref{alg:adagrad-coord})
with per-coordinate update.

For simplicity, we let $\Delta_{t}:=f(x_{t})-f_{*}$, where $f_{*}$
is any valid lower bound for $f$. 
\begin{thm}
\label{thm:adagrad-nonconvex}If $f$ is $L$-smooth and satisfies
assumptions (1'), (2) and (3). With probability at least $1-\delta$,
the iterate sequence $(x_{t})_{t\geq1}$ output by AdaGrad-Norm (Algorithm
\ref{alg:adagrad-norm}) satisfies
\begin{align*}
\frac{1}{T}\sum_{t=1}^{T}\norm{\n(x_{t})}^{2} & \leq g(\delta)\cdot O\left(\frac{\sigma}{\sqrt{T}}+\frac{r(\delta)}{T}\right).
\end{align*}
where 
\begin{align*}
g(\delta) & :=O\left(\Delta_{1}+c(\delta)\sqrt{\log\frac{T}{\delta}}+L\log\left(\sigma\sqrt{T}+r(\delta)\right)\right)\\
c(\delta) & :=O\left(\sigma^{3}\log\left(\frac{1}{\delta}\right)+\sigma\log\left(1+\sigma^{2}T+\sigma^{2}\log\frac{1}{\delta}\right)+\sigma\log\left(\sigma\sqrt{T}+r(\delta)\right)\right),\text{ and }\\
r(\delta) & :=O(\Delta_{1}+\sigma^{2}\log\frac{1}{\delta}+L\log L)
\end{align*}
are polylog terms. 
\end{thm}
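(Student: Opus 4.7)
I would carry the white-box MGF strategy from Section~\ref{sec:sgd} into the AdaGrad-Norm setting, where the new difficulty is that $b_t$ is correlated with the noise $\xi_t$ through $\|\hn(x_t)\|^2$. Starting from $L$-smoothness applied to $x_{t+1}=x_t - (\eta/b_t)\hn(x_t)$ and expanding $\hn(x_t)=\n(x_t)+\xi_t$, one obtains
\[
\Delta_{t+1}-\Delta_t \;\le\; -\frac{\eta}{b_t}\|\n(x_t)\|^2 \;-\; \frac{\eta}{b_t}\langle \n(x_t),\xi_t\rangle \;+\; \frac{L\eta^2}{2b_t^2}\|\hn(x_t)\|^2.
\]
Summed over $t$, the last term is controlled by the classical AdaGrad telescoping $\sum_t \|\hn(x_t)\|^2/b_t^2 \le 2\log(b_T^2/b_0^2)$, so the real work is handling the martingale-like cross term while simultaneously controlling $b_T$ on a high-probability event.

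\noindent\textbf{Decoupling the step size.} To reuse Theorem~\ref{thm:sgd-moment-inequality}, I would introduce an $\F_t$-measurable surrogate such as $a_t := \sqrt{b_{t-1}^2+\|\n(x_t)\|^2+\sigma^2}$ and split
\[
\frac{1}{b_t}\langle \n(x_t),\xi_t\rangle \;=\; \frac{1}{a_t}\langle \n(x_t),\xi_t\rangle \;+\; \left(\frac{1}{b_t}-\frac{1}{a_t}\right)\langle \n(x_t),\xi_t\rangle.
\]
The first piece is a genuine sub-Gaussian martingale difference given $\F_t$ and is handled by a direct analogue of Section~\ref{sec:sgd}: define $Z_t := w_t\,(\text{descent quantity}) - v_t \|\n(x_t)\|^2/a_t^2$ with the penalty $v_t$ chosen via Lemma~\ref{lem:helper-taylor-vector} so that the sub-Gaussian contribution is absorbed, and induct on $S_t=\sum_{i\ge t} Z_i$ to bound $\E[\exp(S_1)]$ as in the proof of Theorem~\ref{thm:sgd-moment-inequality}. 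The second piece is algebraic: using $|1/b_t - 1/a_t|\le |b_t^2-a_t^2|/(b_t a_t(b_t+a_t))$ and $|b_t^2-a_t^2| = |\|\hn(x_t)\|^2 - \|\n(x_t)\|^2 - \sigma^2|$, the residual is absorbed partly into the signal $\|\n(x_t)\|^2/b_t$ and partly into the telescoping AdaGrad sum, modulo a sub-Gaussian concentration for $\sum_t(\|\xi_t\|^2-\sigma^2)$ which follows directly from Lemma~\ref{lem:helper-taylor-vector}.

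\noindent\textbf{From $\sum \|\n\|^2/b_t$ to the stated rate.} Markov's inequality on the MGF bound yields, with high probability,
\[
\sum_{t=1}^T \frac{\eta}{b_t}\|\n(x_t)\|^2 \;\le\; \Delta_1 \;+\; O\!\left(L\eta^2\log(b_T^2/b_0^2) + \sigma^2\log(1/\delta)\right).
\]
To convert this into a bound on $\sum_t \|\n(x_t)\|^2$, I would use $\sum_t\|\n(x_t)\|^2 \le b_T\sum_t\|\n(x_t)\|^2/b_t$ (since $b_t \le b_T$) after controlling $b_T$ via $b_T^2 \le b_0^2 + 2\sum_t\|\n(x_t)\|^2 + 2\sum_t\|\xi_t\|^2$ and a concentration of $\sum_t\|\xi_t\|^2$ around $\sigma^2 T$; solving the resulting implicit inequality gives $b_T = O(\sigma\sqrt T + r(\delta))$. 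Combining then produces the advertised $g(\delta)\cdot(\sigma/\sqrt T + r(\delta)/T)$ rate, after a final union bound across the MGF tail, the $\sum\|\xi_t\|^2$ concentration, and the $b_T$ control event.

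\noindent\textbf{Main obstacle.} The hardest step is the self-referential bootstrap: $\sum_t\|\n(x_t)\|^2$ appears on both sides through $b_T$, forcing one to solve an implicit inequality of the shape $X \le A + B\sqrt X + C\log(1+X)$, which is precisely the source of the logarithmic factors $\log(\sigma\sqrt T + r(\delta))$ appearing inside $g(\delta)$ and $c(\delta)$. A secondary subtlety is that the data-dependent weights $w_t\propto a_t^2$ required for the MGF induction are $\F_t$-measurable so the argument goes through, but the bookkeeping is heavier than in the SGD proof of Theorem~\ref{thm:sgd-moment-inequality}; one also has to verify that the chosen surrogate $a_t$ gives a clean $\log$-type telescoping for the correction piece rather than polynomial blow-up, possibly by iterating the bootstrap once or tuning the definition of $a_t$.
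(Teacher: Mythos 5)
Your plan has the right strategic skeleton --- proxy step size $a_t$, a white-box MGF concentration for the cross term $\langle\nabla f(x_t),\xi_t\rangle/a_t$, and logarithmic control of the AdaGrad sums --- and it would likely close after careful bookkeeping. But it departs from the paper's proof in two places, and in both cases the paper's route is simpler.

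First, the paper never solves the self-referential bootstrap you identify as the hardest step. Its bound on $b_T$ (Lemma~\ref{lem:bound-on-bT}) is obtained by a \emph{separate} smoothness argument that never references $\sum_t\|\nabla f(x_t)\|^2$: write $-\frac{\eta}{b_t}\langle\nabla f(x_t),\hn f(x_t)\rangle = -\frac{\eta}{b_t}\|\hn f(x_t)\|^2 + \frac{\eta}{b_t}\langle\hn f(x_t),\xi_t\rangle$, split $-\frac{\eta}{b_t}\|\hn f(x_t)\|^2$ into two halves so one half absorbs the Young-inequality cross term, lower bound the other half by $\sum_t\|\hn f(x_t)\|^2/b_t\ge b_T-b_0$, and control $\sum_t\|\xi_t\|^2/b_t$ by a self-contained concentration argument (Lemma~\ref{lem:bound-on-eps-sq-by-bt}). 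The result is the explicit bound $b_T=O(\Delta_1+\sigma\sqrt{T}+\sigma^2\log(1/\delta)+L\log L)$, which then simply multiplies the bound on $\sum_t\|\nabla f(x_t)\|^2/b_t$. Your inequality $b_T^2\le b_0^2+2\sum_t\|\nabla f(x_t)\|^2+2\sum_t\|\xi_t\|^2$ combined with $\sum_t\|\nabla f(x_t)\|^2\le b_T\cdot(\cdots)$ can also be made to close, but it requires solving an implicit inequality on a joint high-probability event, which the paper avoids entirely.

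Second, the MGF argument in the paper does not use data-dependent weights or the backward induction of Theorem~\ref{thm:sgd-moment-inequality}. Because $a_t$ is $\F_t$-measurable, Lemma~\ref{lem:helper-taylor-vector} with $b=0$ gives $\E\left[\exp\left(-w\langle\nabla f(x_t),\xi_t\rangle/a_t - 2\sigma^2 w^2\|\nabla f(x_t)\|^2/a_t^2\right)\mid\F_t\right]\le1$ for a \emph{constant} $w$, so the running product is a supermartingale. Markov then yields $-\sum_t\langle\nabla f(x_t),\xi_t\rangle/a_t\le 2\sigma^2 w\sum_t\|\nabla f(x_t)\|^2/a_t^2+\frac{1}{w}\log\frac{1}{\delta}$, and the penalty $\sum_t\|\nabla f(x_t)\|^2/a_t^2$ is turned into a logarithm via $\|\nabla f\|^2\le 2\|\hn f\|^2+2\|\xi\|^2$. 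Your claim that data-dependent weights $w_t\propto a_t^2$ are "required" for the induction is not right; a constant $w$ works, and keeping the martingale bound modular --- separate from the descent inequality, unlike the SGD proof --- is precisely what makes the AdaGrad analysis manageable. (The extra $\sigma^2$ you place inside $a_t$ is a cosmetic variation; the paper uses $a_t^2=b_{t-1}^2+\|\nabla f(x_t)\|^2$.)
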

The next theorem show the first convergence result in high-probability
for vanilla AdaGrad in the non-convex regime.
\begin{thm}
\label{thm:adagrad-coord-nonconvex}If $f$ is $L$-smooth and satisfies
assumptions (1'), (2) and (3). With probability at least $1-\delta$,
the iterate sequence $(x_{t})_{t\geq1}$ output by AdaGrad (Algorithm
\ref{alg:adagrad-coord}) satisfies
\begin{align*}
\frac{1}{T}\sum_{t=1}^{T}\left\Vert \nabla f(x_{t})\right\Vert _{1}^{2} & \leq g(\delta)\cdot O\left(\frac{\left\Vert \sigma\right\Vert _{1}}{\sqrt{T}}+\frac{r(\delta)}{T}\right),
\end{align*}
where 
\begin{align*}
g(\delta) & :=O\left(\Delta_{1}+\left(d\sigma_{\max}+\sum_{i=1}^{d}c_{i}(\delta)\right)\sqrt{\log\frac{dT}{\delta}}+dL\log\left(\left\Vert \sigma\right\Vert _{1}\sqrt{T}+r(\delta)\right)\right),\\
c_{i}(\delta) & :=O\left(\sigma_{i}^{3}\log\left(\frac{d}{\delta}\right)+\sigma_{i}\log\left(1+\sigma_{i}^{2}T+\sigma_{i}^{2}\log\frac{d}{\delta}\right)+\left\Vert \sigma\right\Vert _{1}\log\left(\left\Vert \sigma\right\Vert _{1}\sqrt{T}+r(\delta)\right)\right),\text{ and }\\
r(\delta) & :=O\left(\Delta_{1}+\left\Vert \sigma^{2}\right\Vert _{1}\log\left(\frac{d}{\delta}\right)+\left\Vert \sigma\right\Vert _{1}\sqrt{\log\frac{d}{\delta}}+Ld\log L\right),
\end{align*}
are polylog terms.
\end{thm}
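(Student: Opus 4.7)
The plan is to reduce Theorem~\ref{thm:adagrad-coord-nonconvex} to a coordinate-wise analogue of Theorem~\ref{thm:adagrad-nonconvex}, viewing per-coordinate AdaGrad as $d$ coupled copies of AdaGrad-Norm that share only the iterate $x_t$ through $\nabla f(x_t)$. This is possible because the update for coordinate $i$, $x_{t+1,i} = x_{t,i} - (\eta/b_{t,i})\widehat{\nabla}_i f(x_t)$, depends stochastically only on the history of the $i$-th coordinate of the gradient estimator. Writing $\xi_{t,i} := \widehat{\nabla}_i f(x_t) - \nabla_i f(x_t)$, Assumption~(3) implies each $|\xi_{t,i}|$ is $\sigma_i$-sub-Gaussian with $\sigma_i \le \sigma$, so Lemma~\ref{lem:helper-taylor-vector} can be applied coordinatewise.

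First, starting from $L$-smoothness (\ref{eq:smoothness}) I would split the descent inequality across coordinates,
\begin{align*}
 f(x_{t+1}) - f(x_t) \le \sum_{i=1}^d \Bigl(-\tfrac{\eta}{b_{t,i}} \nabla_i f(x_t)\,\widehat{\nabla}_i f(x_t) + \tfrac{L\eta^2}{2 b_{t,i}^2}\, \widehat{\nabla}_i f(x_t)^2\Bigr),
\end{align*}
substitute $\widehat{\nabla}_i f(x_t) = \nabla_i f(x_t) + \xi_{t,i}$, and rearrange into $d$ one-dimensional ``basic inequalities'' of the same shape as Lemma~\ref{lem:sgd-basic-inequality}. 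Each such inequality is then fed into the moment-generating function induction of Theorem~\ref{thm:sgd-moment-inequality}/Theorem~\ref{thm:adagrad-nonconvex}, but with the gain term $\nabla_i f(x_t)^2 / b_{t,i}$ replaced by $\nabla_i f(x_t)^2 / \widetilde b_{t,i}$ for an $\F_t$-measurable surrogate $\widetilde b_{t,i}$ (e.g. $\widetilde b_{t,i}^2 = b_{t-1,i}^2 + \nabla_i f(x_t)^2$). The discrepancy $\sum_t (\widehat{\nabla}_i f(x_t))^2 (1/b_{t,i} - 1/\widetilde b_{t,i})$ is absorbed through the standard AdaGrad log-telescope $\sum_t \log(b_{t,i}^2/b_{t-1,i}^2) \le 2\log(b_{T,i}/b_{0,i})$, which eventually contributes the $L\log(\cdots)$ term inside $g(\delta)$.

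Second, a union bound over $i \in [d]$ with failure probability $\delta/d$ yields, on a common $1-\delta$ event, a per-coordinate bound
\begin{align*}
 \sum_{t=1}^T \frac{\eta\, \nabla_i f(x_t)^2}{\widetilde b_{t,i}}
 \le O\!\left(\Delta_1 + c_i(\delta)\sqrt{\log(dT/\delta)} + L\log b_{T,i}\right),
\end{align*}
where the $c_i(\delta)$ terms come from the sub-Gaussian MGF bounds and the additional factor $d$ inside the logarithm originates from the union bound. To convert these into the stated $\|\nabla f(x_t)\|_1^2$ bound I would apply Cauchy--Schwarz coordinate-wise,
\begin{align*}
 \|\nabla f(x_t)\|_1^2 = \Bigl(\sum_{i=1}^d |\nabla_i f(x_t)|\Bigr)^2 \le \Bigl(\sum_{i=1}^d \widetilde b_{t,i}\Bigr)\sum_{i=1}^d \frac{\nabla_i f(x_t)^2}{\widetilde b_{t,i}},
\end{align*}
sum over $t$, and then control $\sum_i \widetilde b_{T,i}$ using the high-probability bound $b_{T,i}^2 \lesssim b_{0,i}^2 + \sum_t \nabla_i f(x_t)^2 + \sigma_i^2 T + \sigma_i^2 \log(d/\delta)$ obtained from a sub-Gaussian tail on $\sum_t \xi_{t,i}^2$. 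Collecting terms turns the $\sigma_i$-contributions into the $\|\sigma\|_1/\sqrt{T}$ leading order, and turns the per-coordinate offset terms into $r(\delta)/T$.

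The main obstacle is closing the circular dependence between $b_{T,i}$ and $\sum_t \nabla_i f(x_t)^2$: the progress bound already contains $\sum_t \nabla_i f(x_t)^2 / \widetilde b_{t,i}$, while $b_{T,i}$ is itself roughly $\sqrt{\sum_t \nabla_i f(x_t)^2 + \sigma_i^2 T}$. In the scalar AdaGrad-Norm proof this is resolved by solving an implicit quadratic inequality in $b_T$; here the same trick must be performed $d$ times and then reassembled consistently across coordinates via the Cauchy--Schwarz step above. Doing this while ending up with the clean dependence $\|\sigma\|_1$ (rather than, say, $\sqrt{d}\|\sigma\|_2$) requires choosing the surrogate $\widetilde b_{t,i}$ so that its cross-coordinate sum aggregates to $\|\sigma\|_1 \sqrt{T} + r(\delta)$ up to polylogs, and is the only place where the per-coordinate analysis goes strictly beyond a coordinate-wise black-box invocation of Theorem~\ref{thm:adagrad-nonconvex}.
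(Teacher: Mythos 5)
Your high-level blueprint lines up with the paper's: the same proxy $a_{t,i}^2 = b_{t-1,i}^2 + \nabla_i f(x_t)^2$ to decouple the martingale, a per-coordinate sub-Gaussian MGF argument with a union bound (contributing the $d$'s inside the logs), a Cauchy--Schwarz step to convert $\sum_i \nabla_i f(x_t)^2/(\text{denominator})$ into $\|\nabla f(x_t)\|_1^2$, and a separate high-probability bound on the AdaGrad denominators. However, there are two places where the proposal as written would not go through.

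First, you cannot actually ``rearrange into $d$ one-dimensional basic inequalities of the same shape as Lemma~\ref{lem:sgd-basic-inequality}'' and feed each into the telescope of Theorem~\ref{thm:sgd-moment-inequality}: the function value $\Delta_t = f(x_t)-f_*$ does not decompose across coordinates, so there is no per-coordinate analogue of $\Delta_{t+1}-\Delta_t$ to telescope. The paper keeps a single scalar smoothness inequality (Lemma~\ref{lem:adagrad-coord-basics}) in which only the error and gradient terms are split by coordinate, sums over $t$ to telescope $\Delta_t$ once globally, and then applies a much simpler per-coordinate MGF argument to $-\sum_t \nabla_i f(x_t)\,\xi_{t,i}/a_{t,i}$ alone, without any $Z_t$/$S_t$ machinery or function-value weights. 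Your sketch as phrased needs that structural change to be correct.

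Second, your plan moves the gain term to $\sum_t \nabla_i f(x_t)^2/\widetilde b_{t,i}$ and then applies Cauchy--Schwarz with the weights $\widetilde b_{t,i}$. This creates a real problem: $\widetilde b_{t,i}^2 = b_{t-1,i}^2 + \nabla_i f(x_t)^2$ is not monotone in $t$ and its sum over $i$ involves the very gradient magnitudes you are trying to bound, so there is no clean way to pull $\max_t \sum_i \widetilde b_{t,i}$ out of the sum or to control it without circularity. The paper avoids this by keeping the gain term as $\sum_t \sum_i \nabla_i f(x_t)^2 / b_{t,i}$ (only the \emph{error} cross-term gets the $a_{t,i}$ denominator), then using the pointwise Cauchy--Schwarz $\bigl(\sum_i \nabla_i f(x_t)^2/b_{t,i}\bigr)\bigl(\sum_i b_{t,i}\bigr) \ge \|\nabla f(x_t)\|_1^2$ together with the monotonicity $\|b_t\|_1 \le \|b_T\|_1$, and finally bounding $\|b_T\|_1$ (which depends only on the \emph{stochastic} gradients) from the descent inequality plus a sub-Gaussian tail bound on $\sum_t \xi_{t,i}^2$. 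The implicit $b_T$-vs-$\log b_T$ inequality you flagged is indeed present but is resolved by the elementary split $2\eta^2 L \log(b_{T,i}/(4\eta^2 L)) \le b_{T,i}/2$; it does not require the cross-coordinate care you anticipated because $\|b_T\|_1$ is controlled independently of the per-coordinate gradient bound.
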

Both of these results are adaptive to noise: the rate $\tilde{O}\left(\frac{1}{\sqrt{T}}\right)$
will improve to $\tilde{O}\left(\frac{1}{T}\right)$ as the noise
$\sigma$ approaches $0$. Furthermore, these results hold regardless
of how $\eta$ and $b_{0}$ is set. 

\paragraph{Analysis overview.}

The first key new technique is unlike prior works, we do not use the
division by the step size, which makes the analysis of AdaGrad-Norm
and AdaGrad virtually the same. We can thus focus on AdaGrad-Norm.
To obtain a high probability bound, our analysis of AdaGrad-Norm utilizes
the same martingale concentration technique as presented throughout
this paper to bound the error terms $\eta_{t}\left\langle \nabla f(x_{t}),\xi_{t}\right\rangle $.
However, the step size $\eta_{t}=\frac{\eta}{b_{t}}$ now has a dependency
on the randomness at time $t$ due to $b_{t}$, preventing us from
applying Lemma \ref{lem:helper-taylor-vector}. To circumvent this,
inspired by \citet{ward2019adagrad}, we introduce a proxy step size
$a_{t}:=b_{t-1}^{2}+\norm{\nabla f(x_{t})}^{2}$ that replaces the
stochastic gradient with the true gradient at time $t$ for analysis
purposes. Using that along with standard smoothness analysis, we obtain:
\begin{lem}
\label{lem:adagrad-basics}For $t\ge1$, let $\xi_{t}=\hn(x_{t})-\n(x_{t})$,
$a_{t}^{2}:=b_{t-1}^{2}+\norm{\nabla f(x_{t})}^{2}$, and $M_{t}=\max_{i\le t}\left\Vert \xi_{i}\right\Vert $,
then we have
\begin{align*}
\sum_{t=1}^{T}\frac{\norm{\nf(x_{t})}^{2}}{b_{t}} & \leq\frac{\Delta_{1}}{\eta}+\frac{M_{T}}{2}\left[\sum_{t=1}^{T}\frac{\norm{\nf(x_{t})}^{2}}{a_{t}^{2}}+\sum_{t=1}^{T}\frac{\norm{\xi_{t}}^{2}}{b_{t}^{2}}\right]-\sum_{t=1}^{T}\frac{1}{a_{t}}\left\langle \nf(x_{t}),\xi_{t}\right\rangle +\sum_{t=1}^{T}\frac{L\eta}{2b_{t}^{2}}\norm{\hn(x_{t})}^{2}.
\end{align*}
\end{lem}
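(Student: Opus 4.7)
The plan is to begin from the $L$-smoothness inequality applied to the update $x_{t+1} = x_{t} - \frac{\eta}{b_{t}}\hn(x_{t})$, telescope to obtain a one-sided bound for $\sum_{t} \norm{\nf(x_{t})}^{2}/b_{t}$, and then replace $1/b_{t}$ by $1/a_{t}$ in the stochastic inner-product term $\langle \nf(x_{t}), \xi_{t}\rangle$. The motivation for the swap is that $a_{t}$ is $\F_{t}$-measurable (it depends on $b_{t-1}$ and the true gradient, not on $\xi_{t}$) while $b_{t}$ is not; this is what will enable the later martingale argument used in the proof of Theorem \ref{thm:adagrad-nonconvex}. The discrepancy between the two denominators, controlled via the pointwise bound $|b_{t} - a_{t}| \leq \norm{\xi_{t}}$, will produce precisely the two positive terms on the right-hand side after Cauchy--Schwarz and a single AM--GM step, with $M_{T}$ entering as a uniform bound on one stray factor of $\norm{\xi_{t}}$.

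Concretely, I first apply smoothness together with the update rule and the decomposition $\hn(x_{t}) = \nf(x_{t}) + \xi_{t}$ to get
\[
f(x_{t+1}) \leq f(x_{t}) - \frac{\eta}{b_{t}}\norm{\nf(x_{t})}^{2} - \frac{\eta}{b_{t}}\langle \nf(x_{t}), \xi_{t}\rangle + \frac{L\eta^{2}}{2 b_{t}^{2}}\norm{\hn(x_{t})}^{2}.
\]
Telescoping over $t = 1,\dots,T$, using $f(x_{T+1}) \geq f_{*}$, and dividing by $\eta$ gives
\[
\sum_{t=1}^{T}\frac{\norm{\nf(x_{t})}^{2}}{b_{t}} \leq \frac{\Delta_{1}}{\eta} - \sum_{t=1}^{T}\frac{1}{b_{t}}\langle \nf(x_{t}), \xi_{t}\rangle + \sum_{t=1}^{T}\frac{L\eta}{2 b_{t}^{2}}\norm{\hn(x_{t})}^{2}.
\]

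Next I split $-\frac{1}{b_{t}}\langle \nf(x_{t}), \xi_{t}\rangle = -\frac{1}{a_{t}}\langle \nf(x_{t}), \xi_{t}\rangle + \frac{b_{t} - a_{t}}{a_{t} b_{t}}\langle \nf(x_{t}), \xi_{t}\rangle$. The key auxiliary estimate is $|b_{t} - a_{t}| \leq \norm{\xi_{t}}$, which I would prove by writing $b_{t}^{2} - a_{t}^{2} = \norm{\hn(x_{t})}^{2} - \norm{\nf(x_{t})}^{2}$, factoring as a difference of squares, and dividing by $b_{t} + a_{t} \geq \norm{\hn(x_{t})} + \norm{\nf(x_{t})}$, reducing the bound to the reverse triangle inequality $\big|\norm{\hn(x_{t})} - \norm{\nf(x_{t})}\big| \leq \norm{\xi_{t}}$. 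Combining this with Cauchy--Schwarz and replacing one factor of $\norm{\xi_{t}}$ by $M_{T}$ gives
\[
\frac{b_{t} - a_{t}}{a_{t} b_{t}}\langle \nf(x_{t}), \xi_{t}\rangle \leq \frac{M_{T}}{a_{t} b_{t}}\norm{\nf(x_{t})}\norm{\xi_{t}} \leq \frac{M_{T}}{2}\left(\frac{\norm{\nf(x_{t})}^{2}}{a_{t}^{2}} + \frac{\norm{\xi_{t}}^{2}}{b_{t}^{2}}\right),
\]
where the last step is AM--GM applied to $x = \norm{\nf(x_{t})}/a_{t}$ and $y = \norm{\xi_{t}}/b_{t}$. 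Summing over $t$ matches the claimed inequality term by term.

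The main obstacle I expect is to choose the AM--GM split so that the two correction terms land precisely on $\norm{\nf(x_{t})}^{2}/a_{t}^{2}$ and $\norm{\xi_{t}}^{2}/b_{t}^{2}$ with the right coefficient $M_{T}/2$; the asymmetry between these two denominators (one uses $a_t$, the other $b_t$) is what forces the particular weighting above. Once the split is identified and $|b_{t} - a_{t}| \leq \norm{\xi_{t}}$ is in hand, everything else is the standard smoothness-plus-telescope calculation.
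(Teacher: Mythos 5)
Your proposal is correct and follows essentially the same route as the paper's proof: smoothness plus the update rule, decomposition of $\hn(x_t)$ into $\nf(x_t)+\xi_t$, swapping $1/b_t$ for $1/a_t$ in the inner-product term, controlling the discrepancy via $|b_t - a_t| \le \|\xi_t\|$ (which is exactly the paper's Lemma~\ref{lem:adagrad-proxy-difference} in a slightly different form), and AM--GM to split the residual into the two $M_T/2$ terms. The only cosmetic difference is that you telescope first and then split the inner-product term, whereas the paper splits pointwise before summing; the resulting computation is identical.
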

Now, the randomness at time $t$ of the error term $\frac{1}{a_{t}}\left\langle \nf(x_{t}),\xi_{t}\right\rangle $
only depends on $\xi_{t}$, which follows a sub-Gaussian distribution
with mean $0$. Hence, we can utilize our previous techniques to bound
$-\sum_{t=1}^{T}\frac{1}{a_{t}}\left\langle \nf(x_{t}),\xi_{t}\right\rangle $
with high probability. Comparing to the analysis in expectation from
\citet{ward2019adagrad}, terms like $\sum_{t=1}^{T}\frac{\norm{\nf(x_{t})}^{2}}{a_{t}^{2}}$
must be handled more carefully to obtain a high probability bound.
A bound for $M_{T}$ has also been derived in previous works by \citet{li2020high,liu2022convergence}.
Combining with Lemma \ref{lem:adagrad-basics}, we obtain the following
lemma.
\begin{lem}
\label{lem:martingale-dif-error-term}With probability at least $1-2\delta$,
we have
\begin{align*}
\sum_{t=1}^{T}\frac{\norm{\nf(x_{t})}^{2}}{b_{t}} & \leq\frac{\Delta_{1}}{\eta}+\sigma\sqrt{\log\frac{T}{\delta}}\left[8\log\left(\frac{b_{T}}{b_{0}}\right)+5\sum_{t=1}^{T}\frac{\norm{\xi_{t}}^{2}}{b_{t}^{2}}\right]+\sigma\sqrt{\log\frac{1}{\delta}}+L\eta\log\frac{b_{T}}{b_{0}}.
\end{align*}
\end{lem}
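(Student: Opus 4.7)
The plan is to start from Lemma~\ref{lem:adagrad-basics}, which already reduces the task to controlling three pieces on the right-hand side --- a smoothness sum, a random multiplier $M_T$, and a martingale sum --- and to combine the resulting high-probability bounds via a union bound over two events each of probability at least $1-\delta$.

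The smoothness term $\sum_{t=1}^T \frac{L\eta}{2b_t^2}\norm{\hn(x_t)}^2$ is dispatched deterministically by the standard AdaGrad telescoping identity
\[
\sum_{t=1}^T \frac{b_t^2-b_{t-1}^2}{b_t^2} \le \sum_{t=1}^T \log\frac{b_t^2}{b_{t-1}^2} = 2\log\frac{b_T}{b_0},
\]
producing the final $L\eta\log(b_T/b_0)$ summand. To control $M_T=\max_{t\le T}\norm{\xi_t}$, I apply the sub-Gaussian MGF from Definition~\ref{def:subgaussian-random-variable} together with Markov's inequality to each $\norm{\xi_t}$, then take a union bound over $t\in[T]$, obtaining an event $\mathcal{E}_1$ of probability at least $1-\delta$ on which $M_T \le c\sigma\sqrt{\log(T/\delta)}$ for an absolute constant $c$.

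For the martingale sum $-\sum_t \frac{1}{a_t}\langle \nf(x_t),\xi_t\rangle$, the crucial structural observation is that $a_t^2=b_{t-1}^2+\norm{\nf(x_t)}^2$ uses only information accumulated up to time $t-1$ together with the deterministic gradient $\nf(x_t)$, so $\nf(x_t)/a_t$ is $\F_t$-measurable and the only randomness conditional on $\F_t$ enters through $\xi_t$. I then invoke Lemma~\ref{lem:helper-taylor-vector} with the vector $-\lambda \nf(x_t)/a_t$ and $b=0$ to obtain a conditional MGF bound of the form $\exp\!\bigl(2\lambda^2\sigma^2\norm{\nf(x_t)}^2/a_t^2\bigr)$, and iterate through the filtration exactly as in the proof of Theorem~\ref{thm:md-concentration-subgaussian}. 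Markov's inequality then yields, on a second event $\mathcal{E}_2$ of probability at least $1-\delta$, an estimate consisting of a clean $\sigma\sqrt{\log(1/\delta)}$ contribution plus a ``variance correction'' of order $\sigma^2\sum_t \norm{\nf(x_t)}^2/a_t^2$.

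It remains to dispose of $\sum_t \norm{\nf(x_t)}^2/a_t^2$, which appears both as the $M_T$-weighted sum in Lemma~\ref{lem:adagrad-basics} and as the variance correction produced by the martingale step. Using $\log(1+x)\ge x/(1+x)$ I get $\norm{\nf(x_t)}^2/a_t^2 \le \log(a_t^2/b_{t-1}^2)$, and then the deterministic bound $a_t^2 \le 2b_t^2+2\norm{\xi_t}^2$ --- a consequence of $\norm{\nf(x_t)}^2\le 2\norm{\hn(x_t)}^2+2\norm{\xi_t}^2$ --- lets me split that logarithm into a telescoping $\log(b_t^2/b_{t-1}^2)$ piece, summing to $2\log(b_T/b_0)$, and a residual $\log(1+\norm{\xi_t}^2/b_t^2) \le \norm{\xi_t}^2/b_t^2$. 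Plugging these estimates back into Lemma~\ref{lem:adagrad-basics} on $\mathcal{E}_1\cap\mathcal{E}_2$ and tracking constants produces the bracketed coefficient $8\log(b_T/b_0)+5\sum_t\norm{\xi_t}^2/b_t^2$ scaled by $\sigma\sqrt{\log(T/\delta)}$, and the union bound gives the claimed failure probability $2\delta$. The hard part is precisely this coupling: the martingale's quadratic variation shares the factor $\sum_t\norm{\nf(x_t)}^2/a_t^2$ with the $M_T$-weighted term from Lemma~\ref{lem:adagrad-basics}, so one cannot bound the martingale in isolation; instead the MGF parameter $\lambda$ must be tuned (and the subsequent telescoping applied) so that the variance correction is absorbed into the already-present $M_T$-weighted contribution rather than being discharged separately.
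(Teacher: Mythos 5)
Your decomposition and probabilistic strategy match the paper's proof exactly: start from Lemma~\ref{lem:adagrad-basics}, telescope the smoothness term to $L\eta\log(b_T/b_0)$, control $M_T$ by a union bound (giving $M_T\le 2\sigma\sqrt{\log(T/\delta)}$ with probability $1-\delta$), treat $-\sum_t \langle \nf(x_t),\xi_t\rangle/a_t$ with the supermartingale built from Lemma~\ref{lem:helper-taylor-vector} (noting $\nf(x_t)/a_t$ is $\F_t$-measurable), tune the MGF parameter, and take a union bound over the two events. All of that is correct and is precisely the paper's route.

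The gap is in your bound on $\sum_t \norm{\nf(x_t)}^2/a_t^2$. You pass to $\log(a_t^2/b_{t-1}^2)$ via $\log(1+x)\ge x/(1+x)$, and then claim you can split $\log(a_t^2/b_{t-1}^2)$ into $\log(b_t^2/b_{t-1}^2)+\log(1+\norm{\xi_t}^2/b_t^2)$. That split is equivalent to $a_t^2 \le b_t^2 + \norm{\xi_t}^2$, i.e.\ $\norm{\nf(x_t)}^2 \le \norm{\hn(x_t)}^2 + \norm{\xi_t}^2$, which is false in general (it fails whenever $\langle\hn(x_t),\xi_t\rangle<0$). The deterministic bound you actually have, $a_t^2\le 2b_t^2+2\norm{\xi_t}^2$, only yields $\log(a_t^2/b_{t-1}^2)\le \log 2 + \log(b_t^2/b_{t-1}^2)+\log(1+\norm{\xi_t}^2/b_t^2)$, and the extra $\log 2$ per iteration sums to $T\log 2$, which destroys the final rate. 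The paper avoids logarithms here entirely: it writes
\[
\frac{\norm{\nf(x_t)}^2}{a_t^2}=\frac{\norm{\nf(x_t)}^2}{b_{t-1}^2+\norm{\nf(x_t)}^2}\le \frac{2\norm{\hn(x_t)}^2+2\norm{\xi_t}^2}{b_{t-1}^2+2\norm{\hn(x_t)}^2+2\norm{\xi_t}^2}\le \frac{2\norm{\hn(x_t)}^2}{b_t^2}+\frac{2\norm{\xi_t}^2}{b_t^2},
\]
using monotonicity of $x\mapsto x/(c+x)$ and then lower-bounding the denominator by $b_t^2$, and only afterwards telescopes $\sum_t\norm{\hn(x_t)}^2/b_t^2\le 2\log(b_T/b_0)$. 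Your approach can be repaired by using the finer bound $a_t^2\le b_t^2+\norm{\hn(x_t)}^2+2\norm{\xi_t}^2$ together with $\log(1+x+y)\le\log(1+x)+\log(1+y)$, but as written the $\log 2$ leak is a genuine hole.
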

Since, $\sum_{t=1}^{T}\frac{\norm{\nf(x_{t})}^{2}}{b_{t}}\geq\frac{1}{b_{T}}\sum_{t=1}^{T}\norm{\nf(x_{t})}^{2}$,
it suffices to bound $b_{T}$ and $\sum_{t=1}^{T}\frac{\norm{\xi_{t}}^{2}}{b_{t}^{2}}$
from this point on (see Lemma \ref{lem:bound-on-bT} and Lemma \ref{lem:bound-on-eps-sq-by-bt-sq}).
The analysis for these terms utilize similar martingale techniques
throughout this paper, where the details are deferred to Section \ref{sec:AdaGrad-Norm-second-proof}
of the Appendix. For the coordinate version of AdaGrad, since our
techniques only rely on addition and scalar multiplication, we can
(with some effort) generalize our technique to the standard per-coordinate
AdaGrad algorithm. The full proofs for vanilla AdaGrad are presented
in Section \ref{sec:AdaGrad-(coordinate)-analysis} of the Appendix.

\section{Conclusion}

In this work, we present a generic approach to prove high probability
convergence of stochastic gradient methods under sub-Gaussian noise.
In the convex case, we show high probability bounds for stochastic
and accelerated stochastic mirror descent that depend on the distance
from the initial solution to the optimal solution and do not require
the bounded domain or bounded Bregman divergence assumptions. In the
non-convex case, we apply the same approach and obtain a high probability
bound for SGD that improves over existing works. We also show that
the boundedness of the gradients can be removed when showing high
probability convergence of AdaGrad-Norm. Finally, we show that our
analysis for AdaGrad-Norm can be extended to the standard per-coordinate
AdaGrad algorithm to obtain one of the first high probability convergence
result for standard AdaGrad. 

For future work, it would be interesting to see whether our method
can be applied to analyze AdaGrad-Norm and/or AdaGrad in the convex
setting without restrictive assumptions. Extending this approach to
the heavy tail setting and finding its applications in other problems
are some of the potential future directions.

\bibliographystyle{plainnat}
\bibliography{ref}

\begin{thebibliography}{28}
\providecommand{\natexlab}[1]{#1}
\providecommand{\url}[1]{\texttt{#1}}
\expandafter\ifx\csname urlstyle\endcsname\relax
  \providecommand{\doi}[1]{doi: #1}\else
  \providecommand{\doi}{doi: \begingroup \urlstyle{rm}\Url}\fi

\bibitem[Chung and Lu(2006)]{chung2006concentration}
Fan Chung and Linyuan Lu.
\newblock Concentration inequalities and martingale inequalities: a survey.
\newblock \emph{Internet mathematics}, 3\penalty0 (1):\penalty0 79--127, 2006.

\bibitem[Cutkosky and Mehta(2021)]{cutkosky2021high}
Ashok Cutkosky and Harsh Mehta.
\newblock High-probability bounds for non-convex stochastic optimization with
  heavy tails.
\newblock \emph{Advances in Neural Information Processing Systems},
  34:\penalty0 4883--4895, 2021.

\bibitem[Davis et~al.(2021)Davis, Drusvyatskiy, Xiao, and Zhang]{davis2021low}
Damek Davis, Dmitriy Drusvyatskiy, Lin Xiao, and Junyu Zhang.
\newblock From low probability to high confidence in stochastic convex
  optimization.
\newblock \emph{Journal of machine learning research}, 22\penalty0 (49), 2021.

\bibitem[D{\'e}fossez et~al.(2022)D{\'e}fossez, Bottou, Bach, and
  Usunier]{defossez2020simple}
Alexandre D{\'e}fossez, L{\'e}on Bottou, Francis Bach, and Nicolas Usunier.
\newblock A simple convergence proof of adam and adagrad.
\newblock \emph{Transactions on Machine Learning Research}, 2022.

\bibitem[Duchi et~al.(2011)Duchi, Hazan, and Singer]{duchi2011adaptive}
John Duchi, Elad Hazan, and Yoram Singer.
\newblock Adaptive subgradient methods for online learning and stochastic
  optimization.
\newblock \emph{Journal of machine learning research}, 12\penalty0 (7), 2011.

\bibitem[Dvurechensky and Gasnikov(2016)]{dvurechensky2016stochastic}
Pavel Dvurechensky and Alexander Gasnikov.
\newblock Stochastic intermediate gradient method for convex problems with
  stochastic inexact oracle.
\newblock \emph{Journal of Optimization Theory and Applications}, 171\penalty0
  (1):\penalty0 121--145, 2016.

\bibitem[Ene et~al.(2021)Ene, Nguyen, and Vladu]{ene2021adaptive}
Alina Ene, Huy~L Nguyen, and Adrian Vladu.
\newblock Adaptive gradient methods for constrained convex optimization and
  variational inequalities.
\newblock In \emph{Proceedings of the AAAI Conference on Artificial
  Intelligence}, volume~35, pages 7314--7321, 2021.

\bibitem[Faw et~al.(2022)Faw, Tziotis, Caramanis, Mokhtari, Shakkottai, and
  Ward]{faw2022power}
Matthew Faw, Isidoros Tziotis, Constantine Caramanis, Aryan Mokhtari, Sanjay
  Shakkottai, and Rachel Ward.
\newblock The power of adaptivity in sgd: Self-tuning step sizes with unbounded
  gradients and affine variance.
\newblock \emph{arXiv preprint arXiv:2202.05791}, 2022.

\bibitem[Freedman(1975)]{freedman1975tail}
David~A Freedman.
\newblock On tail probabilities for martingales.
\newblock \emph{the Annals of Probability}, pages 100--118, 1975.

\bibitem[Gorbunov et~al.(2020)Gorbunov, Danilova, and
  Gasnikov]{gorbunov2020stochastic}
Eduard Gorbunov, Marina Danilova, and Alexander Gasnikov.
\newblock Stochastic optimization with heavy-tailed noise via accelerated
  gradient clipping.
\newblock \emph{Advances in Neural Information Processing Systems},
  33:\penalty0 15042--15053, 2020.

\bibitem[Harvey et~al.(2019)Harvey, Liaw, Plan, and Randhawa]{harvey2019tight}
Nicholas~JA Harvey, Christopher Liaw, Yaniv Plan, and Sikander Randhawa.
\newblock Tight analyses for non-smooth stochastic gradient descent.
\newblock In \emph{Conference on Learning Theory}, pages 1579--1613. PMLR,
  2019.

\bibitem[Hazan and Kale(2014)]{hazan2014beyond}
Elad Hazan and Satyen Kale.
\newblock Beyond the regret minimization barrier: optimal algorithms for
  stochastic strongly-convex optimization.
\newblock \emph{The Journal of Machine Learning Research}, 15\penalty0
  (1):\penalty0 2489--2512, 2014.

\bibitem[Juditsky et~al.(2011)Juditsky, Nemirovski, and Tauvel]{JuditskyNT11}
Anatoli Juditsky, Arkadi Nemirovski, and Claire Tauvel.
\newblock Solving variational inequalities with stochastic mirror-prox
  algorithm.
\newblock \emph{Stochastic Systems}, 1\penalty0 (1):\penalty0 17--58, 2011.

\bibitem[Kakade and Tewari(2008)]{kakade2008generalization}
Sham~M Kakade and Ambuj Tewari.
\newblock On the generalization ability of online strongly convex programming
  algorithms.
\newblock \emph{Advances in Neural Information Processing Systems}, 21, 2008.

\bibitem[Kavis et~al.(2021)Kavis, Levy, and Cevher]{kavis2021high}
Ali Kavis, Kfir~Yehuda Levy, and Volkan Cevher.
\newblock High probability bounds for a class of nonconvex algorithms with
  adagrad stepsize.
\newblock In \emph{International Conference on Learning Representations}, 2021.

\bibitem[Lan(2012)]{lan2012optimal}
Guanghui Lan.
\newblock An optimal method for stochastic composite optimization.
\newblock \emph{Mathematical Programming}, 133\penalty0 (1):\penalty0 365--397,
  2012.

\bibitem[Lan(2020)]{lan2020first}
Guanghui Lan.
\newblock \emph{First-order and stochastic optimization methods for machine
  learning}.
\newblock Springer, 2020.

\bibitem[Li and Liu(2022)]{li2022high}
Shaojie Li and Yong Liu.
\newblock High probability guarantees for nonconvex stochastic gradient descent
  with heavy tails.
\newblock In \emph{International Conference on Machine Learning}, pages
  12931--12963. PMLR, 2022.

\bibitem[Li and Orabona(2019)]{li2019convergence}
Xiaoyu Li and Francesco Orabona.
\newblock On the convergence of stochastic gradient descent with adaptive
  stepsizes.
\newblock In \emph{The 22nd International Conference on Artificial Intelligence
  and Statistics}, pages 983--992. PMLR, 2019.

\bibitem[Li and Orabona(2020)]{li2020high}
Xiaoyu Li and Francesco Orabona.
\newblock A high probability analysis of adaptive sgd with momentum.
\newblock \emph{arXiv preprint arXiv:2007.14294}, 2020.

\bibitem[Liu et~al.(2022)Liu, Nguyen, Ene, and Nguyen]{liu2022convergence}
Zijian Liu, Ta~Duy Nguyen, Alina Ene, and Huy~L Nguyen.
\newblock On the convergence of adagrad on {$\mathbb{R}^{d} $}: Beyond
  convexity, non-asymptotic rate and acceleration.
\newblock \emph{arXiv preprint arXiv:2209.14827}, 2022.

\bibitem[Madden et~al.(2020)Madden, Dall'Anese, and Becker]{madden2020high}
Liam Madden, Emiliano Dall'Anese, and Stephen Becker.
\newblock High probability convergence and uniform stability bounds for
  nonconvex stochastic gradient descent.
\newblock \emph{arXiv preprint arXiv:2006.05610}, 2020.

\bibitem[Nazin et~al.(2019)Nazin, Nemirovsky, Tsybakov, and
  Juditsky]{nazin2019algorithms}
Alexander~V Nazin, Arkadi~S Nemirovsky, Alexandre~B Tsybakov, and Anatoli~B
  Juditsky.
\newblock Algorithms of robust stochastic optimization based on mirror descent
  method.
\newblock \emph{Automation and Remote Control}, 80\penalty0 (9):\penalty0
  1607--1627, 2019.

\bibitem[Nemirovski et~al.(2009)Nemirovski, Juditsky, Lan, and
  Shapiro]{nemirovski2009robust}
Arkadi Nemirovski, Anatoli Juditsky, Guanghui Lan, and Alexander Shapiro.
\newblock Robust stochastic approximation approach to stochastic programming.
\newblock \emph{SIAM Journal on optimization}, 19\penalty0 (4):\penalty0
  1574--1609, 2009.

\bibitem[Rakhlin et~al.(2011)Rakhlin, Shamir, and Sridharan]{rakhlin2011making}
Alexander Rakhlin, Ohad Shamir, and Karthik Sridharan.
\newblock Making gradient descent optimal for strongly convex stochastic
  optimization.
\newblock \emph{arXiv preprint arXiv:1109.5647}, 2011.

\bibitem[Vershynin(2018)]{vershynin2018high}
Roman Vershynin.
\newblock \emph{High-dimensional probability: An introduction with applications
  in data science}, volume~47.
\newblock Cambridge university press, 2018.

\bibitem[Ward et~al.(2019)Ward, Wu, and Bottou]{ward2019adagrad}
Rachel Ward, Xiaoxia Wu, and Leon Bottou.
\newblock Adagrad stepsizes: Sharp convergence over nonconvex landscapes.
\newblock In \emph{International Conference on Machine Learning}, pages
  6677--6686. PMLR, 2019.

\bibitem[Zhang et~al.(2020)Zhang, Karimireddy, Veit, Kim, Reddi, Kumar, and
  Sra]{zhang2020adaptive}
Jingzhao Zhang, Sai~Praneeth Karimireddy, Andreas Veit, Seungyeon Kim, Sashank
  Reddi, Sanjiv Kumar, and Suvrit Sra.
\newblock Why are adaptive methods good for attention models?
\newblock \emph{Advances in Neural Information Processing Systems ({NeurIPS})},
  33:\penalty0 15383--15393, 2020.

\end{thebibliography}

\appendix

\section{Proof of Lemma \ref{lem:helper-taylor-vector}}

To prove Lemma \ref{lem:helper-taylor-vector}, we will prove the
following statement.
\begin{lem}
\label{lem:helper-taylor}For any $a\ge0$, $0\le b\le\frac{1}{2\sigma}$
and an $\sigma$-sub-Gaussian random variable $X$,
\[
\E\left[1+b^{2}X^{2}+\sum_{i=2}^{\infty}\frac{1}{i!}\left(aX+b^{2}X^{2}\right)^{i}\right]\le\exp\left(3\left(a^{2}+b^{2}\right)\sigma^{2}\right).
\]
Especially, when $b=0$, we have 
\[
\E\left[1+\sum_{i=2}^{\infty}\frac{1}{i!}\left(aX\right)^{i}\right]\le\exp\left(2a^{2}\sigma^{2}\right).
\]
\end{lem}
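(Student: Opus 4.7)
The plan is a pointwise Taylor-series bound followed by resummation and invocation of the sub-Gaussian definition. A useful reformulation to keep in mind is that the left-hand side equals $\E[\exp(aX+b^2X^2)-aX]$; the statement simply separates out the constant and first-order-in-$X$ contributions of the exponential's Taylor expansion so that no mean-zero hypothesis on $X$ is needed.

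First I would bound each summand for $i\ge 2$ pointwise. Using $|y|^i\ge y^i$ (with equality for even $i$) and then the convexity inequality $(u+v)^i\le 2^{i-1}(u^i+v^i)$ for $u,v\ge 0$, one obtains
\[
(aX+b^2X^2)^i\;\le\;(|a||X|+b^2X^2)^i\;\le\;2^{i-1}\bigl(|a|^i|X|^i+b^{2i}X^{2i}\bigr).
\]
Summing over $i\ge 2$ with the identity $\sum_{i\ge 2}y^i/i!=e^y-1-y$ and adding the $1+b^2X^2$ prefix, the constant and linear terms produced by the two expansions exactly cancel the prefix (up to a nonnegative remainder $-|a||X|$ that can be dropped), yielding the pointwise bound
\[
1+b^2X^2+\sum_{i=2}^{\infty}\frac{(aX+b^2X^2)^i}{i!}\;\le\;\tfrac12 e^{2|a||X|}+\tfrac12 e^{2b^2X^2}.
\]

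It then remains to control the two expectations. For the quadratic piece, since $b\le 1/(2\sigma)$ we have $\sqrt{2}\,b\le 1/\sigma$, so Definition \ref{def:subgaussian-random-variable} with $\lambda=\sqrt{2}\,b$ gives $\E[e^{2b^2X^2}]\le e^{2b^2\sigma^2}$. For the linear piece, I would use $e^{2|a||X|}\le e^{2aX}+e^{-2aX}$ and bound each one-sided MGF via the AM-GM substitution $\pm 2aX\le a^2/\alpha+\alpha X^2$ for some $\alpha\le 1/\sigma^2$, so that Definition \ref{def:subgaussian-random-variable} applied to the quadratic term yields $\E[e^{\pm 2aX}]\le e^{a^2/\alpha+\alpha\sigma^2}$. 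A suitable choice of $\alpha$ (e.g.\ $\alpha=1/\sigma^2$) produces a bound of the form $\exp(c\,a^2\sigma^2+c')$, and combining with the quadratic bound and exploiting the factor-of-3 slack in the target exponent $3(a^2+b^2)\sigma^2$ delivers the claim. For the $b=0$ special case, the entire quadratic piece vanishes and one can argue directly on $\sum_{i\ge 2}(aX)^i/i!=e^{aX}-1-aX$, saving the factor-of-two loss incurred by the convexity split and giving the tighter constant $2$.

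The main obstacle is that the sub-Gaussian hypothesis is two-sided, bounding $\E[e^{\lambda^2 X^2}]$ rather than $\E[e^{\lambda X}]$. Converting this into a one-sided MGF bound on $\pm 2aX$ is exactly what the AM-GM step accomplishes, but tracking the constants cleanly—especially without a mean-zero assumption on $X$, so that no centering can be used to absorb stray factors—is the bookkeeping challenge, and the factors $3$ and $2$ in the final exponents are calibrated precisely to absorb these losses.
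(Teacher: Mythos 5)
Your plan goes wrong at the point where you drop the remainder $-a|X|$. Writing the left-hand side as $\E\bigl[\exp(aX+b^2X^2)-aX\bigr]$ is exactly the right reformulation, because the subtracted $-aX$ is what removes the \emph{first-order-in-$a$} contribution; the target $\exp\bigl(3(a^2+b^2)\sigma^2\bigr)=1+O(a^2)$ has no linear term in $a$, so any valid bound must preserve this cancellation. After the convexity split and resummation, your pointwise bound is
\[
\tfrac12 e^{2a|X|}+\tfrac12 e^{2b^2X^2}-a|X|,
\]
and discarding $-a|X|$ destroys the cancellation. Concretely, for deterministic $X=\sigma$ (which saturates Definition~\ref{def:subgaussian-random-variable}) and $b=0$, your dropped bound becomes $\tfrac12 e^{2a\sigma}+\tfrac12$, which at $a\sigma=0.3$ equals roughly $1.41$, already exceeding the target $e^{3a^2\sigma^2}\approx1.31$. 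The subsequent MGF step makes this worse rather than rescuing it: the AM--GM bound $\E[e^{\pm2aX}]\le e^{a^2/\alpha+\alpha\sigma^2}$, optimized under $\alpha\le1/\sigma^2$, gives at best exponent $2a\sigma$, which is linear in $a$; taking $\alpha=1/\sigma^2$ as you suggest gives exponent $a^2\sigma^2+1$, and the additive $+1$ cannot be absorbed by the ``factor-of-3 slack'' when $a$ and $b$ are small, since the target tends to $1$ while your bound stays above $e+\tfrac12$.

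The paper's proof avoids this by splitting on the size of $a$. When $a\ge1/(2\sigma)$, a crude AM--GM split like yours \emph{is} acceptable, because the additive constant it incurs is at most $1/4\le a^2\sigma^2$ and is absorbed. When $a<1/(2\sigma)$, the paper never discards the linear term; instead it uses $e^x-x\le e^{x^2}$ applied pointwise to $x=aX$, which converts $e^{aX}-aX$ into $e^{a^2X^2}$ (quadratic in $a$ in the exponent, as required), and then handles the cross term $aX\bigl(e^{b^2X^2}-1\bigr)$ via $x\bigl(e^{x^2}-1\bigr)\le e^{2x^2}-1$. If you want to repair your argument along your own lines, the natural fix is to retain $-a|X|$, rewrite $\tfrac12e^{2a|X|}-a|X|=\tfrac12\bigl(e^{2a|X|}-2a|X|\bigr)$, and apply $e^y-y\le e^{y^2}$ with $y=2a|X|$; this requires $2a\le1/\sigma$ to invoke the sub-Gaussian bound, which forces a case split at $a=1/(2\sigma)$ essentially identical to the paper's.
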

\begin{proof}[Proof of Lemma\textbf{ }\ref{lem:helper-taylor}]
Consider two cases either $a\ge1/(2\sigma)$ or $a\le1/(2\sigma)$.
First suppose $a\ge1/(2\sigma)$. We use the inequality $uv\le\frac{u^{2}}{4}+v^{2}$
here to first obtain
\begin{align*}
\left(aX+b^{2}X^{2}\right)^{i} & \leq\left|aX+b^{2}X^{2}\right|^{i}\leq\left(a\left|X\right|+b^{2}X^{2}\right)^{i}\leq\left(\frac{1}{4\sigma^{2}}X^{2}+a^{2}\sigma^{2}+b^{2}X^{2}\right)^{i}.
\end{align*}
Thus, we have
\begin{align*}
\E\left[1+b^{2}X^{2}+\sum_{i=2}^{\infty}\frac{1}{i!}\left(aX+b^{2}X^{2}\right)^{i}\right] & \le\E\left[1+b^{2}X^{2}+\sum_{i=2}^{\infty}\frac{1}{i!}\left(\frac{1}{4\sigma^{2}}X^{2}+a^{2}\sigma^{2}+b^{2}X^{2}\right)^{i}\right]\\
 & =\E\left[b^{2}X^{2}+\exp\left(\left(\frac{1}{4\sigma^{2}}+b^{2}\right)X^{2}+a^{2}\sigma^{2}\right)-\left(\frac{1}{4\sigma^{2}}+b^{2}\right)X^{2}-a^{2}\sigma^{2}\right]\\
 & =\E\left[\exp\left(\left(\frac{1}{4\sigma^{2}}+b^{2}\right)X^{2}+a^{2}\sigma^{2}\right)-\frac{1}{4\sigma^{2}}X^{2}-a^{2}\sigma^{2}\right]\\
 & \le\exp\left(\left(\frac{1}{4\sigma^{2}}+b^{2}\right)\sigma^{2}+a^{2}\sigma^{2}\right)\\
 & \le\exp\left(2a^{2}\sigma^{2}+b^{2}\sigma^{2}\right)\\
 & \leq\exp\left(3\left(a^{2}+b^{2}\right)\sigma^{2}\right).
\end{align*}
Next, let $c=\max(a,b)\le1/(2\sigma)$. We have
\begin{align*}
\E\left[1+b^{2}X^{2}+\sum_{i=2}^{\infty}\frac{1}{i!}\left(aX+b^{2}X^{2}\right)^{i}\right] & =\E\left[\exp\left(aX+b^{2}X^{2}\right)-aX\right]\\
 & \le\E\left[\left(aX+\exp\left(a^{2}X^{2}\right)\right)\exp\left(b^{2}X^{2}\right)-aX\right]\\
 & =\E\left[\exp\left(\left(a^{2}+b^{2}\right)X^{2}\right)+aX\left(\exp\left(b^{2}X^{2}\right)-1\right)\right]\\
 & \le\E\left[\exp\left(\left(a^{2}+b^{2}\right)X^{2}\right)+c\left|X\right|\left(\exp\left(c^{2}X^{2}\right)-1\right)\right]\\
 & \le\E\left[\exp\left(\left(a^{2}+b^{2}\right)X^{2}\right)+\exp\left(2c^{2}X^{2}\right)-1\right]\\
 & \le\E\left[\exp\left(\left(a^{2}+b^{2}+2c^{2}\right)X^{2}\right)\right]\\
 & \le\exp\left(\left(a^{2}+b^{2}+2c^{2}\right)\sigma^{2}\right)\\
 & \leq\exp\left(3\left(a^{2}+b^{2}\right)\sigma^{2}\right).
\end{align*}
In the first inequality, we use the inequality $e^{x}-x\le e^{x^{2}}\forall x$.
In the third inequality, we use $x\left(e^{x^{2}}-1\right)\le e^{2x^{2}}-1\ \forall x$.
This inequality can be proved with the Taylor expansion.
\begin{align*}
x\left(e^{x^{2}}-1\right) & =\sum_{i=1}^{\infty}\frac{1}{i!}x^{2i+1}\\
 & \le\sum_{i=1}^{\infty}\frac{1}{i!}\frac{x^{2i}+x^{2i+2}}{2}\\
 & =\frac{x^{2}}{2}+\sum_{i=2}^{\infty}\left(\frac{1+i}{2i!}\right)x^{2i}\\
 & \le\frac{x^{2}}{2}+\sum_{i=2}^{\infty}\left(\frac{2^{i}}{i!}\right)x^{2i}\\
 & \le e^{2x^{2}}-1.
\end{align*}
The case when $b=0$ simply follows from the above proof.
\end{proof}

\begin{proof}[Proof of Lemma \ref{lem:helper-taylor-vector}]
Using Taylor expansion of $e^{x}$ and the fact that $\E\left[X\right]=0$
we have 
\begin{align*}
\E\left[\exp\left(\left\langle a,X\right\rangle +b^{2}\left\Vert X\right\Vert ^{2}\right)\right] & =\E\left[1+\left\langle a,X\right\rangle +b^{2}\left\Vert X\right\Vert ^{2}+\sum_{i=2}^{\infty}\frac{1}{i!}\left(\left\langle a,X\right\rangle +b^{2}\left\Vert X\right\Vert ^{2}\right)^{i}\right]\\
 & =\E\left[1+b^{2}\left\Vert X\right\Vert ^{2}+\sum_{i=2}^{\infty}\frac{1}{i!}\left(\left\langle a,X\right\rangle +b^{2}\left\Vert X\right\Vert ^{2}\right)^{i}\right]\\
 & \le\E\left[1+b^{2}\left\Vert X\right\Vert ^{2}+\sum_{i=2}^{\infty}\frac{1}{i!}\left(\left\Vert a\right\Vert _{*}\left\Vert X\right\Vert +b^{2}\left\Vert X\right\Vert ^{2}\right)^{i}\right]
\end{align*}
where for the last line we use Cauchy-Schwartz to obtain $\left\langle a,X\right\rangle \le\left\Vert a\right\Vert _{*}\left\Vert X\right\Vert $.
Now applying Lemma \ref{lem:helper-taylor}, we obtain 
\begin{align*}
\E\left[\exp\left(\left\langle a,X\right\rangle +b^{2}\left\Vert X\right\Vert ^{2}\right)\right] & \le\exp\left(3\left(\left\Vert a\right\Vert _{*}^{2}+b^{2}\right)\sigma^{2}\right).
\end{align*}
Similarly, we obtain the corresponding bound for the case $b=0$.
\end{proof}

\section{Missing Proofs from Section \ref{sec:convex}}

\subsection{Stochastic Mirror Descent}

\begin{proof}[Proof of Lemma (\ref{lem:md-basic-analysis})]
 By the optimality condition, we have
\[
\left\langle \eta_{t}\widehat{\nabla}f(x_{t})+\nabla_{x}\breg\left(x_{t+1},x_{t}\right),x^{*}-x_{t+1}\right\rangle \ge0
\]
 and thus
\[
\left\langle \eta_{t}\widehat{\nabla}f(x_{t}),x_{t+1}-x^{*}\right\rangle \leq\left\langle \nabla_{x}\breg\left(x_{t+1},x_{t}\right),x^{*}-x_{t+1}\right\rangle .
\]
Note that 
\begin{align*}
\left\langle \nabla_{x}\breg\left(x_{t+1},x_{t}\right),x^{*}-x_{t+1}\right\rangle  & =\left\langle \nabla\psi\left(x_{t+1}\right)-\nabla\psi\left(x_{t}\right),x^{*}-x_{t+1}\right\rangle \\
 & =\breg\left(x^{*},x_{t}\right)-\breg\left(x_{t+1},x_{t}\right)-\breg\left(x^{*},x_{t+1}\right),
\end{align*}
 and thus
\begin{align*}
\eta_{t}\left\langle \widehat{\nabla}f(x_{t}),x_{t+1}-x^{*}\right\rangle  & \leq\breg\left(x^{*},x_{t}\right)-\breg\left(x^{*},x_{t+1}\right)-\breg\left(x_{t+1},x_{t}\right)\\
 & \leq\breg\left(x^{*},x_{t}\right)-\breg\left(x^{*},x_{t+1}\right)-\frac{1}{2}\left\Vert x_{t+1}-x_{t}\right\Vert ^{2},
\end{align*}
 where we have used that $\breg\left(x_{t+1},x_{t}\right)\geq\frac{1}{2}\left\Vert x_{t+1}-x_{t}\right\Vert ^{2}$
by the strong convexity of $\psi$.

By convexity, we have

\[
f\left(x_{t}\right)-f\left(x^{*}\right)\le\left\langle \nabla f\left(x_{t}\right),x_{t}-x^{*}\right\rangle =\left\langle \xi_{t},x^{*}-x_{t}\right\rangle +\left\langle \widehat{\nabla}f\left(x_{t}\right),x_{t}-x^{*}\right\rangle .
\]
Combining the two inequalities, we obtain
\begin{align*}
 & \eta_{t}\left(f\left(x_{t}\right)-f\left(x^{*}\right)\right)+\breg\left(x^{*},x_{t+1}\right)-\breg\left(x^{*},x_{t}\right)\\
 & \le\eta_{t}\left\langle \xi_{t},x^{*}-x_{t}\right\rangle +\eta_{t}\left\langle \widehat{\nabla}f(x_{t}),x_{t}-x_{t+1}\right\rangle -\frac{1}{2}\left\Vert x_{t+1}-x_{t}\right\Vert ^{2}\\
 & \le\eta_{t}\left\langle \xi_{t},x^{*}-x_{t}\right\rangle +\frac{\eta_{t}^{2}}{2}\left\Vert \widehat{\nabla}f(x_{t})\right\Vert _{*}^{2}.
\end{align*}
Using the triangle inequality and the bounded gradient assumption
$\left\Vert \nabla f(x)\right\Vert _{*}\leq G$ , we obtain
\[
\left\Vert \widehat{\nabla}f(x_{t})\right\Vert _{*}^{2}=\left\Vert \xi_{t}+\nabla f(x_{t})\right\Vert _{*}^{2}\leq2\left\Vert \xi_{t}\right\Vert _{*}^{2}+2\left\Vert \nabla f(x_{t})\right\Vert _{*}^{2}\leq2\left(\left\Vert \xi_{t}\right\Vert _{*}^{2}+G^{2}\right).
\]
Thus
\[
\eta_{t}\left(f\left(x_{t}\right)-f\left(x^{*}\right)\right)+\breg\left(x^{*},x_{t+1}\right)-\breg\left(x^{*},x_{t}\right)\leq\eta_{t}\left\langle \xi_{t},x^{*}-x_{t}\right\rangle +\eta_{t}^{2}\left(\left\Vert \xi_{t}\right\Vert _{*}^{2}+G^{2}\right)
\]
as needed.
\end{proof}

\begin{proof}[Proof of Corollary \ref{cor:md-convergence}]
Let
\[
K=3\sigma^{2}\sum_{t=1}^{T}w_{t}\eta_{t}^{2}+\log\left(\frac{1}{\delta}\right).
\]
By Theorem \ref{thm:md-concentration-subgaussian} and Markov's inequality,
we have
\begin{align*}
\Pr\left[S_{1}\geq K\right] & \leq\Pr\left[\exp\left(S_{1}\right)\geq\exp\left(K\right)\right]\\
 & \leq\exp\left(-K\right)\E\left[\exp\left(S_{1}\right)\right]\\
 & \leq\exp\left(-K\right)\exp\left(3\sigma^{2}\sum_{t=1}^{T}w_{t}\eta_{t}^{2}\right)\\
 & =\delta.
\end{align*}
Note that since $v_{t}+w_{t}\le w_{t-1}$
\begin{align*}
S_{1} & =\sum_{t=1}^{T}Z_{t}=\sum_{t=1}^{T}w_{t}\eta_{t}\left(f\left(x_{t}\right)-f\left(x^{*}\right)\right)-G^{2}\sum_{t=1}^{T}w_{t}\eta_{t}^{2}+\sum_{t=1}^{T}\left(w_{t}\breg\left(x^{*},x_{t+1}\right)-(v_{t}+w_{t})\breg\left(x^{*},x_{t}\right)\right)\\
 & \ge\sum_{t=1}^{T}w_{t}\eta_{t}\left(f\left(x_{t}\right)-f\left(x^{*}\right)\right)-G^{2}\sum_{t=1}^{T}w_{t}\eta_{t}^{2}+\sum_{t=1}^{T}\left(w_{t}\breg\left(x^{*},x_{t+1}\right)-w_{t-1}\breg\left(x^{*},x_{t}\right)\right)\\
 & =\sum_{t=1}^{T}w_{t}\eta_{t}\left(f\left(x_{t}\right)-f\left(x^{*}\right)\right)-G^{2}\sum_{t=1}^{T}w_{t}\eta_{t}^{2}+w_{T}\breg\left(x^{*},x_{T+1}\right)-w_{0}\breg\left(x^{*},x_{1}\right).
\end{align*}
Therefore, with probability at least $1-\delta$, we have
\[
\sum_{t=1}^{T}w_{t}\eta_{t}\left(f\left(x_{t}\right)-f\left(x^{*}\right)\right)+w_{T}\breg\left(x^{*},x_{T+1}\right)\leq w_{0}\breg\left(x^{*},x_{1}\right)+\left(G^{2}+3\sigma^{2}\right)\sum_{t=1}^{T}w_{t}\eta_{t}^{2}+\log\left(\frac{1}{\delta}\right).
\]
\end{proof}

With the above result in hand, we complete the convergence analysis
by showing how to define the sequence $\left\{ w_{t}\right\} $ with
the desired properties. Theorem \ref{thm:md-convergence} can be obtained
from the two following corollaries.
\begin{cor}
\label{cor:md-convergence-constant-stepsize}Suppose we run the Stochastic
Mirror Descent algorithm with fixed step sizes $\eta_{t}=\frac{\eta}{\sqrt{T}}$.
Let $w_{T}=\frac{1}{12\sigma^{2}\eta^{2}}$ and $w_{t-1}=w_{t}+\frac{6}{T}\sigma^{2}\eta^{2}w_{t}^{2}$
for all $1\leq t\leq T$. The sequence $\left\{ w_{t}\right\} $ satisfies
the conditions required by Corollary \ref{cor:md-convergence}. By
Corollary \ref{cor:md-convergence}, for any $\delta>0$, the following
events hold with probability at least $1-\delta$: $\breg\left(x^{*},x_{T+1}\right)\leq2\breg\left(x^{*},x_{1}\right)+12\left(G^{2}+\sigma^{2}\left(1+\log\left(\frac{1}{\delta}\right)\right)\right)\eta^{2}$,
and
\begin{align*}
\frac{1}{T}\sum_{t=1}^{T}\left(f\left(x_{t}\right)-f\left(x^{*}\right)\right) & \le\frac{1}{\sqrt{T}}\frac{2\breg\left(x^{*},x_{1}\right)}{\eta}+\frac{12}{\sqrt{T}}\left(G^{2}+\sigma^{2}\left(1+\log\left(\frac{1}{\delta}\right)\right)\right)\eta.
\end{align*}
In particular, setting $\eta_{t}=\sqrt{\frac{\breg\left(x^{*},x_{1}\right)}{6\left(G^{2}+\sigma^{2}\left(1+\log\left(\frac{1}{\delta}\right)\right)\right)T}}$
we obtain the first case of Theorem \ref{thm:md-convergence}.
\end{cor}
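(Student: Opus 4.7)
The plan is to reduce the corollary to a direct invocation of Corollary~\ref{cor:md-convergence} once I verify the two preconditions on $\{w_t\}$, namely (i) $w_t\eta_t^2\le \tfrac{1}{4\sigma^2}$ and (ii) $w_t+6\sigma^2\eta_t^2 w_t^2\le w_{t-1}$. Condition (ii) is automatic from the defining recursion $w_{t-1}=w_t+\tfrac{6\sigma^2\eta^2}{T}w_t^2$ with $\eta_t^2=\eta^2/T$. Condition (i) will follow once I show the sequence does not blow up backwards; specifically that $w_0\le 2w_T$, since then $w_t\eta_t^2\le 2w_T\cdot\eta^2/T=\tfrac{1}{6\sigma^2 T}\le\tfrac{1}{4\sigma^2}$.

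The crux of the argument is therefore the uniform bound $w_t\le 2w_T$ for all $0\le t\le T$. The naive approach of comparing successive ratios gives $w_0\le e\,w_T$, which is too loose. Instead, I would rewrite the recursion as $w_{t-1}-w_t=\tfrac{6\sigma^2\eta^2}{T}w_t^2$ and divide by $w_{t-1}w_t$, obtaining
\[
\frac{1}{w_t}-\frac{1}{w_{t-1}}=\frac{6\sigma^2\eta^2}{T}\cdot\frac{w_t}{w_{t-1}}\le\frac{6\sigma^2\eta^2}{T},
\]
where the inequality uses monotonicity $w_t\le w_{t-1}$ (immediate from the recursion). Telescoping over $t=1,\dots,T$ yields $\tfrac{1}{w_T}-\tfrac{1}{w_0}\le 6\sigma^2\eta^2$, and plugging in $w_T=\tfrac{1}{12\sigma^2\eta^2}$ gives $\tfrac{1}{w_0}\ge 6\sigma^2\eta^2$, i.e., $w_0\le\tfrac{1}{6\sigma^2\eta^2}=2w_T$. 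This is the main obstacle, and the telescoping of reciprocals is the key trick.

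With the two conditions verified, I would apply Corollary~\ref{cor:md-convergence}. On the left-hand side I lower bound $\sum_t w_t\eta_t(f(x_t)-f(x^*))\ge w_T\cdot\tfrac{\eta}{\sqrt T}\sum_t(f(x_t)-f(x^*))$ using $w_t\ge w_T$ (monotonicity). On the right-hand side I use $w_0\le 2w_T$ and
\[
\sum_{t=1}^{T}w_t\eta_t^2\le 2w_T\cdot T\cdot\frac{\eta^2}{T}=2w_T\eta^2=\frac{1}{6\sigma^2}.
\]
Dividing through by $w_T=\tfrac{1}{12\sigma^2\eta^2}$ and rearranging then produces the stated bounds on both $\breg(x^*,x_{T+1})$ and $\tfrac{1}{T}\sum_t(f(x_t)-f(x^*))$, with constants of at most $2(G^2+3\sigma^2)\eta^2+12\sigma^2\eta^2\log(1/\delta)$, which is majorized by $12(G^2+\sigma^2(1+\log(1/\delta)))\eta^2$ as claimed.

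Finally, to derive the first case of Theorem~\ref{thm:md-convergence}, I would substitute $\eta=\sqrt{\breg(x^*,x_1)/(6(G^2+\sigma^2(1+\log(1/\delta))))}$, which balances the two terms $\tfrac{2\breg(x^*,x_1)}{\eta\sqrt T}$ and $\tfrac{12(G^2+\sigma^2(1+\log(1/\delta)))\eta}{\sqrt T}$ and yields the claimed $\tfrac{4\sqrt 6}{\sqrt T}\sqrt{\breg(x^*,x_1)(G^2+\sigma^2(1+\log(1/\delta)))}$ rate. The same substitution in the $\breg(x^*,x_{T+1})$ bound gives $\breg(x^*,x_{T+1})\le 4\breg(x^*,x_1)$, matching the theorem. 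No additional concentration argument beyond the one already supplied by Corollary~\ref{cor:md-convergence} is needed.
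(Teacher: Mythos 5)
Your proof is correct and follows essentially the same route as the paper: verify the two preconditions of Corollary~\ref{cor:md-convergence} for the recursively defined weights, then substitute. The only cosmetic difference is that you establish $w_0\le 2w_T$ by directly telescoping $\tfrac{1}{w_t}-\tfrac{1}{w_{t-1}}\le \tfrac{6\sigma^2\eta^2}{T}$, whereas the paper proves the equivalent pointwise bound $w_t\le \left(C+\tfrac{6}{T}\sigma^2\eta^2 t\right)^{-1}$ by induction — but the paper's inductive step already hinges on exactly this reciprocal-telescoping identity, so the two derivations are the same computation in different clothing.
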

\begin{proof}[Proof of Corollary (\ref{cor:md-convergence-constant-stepsize}) ]
Recall from Corollary \ref{cor:md-convergence} that the sequence
$\left\{ w_{t}\right\} $ needs to satisfy the following conditions
for all $1\leq t\leq T$:
\begin{align*}
w_{t}+6\sigma^{2}\eta_{t}^{2} & w_{t}^{2}\leq w_{t-1}\\
w_{t}\eta_{t}^{2} & \leq\frac{1}{4\sigma^{2}}.
\end{align*}
Let $C=6\sigma^{2}\eta^{2}$. We set $w_{T}=\frac{1}{C+6\sigma^{2}\eta^{2}}=\frac{1}{2C}$.
For $1\leq t\leq T$, we set $w_{t}$ so that the first condition
holds with equality
\[
w_{t-1}=w_{t}+6\sigma^{2}w_{t}^{2}\eta_{t}^{2}=w_{t}+\frac{6}{T}\sigma^{2}\eta^{2}w_{t}^{2}.
\]
We can show by induction that, for every $1\leq t\leq T$, we have
\[
w_{t}\leq\frac{1}{C+\frac{6}{T}\sigma^{2}\eta^{2}t}.
\]
The base case $t=T$ follows from the definition of $w_{T}$. Consider
$1\leq t\le T$. Using the definition of $w_{t-1}$ and the inductive
hypothesis, we obtain 
\begin{align*}
w_{t-1} & =w_{t}+\frac{6}{T}\sigma^{2}\eta^{2}w_{t}^{2}\\
 & \leq\frac{1}{C+\frac{6}{T}\sigma^{2}\eta^{2}t}+\frac{6\sigma^{2}\eta^{2}}{T\left(C+\frac{6}{T}\sigma^{2}\eta^{2}t\right)^{2}}\\
 & \leq\frac{1}{C+\frac{6}{T}\sigma^{2}\eta^{2}t}+\frac{\left(C+\frac{6}{T}\sigma^{2}\eta^{2}t\right)-\left(C+\frac{6}{T}\sigma^{2}\eta^{2}(t-1)\right)}{\left(C+\frac{6}{T}\sigma^{2}\eta^{2}\left(t-1\right)\right)\left(C+\frac{6}{T}\sigma^{2}\eta^{2}t\right)}\\
 & =\frac{1}{C+\frac{6}{T}\sigma^{2}\eta^{2}\left(t-1\right)}
\end{align*}
as needed.

Using this fact, we now show that $\left\{ w_{t}\right\} $ satisfies
the second condition. Indeed, for every $1\leq t\leq T$, we have
\[
w_{t}\eta_{t}^{2}=w_{t}\frac{\eta^{2}}{T}\leq\frac{\eta^{2}}{6\sigma^{2}\eta^{2}t}=\frac{1}{6\sigma^{2}}.
\]

Thus, by Corollary \ref{cor:md-convergence}, with probability $\geq1-\delta$,
we have
\begin{align*}
\sum_{t=1}^{T}w_{t}\eta_{t}\left(f\left(x_{t}\right)-f\left(x^{*}\right)\right)+w_{T}\breg\left(x^{*},x_{T+1}\right) & \leq w_{0}\breg\left(x^{*},x_{1}\right)+\left(G^{2}+3\sigma^{2}\right)\sum_{t=1}^{T}w_{t}\eta_{t}^{2}+\log\left(\frac{1}{\delta}\right).
\end{align*}
Note that $w_{T}=\frac{1}{2C}$ and $\frac{1}{2C}\leq w_{t}\leq\frac{1}{C}$
for all $0\leq t\leq T$. Thus we obtain
\begin{align*}
\frac{\eta}{\sqrt{T}}\sum_{t=1}^{T}\left(f\left(x_{t}\right)-f\left(x^{*}\right)\right)+\breg\left(x^{*},x_{T+1}\right) & \leq2\breg\left(x^{*},x_{1}\right)+2\left(G^{2}+3\sigma^{2}\right)\eta^{2}+2C\log\left(\frac{1}{\delta}\right)\\
 & =2\breg\left(x^{*},x_{1}\right)+2\left(G^{2}+3\sigma^{2}\right)\eta^{2}+12\sigma^{2}\log\left(\frac{1}{\delta}\right)\eta^{2}\\
 & \le2\breg\left(x^{*},x_{1}\right)+12\left(G^{2}+\sigma^{2}\left(1+\log\left(\frac{1}{\delta}\right)\right)\right)\eta^{2}.
\end{align*}
Thus we have
\[
\frac{1}{T}\sum_{t=1}^{T}\left(f\left(x_{t}\right)-f\left(x^{*}\right)\right)\leq\frac{1}{\sqrt{T}}\left(\frac{2\breg\left(x^{*},x_{1}\right)}{\eta}+12\left(G^{2}+\sigma^{2}\left(1+\log\left(\frac{1}{\delta}\right)\right)\right)\eta\right)
\]
and
\[
\breg\left(x^{*},x_{T+1}\right)\leq2\breg\left(x^{*},x_{1}\right)+12\left(G^{2}+\sigma^{2}\left(1+\log\left(\frac{1}{\delta}\right)\right)\right)\eta^{2}.
\]
\end{proof}

The analysis extends to the setting where the $T$ is not known and
we use the step sizes $\eta_{t}=\frac{\eta}{\sqrt{t}}$.
\begin{cor}
\label{cor:md-convergence-varying-stepsize}Suppose we run the Stochastic
Mirror Descent algorithm with time-varying step sizes $\eta_{t}=\frac{\eta}{\sqrt{t}}$.
Let $w_{T}=\frac{1}{12\sigma^{2}\eta^{2}\left(\sum_{t=1}^{T}\frac{1}{t}\right)}$
and $w_{t-1}=w_{t}+6\sigma^{2}\eta_{t}^{2}w_{t}^{2}$ for all $1\leq t\leq T$.
The sequence $\left\{ w_{t}\right\} $ satisfies the conditions required
by Corollary \ref{cor:md-convergence}. By Corollary \ref{cor:md-convergence},
for any $\delta>0$, the following events hold with probability at
least $1-\delta$: $\breg\left(x^{*},x_{T+1}\right)\leq2\breg\left(x^{*},x_{1}\right)+12\left(G^{2}+\sigma^{2}\left(1+\log\left(\frac{1}{\delta}\right)\right)\right)\eta^{2}(1+\log T)$,
and
\begin{align*}
\frac{1}{T}\sum_{t=1}^{T}\left(f\left(x_{t}\right)-f\left(x^{*}\right)\right) & \le\frac{1}{\sqrt{T}}\frac{2\breg\left(x^{*},x_{1}\right)}{\eta}+\frac{12}{\sqrt{T}}\left(G^{2}+\sigma^{2}\left(1+\log\left(\frac{1}{\delta}\right)\right)\right)\eta(1+\log T).
\end{align*}
In particular, setting $\eta_{t}=\sqrt{\frac{\breg\left(x^{*},x_{1}\right)}{6\left(G^{2}+\sigma^{2}\left(1+\ln\left(\frac{1}{\delta}\right)\right)\right)t}}$
we obtain the second case of Theorem \ref{thm:md-convergence}.
\end{cor}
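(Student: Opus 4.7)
The plan is to proceed in direct parallel to the proof of Corollary \ref{cor:md-convergence-constant-stepsize}, with a harmonic-like weighting replacing the linear one. Let $H_t := \sum_{s=1}^t 1/s$, so that $w_T = 1/(12\sigma^2\eta^2 H_T)$ and the recursion reads $w_{t-1} = w_t + (6\sigma^2\eta^2/t)\,w_t^2$. The first hypothesis of Corollary \ref{cor:md-convergence}, namely $w_t + 6\sigma^2\eta_t^2 w_t^2 \le w_{t-1}$, holds with equality by construction, so only the step-size condition $w_t \eta_t^2 \le 1/(4\sigma^2)$ remains to be verified before Corollary \ref{cor:md-convergence} can be invoked.

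The core technical step, and the main obstacle, is a sharp inductive bound on $w_t$. I claim $w_t \le 1/C_t$ where $C_t := 6\sigma^2\eta^2(H_T + H_t)$. The base case $t=T$ holds with equality by the definition of $w_T$. For the inductive step, using $C_t - C_{t-1} = 6\sigma^2\eta^2/t$ and the inductive hypothesis,
\[
w_{t-1} = w_t\left(1 + \tfrac{6\sigma^2\eta^2}{t}\, w_t\right) \le \tfrac{1}{C_t}\left(1 + \tfrac{C_t - C_{t-1}}{C_t}\right) = \tfrac{2C_t - C_{t-1}}{C_t^2},
\]
and $(2C_t - C_{t-1})/C_t^2 \le 1/C_{t-1}$ reduces to $(C_t - C_{t-1})^2 \ge 0$. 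Applying the same computation once more from $w_1$ gives $w_0 \le 1/(6\sigma^2\eta^2 H_T) = 2 w_T$. The step-size condition is then immediate: $w_t \eta_t^2 = (\eta^2/t)/C_t \le 1/(12\sigma^2) \le 1/(4\sigma^2)$, since $t(H_T + H_t) \ge 2$ for all $t,T \ge 1$.

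With both hypotheses of Corollary \ref{cor:md-convergence} verified, the rest is bookkeeping. I would bound
\[
\sum_{t=1}^T w_t\eta_t^2 \le \frac{1}{6\sigma^2}\sum_{t=1}^T \frac{1}{t(H_T+H_t)} \le \frac{1}{6\sigma^2 H_T}\sum_{t=1}^T \frac{1}{t} = \frac{1}{6\sigma^2},
\]
and observe that since both $w_t$ and $\eta_t$ are nonincreasing in $t$, $w_t\eta_t \ge w_T\eta_T = w_T \eta/\sqrt{T}$, which lower-bounds $\sum_{t=1}^T w_t\eta_t(f(x_t)-f(x^*))$ by $(w_T\eta/\sqrt{T})\sum_{t=1}^T (f(x_t)-f(x^*))$. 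Dividing the conclusion of Corollary \ref{cor:md-convergence} by $w_T \eta/\sqrt{T}$, substituting $1/w_T = 12\sigma^2\eta^2 H_T$ and $H_T \le 1 + \log T$, and absorbing constants yields exactly the stated bound on the average function-value gap; dividing the same inequality by $w_T$ and using $w_0/w_T \le 2$ yields the stated bound on $\breg(x^*, x_{T+1})$. Plugging in the optimal $\eta = \sqrt{\breg(x^*,x_1)/(6(G^2 + \sigma^2(1+\log(1/\delta))))}$ finally recovers Theorem \ref{thm:md-convergence}(2).
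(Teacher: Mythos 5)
Your proposal is correct and is essentially the paper's own argument: you set $C_t=6\sigma^2\eta^2(H_T+H_t)$, which is exactly the paper's $C+M_t$, prove $w_t\le 1/C_t$ by the same telescoping induction, verify $w_t\eta_t^2\le 1/(4\sigma^2)$, invoke Corollary \ref{cor:md-convergence}, and finish with the same bookkeeping using $w_T\le w_t\le 2w_T$ and $H_T\le 1+\log T$. The only (cosmetic) difference is that you collapse the inductive inequality into the single identity $(C_t-C_{t-1})^2\ge 0$ rather than the paper's two-step telescoping bound; the content is identical.
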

\begin{proof}[Proof of Corollary (\ref{cor:md-convergence-varying-stepsize})]
Recall from Corollary \ref{cor:md-convergence} that the sequence
$\left\{ w_{t}\right\} $ needs to satisfy the following conditions
for all $1\leq t\leq T$:
\begin{align*}
w_{t}+6\sigma^{2}\eta_{t}^{2} & w_{t}^{2}\leq w_{t-1}\\
w_{t}\eta_{t}^{2} & \leq\frac{1}{4\sigma^{2}}.
\end{align*}
Let $M_{t}=6\sigma^{2}\sum_{i=1}^{t}\eta_{i}^{2}$ and $C=M_{T}=6\sigma^{2}\eta^{2}\left(\sum_{t=1}^{T}\frac{1}{t}\right)$.
We set $w_{T}=\frac{1}{C+M_{T}}$. For $1\leq t\leq T$, we set $w_{t}$
so that the first condition holds with equality
\[
w_{t-1}=w_{t}+6\sigma^{2}\eta_{t}^{2}w_{t}^{2}.
\]
We can show by induction that, for every $1\leq t\leq T$, we have
\[
w_{t}\leq\frac{1}{C+M_{t}}.
\]
The base case $t=T$ follows from the definition of $w_{T}$. Consider
$1\leq t\le T$. Using the definition of $w_{t}$ and the inductive
hypothesis, we obtain 
\begin{align*}
w_{t-1} & =w_{t}+6\sigma^{2}\eta_{t}^{2}w_{t}^{2}\\
 & \leq\frac{1}{C+M_{t}}+\frac{6\sigma^{2}\eta_{t}^{2}}{\left(C+M_{t}\right)^{2}}\\
 & \le\frac{1}{C+M_{t}}+\frac{\left(C+M_{t}\right)-\left(C+M_{t-1}\right)}{\left(C+M_{t}\right)\left(C+M_{t-1}\right)}\\
 & =\frac{1}{C+M_{t-1}}
\end{align*}
as needed.

Using this fact, we now show that $\left\{ w_{t}\right\} $ satisfies
the second condition. For every $1\leq t\leq T$, we have
\[
w_{t}\eta_{t}^{2}\leq\frac{\eta_{t}^{2}}{C}\le\frac{\eta_{t}^{2}}{6\sigma^{2}\eta_{t}^{2}}=\frac{1}{6\sigma^{2}}
\]
as needed.

Thus, by Corollary \ref{cor:md-convergence}, with probability $\geq1-\delta$,
we have
\begin{align*}
\sum_{t=1}^{T}w_{t}\eta_{t}\left(f\left(x_{t}\right)-f\left(x^{*}\right)\right)+w_{T}\breg\left(x^{*},x_{T+1}\right) & \leq w_{0}\breg\left(x^{*},x_{1}\right)+\left(G^{2}+3\sigma^{2}\right)\sum_{t=1}^{T}w_{t}\eta_{t}^{2}+\log\left(\frac{1}{\delta}\right).
\end{align*}
Note that $w_{T}=\frac{1}{2C}$ and $\frac{1}{2C}\leq w_{t}\leq\frac{1}{C}$
for all $1\leq t\leq T$. Thus we obtain
\[
\frac{1}{2C}\eta_{T}\sum_{t=1}^{T}\left(f\left(x_{t}\right)-f\left(x^{*}\right)\right)+\frac{1}{2C}\breg\left(x^{*},x_{T+1}\right)\leq\frac{1}{C}\breg\left(x^{*},x_{1}\right)+\left(G^{2}+3\sigma^{2}\right)\frac{1}{C}\sum_{t=1}^{T}\eta_{t}^{2}+\log\left(\frac{1}{\delta}\right).
\]
Plugging in $\eta_{t}=\frac{\eta}{\sqrt{t}}$ and simplifying, we
obtain
\begin{align*}
\frac{\eta}{\sqrt{T}}\sum_{t=1}^{T}\left(f\left(x_{t}\right)-f\left(x^{*}\right)\right)+\breg\left(x^{*},x_{T+1}\right) & \leq2\breg\left(x^{*},x_{1}\right)+\left(2G^{2}+6\sigma^{2}\right)\eta^{2}\left(\sum_{t=1}^{T}\frac{1}{t}\right)+2C\log\left(\frac{1}{\delta}\right)\\
 & =2\breg\left(x^{*},x_{1}\right)+\left(2G^{2}+6\sigma^{2}\left(1+2\log\left(\frac{1}{\delta}\right)\right)\right)\eta^{2}\left(\sum_{t=1}^{T}\frac{1}{t}\right).
\end{align*}
Thus we have
\[
\frac{1}{T}\sum_{t=1}^{T}\left(f\left(x_{t}\right)-f\left(x^{*}\right)\right)\leq\frac{1}{\sqrt{T}}\left(\frac{2\breg\left(x^{*},x_{1}\right)}{\eta}+\left(2G^{2}+6\sigma^{2}\left(1+2\log\left(\frac{1}{\delta}\right)\right)\right)\eta\left(\sum_{t=1}^{T}\frac{1}{t}\right)\right)
\]
and
\[
\breg\left(x^{*},x_{T+1}\right)\leq2\breg\left(x^{*},x_{1}\right)+\left(2G^{2}+6\sigma^{2}\left(1+2\log\left(\frac{1}{\delta}\right)\right)\right)\eta^{2}\left(\sum_{t=1}^{T}\frac{1}{t}\right).
\]
\end{proof}

\subsection{Accelerated Stochastic Mirror Descent}

The convergence of Algorithm \ref{alg:acc-md} is given in the following
Theorem.
\begin{thm}
\label{thm:acc-md-convergence}Assume $f$ satisfies Assumptions (1),
(2), (3) and condition (\ref{eq:acc-md-condition}), with probability
at least $1-\delta$, 

(1) Setting $\eta_{t}=\min\left\{ \frac{t}{4L},\frac{\sqrt{\breg\left(x^{*},z_{0}\right)}t}{\sqrt{6}\sqrt{G^{2}+\sigma^{2}\left(1+\log\left(\frac{1}{\delta}\right)\right)}T^{3/2}}\right\} $,
then $\breg\left(x^{*},z_{T}\right)\leq4\breg\left(x^{*},z_{0}\right)$
and
\begin{align*}
f\left(y_{T}\right)-f\left(x^{*}\right) & \leq\frac{16L\breg\left(x^{*},z_{0}\right)}{T^{2}}+\frac{8\sqrt{6}}{\sqrt{T}}\sqrt{\breg\left(x^{*},z_{0}\right)\left(G^{2}+\left(1+\log\left(\frac{1}{\delta}\right)\right)\sigma^{2}\right)}.
\end{align*}

(2) Setting $\eta_{t}=\min\left\{ \frac{t}{4L},\frac{\sqrt{\breg\left(x^{*},z_{0}\right)}}{\sqrt{6}\sqrt{G^{2}+\sigma^{2}\left(1+\log\left(\frac{1}{\delta}\right)\right)}t^{1/2}}\right\} $,
then $\breg\left(x^{*},z_{T}\right)\leq2(2+\log T)\breg\left(x^{*},z_{0}\right)$
and
\begin{align*}
f\left(y_{T}\right)-f\left(x^{*}\right) & \leq\frac{16L\breg\left(x^{*},z_{0}\right)}{T^{2}}+\frac{4\sqrt{6}(2+\log T)}{\sqrt{T}}\sqrt{\breg\left(x^{*},z_{0}\right)\left(G^{2}+\left(1+\log\left(\frac{1}{\delta}\right)\right)\sigma^{2}\right)}.
\end{align*}

\end{thm}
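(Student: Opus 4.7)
}
The plan is to start from the high-probability inequality in Corollary \ref{cor:acc-md-convergence}, make the LHS telescope in both the function-value terms and the Bregman-divergence terms, and then plug in the two specified step-size schedules. Write $\Delta_t^y := f(y_t)-f(x^*)$, $K := G^2 + \sigma^2(1+\log(1/\delta))$, $D := \breg(x^*,z_0)$, and $A_t := w_t \eta_t/\alpha_t$, $B_t := w_t \eta_t(1-\alpha_t)/\alpha_t$. Because $\alpha_t = 2/(t+1)$, we have $A_t = w_t\eta_t(t+1)/2$ and $B_t = w_t\eta_t(t-1)/2$. Writing out $\sum_{t=1}^T (A_t\Delta_t^y - B_t\Delta_{t-1}^y)$ and rearranging indices shows that if the weights are chosen so that $A_{t-1} \ge B_t$ for all $t\ge 2$ (equivalently $w_t\eta_t(t-1) \le w_{t-1}\eta_{t-1}\,t$), then the sum is bounded below by $A_T\Delta_T^y - B_1\Delta_0^y = w_T\eta_T(T+1)\Delta_T^y/2$, using $\alpha_1 = 1$ so $B_1 = 0$. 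Similarly, the Bregman telescope encoded in the definition of $Z_t$ produces the clean form $w_T\breg(x^*,z_T) - w_0\breg(x^*,z_0)$ provided $w_{t-1} \ge w_t + 6\sigma^2 w_t^2\eta_t^2$ for all $t\ge 1$.

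For case (1), with $\eta_t = c' t$ where $c' := \min\{1/(4L),\sqrt{D/(6KT^3)}\}$, I would take $w_t$ essentially constant, mirroring Corollary \ref{cor:md-convergence-constant-stepsize}. Concretely, set $w_T := 1/(12\sigma^2 (c')^2 T^2)$ and define $w_{t-1} := w_t + 6\sigma^2 w_t^2\eta_t^2$ backward. An induction argument (parallel to the MD case) shows $w_T \le w_t \le 2w_T$, so the function-value telescoping condition $w_t\eta_t(t+1) \ge w_{t+1}\eta_{t+1}t$ follows from $\eta_t = c't$ and $w_t$ being nearly constant. Since $c' \le 1/(4L)$ guarantees $L\alpha_t\eta_t \le 1/2$, one has $\sum_t w_t \eta_t^2/(1-L\alpha_t\eta_t) \le O(w_T (c')^2 T^3)$. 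Plugging into Corollary \ref{cor:acc-md-convergence} and dividing by the coefficient $w_T c' T(T+1)/2$ of $\Delta_T^y$ yields
\[
\Delta_T^y \;\lesssim\; \frac{D}{c' T^2} \;+\; c' T \cdot K,
\]
which is minimized at $c' = \sqrt{D/(KT^3)}$ and otherwise pinned at $c' = 1/(4L)$; this recovers the two terms $16LD/T^2$ and $\sqrt{DK/T}$ in the theorem. The bound $\breg(x^*,z_T) \le 4D$ falls out from the same inequality using that $w_T$ multiplies $\breg(x^*,z_T)$ on the LHS while $w_0 \le 2w_T$ multiplies $D$ on the RHS.

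For case (2), with $\eta_t = \min\{t/(4L), \sqrt{D}/(\sqrt{6K}\,t^{1/2})\}$, the step size is linear in $t$ for small $t$ and proportional to $1/\sqrt{t}$ for large $t$. The function-value telescoping condition $w_t\eta_t(t-1) \le w_{t-1}\eta_{t-1}t$ forces $w_t\eta_t$ to be roughly proportional to $t$, so I would set $w_t = t\,u/\eta_t$ for a small parameter $u$; equivalently $w_t\eta_t = tu$. The concentration condition $w_t\eta_t^2/(1-L\alpha_t\eta_t) \le 1/(4\sigma^2)$ becomes $u t \eta_t \le 1/(8\sigma^2)$ at the worst index, which pins down $u$. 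The Bregman-telescoping condition $w_{t-1} \ge w_t + 6\sigma^2 w_t^2\eta_t^2$ needs a small correction: one either perturbs the backward recursion as in case (1) or accepts a multiplicative $(2+\log T)$ factor from the harmonic-type sum $\sum_t \eta_t^2 \sim (\sqrt{D/K})^2 \sum_t 1/t$. Bounding $\sum_t w_t\eta_t^2/(1-L\alpha_t\eta_t) \lesssim u \sum_t t\eta_t$ and dividing by the leading coefficient $w_T\eta_T(T+1)/2 = u T(T+1)/2$ gives the claimed rate.

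\paragraph{Main obstacle.} The hardest step is simultaneously satisfying the three requirements on $\{w_t\}$ in the time-varying case (2): the concentration bound $w_t\eta_t^2/(1-L\alpha_t\eta_t) \le 1/(4\sigma^2)$, the Bregman telescoping $w_{t-1} \ge w_t + 6\sigma^2 w_t^2\eta_t^2$, and the function-value telescoping $w_t\eta_t \propto t$. In case (1) the first two can be enforced by a backward recursion as in the MD proof while the third is automatic from $\eta_t\propto t$; in case (2) the step size changes regime, so $w_t\eta_t \propto t$ forces $w_t$ to grow in the large-$t$ regime, which conflicts with $w_{t-1} \ge w_t + v_t$ unless $v_t$ is driven to zero. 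Tracking this trade-off carefully while also collecting the $\log T$ factor from $\sum_t 1/t$ and keeping the polylog in $1/\delta$ additive (not multiplicative in $\log T$) is the delicate part, and is presumably where the two helper corollaries in the appendix do the bookkeeping.
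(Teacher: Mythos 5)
Your overall structure matches the paper's: start from Corollary \ref{cor:acc-md-convergence}, telescope both the function-value and Bregman terms, and instantiate the two step-size schedules by constructing $\{w_t\}$ via a backward recursion (this is exactly what Corollaries \ref{cor:acc-md-convergence-final} and \ref{cor:acc-md-convergence-final-vary} do). Your treatment of case (1) is essentially the paper's argument, modulo a minor normalization slip: with $\eta_t = c' t$ the correct scaling is $w_T \propto 1/(\sigma^2 (c')^2 T^3)$, not $1/(\sigma^2 (c')^2 T^2)$, since $\sum_{t\le T}\eta_t^2 \sim (c')^2 T^3/3$; the paper takes $w_T = \tfrac{1}{3\sigma^2\eta^2 T(T+1)(2T+1)}$.

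For case (2) there is a genuine gap in your plan. You assert that the function-value condition $w_{t-1}\eta_{t-1}t \ge w_t\eta_t(t-1)$ ``forces $w_t\eta_t$ to be roughly proportional to $t$'' and then set $w_t = tu/\eta_t$, which makes $w_t$ grow like $t^{3/2}$ in the $\eta_t\propto 1/\sqrt{t}$ regime and, as you observe, clashes with the Bregman telescoping $w_{t-1}\ge w_t + 6\sigma^2\eta_t^2 w_t^2$. But the condition does \emph{not} force $w_t\eta_t\propto t$; it only requires $w_t\eta_t/t$ to be nonincreasing. If $w_t$ is nonincreasing (which the backward recursion gives for free) and $\eta_t/t$ is nonincreasing, then $w_t\eta_t/t$ is nonincreasing. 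For the schedule $\eta_t = \min\{t/(4L),\,\eta/\sqrt{t}\}$, one has $\eta_t/t = \min\{1/(4L),\,\eta/t^{3/2}\}$, which is nonincreasing in $t$; hence (C3) holds with the same nonincreasing weights as in case (1), and the dilemma you flag never arises. The paper's Corollary \ref{cor:acc-md-convergence-final-vary} uses exactly this observation (splitting into the two regimes of the $\min$): it keeps $w_{t-1} = w_t + 6\sigma^2\eta_t^2 w_t^2$, sets $w_T = \tfrac{1}{12\sigma^2\sum_{t\le T}\eta_t^2}$, and checks $\frac{\eta_{t-1}}{\alpha_{t-1}} \ge \frac{\eta_t(1-\alpha_t)}{\alpha_t}$ directly. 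Once you use nonincreasing weights, the $\log T$ comes out cleanly from $\sum_{t}\eta_t^2 \le \eta^2(1+\log T)$ and stays additive relative to $\log(1/\delta)$, with no extra bookkeeping needed.
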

\begin{proof}[Proof of Lemma \ref{lem:acc-md-basic-analysis}]
 Starting with smoothness, we obtain
\begin{align*}
f\left(y_{t}\right) & \le f\left(x_{t}\right)+\left\langle \nabla f\left(x_{t}\right),y_{t}-x_{t}\right\rangle +G\left\Vert y_{t}-x_{t}\right\Vert +\frac{\beta}{2}\left\Vert y_{t}-x_{t}\right\Vert ^{2}\ \forall x\in\dom\\
 & =f\left(x_{t}\right)+\left\langle \nabla f\left(x_{t}\right),y_{t-1}-x_{t}\right\rangle +\left\langle \nabla f\left(x_{t}\right),y_{t}-y_{t-1}\right\rangle +G\left\Vert y_{t}-x_{t}\right\Vert +\frac{\beta}{2}\left\Vert y_{t}-x_{t}\right\Vert ^{2}\\
 & =\left(1-\alpha_{t}\right)\underbrace{\left(f\left(x_{t}\right)+\left\langle \nabla f\left(x_{t}\right),y_{t-1}-x_{t}\right\rangle \right)}_{\text{convexity}}+\alpha_{t}\underbrace{\left(f\left(x_{t}\right)+\left\langle \nabla f\left(x_{t}\right),y_{t-1}-x_{t}\right\rangle \right)}_{\text{convexity}}\\
 & +\alpha_{t}\left\langle \nabla f\left(x_{t}\right),z_{t}-y_{t-1}\right\rangle +G\left\Vert y_{t}-x_{t}\right\Vert +\frac{\beta}{2}\left\Vert y_{t}-x_{t}\right\Vert ^{2}\\
 & \le\left(1-\alpha_{t}\right)f\left(y_{t-1}\right)+\alpha_{t}f\left(x_{t}\right)+\alpha_{t}\left\langle \nabla f\left(x_{t}\right),z_{t}-x_{t}\right\rangle +G\underbrace{\left\Vert y_{t}-x_{t}\right\Vert }_{=\alpha_{t}\left\Vert z_{t}-z_{t-1}\right\Vert }+\frac{\beta}{2}\underbrace{\left\Vert y_{t}-x_{t}\right\Vert ^{2}}_{=\alpha_{t}^{2}\left\Vert z_{t}-z_{t-1}\right\Vert ^{2}}\\
 & =\left(1-\alpha_{t}\right)f\left(y_{t-1}\right)+\alpha_{t}f\left(x_{t}\right)+\alpha_{t}\left\langle \nabla f\left(x_{t}\right),z_{t}-x_{t}\right\rangle +G\alpha_{t}\left\Vert z_{t}-z_{t-1}\right\Vert +\frac{\beta}{2}\alpha_{t}^{2}\left\Vert z_{t}-z_{t-1}\right\Vert ^{2}.
\end{align*}
 By the optimality condition for $z_{t}$,
\[
\eta_{t}\left\langle \widehat{\nabla}f(x_{t}),z_{t}-x^{*}\right\rangle \leq\left\langle \nabla_{x}\breg\left(z_{t},z_{t-1}\right),x^{*}-z_{t}\right\rangle =\breg\left(x^{*},z_{t-1}\right)-\breg\left(z_{t},z_{t-1}\right)-\breg\left(x^{*},z_{t}\right).
\]
Rearranging, we obtain
\begin{align*}
\breg\left(x^{*},z_{t}\right)-\breg\left(x^{*},z_{t-1}\right)+\breg\left(z_{t},z_{t-1}\right) & \leq\eta_{t}\left\langle \widehat{\nabla}f\left(x_{t}\right),x^{*}-z_{t}\right\rangle =\eta_{t}\left\langle \nabla f\left(x_{t}\right)+\xi_{t},x^{*}-z_{t}\right\rangle .
\end{align*}
By combining the two inequalities, we obtain
\begin{align*}
 & f\left(y_{t}\right)+\frac{\alpha_{t}}{\eta_{t}}\left(\breg\left(x^{*},z_{t}\right)-\breg\left(x^{*},z_{t-1}\right)+\breg\left(z_{t},z_{t-1}\right)\right)\\
 & \leq\left(1-\alpha_{t}\right)f\left(y_{t-1}\right)+\alpha_{t}\underbrace{\left(f\left(x_{t}\right)+\left\langle \nabla f\left(x_{t}\right),x^{*}-x_{t}\right\rangle \right)}_{\text{convexity}}\\
 & +G\alpha_{t}\left\Vert z_{t}-z_{t-1}\right\Vert +\frac{\beta}{2}\alpha_{t}^{2}\left\Vert z_{t}-z_{t-1}\right\Vert ^{2}+\alpha_{t}\left\langle \xi_{t},x^{*}-z_{t}\right\rangle \\
 & \leq\left(1-\alpha_{t}\right)f\left(y_{t-1}\right)+\alpha_{t}f\left(x^{*}\right)+G\alpha_{t}\left\Vert z_{t}-z_{t-1}\right\Vert +\frac{\beta}{2}\alpha_{t}^{2}\left\Vert z_{t}-z_{t-1}\right\Vert ^{2}+\alpha_{t}\left\langle \xi_{t},x^{*}-z_{t}\right\rangle .
\end{align*}
Subtracting $f\left(x^{*}\right)$ from both sides, rearranging, and
using that $\breg\left(z_{t},z_{t-1}\right)\geq\frac{1}{2}\left\Vert z_{t}-z_{t-1}\right\Vert ^{2}$,
we obtain
\begin{align*}
 & f\left(y_{t}\right)-f\left(x^{*}\right)+\frac{\alpha_{t}}{\eta_{t}}\left(\breg\left(x^{*},z_{t}\right)-\breg\left(x^{*},z_{t-1}\right)\right)\\
 & \leq\left(1-\alpha_{t}\right)\left(f\left(y_{t-1}\right)-f\left(x^{*}\right)\right)+\alpha_{t}\left\langle \xi_{t},x^{*}-z_{t}\right\rangle +G\alpha_{t}\left\Vert z_{t}-z_{t-1}\right\Vert -\alpha_{t}\frac{1-\beta\alpha_{t}\eta_{t}}{2\eta_{t}}\left\Vert z_{t}-z_{t-1}\right\Vert ^{2}\\
 & =\left(1-\alpha_{t}\right)\left(f\left(y_{t-1}\right)-f\left(x^{*}\right)\right)+\alpha_{t}\left\langle \xi_{t},x^{*}-z_{t-1}\right\rangle +\alpha_{t}\left\langle \xi_{t},z_{t}-z_{t-1}\right\rangle +G\alpha_{t}\left\Vert z_{t}-z_{t-1}\right\Vert -\alpha_{t}\frac{1-\beta\alpha_{t}\eta_{t}}{2\eta_{t}}\left\Vert z_{t}-z_{t-1}\right\Vert ^{2}\\
 & \le\left(1-\alpha_{t}\right)\left(f\left(y_{t-1}\right)-f\left(x^{*}\right)\right)+\alpha_{t}\left\langle \xi_{t},x^{*}-z_{t-1}\right\rangle +\alpha_{t}\left\Vert z_{t}-z_{t-1}\right\Vert \left(\left\Vert \xi_{t}\right\Vert _{*}+G\right)-\alpha_{t}\frac{1-\beta\alpha_{t}\eta_{t}}{2\eta_{t}}\left\Vert z_{t}-z_{t-1}\right\Vert ^{2}\\
 & \leq\left(1-\alpha_{t}\right)\left(f\left(y_{t-1}\right)-f\left(x^{*}\right)\right)+\alpha_{t}\left\langle \xi_{t},x^{*}-z_{t-1}\right\rangle +\frac{\alpha_{t}\eta_{t}}{2\left(1-\beta\alpha_{t}\eta_{t}\right)}\left(\left\Vert \xi_{t}\right\Vert _{*}+G\right)^{2}.
\end{align*}
Finally, we divide by $\frac{\alpha_{t}}{\eta_{t}}$, and obtain
\begin{align*}
 & \frac{\eta_{t}}{\alpha_{t}}\left(f\left(y_{t}\right)-f\left(x^{*}\right)\right)+\breg\left(x^{*},z_{t}\right)-\breg\left(x^{*},z_{t-1}\right)\\
 & \leq\frac{\eta_{t}}{\alpha_{t}}\left(1-\alpha_{t}\right)\left(f\left(y_{t-1}\right)-f\left(x^{*}\right)\right)+\eta_{t}\left\langle \xi_{t},x^{*}-z_{t-1}\right\rangle +\frac{\eta_{t}^{2}}{2\left(1-\beta\alpha_{t}\eta_{t}\right)}\left(\left\Vert \xi_{t}\right\Vert _{*}+G\right)^{2}\\
 & \leq\frac{\eta_{t}}{\alpha_{t}}\left(1-\alpha_{t}\right)\left(f\left(y_{t-1}\right)-f\left(x^{*}\right)\right)+\eta_{t}\left\langle \xi_{t},x^{*}-z_{t-1}\right\rangle +\frac{\eta_{t}^{2}}{1-\beta\alpha_{t}\eta_{t}}\left(\left\Vert \xi_{t}\right\Vert _{*}^{2}+G^{2}\right).
\end{align*}
\end{proof}

\begin{proof}[Proof of Theorem \ref{thm:acc-md-concentration-subgaussian}]
We proceed by induction on $t$. Consider the base case $t=T+1$,
the inequality trivially holds. Next, we consider $t\leq T$. We have
\begin{align}
\E\left[\exp\left(S_{t}\right)\mid\F_{t}\right] & =\E\left[\exp\left(Z_{t}+S_{t+1}\right)\mid\F_{t}\right]=\E\left[\E\left[\exp\left(Z_{t}+S_{t+1}\right)\mid\F_{t+1}\right]\mid\F_{t}\right].\label{eq:1-1}
\end{align}
We now analyze the inner expectation. Conditioned on $\F_{t+1}$,
$Z_{t}$ is fixed. Using the inductive hypothesis, we obtain
\begin{align}
\E\left[\exp\left(Z_{t}+S_{t+1}\right)\mid\F_{t+1}\right] & \leq\exp\left(Z_{t}\right)\exp\left(3\sigma^{2}\sum_{i=t+1}^{T}w_{i}\frac{\eta_{i}^{2}}{1-L\alpha_{i}\eta_{i}}\right).\label{eq:2-1}
\end{align}
Let $X_{t}=\eta_{t}\left\langle \xi_{t},x^{*}-z_{t-1}\right\rangle $.
By Lemma \ref{lem:acc-md-basic-analysis}, we have
\begin{align*}
 & \frac{\eta_{t}}{\alpha_{t}}\left(f\left(y_{t}\right)-f\left(x^{*}\right)\right)-\frac{\eta_{t}}{\alpha_{t}}\left(1-\alpha_{t}\right)\left(f\left(y_{t-1}\right)-f\left(x^{*}\right)\right)-\frac{\eta_{t}^{2}}{1-L\alpha_{t}\eta_{t}}G^{2}\\
 & \quad+\breg\left(x^{*},z_{t}\right)-\breg\left(x^{*},z_{t-1}\right)\\
 & \leq X_{t}+\frac{\eta_{t}^{2}}{\left(1-L\alpha_{t}\eta_{t}\right)}\left\Vert \xi_{t}\right\Vert _{*}^{2}
\end{align*}
and thus
\begin{align*}
Z_{t} & \leq w_{t}X_{t}+w_{t}\frac{\eta_{t}^{2}}{1-L\alpha_{t}\eta_{t}}\left\Vert \xi_{t}\right\Vert _{*}^{2}-v_{t}\breg\left(x^{*},z_{t-1}\right).
\end{align*}
Plugging into (\ref{eq:2-1}), we obtain
\begin{align*}
 & \E\left[\exp\left(Z_{t}+S_{t+1}\right)\mid\F_{t+1}\right]\\
 & \leq\exp\left(w_{t}X_{t}-v_{t}\breg\left(x^{*},z_{t-1}\right)+w_{t}\frac{\eta_{t}^{2}}{1-L\alpha_{t}\eta_{t}}\left\Vert \xi_{t}\right\Vert _{*}^{2}+3\sigma^{2}\sum_{i=t+1}^{T}w_{i}\frac{\eta_{i}^{2}}{1-L\alpha_{i}\eta_{i}}\right).
\end{align*}
Plugging into (\ref{eq:1-1}), we obtain
\begin{align}
 & \E\left[\exp\left(S_{t}\right)\mid\F_{t}\right]\nonumber \\
 & \leq\exp\left(-v_{t}\breg\left(x^{*},z_{t-1}\right)+3\sigma^{2}\sum_{i=t+1}^{T}w_{i}\frac{\eta_{i}^{2}}{1-L\alpha_{i}\eta_{i}}\right)\E\left[\exp\left(w_{t}X_{t}+w_{t}\frac{\eta_{t}^{2}}{1-L\alpha_{t}\eta_{t}}\left\Vert \xi_{t}\right\Vert _{*}^{2}\right)\mid\F_{t}\right].\label{eq:3-1}
\end{align}
Next, we analyze the expectation on the RHS of the above inequality.
Note that $X_{t}=\eta_{t}\left\langle \xi_{t},x^{*}-z_{t-1}\right\rangle $
and $\E\left[X_{t}\mid\F_{t}\right]=0$. Applying Lemma \ref{lem:helper-taylor-vector},
we obtain
\begin{align}
 & \E\left[\exp\left(w_{t}X_{t}+w_{t}\frac{\eta_{t}^{2}}{1-L\alpha_{t}\eta_{t}}\left\Vert \xi_{t}\right\Vert _{*}^{2}\right)\mid\F_{t}\right]\nonumber \\
 & \leq\exp\left(3\left(w_{t}^{2}\eta_{t}^{2}\left\Vert x^{*}-z_{t-1}\right\Vert ^{2}+w_{t}\frac{\eta_{t}^{2}}{1-L\alpha_{t}\eta_{t}}\right)\sigma^{2}\right)\nonumber \\
 & \leq\exp\left(3\left(2w_{t}^{2}\eta_{t}^{2}\breg\left(x^{*},z_{t-1}\right)+w_{t}\frac{\eta_{t}^{2}}{1-L\alpha_{t}\eta_{t}}\right)\sigma^{2}\right).\label{eq:4-1}
\end{align}
On the last line we used that $\breg\left(x^{*},z_{t-1}\right)\geq\frac{1}{2}\left\Vert x^{*}-z_{t-1}\right\Vert ^{2}$,
which follows from the strong convexity of $\psi$.

Plugging in (\ref{eq:4-1}) into (\ref{eq:3-1}) and using that $v_{t}=6\sigma^{2}w_{t}^{2}\eta_{t}^{2}$,
we obtain
\[
\E\left[\exp\left(S_{t}\right)\mid\F_{t}\right]\leq\exp\left(3\sigma^{2}\sum_{i=t}^{T}w_{i}\frac{\eta_{i}^{2}}{1-L\alpha_{i}\eta_{i}}\right)
\]
as needed.
\end{proof}

\begin{proof}[Proof of Corollary \ref{cor:acc-md-convergence}]
Let
\[
K=3\sigma^{2}\sum_{t=1}^{T}w_{t}\frac{\eta_{t}^{2}}{1-L\alpha_{t}\eta_{t}}+\log\left(\frac{1}{\delta}\right).
\]
By Theorem \ref{thm:acc-md-concentration-subgaussian} and Markov's
inequality, we have
\begin{align*}
\Pr\left[S_{1}\geq K\right] & \leq\Pr\left[\exp\left(S_{1}\right)\geq\exp\left(K\right)\right]\\
 & \leq\exp\left(-K\right)\E\left[\exp\left(S_{1}\right)\right]\\
 & \leq\exp\left(-K\right)\exp\left(3\sigma^{2}\sum_{t=1}^{T}w_{t}\frac{\eta_{t}^{2}}{1-L\alpha_{t}\eta_{t}}\right)\\
 & =\delta.
\end{align*}
Note that since $v_{t}+w_{t}\le w_{t-1}$
\begin{align*}
S_{1} & =\sum_{t=1}^{T}Z_{t}\\
 & =\sum_{t=1}^{T}w_{t}\left(\frac{\eta_{t}}{\alpha_{t}}\left(f\left(y_{t}\right)-f\left(x^{*}\right)\right)-\frac{\eta_{t}\left(1-\alpha_{t}\right)}{\alpha_{t}}\left(f\left(y_{t-1}\right)-f\left(x^{*}\right)\right)\right)\\
 & \quad+\sum_{t=1}^{T}w_{t}\breg\left(x^{*},z_{t}\right)-(v_{t}+w_{t})\breg\left(x^{*},z_{t-1}\right)-G^{2}\sum_{t=1}^{T}w_{t}\frac{\eta_{t}^{2}}{1-L\alpha_{t}\eta_{t}}\\
\ge & \sum_{t=1}^{T}w_{t}\left(\frac{\eta_{t}}{\alpha_{t}}\left(f\left(y_{t}\right)-f\left(x^{*}\right)\right)-\frac{\eta_{t}\left(1-\alpha_{t}\right)}{\alpha_{t}}\left(f\left(y_{t-1}\right)-f\left(x^{*}\right)\right)\right)\\
 & \quad+\sum_{t=1}^{T}w_{t}\breg\left(x^{*},z_{t}\right)-w_{t-1}\breg\left(x^{*},z_{t-1}\right)-G^{2}\sum_{t=1}^{T}w_{t}\frac{\eta_{t}^{2}}{1-L\alpha_{t}\eta_{t}}\\
= & \sum_{t=1}^{T}w_{t}\left(\frac{\eta_{t}}{\alpha_{t}}\left(f\left(y_{t}\right)-f\left(x^{*}\right)\right)-\frac{\eta_{t}\left(1-\alpha_{t}\right)}{\alpha_{t}}\left(f\left(y_{t-1}\right)-f\left(x^{*}\right)\right)\right)\\
 & \quad+w_{T}\breg\left(x^{*},z_{T}\right)-w_{0}\breg\left(x^{*},z_{0}\right)-G^{2}\sum_{t=1}^{T}w_{t}\frac{\eta_{t}^{2}}{1-L\alpha_{t}\eta_{t}}.
\end{align*}
Therefore, with probability at least $1-\delta$, we have
\begin{align*}
 & \sum_{t=1}^{T}w_{t}\left(\frac{\eta_{t}}{\alpha_{t}}\left(f\left(y_{t}\right)-f\left(x^{*}\right)\right)-\frac{\eta_{t}\left(1-\alpha_{t}\right)}{\alpha_{t}}\left(f\left(y_{t-1}\right)-f\left(x^{*}\right)\right)\right)+w_{T}\breg\left(x^{*},z_{T}\right)\\
 & \leq w_{0}\breg\left(x^{*},z_{0}\right)+\left(G^{2}+3\sigma^{2}\right)\sum_{t=1}^{T}w_{t}\frac{\eta_{t}^{2}}{1-L\alpha_{t}\eta_{t}}+\log\left(\frac{1}{\delta}\right).
\end{align*}
\end{proof}

\begin{cor}
\label{cor:acc-md-convergence-final}Suppose we run the Accelerated
Stochastic Mirror Descent algorithm with the standard choices $\alpha_{t}=\frac{2}{t+1}$
and $\eta_{t}=\eta t$ with $\eta\leq\frac{1}{4L}$. Let $w_{T}=\frac{1}{3\sigma^{2}\eta^{2}T\left(T+1\right)\left(2T+1\right)}$
and $w_{t-1}=w_{t}+6\sigma^{2}\eta_{t}^{2}w_{t}^{2}$ for all $1\leq t\leq T$.
The sequence $\left\{ w_{t}\right\} _{0\leq t\leq T}$ satisfies the
conditions required by Corollary \ref{cor:acc-md-convergence}. By
Corollary \ref{cor:acc-md-convergence}, with probability at least
$1-\delta$, $\breg\left(x^{*},z_{T}\right)\leq2\breg\left(x^{*},z_{0}\right)+12\left(G^{2}+\left(1+\log\left(\frac{1}{\delta}\right)\right)\sigma^{2}\right)\eta^{2}T^{3}$
and
\begin{align*}
f\left(y_{T}\right)-f\left(x^{*}\right)\le & \frac{4\breg\left(x^{*},z_{0}\right)}{\eta T^{2}}+24\left(G^{2}+\left(1+\log\left(\frac{1}{\delta}\right)\right)\sigma^{2}\right)\eta T
\end{align*}
In particular, setting $\eta=\min\left\{ \frac{1}{4L},\frac{\sqrt{\breg\left(x^{*},z_{0}\right)}}{\sqrt{6}\sqrt{G^{2}+\sigma^{2}\left(1+\log\left(\frac{1}{\delta}\right)\right)}T^{3/2}}\right\} $,
we obtain the first case of Theorem \ref{thm:acc-md-convergence}.
\end{cor}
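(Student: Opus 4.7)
The plan is to follow the blueprint already established in Corollary \ref{cor:md-convergence-constant-stepsize} and Corollary \ref{cor:md-convergence-varying-stepsize}: first show that the proposed weight sequence $\{w_t\}$ meets the hypotheses of Theorem \ref{thm:acc-md-concentration-subgaussian} and Corollary \ref{cor:acc-md-convergence}, then plug into that corollary and telescope the resulting function-value terms. Note that the two conditions we need are (a) $w_t + 6\sigma^2\eta_t^2 w_t^2 \le w_{t-1}$, which holds with equality by the definition of the recursion, and (b) $\frac{w_t\eta_t^2}{1-L\alpha_t\eta_t}\le \frac{1}{4\sigma^2}$.

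To verify (b), I would first bound $L\alpha_t\eta_t = L\cdot\frac{2}{t+1}\cdot\eta t \le 2L\eta \le 1/2$ using $\eta\le 1/(4L)$, so $\frac{1}{1-L\alpha_t\eta_t}\le 2$. Next, set $M_t := 6\sigma^2\sum_{i=1}^t\eta_i^2 = \sigma^2\eta^2 t(t+1)(2t+1)$ and $C := 2M_T$, so that $w_T = 1/(C+M_T) = 1/(3M_T)$ agrees with the stated value. By downward induction on $t$, using the telescoping identity
\[
\frac{6\sigma^2\eta_t^2}{(C+M_t)^2}\le \frac{(C+M_t)-(C+M_{t-1})}{(C+M_t)(C+M_{t-1})}=\frac{1}{C+M_{t-1}}-\frac{1}{C+M_t},
\]
one gets $w_t\le \frac{1}{C+M_t}\le \frac{1}{C}$ for every $t$. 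Then $w_t\eta_t^2\le \eta_t^2/C\le 1/(12\sigma^2)$, so $\frac{w_t\eta_t^2}{1-L\alpha_t\eta_t}\le 1/(6\sigma^2)\le 1/(4\sigma^2)$, confirming (b). As a by-product we also obtain $w_0\le 1/C = 3w_T/2$.

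With the hypotheses in hand I apply Corollary \ref{cor:acc-md-convergence}. Using $\alpha_t = 2/(t+1)$ I compute $\eta_t/\alpha_t = \eta t(t+1)/2$ and $\eta_t(1-\alpha_t)/\alpha_t = \eta t(t-1)/2$, so the second coefficient at time $t$ equals the first coefficient at time $t-1$. After the index shift $s=t-1$ the LHS of Corollary \ref{cor:acc-md-convergence} becomes
\[
w_T\cdot\tfrac{\eta T(T+1)}{2}(f(y_T)-f(x^*))+\sum_{t=1}^{T-1}(w_t-w_{t+1})\cdot\tfrac{\eta t(t+1)}{2}(f(y_t)-f(x^*))+w_T\breg(x^*,z_T),
\]
and since $w_t\ge w_{t+1}$ and $f(y_t)\ge f(x^*)$, the middle sum is nonnegative and can be dropped. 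For the RHS, I bound $\sum_{t=1}^T w_t\frac{\eta_t^2}{1-L\alpha_t\eta_t}\le 2\sum_t \eta_t^2/C = \frac{M_T}{3\sigma^2 C} = \frac{1}{6\sigma^2}$, and $w_0\le 3w_T/2$.

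Dividing by $w_T$ and using $1/w_T = 3M_T = 3\sigma^2\eta^2 T(T+1)(2T+1)\le 18\sigma^2\eta^2 T^3$ gives bounds of the stated form for both $f(y_T)-f(x^*)$ (after dividing further by $\eta T(T+1)/2$) and $\breg(x^*,z_T)$. The only real obstacle is the careful bookkeeping in the telescoping step and the constant-chasing to land in the stated form; everything else is an essentially mechanical transcription of the mirror-descent argument from the previous subsection. Finally, substituting the stated value of $\eta = \min\{1/(4L),\sqrt{\breg(x^*,z_0)}/(\sqrt 6 T^{3/2}\sqrt{G^2+\sigma^2(1+\log(1/\delta))})\}$ balances the two terms and recovers case (1) of Theorem \ref{thm:acc-md-convergence}.
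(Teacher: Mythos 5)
Your proposal follows essentially the same route as the paper's own proof: verify condition (C1) by construction, verify (C2) via the downward induction $w_t\le\frac{1}{C+M_t}$ together with $1-L\alpha_t\eta_t\ge\frac12$, verify the telescoping condition using $\frac{\eta_{t-1}}{\alpha_{t-1}}=\frac{\eta_t(1-\alpha_t)}{\alpha_t}=\frac{\eta t(t-1)}{2}$ and monotonicity of $w_t$, then apply Corollary \ref{cor:acc-md-convergence}, drop the nonnegative telescoped sum, and divide by $w_T$. The one point you gloss over ("constant-chasing to land in the stated form") does not actually come out as stated with your normalization: taking $C=2M_T$ so that $w_T=\frac{1}{3M_T}$ matches the corollary's displayed $w_T$, dividing by $w_T$ multiplies the right-hand side by $3M_T$, and the $\log\frac1\delta$ term picks up coefficient $3\sigma^2\eta^2T(T+1)(2T+1)\le 18\sigma^2\eta^2T^3$, giving $\breg(x^*,z_T)\le\frac32\breg(x^*,z_0)+18\bigl(G^2+(1+\log\frac1\delta)\sigma^2\bigr)\eta^2T^3$ and $f(y_T)-f(x^*)\le\frac{3\breg(x^*,z_0)}{\eta T^2}+36\bigl(G^2+(1+\log\frac1\delta)\sigma^2\bigr)\eta T$, which then yields $9\sqrt6$ rather than $8\sqrt6$ in Theorem \ref{thm:acc-md-convergence}(1). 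This traces to an inconsistency in the paper itself: the corollary's statement has a $3$ in the denominator of $w_T$, but the paper's proof actually takes $w_T=\frac{1}{2\sigma^2\eta^2T(T+1)(2T+1)}$ (i.e., $C=M_T$, so $w_0\le 2w_T$), for which the stated constants $2,12,4,24$ do drop out. So your argument is sound and the asymptotics are identical, but to recover the exact stated constants you should use the paper's normalization (or restate the corollary with the slightly larger constants your choice produces).
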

\begin{proof}[Proof of Corollary \ref{cor:acc-md-convergence-final}]
Recall from Corollary \ref{cor:acc-md-convergence} that the sequence
$\left\{ w_{t}\right\} $ needs to satisfy the following conditions:
\begin{align}
w_{t}+6\sigma^{2}\eta_{t}^{2}w_{t}^{2} & \leq w_{t-1}\quad\forall1\leq t\leq T\label{eq:C1}\\
\frac{w_{t}\eta_{t}^{2}}{1-L\alpha_{t}\eta_{t}} & \leq\frac{1}{4\sigma^{2}}\quad\forall0\leq t\leq T\label{eq:C2}
\end{align}
We will set $\left\{ w_{t}\right\} $ so that it satisfies the following
additional condition, which will allow us to telescope the sum on
the RHS of Corollary \ref{cor:acc-md-convergence}:
\begin{equation}
w_{t-1}\frac{\eta_{t-1}}{\alpha_{t-1}}\geq w_{t}\frac{\eta_{t}\left(1-\alpha_{t}\right)}{\alpha_{t}}\quad\forall1\leq t\leq T.\label{eq:C3}
\end{equation}
Given $w_{T}$, we set $w_{t-1}$ for every $1\leq t\leq T$ so that
the first condition (\ref{eq:C1}) holds with equality:
\[
w_{t-1}=w_{t}+6\sigma^{2}\eta_{t}^{2}w_{t}^{2}=w_{t}+6\sigma^{2}\eta^{2}t^{2}w_{t}^{2}.
\]
Let $C=\sigma^{2}\eta^{2}T\left(T+1\right)\left(2T+1\right)$. We
set 
\[
w_{T}=\frac{1}{C+6\sigma^{2}\eta^{2}\sum_{i=1}^{T}i^{2}}=\frac{1}{C+\sigma^{2}\eta^{2}T\left(T+1\right)\left(2T+1\right)}=\frac{1}{2\sigma^{2}\eta^{2}T\left(T+1\right)\left(2T+1\right)}.
\]
Given this choice for $w_{T}$, we now verify that, for all $0\leq t\leq T$,
we have
\[
w_{t}\leq\frac{1}{C+6\sigma^{2}\eta^{2}\sum_{i=1}^{t}i^{2}}=\frac{1}{C+\sigma^{2}\eta^{2}t\left(t+1\right)\left(2t+1\right)}.
\]
We proceed by induction on $t$. The base case $t=T$ follows from
the definition of $w_{T}$. Consider $t\le T$. Using the definition
of $w_{t-1}$ and the inductive hypothesis, we obtain
\begin{align*}
w_{t-1} & =w_{t}+6\sigma^{2}\eta^{2}t^{2}w_{t}^{2}\\
 & \leq\frac{1}{C+6\sigma^{2}\eta^{2}\sum_{i=1}^{t}i^{2}}+\frac{6\sigma^{2}\eta^{2}t^{2}}{\left(C+6\sigma^{2}\eta^{2}\sum_{i=1}^{t}i^{2}\right)^{2}}\\
 & \leq\frac{1}{C+6\sigma^{2}\eta^{2}\sum_{i=1}^{t}i^{2}}+\frac{\left(C+6\sigma^{2}\eta^{2}\sum_{i=1}^{t}i^{2}\right)-\left(C+6\sigma^{2}\eta^{2}\sum_{i=1}^{t-1}i^{2}\right)}{\left(C+6\sigma^{2}\eta^{2}\sum_{i=1}^{t}i^{2}\right)\left(C+6\sigma^{2}\eta^{2}\sum_{i=1}^{t-1}i^{2}\right)}\\
 & =\frac{1}{C+6\sigma^{2}\eta^{2}\sum_{i=1}^{t-1}i^{2}}
\end{align*}
as needed.

Let us now verify that the second condition (\ref{eq:C2}) also holds.
Using that $\frac{2t}{t+1}\leq2$, $L\eta\leq\frac{1}{4}$, and $T\geq2$,
we obtain
\begin{align*}
\frac{w_{t}\eta_{t}^{2}}{1-L\alpha_{t}\eta_{t}} & =\frac{w_{t}\eta^{2}t^{2}}{1-L\eta\frac{2t}{t+1}}\leq2w_{t}\eta^{2}t^{2}\leq\frac{2\eta^{2}t^{2}}{C+6\sigma^{2}\eta^{2}t^{2}}\\
 & =\frac{t^{2}}{\sigma^{2}T\left(T+1\right)\left(2T+1\right)+3\sigma^{2}t^{2}}\\
 & \leq\frac{1}{\sigma^{2}\left(2T+1\right)+3\sigma^{2}}\leq\frac{1}{4\sigma^{2}}
\end{align*}
as needed.

Let us now verify that the third condition (\ref{eq:C3}) also holds.
Since $\eta_{t}=\eta t$ and $\alpha_{t}=\frac{2}{t+1}$, we have
$\frac{\eta_{t-1}}{\alpha_{t-1}}=\frac{\eta_{t}\left(1-\alpha_{t}\right)}{\alpha_{t}}=\frac{\eta t\left(t-1\right)}{2}$.
Since $w_{t}\leq w_{t-1}$, it follows that condition (\ref{eq:C3})
holds.

We now turn our attention to the convergence. By Corollary \ref{cor:acc-md-convergence},
with probability $\geq1-\delta$, we have
\begin{align*}
 & \sum_{t=1}^{T}w_{t}\left(\frac{\eta_{t}}{\alpha_{t}}\left(f\left(y_{t}\right)-f\left(x^{*}\right)\right)-\frac{\eta_{t}\left(1-\alpha_{t}\right)}{\alpha_{t}}\left(f\left(y_{t-1}\right)-f\left(x^{*}\right)\right)\right)+w_{T}\breg\left(x^{*},z_{T}\right)\\
 & \leq w_{0}\breg\left(x^{*},z_{0}\right)+\left(G^{2}+3\sigma^{2}\right)\sum_{t=1}^{T}w_{t}\frac{\eta_{t}^{2}}{1-L\alpha_{t}\eta_{t}}+\log\left(\frac{1}{\delta}\right).
\end{align*}
Grouping terms on the LHS and using that $\alpha_{1}=1$, we obtain
\begin{align*}
 & \sum_{t=1}^{T-1}\left(w_{t}\frac{\eta_{t}}{\alpha_{t}}-w_{t+1}\frac{\eta_{t+1}\left(1-\alpha_{t+1}\right)}{\alpha_{t+1}}\right)\left(f\left(y_{t}\right)-f\left(x^{*}\right)\right)+w_{T}\frac{\eta_{T}}{\alpha_{T}}\left(f\left(y_{T}\right)-f\left(x^{*}\right)\right)+w_{T}\breg\left(x^{*},z_{T}\right)\\
 & \leq w_{0}\breg\left(x^{*},z_{0}\right)+\left(G^{2}+3\sigma^{2}\right)\sum_{t=1}^{T}w_{t}\frac{\eta_{t}^{2}}{1-L\alpha_{t}\eta_{t}}+\log\left(\frac{1}{\delta}\right).
\end{align*}
Since $\left\{ w_{t}\right\} $ satisfies condition (\ref{eq:C3}),
the coefficient of $f\left(y_{t}\right)-f\left(x^{*}\right)$ is non-negative
and thus we can drop the above sum. We obtain
\begin{align*}
w_{T}\frac{\eta_{T}}{\alpha_{T}}\left(f\left(y_{T}\right)-f\left(x^{*}\right)\right)+w_{T}\breg\left(x^{*},z_{T}\right) & \leq w_{0}\breg\left(x^{*},z_{0}\right)+\left(G^{2}+3\sigma^{2}\right)\sum_{t=1}^{T}w_{t}\frac{\eta_{t}^{2}}{1-L\alpha_{t}\eta_{t}}+\log\left(\frac{1}{\delta}\right).
\end{align*}
 Using that $w_{T}=\frac{1}{2C}$ and $w_{t}\leq\frac{1}{C}$ for
all $0\leq t\leq T-1$, we obtain
\begin{align*}
 & \frac{1}{2C}\frac{\eta_{T}}{\alpha_{T}}\left(f\left(y_{T}\right)-f\left(x^{*}\right)\right)+\frac{1}{2C}\breg\left(x^{*},z_{T}\right)\\
 & \leq\frac{1}{C}\breg\left(x^{*},z_{0}\right)+\frac{1}{C}\left(G^{2}+3\sigma^{2}\right)\sum_{t=1}^{T}\frac{\eta_{t}^{2}}{1-L\alpha_{t}\eta_{t}}+\log\left(\frac{1}{\delta}\right).
\end{align*}
Thus
\begin{align*}
 & \frac{\eta_{T}}{\alpha_{T}}\left(f\left(y_{T}\right)-f\left(x^{*}\right)\right)+\breg\left(x^{*},z_{T}\right)\\
 & \leq2\breg\left(x^{*},z_{0}\right)+2\left(G^{2}+3\sigma^{2}\right)\sum_{t=1}^{T}\frac{\eta_{t}^{2}}{1-L\alpha_{t}\eta_{t}}+2C\log\left(\frac{1}{\delta}\right)\\
 & =2\breg\left(x^{*},z_{0}\right)+2\left(G^{2}+3\sigma^{2}\right)\sum_{t=1}^{T}\frac{\eta_{t}^{2}}{1-L\alpha_{t}\eta_{t}}+2\sigma^{2}\log\left(\frac{1}{\delta}\right)\eta^{2}T\left(T+1\right)\left(2T+1\right).
\end{align*}
Using that $L\eta\leq\frac{1}{4}$ and $\frac{2t}{t+1}\leq2$, we
obtain
\[
\sum_{t=1}^{T}\frac{\eta_{t}^{2}}{1-L\alpha_{t}\eta_{t}}=\sum_{t=1}^{T}\frac{\eta^{2}t^{2}}{1-L\eta\frac{2t}{t+1}}\leq\sum_{t=1}^{T}2\eta^{2}t^{2}=\frac{1}{3}\eta^{2}T\left(T+1\right)\left(2T+1\right).
\]
Plugging in and using that $\eta_{T}=\eta T$ and $\alpha_{T}=\frac{2}{T+1}$,
we obtain
\begin{align*}
 & \eta\frac{T\left(T+1\right)}{2}\left(f\left(y_{T}\right)-f\left(x^{*}\right)\right)+\breg\left(x^{*},z_{T}\right)\\
 & \leq2\breg\left(x^{*},z_{0}\right)+\left(\frac{2}{3}G^{2}+2\left(1+\log\left(\frac{1}{\delta}\right)\right)\sigma^{2}\right)\eta^{2}T\left(T+1\right)\left(2T+1\right)\\
 & \leq2\breg\left(x^{*},z_{0}\right)+2\left(G^{2}+\left(1+\log\left(\frac{1}{\delta}\right)\right)\sigma^{2}\right)\eta^{2}T\left(T+1\right)\left(2T+1\right).
\end{align*}
We can further simplify the bound by lower bounding $T\left(T+1\right)\geq T^{2}$
and upper bounding $T\left(T+1\right)\left(2T+1\right)\leq6T^{3}$.
We obtain
\begin{align*}
\eta T^{2}\left(f\left(y_{T}\right)-f\left(x^{*}\right)\right)+2\breg\left(x^{*},z_{T}\right) & \leq4\breg\left(x^{*},z_{0}\right)+24\left(G^{2}+\left(1+\log\left(\frac{1}{\delta}\right)\right)\sigma^{2}\right)\eta^{2}T^{3}.
\end{align*}
Thus we obtain
\[
f\left(y_{T}\right)-f\left(x^{*}\right)\leq\frac{4\breg\left(x^{*},z_{0}\right)}{\eta T^{2}}+24\left(G^{2}+\left(1+\log\left(\frac{1}{\delta}\right)\right)\sigma^{2}\right)\eta T
\]
and
\begin{align*}
\breg\left(x^{*},z_{T}\right) & \leq2\breg\left(x^{*},z_{0}\right)+12\left(G^{2}+\left(1+\log\left(\frac{1}{\delta}\right)\right)\sigma^{2}\right)\eta^{2}T^{3}.
\end{align*}
\end{proof}

\begin{cor}
\label{cor:acc-md-convergence-final-vary}Suppose we run the Accelerated
Stochastic Mirror Descent algorithm with the standard choices $\alpha_{t}=\frac{2}{t+1}$
and $\eta_{t}=\min\left\{ \frac{t}{4L},\frac{\eta}{\sqrt{t}}\right\} $.
Let $w_{T}=\frac{1}{12\sigma^{2}\sum_{i=1}^{T}\eta_{t}^{2}}$ and
$w_{t-1}=w_{t}+6\sigma^{2}\eta_{t}^{2}w_{t}^{2}$ for all $1\leq t\leq T$.
The sequence $\left\{ w_{t}\right\} _{0\leq t\leq T}$ satisfies the
conditions required by Corollary \ref{cor:acc-md-convergence}. By
Corollary \ref{cor:acc-md-convergence}, with probability at least
$1-\delta$, $\breg\left(x^{*},z_{T}\right)\leq2\breg\left(x^{*},z_{0}\right)+12\left(G^{2}+\left(1+\log\left(\frac{1}{\delta}\right)\right)\sigma^{2}\right)\eta^{2}(1+\log T)$
and
\begin{align*}
 & f\left(y_{T}\right)-f\left(x^{*}\right)\le\frac{16L}{T^{2}}\breg\left(x^{*},z_{0}\right)+\frac{2}{T^{1/2}\eta}\left(2\breg\left(x^{*},z_{0}\right)+12\left(G^{2}+\left(1+\log\left(\frac{1}{\delta}\right)\right)\sigma^{2}\right)\eta^{2}(1+\log T)\right)
\end{align*}
In particular, setting $\eta_{t}=\min\left\{ \frac{t}{4L},\frac{\sqrt{\breg\left(x^{*},z_{0}\right)}}{\sqrt{6}\sqrt{G^{2}+\sigma^{2}\left(1+\log\left(\frac{1}{\delta}\right)\right)}t^{1/2}}\right\} $,
we obtain the second case of Theorem \ref{thm:acc-md-convergence}.
\end{cor}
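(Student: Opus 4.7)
The plan mirrors the proof of Corollary~\ref{cor:acc-md-convergence-final}, with the key differences being the use of time-varying step sizes and the resulting sum $\sum_{t=1}^T \eta_t^2 \le \eta^2(1+\log T)$ in place of $\sum_{t=1}^T \eta^2 t^2$. The first task is to verify that the sequence $\{w_t\}_{0 \le t \le T}$ constructed via $w_T = \frac{1}{12\sigma^2 \sum_{i=1}^T \eta_i^2}$ and $w_{t-1} = w_t + 6\sigma^2 \eta_t^2 w_t^2$ satisfies the three conditions required to apply Corollary~\ref{cor:acc-md-convergence} and to telescope: (C1) the recursion $w_t + 6\sigma^2 \eta_t^2 w_t^2 \le w_{t-1}$, which holds by construction with equality; (C2) the moment-generating-function bound $\frac{w_t \eta_t^2}{1-L\alpha_t\eta_t} \le \frac{1}{4\sigma^2}$; and (C3) the monotonicity condition $w_{t-1}\frac{\eta_{t-1}}{\alpha_{t-1}} \ge w_t \frac{\eta_t(1-\alpha_t)}{\alpha_t}$ that will allow telescoping the function-value gap terms on the LHS.

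For (C2), let $C = 6\sigma^2 \sum_{i=1}^T \eta_i^2$, so $w_T = 1/(2C)$. By induction on $t$ I will establish that $w_t \le \frac{1}{C + 6\sigma^2 \sum_{i=1}^t \eta_i^2}$ for all $0 \le t \le T$, using the same style of telescoping bound on the reciprocals as in the proof of Corollary~\ref{cor:acc-md-convergence-final}. Since $\eta_t \le t/(4L)$ and $\alpha_t = 2/(t+1)$ imply $L\alpha_t\eta_t \le 1/2$, the denominator satisfies $1-L\alpha_t\eta_t \ge 1/2$. Using that both $C \ge 6\sigma^2 \eta_t^2$ and $6\sigma^2 \sum_{i=1}^t \eta_i^2 \ge 6\sigma^2 \eta_t^2$ (since $t \le T$), the inductive bound gives $w_t \eta_t^2 \le \frac{1}{12\sigma^2}$, whence $\frac{w_t\eta_t^2}{1-L\alpha_t\eta_t} \le \frac{1}{6\sigma^2} \le \frac{1}{4\sigma^2}$.

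Condition (C3) is where the time-varying analysis departs from the constant step-size case. Since $w_{t-1} \ge w_t$ by construction, and $\alpha_t = 2/(t+1)$ makes the condition reduce to $\eta_{t-1}\cdot t \ge \eta_t \cdot (t-1)$, it suffices to show that $t \mapsto \eta_t/t$ is non-increasing. I will handle this by a case analysis on the two branches of the $\min$: on the branch $\eta_t = t/(4L)$, the ratio $\eta_t/t = 1/(4L)$ is constant; on the branch $\eta_t = \eta/\sqrt{t}$, the ratio $\eta/t^{3/2}$ is strictly decreasing; and at the transition, $\eta/\sqrt{t} \le t/(4L)$ directly gives $\eta/t^{3/2} \le 1/(4L)$, so monotonicity is preserved across the switch. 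This careful verification across the $\min$ is the main technical novelty relative to the constant step-size case.

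With (C1)--(C3) in hand, I apply Corollary~\ref{cor:acc-md-convergence} and telescope exactly as in the proof of Corollary~\ref{cor:acc-md-convergence-final}, using $w_0 \le 1/C$, $w_T = 1/(2C)$, and $\sum_{t=1}^T \frac{w_t\eta_t^2}{1-L\alpha_t\eta_t} \le \frac{2}{C}\sum_{t=1}^T \eta_t^2 = \frac{1}{3\sigma^2}$, together with $\sum_{t=1}^T \eta_t^2 \le \eta^2(1+\log T)$, to obtain
\begin{align*}
\frac{\eta_T}{\alpha_T}\bigl(f(y_T)-f(x^*)\bigr) + \breg(x^*,z_T) \le 2\breg(x^*,z_0) + 12\bigl(G^2 + (1+\log(1/\delta))\sigma^2\bigr)\eta^2(1+\log T).
\end{align*}
Finally, to extract $f(y_T)-f(x^*)$ I use $\frac{\eta_T}{\alpha_T} = \frac{(T+1)\eta_T}{2}$ together with the elementary bound $\frac{1}{\eta_T} = \max\{4L/T,\sqrt{T}/\eta\} \le 4L/T + \sqrt{T}/\eta$, which after simplification using $T+1 \ge T$ and $\sqrt{T}/(T+1) \le 1/\sqrt{T}$ yields the stated decomposition $\frac{16L}{T^2}\breg(x^*,z_0) + \frac{2}{\sqrt{T}\eta}(\cdots)$. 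Specializing $\eta$ as in the corollary statement then produces the second case of Theorem~\ref{thm:acc-md-convergence}. Overall, the hardest step is the case analysis for (C3); everything else is a routine adaptation of the constant step-size proof.
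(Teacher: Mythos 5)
Your verification of conditions (C1)--(C3) is correct and matches the paper, including the nice observation that (C3) reduces to $\eta_t/t$ being non-increasing, which follows immediately since $\eta_t/t = \min\{1/(4L),\, \eta/t^{3/2}\}$ is a minimum of two non-increasing functions. The application of Corollary~\ref{cor:acc-md-convergence}, the telescoping, and the bound $\sum_{t=1}^T \eta_t^2 \le \eta^2(1+\log T)$ are all fine, and you correctly arrive at
\begin{equation*}
\frac{\eta_T(T+1)}{2}\bigl(f(y_T)-f(x^*)\bigr) \le 2\breg(x^*,z_0) + 12\bigl(G^2+(1+\log(1/\delta))\sigma^2\bigr)\eta^2(1+\log T) \eqqcolon R.
\end{equation*}

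The gap is in the final extraction. You claim that plugging in $\frac{1}{\eta_T} \le \frac{4L}{T} + \frac{\sqrt T}{\eta}$ yields the stated decomposition $\frac{16L}{T^2}\breg(x^*,z_0) + \frac{2}{\sqrt T \eta}R$, but this is not what you get. What you actually get is $\bigl(\frac{8L}{T^2} + \frac{2}{\sqrt T\eta}\bigr)R$, which expands to the claimed bound \emph{plus} the extra cross term $\frac{8L}{T^2}\cdot 12(G^2+(1+\log(1/\delta))\sigma^2)\eta^2(1+\log T)$. This term cannot be absorbed: comparing it with $\frac{2}{\sqrt T\eta}\cdot 12(\cdots)\eta^2(1+\log T)$, the ratio is $4L\eta/T^{3/2}$, which exceeds $1$ exactly in the regime where $\eta_T = T/(4L)$ is the active branch of the $\min$. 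Under the particular choice of $\eta$ in the corollary statement, your approach gives $\frac{16L\breg(x^*,z_0)(2+\log T)}{T^2}$ in place of the theorem's $\frac{16L\breg(x^*,z_0)}{T^2}$ — an extraneous $\log T$ on the deterministic term.

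The fix is a case analysis on which branch of $\eta_T = \min\{T/(4L),\ \eta/\sqrt T\}$ is active, which is precisely what the paper does. The crucial point you are missing is that in the case $\eta_T = T/(4L)$ (equivalently $T^{3/2}\le 4L\eta$), all $\eta_t = t/(4L)$, and the \emph{tighter} estimate $\sum_{t=1}^T\eta_t^2 \le T^3/(16L^2) \le \eta^2$ holds, without the $\log T$ factor. Combining this with $T^{3/2}\le 4L\eta$ then converts the resulting $\frac{6T}{L}$ term into $\frac{24\eta}{\sqrt T}$, landing within the "overall" bound. By applying the uniform estimate $\sum_t\eta_t^2\le\eta^2(1+\log T)$ and the $\max\le\text{sum}$ bound independently, you decouple $\eta_T$ from $\sum_t\eta_t^2$, losing exactly the coupling that the case analysis exploits.
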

\begin{proof}[Proof of Corollary \ref{cor:acc-md-convergence-final-vary}]
Recall from Corollary \ref{cor:acc-md-convergence} that the sequence
$\left\{ w_{t}\right\} $ needs to satisfy the following conditions:
\begin{align}
w_{t}+6\sigma^{2}\eta_{t}^{2}w_{t}^{2} & \leq w_{t-1}\quad\forall1\leq t\leq T,\label{eq:C1-1}\\
\frac{w_{t}\eta_{t}^{2}}{1-L\alpha_{t}\eta_{t}} & \leq\frac{1}{4\sigma^{2}}\quad\forall0\leq t\leq T.\label{eq:C2-1}
\end{align}
We will set $\left\{ w_{t}\right\} $ so that it satisfies the following
additional condition, which will allow us to telescope the sum on
the RHS of Corollary \ref{cor:acc-md-convergence}:
\begin{equation}
w_{t-1}\frac{\eta_{t-1}}{\alpha_{t-1}}\geq w_{t}\frac{\eta_{t}\left(1-\alpha_{t}\right)}{\alpha_{t}}\quad\forall1\leq t\leq T-1.\label{eq:C3-1}
\end{equation}
Given $w_{T}$, we set $w_{t-1}$ for every $1\leq t\leq T$ so that
the first condition (\ref{eq:C1-1}) holds with equality:
\[
w_{t-1}=w_{t}+6\sigma^{2}\eta_{t}^{2}w_{t}^{2}=w_{t}+6\sigma^{2}\eta^{2}t^{2}w_{t}^{2}.
\]
Let $C=6\sigma^{2}\sum_{i=1}^{T}\eta_{t}^{2}$. We set 
\[
w_{T}=\frac{1}{12\sigma^{2}\sum_{i=1}^{T}\eta_{t}^{2}}=\frac{1}{2C}.
\]
Given this choice for $w_{T}$, we now verify that, for all $0\leq t\leq T$,
we have
\[
w_{t}\leq\frac{1}{C+6\sigma^{2}\sum_{i=1}^{t}\eta_{i}^{2}}.
\]
We proceed by induction on $t$. The base case $t=T$ follows from
the definition of $w_{T}$. Consider $t\le T$. Using the definition
of $w_{t-1}$ and the inductive hypothesis, we obtain
\begin{align*}
w_{t-1} & =w_{t}+6\sigma^{2}\eta_{t}^{2}w_{t}^{2}\\
 & \leq\frac{1}{C+6\sigma^{2}\sum_{i=1}^{t}\eta_{i}^{2}}+\frac{6\sigma^{2}\eta_{t}^{2}}{\left(C+6\sigma^{2}\sum_{i=1}^{t}\eta_{i}^{2}\right)^{2}}\\
 & \leq\frac{1}{C+6\sigma^{2}\sum_{i=1}^{t}\eta_{i}^{2}}+\frac{\left(C+6\sigma^{2}\sum_{i=1}^{t}\eta_{i}^{2}\right)-\left(C+6\sigma^{2}\sum_{i=1}^{t-1}\eta_{i}^{2}\right)}{\left(C+6\sigma^{2}\sum_{i=1}^{t}\eta_{i}^{2}\right)\left(C+6\sigma^{2}\sum_{i=1}^{t-1}\eta_{i}^{2}\right)}\\
 & =\frac{1}{C+6\sigma^{2}\sum_{i=1}^{t-1}\eta_{i}^{2}}
\end{align*}
as needed.

Let us now verify that the second condition (\ref{eq:C2-1}) also
holds. Using that $L\eta_{t}\leq\frac{t}{4}$, and $T\geq2$, we obtain
\[
\frac{w_{t}\eta_{t}^{2}}{1-L\alpha_{t}\eta_{t}}\le\frac{w_{t}\eta_{t}^{2}}{1-\frac{t}{4}\frac{2}{t+1}}\leq2w_{t}\eta_{t}^{2}\leq\frac{2\eta_{t}^{2}}{6\sigma^{2}\sum_{i=1}^{T}\eta_{t}^{2}+6\sigma^{2}\sum_{i=1}^{t}\eta_{i}^{2}}\le\frac{2\eta_{t}^{2}}{12\sigma^{2}\eta_{t}^{2}}\leq\frac{1}{4\sigma^{2}}
\]
as needed.

Let us now verify that the third condition (\ref{eq:C3-1}) also holds.
Since $\alpha_{t}=\frac{2}{t+1}$, we have 
\begin{align*}
\frac{\eta_{t-1}}{\alpha_{t-1}} & =\frac{\eta_{t-1}t}{2},\\
\frac{\eta_{t}\left(1-\alpha_{t}\right)}{\alpha_{t}} & =\frac{\eta_{t}\left(t-1\right)}{2}.
\end{align*}
If $\eta_{t-1}=\frac{t-1}{4L}$ then we have $\eta_{t}\le\frac{t}{4L}$
and $\frac{\eta_{t}\left(1-\alpha_{t}\right)}{\alpha_{t}}\le\frac{\eta_{t-1}}{\alpha_{t-1}}=\frac{t(t-1)}{8L}$.
If $\eta_{t-1}=\frac{\eta}{\sqrt{t-1}}$ then $\eta_{t}=\frac{\eta}{\sqrt{t}},$we
also have $\frac{\eta_{t}\left(1-\alpha_{t}\right)}{\alpha_{t}}\le\frac{\eta_{t-1}}{\alpha_{t-1}}$.
Since $w_{t}\leq w_{t-1}$, it follows that condition (\ref{eq:C3-1})
holds.

We now turn our attention to the convergence. By Corollary \ref{cor:acc-md-convergence},
with probability $\geq1-\delta$, we have
\begin{align*}
 & \sum_{t=1}^{T}w_{t}\left(\frac{\eta_{t}}{\alpha_{t}}\left(f\left(y_{t}\right)-f\left(x^{*}\right)\right)-\frac{\eta_{t}\left(1-\alpha_{t}\right)}{\alpha_{t}}\left(f\left(y_{t-1}\right)-f\left(x^{*}\right)\right)\right)+w_{T}\breg\left(x^{*},z_{T}\right)\\
 & \leq w_{0}\breg\left(x^{*},z_{0}\right)+\left(G^{2}+3\sigma^{2}\right)\sum_{t=1}^{T}w_{t}\frac{\eta_{t}^{2}}{1-L\alpha_{t}\eta_{t}}+\log\left(\frac{1}{\delta}\right).
\end{align*}
Grouping terms on the LHS and using that $\alpha_{1}=1$, we obtain
\begin{align*}
 & \sum_{t=1}^{T-1}\left(w_{t}\frac{\eta_{t}}{\alpha_{t}}-w_{t+1}\frac{\eta_{t+1}\left(1-\alpha_{t+1}\right)}{\alpha_{t+1}}\right)\left(f\left(y_{t}\right)-f\left(x^{*}\right)\right)+w_{T}\frac{\eta_{T}}{\alpha_{T}}\left(f\left(y_{T}\right)-f\left(x^{*}\right)\right)+w_{T}\breg\left(x^{*},z_{T}\right)\\
 & \leq w_{0}\breg\left(x^{*},z_{0}\right)+\left(G^{2}+3\sigma^{2}\right)\sum_{t=1}^{T}w_{t}\frac{\eta_{t}^{2}}{1-L\alpha_{t}\eta_{t}}+\log\left(\frac{1}{\delta}\right).
\end{align*}
Since $\left\{ w_{t}\right\} $ satisfies condition (\ref{eq:C3-1}),
the coefficient of $f(y_{t})-f(x^{*})$ is non-negative and thus we
can drop the above sum. We obtain
\begin{align*}
w_{T}\frac{\eta_{T}}{\alpha_{T}}\left(f\left(y_{T}\right)-f\left(x^{*}\right)\right)+w_{T}\breg\left(x^{*},z_{T}\right) & \leq w_{0}\breg\left(x^{*},z_{0}\right)+\left(G^{2}+3\sigma^{2}\right)\sum_{t=1}^{T}w_{t}\frac{\eta_{t}^{2}}{1-L\alpha_{t}\eta_{t}}+\log\left(\frac{1}{\delta}\right).
\end{align*}
 Using that $w_{T}=\frac{1}{2C}$ and $w_{t}\leq\frac{1}{C}$ for
all $0\leq t\leq T-1$, we obtain
\begin{align*}
 & \frac{1}{2C}\frac{\eta_{T}}{\alpha_{T}}\left(f\left(y_{T}\right)-f\left(x^{*}\right)\right)+\frac{1}{2C}\breg\left(x^{*},z_{T}\right)\\
 & \leq\frac{1}{C}\breg\left(x^{*},z_{0}\right)+\frac{1}{C}\left(G^{2}+3\sigma^{2}\right)\sum_{t=1}^{T}\frac{\eta_{t}^{2}}{1-L\alpha_{t}\eta_{t}}+\log\left(\frac{1}{\delta}\right).
\end{align*}
Thus,
\begin{align*}
 & \frac{\eta_{T}}{\alpha_{T}}\left(f\left(y_{T}\right)-f\left(x^{*}\right)\right)+\breg\left(x^{*},z_{T}\right)\\
 & \leq2\breg\left(x^{*},z_{0}\right)+2\left(G^{2}+3\sigma^{2}\right)\sum_{t=1}^{T}\frac{\eta_{t}^{2}}{1-L\alpha_{t}\eta_{t}}+2C\log\left(\frac{1}{\delta}\right).
\end{align*}
Using that $L\eta_{t}\leq\frac{t}{4}$, we obtain
\[
\sum_{t=1}^{T}\frac{\eta_{t}^{2}}{1-L\alpha_{t}\eta_{t}}=\sum_{t=1}^{T}\frac{\eta_{t}^{2}}{1-\frac{t}{4}\frac{2}{t+1}}\leq\sum_{t=1}^{T}2\eta_{t}^{2}=\frac{C}{3\sigma^{2}}.
\]
Plugging in and using that $\eta_{T}=\eta T$ and $\alpha_{T}=\frac{2}{T+1}$,
we obtain
\begin{align*}
 & \frac{\eta_{T}\left(T+1\right)}{2}\left(f\left(y_{T}\right)-f\left(x^{*}\right)\right)+\breg\left(x^{*},z_{T}\right)\\
 & \leq2\breg\left(x^{*},z_{0}\right)+\left(2G^{2}+6\left(1+\log\left(\frac{1}{\delta}\right)\right)\sigma^{2}\right)\frac{C}{3\sigma^{2}}.
\end{align*}
If $\frac{T}{4L}\le\frac{\eta}{\sqrt{T}}$ which means $T^{3/2}\le4L\eta$,
then $\eta_{T}=\frac{T}{4L}$. We have 
\begin{align*}
C & =6\sigma^{2}\sum_{i=1}^{T}\eta_{t}^{2}=\frac{6\sigma^{2}}{16L^{2}}\sum_{i=1}^{T}t^{2}\le\frac{3\sigma^{2}T^{3}}{8L^{2}}\le6\sigma^{2}\eta^{2}.
\end{align*}
Hence
\begin{align*}
 & \frac{\eta_{T}\left(T+1\right)}{2}\left(f\left(y_{T}\right)-f\left(x^{*}\right)\right)+\breg\left(x^{*},z_{T}\right)\\
 & \leq2\breg\left(x^{*},z_{0}\right)+\left(G^{2}+\left(1+\log\left(\frac{1}{\delta}\right)\right)\sigma^{2}\right)\frac{3T^{3}}{4L^{2}},
\end{align*}
which entails
\begin{align*}
f\left(y_{T}\right)-f\left(x^{*}\right) & \le\frac{16L}{T^{2}}\breg\left(x^{*},z_{0}\right)+\left(G^{2}+\left(1+\log\left(\frac{1}{\delta}\right)\right)\sigma^{2}\right)\frac{6T}{L}\\
 & =\frac{16L}{T^{2}}\breg\left(x^{*},z_{0}\right)+\frac{6}{\sqrt{T}}\left(G^{2}+\left(1+\log\left(\frac{1}{\delta}\right)\right)\sigma^{2}\right)\frac{T^{3/2}}{L}\\
 & \le\frac{16L}{T^{2}}\breg\left(x^{*},z_{0}\right)+\frac{24}{\sqrt{T}}\left(G^{2}+\left(1+\log\left(\frac{1}{\delta}\right)\right)\sigma^{2}\right)\eta,
\end{align*}
and 
\begin{align*}
\breg\left(x^{*},z_{T}\right) & \leq2\breg\left(x^{*},z_{0}\right)+12\left(G^{2}+\left(1+\log\left(\frac{1}{\delta}\right)\right)\sigma^{2}\right)\eta^{2}.
\end{align*}
If $\frac{\eta}{\sqrt{T}}\le\frac{T}{4L}$ then $\eta_{T}=\frac{\eta}{\sqrt{T}}$.
Let $T_{0}$ be the largest $t$ such that $\frac{\eta}{\sqrt{t}}\ge\frac{t}{4L}$,
we have $T_{0}^{3}\le16L^{2}\eta^{2}$
\begin{align*}
C & =6\sigma^{2}\sum_{i=1}^{T}\eta_{t}^{2}\\
 & =6\sigma^{2}\sum_{i=1}^{T_{0}}\eta_{t}^{2}+6\sigma^{2}\sum_{i=T_{0}+1}^{T}\eta_{t}^{2}\\
 & =\frac{6\sigma^{2}}{16L^{2}}\sum_{i=1}^{T_{0}}t^{2}+6\sigma^{2}\eta^{2}\sum_{i=T_{0}+1}^{T}\frac{1}{t}\\
 & \le\frac{6\sigma^{2}}{16L^{2}}T_{0}^{3}+6\sigma^{2}\eta^{2}\sum_{i=T_{0}+1}^{T}\frac{1}{t}\\
 & \le6\sigma^{2}\eta^{2}\sum_{i=1}^{T}\frac{1}{t}\le6\sigma^{2}\eta^{2}(1+\log T).
\end{align*}
Hence
\begin{align*}
f\left(y_{T}\right)-f\left(x^{*}\right) & \le\frac{2}{T^{1/2}\eta}\left(2\breg\left(x^{*},z_{0}\right)+12\left(G^{2}+\left(1+\log\left(\frac{1}{\delta}\right)\right)\sigma^{2}\right)\eta^{2}(1+\log T)\right)
\end{align*}
and 
\begin{align*}
\breg\left(x^{*},z_{T}\right) & \le2\breg\left(x^{*},z_{0}\right)+12\left(G^{2}+\left(1+\log\left(\frac{1}{\delta}\right)\right)\sigma^{2}\right)\eta^{2}(1+\log T).
\end{align*}
 Overall, we have 
\begin{align*}
f\left(y_{T}\right)-f\left(x^{*}\right) & \le\frac{16L}{T^{2}}\breg\left(x^{*},z_{0}\right)+\frac{2}{T^{1/2}\eta}\left(2\breg\left(x^{*},z_{0}\right)+12\left(G^{2}+\left(1+\log\left(\frac{1}{\delta}\right)\right)\sigma^{2}\right)\eta^{2}(1+\log T)\right).
\end{align*}
\end{proof}

\section{Missing Proofs from Section \ref{sec:nonconvex}}

\begin{proof}[Proof of Lemma \ref{lem:sgd-basic-inequality}]
We start from the smoothness of $f$
\begin{align*}
f(x_{t+1})-f(x_{t}) & \le\left\langle \nabla f(x_{t}),x_{t+1}-x_{t}\right\rangle +\frac{L}{2}\left\Vert x_{t+1}-x_{t}\right\Vert ^{2}\\
 & =-\eta_{t}\left\langle \nabla f(x_{t}),\hn(x_{t})\right\rangle +\frac{L\eta_{t}^{2}}{2}\left\Vert \hn(x_{t})\right\Vert ^{2}.
\end{align*}
By writing $\hn(x_{t})=\xi_{t}+\nabla f(x_{t})$ we have 
\begin{align*}
f(x_{t+1})-f(x_{t}) & \le-\eta_{t}\left\langle \nabla f(x_{t}),\xi_{t}+\nabla f(x_{t})\right\rangle +\frac{L\eta_{t}^{2}}{2}\left\Vert \xi_{t}+\nabla f(x_{t})\right\Vert ^{2}\\
 & =-\eta_{t}\left\Vert \nabla f(x_{t})\right\Vert ^{2}-\eta_{t}\left\langle \nabla f(x_{t}),\xi_{t}\right\rangle \\
 & \quad+\frac{L\eta_{t}^{2}}{2}\left\Vert \xi_{t}\right\Vert ^{2}+\frac{L\eta_{t}^{2}}{2}\left\Vert \nabla f(x_{t})\right\Vert ^{2}+L\eta_{t}^{2}\left\langle \nabla f(x_{t}),\xi_{t}\right\rangle .
\end{align*}
We obtain the inequality (\ref{eq:sgd-basic-inequality}) by rearranging
the terms.
\end{proof}

\begin{proof}[Proof of Theorem \ref{thm:sgd-moment-inequality}]
We prove by induction. The base case $t=T+1$ trivially holds. Consider
$1\le t\le T$, we have
\begin{align*}
\E\left[\exp\left(S_{t}\right)\mid\F_{t}\right] & =\E\left[\E\left[\exp\left(Z_{t}+S_{t+1}\right)\mid\F_{t+1}\right]\mid\F_{t}\right]\\
 & =\E\left[\exp\left(Z_{t}\right)\E\left[\exp\left(S_{t+1}\right)\mid\F_{t+1}\right]\mid\F_{k}\right].
\end{align*}
From the induction hypothesis we have $\E\left[\exp\left(S_{t+1}\right)\mid\F_{t+1}\right]\le\exp\left(3\sigma^{2}\sum_{i=t+1}^{T}\frac{w_{i}\eta_{i}^{2}L}{2}\right)$,
hence 
\begin{align*}
\E\left[\exp\left(S_{t}\right)\mid\F_{t}\right] & \le\exp\left(3\sigma^{2}\sum_{i=t+1}^{T}\frac{w_{i}\eta_{i}^{2}L}{2}\right)\E\left[\exp\left(Z_{t}\right)\mid\F_{t}\right].
\end{align*}
We have then
\begin{align*}
\E\left[\exp\left(Z_{t}\right)\mid\F_{t}\right] & =\E\left[\exp\left(w_{t}\left(\eta_{t}\left(1-\frac{\eta_{t}L}{2}\right)\left\Vert \nabla f(x_{t})\right\Vert ^{2}+\Delta_{t+1}-\Delta_{t}\right)-v_{t}\left\Vert \nabla f(x_{T})\right\Vert ^{2}\right)\mid\F_{t}\right]\\
 & \leq\E\left[\exp\left(w_{t}\left(\eta_{t}(\eta_{t}L-1)\left\langle \nabla f(x_{t}),\xi_{t}\right\rangle +\frac{\eta_{t}^{2}L}{2}\left\Vert \xi_{t}\right\Vert ^{2}\right)-v_{t}\left\Vert \nabla f(x_{t})\right\Vert ^{2}\right)\mid\F_{t}\right]\\
 & =\exp\left(-v_{t}\left\Vert \nabla f(x_{t})\right\Vert ^{2}\right)\E\left[\exp\left(w_{t}\left(\eta_{t}(\eta_{t}L-1)\left\langle \nabla f(x_{t}),\xi_{t}\right\rangle +\frac{\eta_{t}^{2}L}{2}\left\Vert \xi_{t}\right\Vert ^{2}\right)\right)\mid\F_{t}\right]\\
 & \leq\exp\left(-v_{t}\left\Vert \nabla f(x_{t})\right\Vert ^{2}\right)\exp\left(3\sigma^{2}\left(w_{t}^{2}\eta_{t}^{2}(\eta_{t}L-1)^{2}\left\Vert \nabla f(x_{t})\right\Vert ^{2}+\frac{w_{t}\eta_{t}^{2}L}{2}\right)\right)\\
 & =\exp\left(3\sigma^{2}\frac{w_{t}\eta_{t}^{2}L}{2}\right).
\end{align*}
where the second line is due to (\ref{eq:sgd-basic-inequality}) in
Lemma \ref{lem:sgd-basic-inequality} and the second to last line
is due to Lemma \ref{lem:helper-taylor-vector}.Therefore 
\begin{align*}
\E\left[\exp\left(S_{t}\right)\mid\F_{t}\right] & \leq\exp\left(3\sigma^{2}\sum_{i=t}^{T}\frac{w_{i}\eta_{i}^{2}L}{2}\right)
\end{align*}
which is what we want to show.
\end{proof}

\begin{proof}[Proof of Corollary \ref{cor:sgd-general-guarantee}]
In Lemma \ref{thm:sgd-moment-inequality}, Let $t=1$ we obtain 
\begin{align*}
\E\left[\exp\left(S_{1}\right)\right] & \leq\exp\left(3\sigma^{2}\sum_{t=1}^{T}\frac{w_{t}\eta_{t}^{2}L}{2}\right).
\end{align*}
Hence, by Markov's inequality, we have 
\[
\Pr\left[S_{1}\geq\left(3\sigma^{2}\sum_{t=1}^{T}\frac{w_{t}\eta_{t}^{2}L}{2}\right)+\log\frac{1}{\delta}\right]\leq\delta.
\]
In other words, with probability $\geq1-\delta$ (once the condition
in Lemma \ref{thm:sgd-moment-inequality} is satisfied)
\begin{align*}
 & \sum_{t=1}^{T}\left[w_{t}\eta_{t}\left(1-\frac{\eta_{t}L}{2}\right)-v_{t}\right]\left\Vert \nabla f(x_{t})\right\Vert ^{2}+w_{t}\left(\Delta_{t+1}-\Delta_{t}\right)\\
 & \leq3\sigma^{2}\sum_{t=1}^{T}\frac{w_{t}\eta_{t}^{2}L}{2}+\log\frac{1}{\delta}.
\end{align*}
This gives 
\begin{align*}
\sum_{t=1}^{T}\left[w_{t}\eta_{t}\left(1-\frac{\eta_{t}L}{2}\right)-v_{t}\right]\left\Vert \nabla f(x_{t})\right\Vert ^{2}+w_{T}\Delta_{T+1} & \le w_{1}\Delta_{1}+\left(\sum_{t=2}^{T}(w_{t}-w_{t-1})\Delta_{t}+3\sigma^{2}\sum_{t=1}^{T}\frac{w_{t}\eta_{t}^{2}L}{2}\right)+\log\frac{1}{\delta}
\end{align*}
 as needed.
\end{proof}

\begin{proof}[Proof of Theorem \ref{thm:sgd-convergence} ]

\textbf{First case.}

Starting from this inequality, we will specify the choice of $\eta_{t}$
and $w_{t}$ to obtain the bound. Consider $\eta_{t}=\eta$ with $\eta L\leq1$,
$w_{t}=w=\frac{1}{6\sigma^{2}\eta}$. Note that $w_{t}\eta_{t}^{2}L=\frac{\eta L}{6\sigma^{2}}\le\frac{1}{2\sigma^{2}}$
satisfies the condition of Lemma \ref{thm:sgd-moment-inequality},
we have
\begin{align*}
\mbox{LHS of \eqref{eq:sgd-general-bound}} & =w\Delta_{T+1}+\sum_{t=1}^{T}\left[w\eta\left(1-\frac{\eta L}{2}\right)-3\sigma^{2}w^{2}\eta^{2}(\eta L-1)^{2}\right]\left\Vert \nabla f(x_{t})\right\Vert ^{2}\\
 & =w\Delta_{T+1}+w\eta\sum_{t=1}^{T}\left[1-\frac{\eta L}{2}-\frac{1}{2}(\eta L-1)^{2}\right]\left\Vert \nabla f(x_{t})\right\Vert ^{2}\\
 & \ge w\Delta_{T+1}+\frac{w\eta}{2}\sum_{t=1}^{T}\left\Vert \nabla f(x_{t})\right\Vert ^{2}
\end{align*}
where the last inequality is due to $1-\frac{\eta L}{2}-\frac{(1-\eta L)^{2}}{2}\geq\frac{1}{2}$
when $0\leq\eta L\leq1$. Besides, 
\begin{align*}
\mbox{RHS of \eqref{eq:sgd-general-bound}}= & w\Delta_{1}+\frac{3\sigma^{2}}{2}w\eta^{2}LT+\log\frac{1}{\delta}.
\end{align*}
Hence with probability $\geq1-\delta$
\begin{align*}
\sum_{t=1}^{T}\left\Vert \nabla f(x_{t})\right\Vert ^{2}+\frac{2\Delta_{T+1}}{\eta} & \leq\frac{2\Delta_{1}}{\eta}+3\sigma^{2}\eta LT+\frac{2}{w\eta}\log\frac{1}{\delta}\\
 & =\frac{2\Delta_{1}}{\eta}+3\sigma^{2}\eta LT+12\sigma^{2}\log\frac{1}{\delta}.
\end{align*}
Finally, by choosing $\eta=\min\left\{ \frac{1}{L};\sqrt{\frac{\Delta_{1}}{\sigma^{2}LT}}\right\} $
and noticing $\Delta_{T+1}\geq0$, we obtain the desired inequality.

\textbf{Second case.}

Consider $\eta_{t}=\frac{\eta}{\sqrt{t}}$ with $\eta L\leq1$, $w_{t}=w=\frac{1}{6\sigma^{2}\eta}$
. Again, we have $w_{t}\eta_{t}^{2}L=\frac{\eta L}{6\sigma^{2}t}\leq\frac{1}{2\sigma^{2}}$,
then
\begin{align*}
\mbox{LHS of \eqref{eq:sgd-general-bound}}= & \sum_{t=1}^{T}\left[\frac{w\eta}{\sqrt{t}}\left(1-\frac{\eta L}{2\sqrt{t}}\right)-\frac{3\sigma^{2}w^{2}\eta^{2}}{t}\left(1-\frac{\eta L}{\sqrt{t}}\right)^{2}\right]\left\Vert \nabla f(x_{t})\right\Vert ^{2}+w\Delta_{T+1}\\
= & \sum_{t=1}^{T}\frac{w\eta}{\sqrt{t}}\left[1-\frac{\eta L}{2\sqrt{t}}-\frac{3\sigma^{2}w\eta}{\sqrt{t}}\left(1-\frac{\eta L}{\sqrt{t}}\right)^{2}\right]\left\Vert \nabla f(x_{t})\right\Vert ^{2}+w\Delta_{T+1}\\
\text{\ensuremath{\geq}} & \sum_{t=1}^{T}\frac{w\eta}{\sqrt{t}}\left[1-\frac{\eta L}{2\sqrt{t}}-3\sigma^{2}w\eta\left(1-\frac{\eta L}{\sqrt{t}}\right)^{2}\right]\left\Vert \nabla f(x_{t})\right\Vert ^{2}+w\Delta_{T+1}\\
= & \sum_{t=1}^{T}\frac{w\eta}{\sqrt{t}}\left[1-\frac{\eta L}{2\sqrt{t}}-\frac{1}{2}\left(1-\frac{\eta L}{\sqrt{t}}\right)^{2}\right]\left\Vert \nabla f(x_{t})\right\Vert ^{2}+w\Delta_{T+1}\\
\geq & \sum_{t=1}^{T}\frac{w\eta}{2\sqrt{t}}\left\Vert \nabla f(x_{t})\right\Vert ^{2}+w\Delta_{T+1}\geq\frac{w\eta}{2\sqrt{T}}\sum_{t=1}^{T}\left\Vert \nabla f(x_{t})\right\Vert ^{2}+w\Delta_{T+1}
\end{align*}
where the second inequality is due to $1-\frac{\eta L}{2\sqrt{t}}-\frac{1}{2}\left(1-\frac{\eta L}{\sqrt{t}}\right)^{2}\geq\frac{1}{2}$
when $0\leq\frac{\eta L}{\sqrt{t}}\leq1$. Besides,
\begin{align*}
\mbox{RHS of \eqref{eq:sgd-general-bound}}= & w\Delta_{1}+\frac{3\sigma^{2}}{2}w\eta^{2}L\sum_{t=1}^{T}\frac{1}{t}+\log\frac{1}{\delta}\\
\le & w\Delta_{1}+\frac{3\sigma^{2}}{2}w\eta^{2}L(1+\log T)+\log\frac{1}{\delta}.
\end{align*}
Therefore with probability $\geq1-\delta$
\begin{align*}
 & \sum_{t=1}^{T}\left\Vert \nabla f(x_{t})\right\Vert ^{2}+\frac{2\sqrt{T}\Delta_{T+1}}{\eta}\\
\le & \sqrt{T}\left(\frac{2\Delta_{1}}{\eta}+3\sigma^{2}\eta L\left(1+\log T\right)+\frac{2}{w\eta}\log\frac{1}{\delta}\right)\\
= & \sqrt{T}\left(\frac{2\Delta_{1}}{\eta}+3\sigma^{2}\eta L\left(1+\log T\right)+12\sigma^{2}\log\frac{1}{\delta}\right).
\end{align*}
Choose $\eta=\frac{1}{L}$, and notice $\Delta_{T+1}\geq0$, we obtain
\begin{align*}
\frac{1}{T}\sum_{t=1}^{T}\left\Vert \nabla f(x_{t})\right\Vert ^{2} & \le\frac{2\Delta_{1}L+3\sigma^{2}\left(1+\log T\right)+12\sigma^{2}\log\frac{1}{\delta}}{\sqrt{T}}.
\end{align*}
\end{proof}

\section{AdaGrad-Norm omitted proofs \label{sec:AdaGrad-Norm-second-proof}}

We first provide the proofs for some of the Lemmas in Section \ref{subsec:main-body-adagradnorm-and-adagrad}.

\begin{proof}[Proof of Lemma \ref{lem:adagrad-basics}]
We start by using the smoothness of $f$
\begin{align}
f(x_{t+1})-f(x_{t}) & \le\left\langle \n(x_{t}),x_{t+1}-x_{t}\right\rangle +\frac{L}{2}\left\Vert x_{t+1}-x_{t}\right\Vert ^{2}\nonumber \\
 & =-\frac{\eta}{b_{t}}\left\langle \n(x_{t}),\hn(x_{t})\right\rangle +\frac{L\eta^{2}}{2b_{t}^{2}}\left\Vert \hn(x_{t})\right\Vert ^{2}\nonumber \\
 & =-\frac{\eta}{b_{t}}\left\Vert \n(x_{t})\right\Vert ^{2}-\frac{\eta}{b_{t}}\left\langle \n(x_{t}),\xi_{t}\right\rangle +\frac{L\eta^{2}}{2b_{t}^{2}}\left\Vert \hn(x_{t})\right\Vert ^{2}\nonumber \\
 & =\eta\left(\frac{1}{a_{t}}-\frac{1}{b_{t}}\right)\left\langle \nf(x_{t}),\xi_{t}\right\rangle -\frac{\eta}{a_{t}}\left\langle \nf(x_{t}),\xi_{t}\right\rangle -\frac{\eta}{b_{t}}\norm{\nf(x_{t})}^{2}+\frac{L\eta^{2}}{2b_{t}^{2}}\norm{\hn(x_{t})}^{2}\label{eq:adagrad-starting-point-1}
\end{align}
First, by Lemma \ref{lem:adagrad-proxy-difference}, we have
\[
\left|\frac{1}{a_{t}}-\frac{1}{b_{t}}\right|\leq\frac{\norm{\xi_{t}}}{a_{t}b_{t}}.
\]
This gives 
\begin{align*}
\left(\frac{1}{a_{t}}-\frac{1}{b_{t}}\right)\left\langle \nf(x_{t}),\xi_{t}\right\rangle  & \leq\left|\frac{1}{a_{t}}-\frac{1}{b_{t}}\right|\norm{\nf(x_{t})}\norm{\xi_{t}}\\
 & \leq\frac{\norm{\xi_{t}}}{a_{t}b_{t}}\norm{\nf(x_{t})}\norm{\xi_{t}}\\
 & \leq\norm{\xi_{t}}\left(\frac{\norm{\nf(x_{t})}^{2}}{2a_{t}^{2}}+\frac{\norm{\xi_{t}}^{2}}{2b_{t}^{2}}\right).
\end{align*}
Plugging this back into \ref{eq:adagrad-starting-point-1}, we have
\begin{align*}
f(x_{t+1})-f(x_{t}) & \leq\eta\norm{\xi_{t}}\left(\frac{\norm{\nf(x_{t})}^{2}}{2a_{t}^{2}}+\frac{\norm{\xi_{t}}^{2}}{2b_{t}^{2}}\right)-\frac{\eta\left\langle \nf(x_{t}),\xi_{t}\right\rangle }{a_{t}}-\frac{\eta}{b_{t}}\norm{\nf(x_{t})}^{2}+\frac{L\eta^{2}}{2b_{t}^{2}}\norm{\hn(x_{t})}^{2}
\end{align*}
After summing up, rearranging the terms, using $\norm{\xi_{t}}\le M_{T}$
and $f(x_{1})-f(x_{T+1})\le\Delta_{1}$, we obtain 
\begin{align*}
\sum_{t=1}^{T}\frac{\norm{\nf(x_{t})}^{2}}{b_{t}} & \leq\frac{\Delta_{1}}{\eta}+\frac{M_{T}}{2}\left[\sum_{t=1}^{T}\frac{\norm{\nf(x_{t})}^{2}}{a_{t}^{2}}+\sum_{t=1}^{T}\frac{\norm{\xi_{t}}^{2}}{b_{t}^{2}}\right]-\sum_{t=1}^{T}\frac{\left\langle \nf(x_{t}),\xi_{t}\right\rangle }{a_{t}}+\sum_{t=1}^{T}\frac{L\eta}{2b_{t}^{2}}\norm{\hn(x_{t})}^{2}.
\end{align*}
\end{proof}

\begin{proof}[Proof of Lemma \ref{lem:martingale-dif-error-term}]
By Lemma \ref{lem:helper-taylor-vector} with some $w>0$, we have
\[
\E\left[\exp\left(\left\langle -w\frac{\nf(x_{t}),\xi_{t}}{a_{t}}\right\rangle -2\sigma^{2}w^{2}\frac{\norm{\nf(x_{t})}^{2}}{a_{t}^{2}}\right)\mid\F_{t}\right]\leq1.
\]
Thus it is not difficult to verify that 
\[
\E\left[\exp\left(\sum_{t=1}^{T}\left\langle -w\frac{\nf(x_{t}),\xi_{t}}{a_{t}}\right\rangle -2\sigma^{2}w^{2}\frac{\norm{\nf(x_{t})}^{2}}{a_{t}^{2}}\right)\right]\leq1.
\]
By Markov's inequality we obtain, with probability at least $1-\delta$,
\[
\sum_{t=1}^{T}-\frac{\left\langle \nf(x_{t}),\xi_{t}\right\rangle }{a_{t}}\leq2\sigma^{2}w\sum_{t=1}^{T}\frac{\norm{\nf(x_{t})}^{2}}{a_{t}^{2}}+\frac{1}{w}\log\frac{1}{\delta}.
\]
It is also known that with probability at least $1-\delta$, $M_{T}\leq\sigma\sqrt{1+\log\frac{T}{\delta}}\leq2\sigma\sqrt{\log\frac{T}{\delta}}$
\citet{li2020high,liu2022convergence} for $T\geq1$ and $\delta\in(0,1)$.
Thus by a union bound and setting $w:=\frac{\sqrt{\log\frac{1}{\delta}}}{\sigma}$,
we can bound Lemma \ref{lem:adagrad-basics} with probability at least
$1-2\delta$
\begin{align*}
\sum_{t=1}^{T}\frac{\norm{\nf(x_{t})}^{2}}{b_{t}} & \leq\frac{\Delta_{1}}{\eta}+M_{T}\sum_{t=1}^{T}\frac{\norm{\nf(x_{t})}^{2}}{2a_{t}^{2}}+M_{T}\sum_{t=1}^{T}\frac{\norm{\xi_{t}}^{2}}{2b_{t}^{2}}-\sum_{t=1}^{T}\frac{\left\langle \nf(x_{t}),\xi_{t}\right\rangle }{a_{t}}+\sum_{t=1}^{T}\frac{L\eta}{2b_{t}^{2}}\norm{\hn(x_{t})}^{2}\\
 & \leq\frac{\Delta_{1}}{\eta}+\sigma\sqrt{\log\frac{T}{\delta}}\left[2\underbrace{\sum_{t=1}^{T}\frac{\norm{\nf(x_{t})}^{2}}{a_{t}^{2}}}_{B}+\sum_{t=1}^{T}\frac{\norm{\xi_{t}}^{2}}{b_{t}^{2}}\right]+\sigma\sqrt{\log\frac{1}{\delta}}+\frac{L\eta}{2}\underbrace{\sum_{t=1}^{T}\frac{\norm{\hn(x_{t})}^{2}}{b_{t}^{2}}}_{A}.
\end{align*}
Let us consider the term $A$. We have
\begin{align*}
\sum_{t=1}^{T}\frac{\norm{\hn(x_{t})}^{2}}{b_{t}^{2}} & =\sum_{t=1}^{T}\frac{b_{t}^{2}-b_{t-1}^{2}}{b_{t}^{2}}=\sum_{t=1}^{T}1-\frac{b_{t-1}^{2}}{b_{t}^{2}}\\
 & \leq2\sum_{t=1}^{T}\log\frac{b_{t}}{b_{t-1}}=2\log\frac{b_{T}}{b_{0}}.
\end{align*}
For $B$, note that since $\norm{\nf(x_{t})}^{2}\leq2\norm{\hn(x_{t})}^{2}+2\norm{\xi_{t}}^{2}$,
we have
\begin{align*}
\sum_{t=1}^{T}\frac{\norm{\nf(x_{t})}^{2}}{a_{t}^{2}} & =\sum_{t=1}^{T}\frac{\norm{\nf(x_{t})}^{2}}{b_{t-1}^{2}+\norm{\nf(x_{t})}^{2}}\\
 & \overset{(*)}{\leq}\sum_{t=1}^{T}\frac{2\norm{\hn(x_{t})}^{2}+2\norm{\xi_{t}}^{2}}{b_{t-1}^{2}+2\norm{\hn(x_{t})}^{2}+2\norm{\xi_{t}}^{2}}\\
 & =\sum_{t=1}^{T}\frac{2\norm{\hn(x_{t})}^{2}}{b_{t-1}^{2}+2\norm{\hn(x_{t})}^{2}+2\norm{\xi_{t}}^{2}}+\sum_{t=1}^{T}\frac{2\norm{\xi_{t}}^{2}}{b_{t-1}^{2}+2\norm{\hn(x_{t})}^{2}+2\norm{\xi_{t}}^{2}}\\
 & \leq2\sum_{t=1}^{T}\frac{\norm{\hn(x_{t})}^{2}}{b_{t}^{2}}+2\sum_{t=1}^{T}\frac{\norm{\xi_{t}}^{2}}{b_{t}^{2}}\\
 & \leq4\log\left(\frac{b_{T}}{b_{0}}\right)+2\sum_{t=1}^{T}\frac{\norm{\xi_{t}}^{2}}{b_{t}^{2}}.
\end{align*}
For $(*)$ we use the fact that $\frac{x}{c+x}$ is an increasing
function. Combining the bound for $A$ and $B$, we obtain, with probability
at least $1-2\delta$, 
\begin{align*}
\sum_{t=1}^{T}\frac{\norm{\nf(x_{t})}^{2}}{b_{t}} & \leq\frac{\Delta_{1}}{\eta}+\sigma\sqrt{\log\frac{T}{\delta}}\left[8\log\left(\frac{b_{T}}{b_{0}}\right)+5\sum_{t=1}^{T}\frac{\norm{\xi_{t}}^{2}}{b_{t}^{2}}\right]+\sigma\sqrt{\log\frac{1}{\delta}}+L\eta\log\frac{b_{T}}{b_{0}}.
\end{align*}
\end{proof}

\begin{lem}
\label{lem:bound-on-bT}For AdaGrad-Norm stepsizes $b_{t}$, if $f$
is $L$-smooth and the stochastic gradients have $\sigma$-subgaussian
noise, then with probability at least $1-\delta$
\begin{align*}
b_{T} & \leq4b_{0}+4\frac{\Delta_{1}}{\eta}+\frac{32}{\eta^{2}b_{0}}\sigma^{2}\ln\left(\frac{2}{\delta}\right)+\frac{16\sigma}{\eta^{2}}\sqrt{T+\log\frac{2}{\delta}}+4L\eta\log\frac{L\eta}{b_{0}}\\
 & =O\left(\Delta_{1}+\sigma\sqrt{T}+\sigma^{2}\log\frac{1}{\delta}+L\log L\right).
\end{align*}
\end{lem}
\begin{proof}
We start from function value analysis
\begin{align*}
f(x_{t+1})-f(x_{t}) & \leq\left\langle \nabla f(x_{t}),x_{t+1}-x_{t}\right\rangle +\frac{L}{2}\norm{x_{t+1}-x_{t}}^{2}\\
 & =\frac{1}{b_{t}}\left\langle \widehat{\nabla}f(x_{t}),\xi_{t}\right\rangle +\eta\left(\frac{L\eta}{2b_{t}^{2}}-\frac{1}{2b_{t}}\right)\norm{\widehat{\nabla}f(x_{t})}^{2}-\frac{\eta}{2b_{t}}\norm{\widehat{\nabla}f(x_{t})}^{2}.
\end{align*}
We can bound $\sum_{t=1}^{T}\left(\frac{L\eta}{2b_{t}^{2}}-\frac{1}{2b_{t}}\right)\norm{\widehat{\nabla}f(x_{t})}^{2}$via
a standard argument. Let $\tau=\max\left\{ t\leq T\mid b_{t}\leq\eta L\right\} $
so that $t\geq\tau$ implies $b_{t}\geq\eta L\iff\frac{L\eta}{b_{t}^{2}}\leq\frac{1}{b_{t}}$
. Then
\begin{align*}
\sum_{t=1}^{T}\left(\frac{L\eta}{2b_{t}^{2}}-\frac{1}{2b_{t}}\right)\norm{\widehat{\nabla}f(x_{t})}^{2} & \leq\sum_{t=1}^{\tau}\left(\frac{L\eta}{2b_{t}^{2}}-\frac{1}{2b_{t}}\right)\norm{\widehat{\nabla}f(x_{t})}^{2}\\
 & \leq\frac{L\eta}{2}\sum_{t=1}^{\tau}\frac{1}{b_{t}^{2}}\norm{\widehat{\nabla}f(x_{t})}^{2}\\
 & =L\eta\log\frac{b_{\tau}}{b_{0}}\leq L\eta\log\frac{L\eta}{b_{0}}.
\end{align*}
Summing and plugging in the above gives 
\begin{align*}
f(x_{T+1})-f(x_{1}) & \leq\sum_{t=1}^{T}\frac{1}{b_{t}}\left\langle \widehat{\nabla}f(x_{t}),\xi_{t}\right\rangle +L\eta^{2}\log\frac{L\eta}{b_{0}}-\frac{\eta}{2}\sum_{t=1}^{T}\frac{\norm{\widehat{\nabla}f(x_{t})}^{2}}{b_{t}}\\
 & \leq\frac{1}{\eta}\sum_{t=1}^{T}\frac{\norm{\xi_{t}}^{2}}{b_{t}}-\frac{\eta}{4}\sum_{t=1}^{T}\frac{\norm{\widehat{\nabla}f(x_{t})}^{2}}{b_{t}}+L\eta^{2}\log\frac{L\eta}{b_{0}},
\end{align*}
where we use $\frac{1}{b_{t}}\left\langle \widehat{\nabla}f(x_{t}),\xi_{t}\right\rangle \leq\frac{\norm{\xi_{t}}^{2}}{\eta b_{t}}+\frac{\eta\norm{\widehat{\nabla}f(x_{t})}^{2}}{4b_{t}}$
in the second inequality. Rearranging and dividing by $\eta$, we
get
\[
\sum_{t=1}^{T}\frac{\norm{\widehat{\nabla}f(x_{t})}^{2}}{4b_{t}}\leq\frac{f(x_{1})-f(x_{T+1})}{\eta}+\frac{1}{\eta^{2}}\sum_{t=1}^{T}\frac{\norm{\xi_{t}}^{2}}{b_{t}}+L\eta\log\frac{L\eta}{b_{0}}.
\]
On the LHS, we have 
\begin{align*}
\sum_{t=1}^{T}\frac{\norm{\widehat{\nabla}f(x_{t})}^{2}}{b_{t}} & =\sum_{t=1}^{T}\frac{b_{t}^{2}-b_{t-1}^{2}}{b_{t}}\geq\sum_{t=1}^{T}b_{t}-\frac{b_{t-1}^{2}}{b_{t-1}}=b_{T}-b_{0}.
\end{align*}
Combining this with Lemma \ref{lem:bound-on-eps-sq-by-bt} where $\sum_{t=1}^{T}\frac{\norm{\xi_{t}}^{2}}{b_{t}}\leq\frac{8}{b_{0}}\sigma^{2}\ln\left(\frac{2}{\delta}\right)-b_{0}+4\sigma\sqrt{T+\log\frac{2}{\delta}}$,
we get the result.
\end{proof}

Now, we can prove Theorem \ref{thm:adagrad-nonconvex}.

\begin{proof}[Proof of Theorem \ref{thm:adagrad-nonconvex}]
From Lemma \ref{lem:martingale-dif-error-term}, we have with probability
at least $1-2\delta$
\[
\sum_{t=1}^{T}\frac{\norm{\nf(x_{t})}^{2}}{b_{t}}\leq\frac{\Delta_{1}}{\eta}+\sigma\sqrt{\log\frac{T}{\delta}}\left[8\log\left(\frac{b_{T}}{b_{0}}\right)+5\sum_{t=1}^{T}\frac{\norm{\xi_{t}}^{2}}{b_{t}^{2}}\right]+\sigma\sqrt{\log\frac{1}{\delta}}+L\eta\log\frac{b_{T}}{b_{0}}.
\]
Since $b_{t}$ is increasing, we have $\sum_{t=1}^{T}\frac{\norm{\nf(x_{t})}^{2}}{b_{t}}\geq\sum_{t=1}^{T}\frac{\norm{\nf(x_{t})}^{2}}{b_{T}}$.
That means 
\[
\sum_{t=1}^{T}\norm{\nf(x_{t})}^{2}\leq b_{T}\left[\frac{\Delta_{1}}{\eta}+\sigma\sqrt{\log\frac{T}{\delta}}\left[8\log\left(\frac{b_{T}}{b_{0}}\right)+5\sum_{t=1}^{T}\frac{\norm{\xi_{t}}^{2}}{b_{t}^{2}}\right]+\sigma\sqrt{\log\frac{1}{\delta}}+L\eta\log\left(\frac{b_{T}}{b_{0}}\right)\right].
\]
Combining this with the event from Lemma \ref{lem:bound-on-eps-sq-by-bt-sq}
that bounds $\sum_{t=1}^{T}\frac{\norm{\xi_{t}}^{2}}{b_{t}^{2}}$
and Lemma \ref{lem:bound-on-bT} that bounds $b_{T}$ gives us the
Theorem.
\end{proof}

\subsection{Additional helper lemmas}
\begin{lem}
\label{lem:adagrad-proxy-difference}For $t\ge1$ and $a_{t}$, $\xi_{t}$
defined in Lemma \ref{lem:adagrad-basics}, we have 
\[
\left|\frac{1}{a_{t}}-\frac{1}{b_{t}}\right|\leq\frac{\norm{\xi_{t}}}{a_{t}b_{t}}.
\]
\end{lem}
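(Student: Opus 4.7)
The plan is to directly compare $a_t$ and $b_t$ by exploiting the algebraic identity
\[
\left|\frac{1}{a_t}-\frac{1}{b_t}\right| = \frac{|b_t - a_t|}{a_t b_t},
\]
so it suffices to show $|b_t - a_t| \le \norm{\xi_t}$. Rather than attack $|b_t - a_t|$ directly, I would first work with the squares, since by definition $a_t^2 = b_{t-1}^2 + \norm{\nabla f(x_t)}^2$ and $b_t^2 = b_{t-1}^2 + \norm{\widehat{\nabla} f(x_t)}^2$ differ only in the last term. That gives
\[
|a_t^2 - b_t^2| = \bigl|\,\norm{\widehat{\nabla} f(x_t)}^2 - \norm{\nabla f(x_t)}^2\,\bigr| = \bigl|\,\norm{\widehat{\nabla} f(x_t)} - \norm{\nabla f(x_t)}\,\bigr| \cdot \bigl(\norm{\widehat{\nabla} f(x_t)} + \norm{\nabla f(x_t)}\bigr).
\]

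Next I would apply the reverse triangle inequality, using $\widehat{\nabla} f(x_t) - \nabla f(x_t) = \xi_t$, to bound the first factor by $\norm{\xi_t}$. For the second factor, I would use the simple observations $\norm{\nabla f(x_t)} \le a_t$ (immediate from the definition of $a_t^2$) and $\norm{\widehat{\nabla} f(x_t)} \le b_t$ (immediate from the definition of $b_t^2$), which give $\norm{\widehat{\nabla} f(x_t)} + \norm{\nabla f(x_t)} \le a_t + b_t$. Combining these yields
\[
|a_t^2 - b_t^2| \le \norm{\xi_t}\,(a_t + b_t).
\]

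Finally, factoring $|a_t^2 - b_t^2| = |a_t - b_t|(a_t + b_t)$ and canceling $a_t + b_t > 0$ gives $|a_t - b_t| \le \norm{\xi_t}$, and dividing by $a_t b_t$ produces the claim. There is no real obstacle here; the only point that needs a word of care is ensuring $a_t, b_t > 0$ so that the division is legal, which follows as long as $b_0 > 0$ (so both $a_t$ and $b_t$ are strictly positive), a standing assumption of AdaGrad-Norm.
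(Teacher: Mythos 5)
Your proof is correct and follows essentially the same route as the paper: reduce to $|a_t - b_t|$, pass to the squared difference $|a_t^2 - b_t^2| = |\norm{\widehat{\nabla} f(x_t)}^2 - \norm{\nabla f(x_t)}^2|$, factor it, bound the sum factor using $\norm{\nabla f(x_t)} \le a_t$ and $\norm{\widehat{\nabla} f(x_t)} \le b_t$, and finish with the reverse triangle inequality. The only cosmetic difference is that the paper carries the denominator $a_t b_t(a_t + b_t)$ throughout and cancels at the end, whereas you isolate $|a_t - b_t| \le \norm{\xi_t}$ first; the substance is identical.
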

\begin{proof}
We have 
\begin{align*}
\left|\frac{1}{a_{t}}-\frac{1}{b_{t}}\right| & =\left|\frac{b_{t}-a_{t}}{a_{t}b_{t}}\right|\\
 & =\left|\frac{b_{t}^{2}-a_{t}^{2}}{a_{t}b_{t}\left(b_{t}+a_{t}\right)}\right|\\
 & =\left|\frac{b_{t}^{2}-b_{t-1}^{2}-\norm{\nf(x_{t})}^{2}}{a_{t}b_{t}\left(b_{t}+a_{t}\right)}\right|\\
 & =\left|\frac{\norm{\hn(x_{t})}^{2}-\norm{\nf(x_{t})}^{2}}{a_{t}b_{t}\left(b_{t}+a_{t}\right)}\right|\\
 & \leq\left|\frac{\left(\norm{\hn(x_{t})}-\norm{\nf(x_{t})}\right)\left(\norm{\hn(x_{t})}+\norm{\nf(x_{t})}\right)}{a_{t}b_{t}\left(b_{t}+a_{t}\right)}\right|.
\end{align*}
Since $b_{t}=\sqrt{b_{t-1}^{2}+\norm{\hn(x_{t})}^{2}}\geq\norm{\hn(x_{t})}$
and $a_{t}=\sqrt{b_{t-1}^{2}+\norm{\nf(x_{t})}^{2}}\geq\norm{\nf(x_{t})}$,
we have
\begin{align*}
\left|\frac{1}{a_{t}}-\frac{1}{b_{t}}\right| & \leq\left|\frac{\norm{\hn(x_{t})}-\norm{\nf(x_{t})}}{a_{t}b_{t}}\right|\\
 & \leq\frac{\norm{\hn(x_{t})-\nf(x_{t})}}{a_{t}b_{t}}\\
 & =\frac{\norm{\xi_{t}}}{a_{t}b_{t}}.
\end{align*}
\end{proof}

\begin{lem}
\label{lem:upperbound-on-error}With prob $\geq1-\delta$, for any
$0\leq t\leq T$, we have
\[
\sum_{s=1}^{t}\|\xi_{s}\|^{2}\leq\sum_{s=1}^{t}\|\widehat{\nabla}f(x_{s})\|^{2}+4\sigma^{2}\log\frac{1}{\delta}.
\]
\end{lem}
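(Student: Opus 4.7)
My plan is to set up the usual exponential supermartingale from this paper. Expanding $\widehat{\nabla}f(x_{s}) = \nabla f(x_{s}) + \xi_{s}$ and canceling the cross terms gives
\begin{align*}
Y_{s} \coloneqq \|\xi_{s}\|^{2} - \|\widehat{\nabla}f(x_{s})\|^{2} = -\|\nabla f(x_{s})\|^{2} - 2\langle \nabla f(x_{s}), \xi_{s}\rangle,
\end{align*}
so the claim is equivalent to the uniform-in-$t$ bound $\sum_{s=1}^{t} Y_{s} \le 4\sigma^{2}\log(1/\delta)$. Conditioning on $\F_{s}$ fixes $x_{s}$ and $\nabla f(x_{s})$, while $\xi_{s}$ has mean zero and $\sigma$-sub-Gaussian norm by assumption (3).

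For a parameter $\lambda > 0$ to be chosen, I would apply Lemma \ref{lem:helper-taylor-vector} with $b=0$, $X = \xi_{s}$, and $a = -2\lambda\,\nabla f(x_{s})$ to obtain
\begin{align*}
\E\!\left[\exp\!\big(-2\lambda \langle \nabla f(x_{s}), \xi_{s}\rangle\big) \,\big|\, \F_{s}\right] \le \exp\!\big(8\lambda^{2}\sigma^{2} \|\nabla f(x_{s})\|^{2}\big).
\end{align*}
Multiplying both sides by $\exp(-\lambda \|\nabla f(x_{s})\|^{2})$ yields
\begin{align*}
\E\!\left[\exp(\lambda Y_{s}) \,\big|\, \F_{s}\right] \le \exp\!\big((8\lambda^{2}\sigma^{2} - \lambda)\|\nabla f(x_{s})\|^{2}\big) \le 1
\end{align*}
whenever $\lambda \le 1/(8\sigma^{2})$. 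Consequently $M_{t} \coloneqq \exp\!\big(\lambda \sum_{s=1}^{t} Y_{s}\big)$ is a non-negative supermartingale adapted to $\{\F_{t}\}$ with $M_{0} = 1$, and Ville's maximal inequality gives $\Pr\!\big(\sup_{0 \le t \le T} M_{t} \ge 1/\delta\big) \le \delta$. Rearranging, with probability at least $1-\delta$ the bound $\sum_{s=1}^{t} Y_{s} \le (1/\lambda) \log(1/\delta)$ holds simultaneously for all $t \in [0,T]$, and choosing $\lambda$ at its maximum allowed value gives the claimed rate.

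\textbf{Main obstacle.} The key conceptual step is that the bound must hold uniformly in $t$ rather than at a fixed index, which is why a supermartingale/Ville argument is needed instead of a one-shot Markov bound. The only real technical subtlety is matching the stated constant $4\sigma^{2}$: the plain application of Lemma \ref{lem:helper-taylor-vector} above produces $(1/\lambda)\log(1/\delta) = 8\sigma^{2}\log(1/\delta)$, and sharpening to $4\sigma^{2}$ requires using the tighter scalar sub-Gaussian MGF bound on the one-dimensional projection $\langle \nabla f(x_{s})/\|\nabla f(x_{s})\|,\, \xi_{s}\rangle$, which is itself $\sigma$-sub-Gaussian in the sense of Definition \ref{def:subgaussian-random-variable} (its absolute value is dominated by $\|\xi_{s}\|$). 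Apart from this constant-chasing, the argument is a direct instance of the concentration template already used in the proofs of Theorems \ref{thm:md-concentration-subgaussian} and \ref{thm:sgd-moment-inequality}.
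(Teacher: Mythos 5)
Your proposal matches the paper's own proof essentially step for step: same decomposition $Y_s = \|\xi_s\|^2 - \|\widehat{\nabla}f(x_s)\|^2 = -\|\nabla f(x_s)\|^2 - 2\langle\nabla f(x_s),\xi_s\rangle$ (your algebra is correct; the paper's intermediate identity has a sign typo that is immaterial since the MGF bound is symmetric), same exponential supermartingale with a quadratic penalty $-v_s\|\nabla f(x_s)\|^2$ chosen to cancel the MGF overshoot, and same use of Doob's/Ville's maximal inequality to get the bound uniformly over $t\le T$. The tuning $v_s = c\sigma^2 w_s^2$, $w_s=1/(c\sigma^2)$ is exactly the paper's.

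Regarding the constant: you are right that Lemma \ref{lem:helper-taylor-vector} with $b=0$ yields $\E[\exp(\lambda Y_s)\mid\F_s]\le\exp((8\lambda^2\sigma^2-\lambda)\|\nabla f(x_s)\|^2)$, forcing $\lambda\le 1/(8\sigma^2)$ and hence an $8\sigma^2\log(1/\delta)$ additive term. You should be aware that the paper's own proof implicitly invokes $\E[\exp(\langle a,\xi_s\rangle)\mid\F_s]\le\exp(\sigma^2\|a\|_*^2)$ — a factor of two tighter than what Lemma \ref{lem:helper-taylor-vector} actually supplies — to land on $4\sigma^2$. Your suggested remedy of projecting to one dimension does not obviously close the gap either, since the scalar version (Lemma \ref{lem:helper-taylor} with $b=0$) also gives $\exp(2a^2\sigma^2)$ rather than $\exp(a^2\sigma^2)$, and the tighter $\exp(a^2\sigma^2)$ bound from the $e^u\le u+e^{u^2}$ trick is only available in the small-$a$ regime $a\sigma\le 1$, which is not guaranteed here since $\|\nabla f(x_s)\|$ is unbounded. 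So both your proof and the paper's leave an unresolved factor-of-two slack in the constant; this is harmless, as the lemma is only used inside $O(\cdot)$-level bounds downstream, but the honest constant under the paper's helper lemmas is $8\sigma^2\log(1/\delta)$, not $4\sigma^2\log(1/\delta)$.
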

\begin{proof}
Note that
\begin{align*}
\|\widehat{\nabla}f(x_{t})\|^{2} & =\|\nabla f(x_{t})\|^{2}+2\langle\xi_{t},\nabla f(x_{t})\rangle+\|\xi_{t}\|^{2}\\
\Rightarrow\|\nabla f(x_{t})\|-\|\widehat{\nabla}f(x_{t})\|^{2}+\|\xi_{t}\|^{2} & =2\langle\xi_{t},\nabla f(x_{t})\rangle.
\end{align*}
Define for $t\in\left\{ 0,1,\cdots,T\right\} $
\begin{align*}
U_{t+1} & =\exp\left(\sum_{s=1}^{t}w_{s}\left(\|\nabla f(x_{s})\|^{2}-\|\widehat{\nabla}f(x_{s})\|^{2}+\|\xi_{s}\|^{2}\right)-v_{s}\|\nabla f(x_{s})\|^{2}\right);\quad v_{s}=4\sigma^{2}w_{s}^{2}.
\end{align*}
Let $\F_{t}=\sigma(\xi_{i\leq t-1})$. We know $U_{t}\in\F_{t}$.
Note that $U_{t}$ is a supermartingale
\begin{align*}
\E\left[U_{t+1}\mid\F_{t}\right] & =U_{t}\exp\left(-v_{t}\|\nabla f(x_{t})\|^{2}\right)\E\left[\exp\left(2w_{t}\langle\xi_{t},\nabla f(x_{t})\rangle\right)\mid\F_{t}\right]\\
 & \leq U_{t}\exp\left(-v_{t}\|\nabla f(x_{t})\|^{2}\right)\E\left[\exp\left(4\sigma^{2}w_{t}^{2}\|\nabla f(x_{t})\|^{2}\right)\mid\F_{t}\right]\\
 & =U_{t}
\end{align*}
By Doob's supermartingale inequality, there is
\[
\Pr\left[\max_{t\in\left[T+1\right]}U_{t}\geq\delta^{-1}\right]\leq\delta\E\left[U_{1}\right]=\delta
\]
which implies w.p. $\geq1-\delta$, $\forall0\leq t\leq T$
\begin{align*}
\sum_{s=1}^{t}w_{s}\left(\|\nabla f(x_{s})\|^{2}-\|\widehat{\nabla}f(x_{s})\|^{2}+\|\xi_{s}\|^{2}\right)-v_{s}\|\nabla f(x_{s})\|^{2} & \leq\log\frac{1}{\delta}\\
\sum_{s=1}^{t}\left(w_{s}-4\sigma^{2}w_{s}^{2}\right)\|\nabla f(x_{s})\|^{2}+w_{s}\|\xi_{s}\|^{2} & \leq\sum_{s=1}^{t}w_{s}\|\widehat{\nabla}f(x_{s})\|^{2}+\log\frac{1}{\delta}.
\end{align*}
Set $w_{s}=\frac{1}{4\sigma^{2}}$ to get
\[
\sum_{s=1}^{t}\|\xi_{s}\|^{2}\leq\sum_{s=1}^{t}\|\widehat{\nabla}f(x_{s})\|^{2}+4\sigma^{2}\log\frac{1}{\delta}.
\]
\end{proof}

\begin{lem}
\label{lem:sum-error-naive-bound}With probability $\geq1-\delta$,
we have
\[
\sum_{t=1}^{T}\|\xi_{t}\|^{2}\leq\sigma^{2}T+\sigma^{2}\log\frac{1}{\delta}.
\]
\end{lem}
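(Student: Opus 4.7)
The plan is a standard Chernoff/Cram\'er--Chernoff bound applied directly to the sum $\sum_{t=1}^T \|\xi_t\|^2$, with the conditional sub-Gaussian hypothesis as the sole input. Assumption (3) states that $\|\xi_t\| = \|\widehat{\nabla}f(x_t) - \nabla f(x_t)\|_*$ is $\sigma$-sub-Gaussian, and this holds conditionally on $x_t$ (hence on $\F_t$). Applying Definition \ref{def:subgaussian-random-variable} at the boundary choice $\lambda = 1/\sigma$ gives the per-step moment estimate
\[
\E\!\left[\exp\!\left(\tfrac{\|\xi_t\|^2}{\sigma^2}\right)\bigm|\F_t\right]\ \le\ e.
\]

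From here I would form the nonnegative process $U_t := \exp\bigl(\tfrac{1}{\sigma^2}\sum_{s=1}^t \|\xi_s\|^2 - t\bigr)$ with $U_0 = 1$. The conditional bound above immediately yields $\E[U_t \mid \F_t] \le U_{t-1}$, so $U_t$ is a supermartingale with $\E[U_T] \le 1$. Markov's inequality then gives
\[
\Pr\!\left[\tfrac{1}{\sigma^2}\sum_{t=1}^T\|\xi_t\|^2 - T \ge \log\tfrac{1}{\delta}\right]\ =\ \Pr\!\left[U_T \ge \tfrac{1}{\delta}\right]\ \le\ \delta,
\]
which rearranges to the claim $\sum_{t=1}^T \|\xi_t\|^2 \le \sigma^2 T + \sigma^2 \log(1/\delta)$ with probability at least $1 - \delta$.

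There is really no obstacle in this lemma; the only mild care needed is that the sub-Gaussian property is a conditional one, so the MGF bound holds pointwise on every realization of $\F_t$ and the tower-property telescoping in the supermartingale argument is legitimate. Unlike the other concentration results in the paper, we do not need to exploit the mean-zero structure of $\xi_t$ here, because the statement only controls the second-moment contribution; correspondingly the bound is not noise-adaptive and is deliberately loose, used only as a crude backup estimate elsewhere in the analysis.
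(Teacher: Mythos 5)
Your proposal is correct and takes essentially the same route as the paper: both apply a Chernoff bound to $\exp\bigl(\sum_t\|\xi_t\|^2/\sigma^2\bigr)$ after iterating the conditional MGF estimate $\E\bigl[\exp(\|\xi_t\|^2/\sigma^2)\mid\F_t\bigr]\le e$ via the tower property, and your supermartingale framing of $U_t$ just makes that telescoping step explicit where the paper leaves it implicit in the bound $\E\bigl[\exp\bigl(\sum_t\|\xi_t\|^2/\sigma^2\bigr)\bigr]\le\exp(T)$.
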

\begin{proof}
Note that
\begin{align*}
\Pr\left[\sum_{t=1}^{T}\|\xi_{t}\|^{2}\geq u\right] & =\Pr\left[\exp\left(\sum_{t=1}^{T}\frac{\|\xi_{t}\|^{2}}{\sigma^{2}}\right)\geq\exp\left(\frac{u}{\sigma^{2}}\right)\right]\\
 & \leq\frac{\E\left[\exp\left(\sum_{t=1}^{T}\frac{\|\xi_{t}\|^{2}}{\sigma^{2}}\right)\right]}{\exp\left(\frac{u}{\sigma^{2}}\right)}\\
 & \leq\frac{\exp(T)}{\exp\left(\frac{u}{\sigma^{2}}\right)}
\end{align*}
where we choose 
\[
u=\sigma^{2}T+\sigma^{2}\log\frac{1}{\delta}.
\]
\end{proof}

\begin{lem}
\label{lem:bound-on-eps-sq-by-bt}For AdaGrad stepsize $b_{t}$ and
$\sigma$-subgaussian noise $\norm{\xi_{t}}$, with probability at
least $1-\delta$
\[
\sum_{t=1}^{T}\frac{\norm{\xi_{t}}^{2}}{b_{t}}\leq\frac{8}{b_{0}}\sigma^{2}\ln\left(\frac{2}{\delta}\right)-b_{0}+4\sigma\sqrt{T+\log\frac{2}{\delta}}.
\]
\end{lem}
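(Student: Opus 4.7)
The plan is to condition on an event of probability at least $1-\delta$ on which both Lemma~\ref{lem:upperbound-on-error} (applied at confidence level $\delta/2$) and Lemma~\ref{lem:sum-error-naive-bound} (applied at confidence level $\delta/2$) hold, by a union bound. Writing $S_t := \sum_{s=1}^{t}\|\xi_s\|^2$ and $L := 4\sigma^2\log(2/\delta)$, the two bounds translate, using the definition $b_t^2 = b_0^2 + \sum_{s\le t}\|\widehat{\nabla}f(x_s)\|^2$, into
\[
S_t \leq b_t^2 - b_0^2 + L \quad \text{for all } 1 \leq t \leq T, \qquad S_T \leq \sigma^2\bigl(T + \log(2/\delta)\bigr).
\]
All subsequent estimates are deterministic and take place on this event.

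The key idea is to split the index set at the first time $\tau := \min\{t : b_t^2 \geq L\}$ (with the convention $\tau = T+1$ if no such $t$ exists) and control the two pieces of $\sum_{t=1}^{T}\|\xi_t\|^2/b_t$ by complementary arguments. For $t < \tau$ one only has the crude bound $b_t \geq b_0$, but the threshold condition $b_t^2 < L$ combined with the first displayed inequality above caps $S_{\tau-1}$ by $2L - b_0^2$, yielding
\[
\sum_{t=1}^{\tau-1}\frac{\|\xi_t\|^2}{b_t} \leq \frac{S_{\tau-1}}{b_0} \leq \frac{2L}{b_0} - b_0 = \frac{8\sigma^2\log(2/\delta)}{b_0} - b_0.
\]

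For $t \geq \tau$, the regime $b_t^2 \geq L$ combined with $b_t^2 \geq b_0^2 + S_t - L$ implies $b_t^2 \geq (b_0^2 + S_t)/2$, so with $A_t := b_0^2 + S_t$ we get $1/b_t \leq \sqrt{2}/\sqrt{A_t}$. The standard telescoping estimate $(A_t - A_{t-1})/\sqrt{A_t} \leq 2(\sqrt{A_t} - \sqrt{A_{t-1}})$ then gives
\[
\sum_{t=\tau}^{T}\frac{\|\xi_t\|^2}{b_t} \leq \sqrt{2}\sum_{t=\tau}^{T}\frac{A_t-A_{t-1}}{\sqrt{A_t}} \leq 2\sqrt{2}\bigl(\sqrt{A_T} - b_0\bigr) \leq 2\sqrt{2}\sqrt{S_T} \leq 4\sigma\sqrt{T+\log(2/\delta)},
\]
using $A_{\tau-1} \geq b_0^2$, the inequality $\sqrt{b_0^2 + S_T} - b_0 \leq \sqrt{S_T}$, and the second tail bound from the union-bound event. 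Summing the two pieces produces the claimed inequality.

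The main obstacle is picking the splitting threshold correctly: it must be large enough so that in the late regime the additive correction $L$ appearing in $b_t^2 \geq b_0^2 + S_t - L$ can be absorbed into half of $A_t$, yet small enough that in the early regime the accumulated noise $S_{\tau-1}$ stays of order $L$. Taking the threshold equal to $L$ itself threads the needle; the remaining work is routine bookkeeping with the telescoping identity and a careful handling of the degenerate cases $\tau = 1$ and $\tau = T+1$.
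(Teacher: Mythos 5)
Your proof is correct and follows essentially the same approach as the paper: a union bound combining Lemma~\ref{lem:upperbound-on-error} and Lemma~\ref{lem:sum-error-naive-bound}, a time split into an early regime (where the crude bound $b_t\geq b_0$ suffices because the cumulative noise is still small) and a late regime (where $b_t^2\gtrsim b_0^2+S_t$), and the standard telescoping estimate $\sum_t (A_t-A_{t-1})/\sqrt{A_t}\leq 2\sqrt{A_T}$. The paper phrases the split in terms of $S_t$ exceeding $2C$ with $C=4\sigma^2\log(1/\delta)-b_0^2$ while you phrase it as $b_t^2$ exceeding $L=4\sigma^2\log(2/\delta)$, but these are dual formulations of the same threshold and yield the identical bound; your version keeps $b_0^2$ explicit and avoids the $(\cdot)^+$ truncation, which is a minor cosmetic simplification rather than a different route.
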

\begin{proof}
First, Lemma \ref{lem:upperbound-on-error} gives that with probability
at least $1-\delta$, for all $t\le T$

\begin{align*}
\sum_{i=1}^{t}\norm{\xi_{i}}^{2} & \leq\sum_{i=1}^{t}\norm{\widehat{\nabla}f(x_{i})}^{2}+4\sigma^{2}\ln\left(\frac{1}{\delta}\right)\\
 & =b_{t}^{2}-b_{0}^{2}+4\sigma^{2}\ln\left(\frac{1}{\delta}\right)\\
\implies b_{t}^{2} & \geq\sum_{i=1}^{t}\norm{\xi_{i}}^{2}-\underbrace{\left[4\sigma^{2}\ln\left(\frac{1}{\delta}\right)-b_{0}^{2}\right]}_{=:C}.
\end{align*}
This means that 
\[
b_{t}\geq\max\left\{ b_{0},\sqrt{\left(\sum_{i=1}^{t}\norm{\xi_{i}}^{2}-C\right)^{+}}\right\} .
\]
Let $\tau=\max\left(\left\{ 0\right\} \cup\left\{ t\in\mathbb{N}_{\leq T}\mid\sum_{i=1}^{t}\norm{\xi_{i}}^{2}\leq2C\right\} \right)$.
Then 
\begin{align*}
\sum_{t=1}^{T}\frac{1}{b_{t}}\norm{\xi_{t}}^{2} & \leq\sum_{t=1}^{\tau}\frac{1}{b_{t}}\norm{\xi_{t}}^{2}+\sum_{t=\tau+1}^{T}\frac{1}{b_{t}}\norm{\xi_{t}}^{2}\\
 & \leq\frac{1}{b_{0}}\sum_{t=1}^{\tau}\norm{\xi_{t}}^{2}+\sum_{t=\tau+1}^{T}\frac{\norm{\xi_{t}}^{2}}{\max\left\{ b_{0},\sqrt{\sum_{i=1}^{t}\norm{\xi_{i}}^{2}-C}\right\} }\\
 & \leq\frac{2C}{b_{0}}+\sum_{t=\tau+1}^{T}\frac{\norm{\xi_{t}}^{2}}{\sqrt{\sum_{i=1}^{t}\norm{\xi_{i}}^{2}-C}}\\
 & \leq\frac{2C}{b_{0}}+\sum_{t=\tau+1}^{T}\frac{\norm{\xi_{t}}^{2}}{\sqrt{\frac{1}{2}\sum_{i=1}^{t}\norm{\xi_{i}}^{2}}}\quad\left(\mbox{since}\tag{\ensuremath{\sum_{i=1}^{t}\norm{\xi_{i}}^{2}>2C} for \ensuremath{t>\tau}}\right)\\
 & \leq\frac{2C}{b_{0}}+2\sum_{t=1}^{T}\frac{\norm{\xi_{t}}^{2}}{\sqrt{\sum_{i=1}^{t}\norm{\xi_{i}}^{2}}}\\
 & \leq\frac{2C}{b_{0}}+4\sqrt{\sum_{t=1}^{T}\norm{\xi_{t}}^{2}}.
\end{align*}
Hence, with probability at least $1-\delta$,
\[
\sum_{t=1}^{T}\frac{\norm{\xi_{t}}^{2}}{b_{t}}\leq\frac{8}{b_{0}}\sigma^{2}\ln\left(\frac{1}{\delta}\right)-b_{0}+4\sqrt{\sum_{t=1}^{T}\norm{\xi_{t}}^{2}}.
\]
Combining with Lemma \ref{lem:sum-error-naive-bound}, we get with
probability at least $1-2\delta$
\[
\sum_{t=1}^{T}\frac{\norm{\xi_{t}}^{2}}{b_{t}}\leq\frac{8}{b_{0}}\sigma^{2}\ln\left(\frac{1}{\delta}\right)-b_{0}+4\sigma\sqrt{T+\log\frac{1}{\delta}}.
\]
\end{proof}

\begin{lem}
\label{lem:bound-on-eps-sq-by-bt-sq}For AdaGrad-Norm stepsize $b_{t}$
and $\sigma$-subgaussian noise $\norm{\xi_{t}}$, with probability
at least $1-\delta$,

\begin{align*}
\sum_{t=1}^{T}\frac{\norm{\xi_{t}}^{2}}{b_{t}^{2}} & \leq\frac{4\sigma^{2}}{b_{0}^{2}}\log\left(\frac{2}{\delta}\right)+2\log\left(1+\sigma^{2}T+\sigma^{2}\log\frac{2}{\delta}\right)\\
 & =O\left(\sigma^{2}\log\left(\frac{1}{\delta}\right)+\log\left(1+\sigma^{2}T+\sigma^{2}\log\frac{1}{\delta}\right)\right).
\end{align*}
\end{lem}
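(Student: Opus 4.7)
The plan is to mirror the proof of Lemma \ref{lem:bound-on-eps-sq-by-bt}, but with the squared denominator $b_t^2$, combining two high-probability events via a union bound at level $\delta/2$ each. The first event, from Lemma \ref{lem:upperbound-on-error} applied with $\delta/2$, will provide the pathwise lower bound
\[
b_t^2 = b_0^2 + \sum_{i=1}^t \norm{\hn(x_i)}^2 \geq b_0^2 + A_t - 4\sigma^2 \log(2/\delta)
\]
uniformly over $t \leq T$, where $A_t := \sum_{i=1}^t \norm{\xi_i}^2$. The second event, from Lemma \ref{lem:sum-error-naive-bound} with $\delta/2$, will bound the total noise as $A_T \leq \sigma^2 T + \sigma^2 \log(2/\delta)$. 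A union bound then keeps us in both events with probability at least $1-\delta$.

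Next I would set $C := 4\sigma^2 \log(2/\delta)$ and split the index range at $\tau := \max\{0 \leq t \leq T : A_t \leq 2C\}$, the last time the cumulative noise is below the threshold. On the small-noise prefix $t \leq \tau$, the trivial bound $b_t \geq b_0$ immediately gives
\[
\sum_{t=1}^\tau \frac{\norm{\xi_t}^2}{b_t^2} \leq \frac{A_\tau}{b_0^2} \leq \frac{2C}{b_0^2} = O\!\left(\frac{\sigma^2 \log(1/\delta)}{b_0^2}\right).
\]
On the tail $t > \tau$, the first event delivers $b_t^2 \geq A_t - C \geq A_t/2$, so $\sum_{t=\tau+1}^T \norm{\xi_t}^2/b_t^2 \leq 2 \sum_{t=\tau+1}^T (A_t - A_{t-1})/A_t$, and the standard telescoping inequality $1 - A_{t-1}/A_t \leq \log(A_t/A_{t-1})$ collapses this to $2 \log(A_T/A_\tau)$, which by the second event is $O(\log(1 + \sigma^2 T + \sigma^2 \log(1/\delta)))$.

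The main obstacle will be the boundary handling in the tail sum: strict positivity $A_\tau > 0$ is needed to make sense of the logarithm, and one must either peel off the first post-threshold term individually or, as I would prefer, telescope against the shifted cumulative $\tilde A_t := b_0^2 + A_t$, which initializes at $\tilde A_0 = b_0^2 > 0$ and only costs a constant factor in the comparison with $b_t^2$ on the tail. The remaining bookkeeping---tracking constants through each case to match the stated coefficients $4$ and $2$---is routine, since all the essential cancellations come from the two high-probability events together with the log-telescoping identity.
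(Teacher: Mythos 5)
Your proposal follows the same route as the paper's proof: invoke Lemma~\ref{lem:upperbound-on-error} (at level $\delta/2$) to obtain the pathwise lower bound $b_t^2 \geq b_0^2 + A_t - 4\sigma^2\log(2/\delta)$ with $A_t := \sum_{i\le t}\norm{\xi_i}^2$, split the sum at the stopping index $\tau$ where $A_t$ first exceeds $2C$, handle the prefix with $b_t\geq b_0$, telescope the tail, and close with Lemma~\ref{lem:sum-error-naive-bound} and a union bound. The one place you deviate---telescoping against the regularized sum $\tilde A_t := b_0^2 + A_t$ rather than $A_t$ itself---is in fact a correction of the paper's step, which bounds $\sum_{t=1}^{T}\norm{\xi_t}^2/A_t \le 1+\log(1+A_T)$; that inequality is false in general when $A_1$ can be arbitrarily small (take $\norm{\xi_1}^2=\varepsilon$, $\norm{\xi_2}^2=1$: the left side tends to $2$ while the right tends to $1+\log 2$). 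Since $b_t^2 > b_0^2 + A_t/2 \ge \tilde A_t/2$ for $t>\tau$, your telescoping against $\tilde A_t$ gives $2\log(\tilde A_T/\tilde A_\tau)\le 2\log(1+A_T/b_0^2)$ and closes cleanly, at the (cosmetic) cost of a $b_0^{-2}$ inside the logarithm that the stated constants in the lemma do not reflect---but the paper's own derivation also does not reproduce its stated constants exactly, so that discrepancy is inherited rather than introduced by you. The plan is sound.
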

\begin{proof}
Lemma \ref{lem:upperbound-on-error} gives that with probability at
least $1-\delta$

\begin{align*}
\sum_{i=1}^{t}\norm{\xi_{i}}^{2} & \leq\sum_{i=1}^{t}\norm{\widehat{\nabla}f(x_{i})}^{2}+4\sigma^{2}\ln\left(\frac{1}{\delta}\right)\\
 & =b_{t}^{2}-b_{0}^{2}+4\sigma^{2}\ln\left(\frac{1}{\delta}\right)\\
\implies b_{t}^{2} & \geq\sum_{i=1}^{t}\norm{\xi_{i}}^{2}-\underbrace{\left[4\sigma^{2}\ln\left(\frac{1}{\delta}\right)-b_{0}^{2}\right]}_{=:C}.
\end{align*}
Let $\tau=\max\left(\left\{ 0\right\} \cup\left\{ t\in\mathbb{N}_{\leq T}\mid\sum_{i=1}^{t}\norm{\xi_{i}}^{2}\leq2C\right\} \right)$.
We have
\begin{align*}
\sum_{t=1}^{T}\frac{\norm{\xi_{t}}^{2}}{b_{t}^{2}} & \leq\sum_{t=1}^{\tau}\frac{\norm{\xi_{t}}^{2}}{b_{t}^{2}}+\sum_{t=\tau+1}^{T}\frac{\norm{\xi_{t}}^{2}}{b_{t}^{2}}\\
 & \leq\frac{1}{b_{0}^{2}}\sum_{t=1}^{\tau}\norm{\xi_{t}}^{2}+\sum_{t=\tau+1}^{T}\frac{\norm{\xi_{t}}^{2}}{\sum_{i=1}^{t}\norm{\xi_{i}}^{2}-C}\tag{\ensuremath{\quad}\ensuremath{\left(\mbox{since }\ensuremath{\sum_{i=1}^{t}\norm{\xi_{i}}^{2}>2C}\mbox{ for }\ensuremath{t>\tau}\right)}}\\
 & \leq\frac{2C}{b_{0}^{2}}+2\sum_{t=\tau+1}^{T}\frac{\norm{\xi_{t}}^{2}}{\sum_{i=1}^{t}\norm{\xi_{i}}^{2}}\\
 & \leq\frac{2C}{b_{0}^{2}}+2\sum_{t=1}^{T}\frac{\norm{\xi_{t}}^{2}}{\sum_{i=1}^{t}\norm{\xi_{i}}^{2}}\\
 & \leq\frac{2C}{b_{0}^{2}}+2+2\log\left(1+\sum_{t=1}^{T}\norm{\xi_{t}}^{2}\right)\\
 & =\frac{4\sigma^{2}}{b_{0}^{2}}\log\left(\frac{1}{\delta}\right)+2\log\left(1+\sum_{t=1}^{T}\norm{\xi_{t}}^{2}\right).
\end{align*}
Then, we can combine this with Lemma \ref{lem:sum-error-naive-bound}
to get that with probability at least $1-2\delta$
\[
\sum_{t=1}^{T}\frac{\norm{\xi_{t}}^{2}}{b_{t}^{2}}\leq\frac{4\sigma^{2}}{b_{0}^{2}}\log\left(\frac{1}{\delta}\right)+2\log\left(1+\sigma^{2}T+\sigma^{2}\log\frac{1}{\delta}\right).
\]
Replacing $\delta$ with $\delta/2$ yields the result.
\end{proof}

\section{AdaGrad (coordinate) analysis \label{sec:AdaGrad-(coordinate)-analysis}}

\global\long\def\hn{\widehat{\nabla}}%

In this section, we show that our same technique can be generalized
to the standard (per-coordinate) version of AdaGrad. The analysis
is analogous to our AdaGrad-norm analysis but with the coordinates
taken into account. 

\subsection{Preliminaries and notations}

Let $d\in\mathbb{N}$ be the dimension of the problem. We let $v_{i}$
denote the $i$-th coordinate of a vector $v\in\R^{d}$. If a vector
like $x_{t}$ is already indexed as part of a sequence of vectors
(where $x_{t}$ denotes the $t$-th update) then we use $x_{t,i}$
to denote $x_{t}$'s $i$-th coordinate. For gradients, we let $\nabla_{i}f(x):=\frac{\partial f}{\partial x_{i}}$
denote the partial derivative wrt the $i$-th coordinate. Similarly,
for stochastic gradients $\hn f(x)$, we let $\hn_{i}f(x)$ denotes
its $i$-th coordinate. For simplicity, in our analysis, we will use
$\hn_{t,i}:=\widehat{\nabla}_{i}f(x_{t})$ and $\nabla_{t,i}:=\nabla_{i}f(x_{t})$
to denote the $i$-th coordinate of the stochastic gradients and gradients
at iterate $t$, respectively. If $a,b\in\R^{d}$, then $ab$ and
$a/b$ denotes coordinate-wise multiplication and division, respectively
i.e. $(ab)_{i}=a_{i}b_{i}$ and $(a/b)_{i}=a_{i}/b_{i}$.

If we denote the noise as $\xi_{t}:=\widehat{\nabla}f(x_{t})-\nabla f(x_{t})$
and $\xi_{t,i}$ as the $i$-th coordinate of $\xi_{t}$, then we
assume the noise is per-coordinate sub-gaussian i.e. there exists
$\sigma_{i}>0$ for $i\in[d]$ such that $\xi_{t}$ satisfies
\[
\E\left[\exp\left(\lambda^{2}\xi_{t,i}^{2}\right)\right]\leq\exp\left(\lambda^{2}\sigma_{i}^{2}\right),\forall\left|\lambda\right|\leq\frac{1}{\sigma_{i}},\forall i\in\left[d\right].
\]

Note that $\left\Vert \xi_{t}\right\Vert $ being $\sigma$-subgaussian
implies that each $\xi_{t,i}$ is also $\sigma$-subgaussian, thus
the assumption above is more general.

\subsection{Analysis}

Similarly to our Adagrad-norm analysis in Section \ref{sec:AdaGrad-Norm-second-proof},
we define a proxy step size $a_{t}$ that replaces the stochastic
gradient at time $t$ with the true gradient: $a_{t,i}^{2}:=b_{t-1,i}^{2}+\nabla_{t,i}^{2}$,
for $i\in[d]$. First, we present an analogous starting point to Lemma
\ref{lem:adagrad-basics} in the Lemma below:
\begin{lem}
\label{lem:adagrad-coord-basics}For $t\ge1$, let $\xi_{t,i}=\hn_{t,i}-\nabla_{t,i}$,
$a_{t,i}^{2}:=b_{t-1,i}^{2}+\nabla_{t,i}^{2}$ and $M_{t,i}=\max_{j\le t}\left|\xi_{j,i}\right|$,
then we have
\[
\sum_{t=1}^{T}\sum_{i=1}^{d}\frac{\nabla_{t,i}^{2}}{b_{t,i}}\leq\frac{\Delta_{1}}{\eta}-\sum_{t=1}^{T}\sum_{i=1}^{d}\frac{\nabla_{t,i}\xi_{t,i}}{a_{t,i}}+\sum_{t=1}^{T}\sum_{i=1}^{d}\left|\xi_{t,i}\right|\left[\frac{\nabla_{t,i}^{2}}{2a_{t,i}^{2}}+\frac{\xi_{t,i}^{2}}{2b_{t,i}^{2}}\right]+\frac{\eta L}{2}\sum_{t=1}^{T}\sum_{i=1}^{d}\frac{\hn_{t,i}^{2}}{b_{t,i}^{2}}.
\]
\end{lem}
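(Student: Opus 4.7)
The plan is to mirror the scalar analysis of Lemma~\ref{lem:adagrad-basics} but apply it coordinate-wise. First I would start from $L$-smoothness of $f$ together with the per-coordinate AdaGrad update $x_{t+1,i} - x_{t,i} = -\frac{\eta}{b_{t,i}}\hn_{t,i}$, which after expansion yields
\begin{align*}
f(x_{t+1}) - f(x_t) \;\le\; -\sum_{i=1}^{d} \frac{\eta}{b_{t,i}}\nabla_{t,i}\hn_{t,i} + \frac{L\eta^{2}}{2}\sum_{i=1}^{d}\frac{\hn_{t,i}^{2}}{b_{t,i}^{2}}.
\end{align*}
Writing $\hn_{t,i} = \nabla_{t,i} + \xi_{t,i}$, the inner product splits into a ``main'' term $-\frac{\eta\nabla_{t,i}^{2}}{b_{t,i}}$ and an error term $-\frac{\eta\nabla_{t,i}\xi_{t,i}}{b_{t,i}}$. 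This is the direct coordinate analog of the starting identity in the proof of Lemma~\ref{lem:adagrad-basics}.

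Next, to handle the dependence between $\xi_{t,i}$ and $b_{t,i}$, I would swap in the proxy step size $a_{t,i}$ via
\[
-\frac{\nabla_{t,i}\xi_{t,i}}{b_{t,i}} \;=\; -\frac{\nabla_{t,i}\xi_{t,i}}{a_{t,i}} + \nabla_{t,i}\xi_{t,i}\!\left(\frac{1}{a_{t,i}} - \frac{1}{b_{t,i}}\right).
\]
The key estimate needed here is a per-coordinate version of Lemma~\ref{lem:adagrad-proxy-difference}: using $b_{t,i}^{2} - a_{t,i}^{2} = \hn_{t,i}^{2} - \nabla_{t,i}^{2}$ and the bounds $b_{t,i} \ge |\hn_{t,i}|$, $a_{t,i} \ge |\nabla_{t,i}|$, a short calculation gives
\[
\left|\frac{1}{a_{t,i}} - \frac{1}{b_{t,i}}\right| \;\le\; \frac{|\xi_{t,i}|}{a_{t,i}b_{t,i}}.
\]
Combining this with the AM-GM split $|\nabla_{t,i}||\xi_{t,i}|/(a_{t,i}b_{t,i}) \le \frac{\nabla_{t,i}^{2}}{2a_{t,i}^{2}} + \frac{\xi_{t,i}^{2}}{2b_{t,i}^{2}}$ bounds the swap error by $|\xi_{t,i}|\bigl[\frac{\nabla_{t,i}^{2}}{2a_{t,i}^{2}} + \frac{\xi_{t,i}^{2}}{2b_{t,i}^{2}}\bigr]$, which is exactly the form appearing on the right-hand side.

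Finally I would sum the resulting inequality over $t=1,\dots,T$, rearrange so that the $\sum_{i}\nabla_{t,i}^{2}/b_{t,i}$ terms appear on the left, and use the telescoping bound $\sum_{t=1}^{T}(f(x_t) - f(x_{t+1})) \le \Delta_{1}$ on the function-value differences. This yields precisely the claimed inequality. There is no significant obstacle beyond verifying the per-coordinate proxy-difference bound; the argument is essentially the Lemma~\ref{lem:adagrad-basics} proof applied coordinate-by-coordinate, and the only reason the paper states this separately is that the coordinate structure must be made explicit for the downstream analysis of per-coordinate AdaGrad.
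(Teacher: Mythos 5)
Your proposal matches the paper's proof essentially step for step: the smoothness expansion with the per-coordinate update, the decomposition $\hn_{t,i}=\nabla_{t,i}+\xi_{t,i}$, the proxy swap from $b_{t,i}$ to $a_{t,i}$ controlled by the coordinate-wise analogue of Lemma~\ref{lem:adagrad-proxy-difference}, the AM-GM bound on the swap error, and the telescoping of $\Delta_{t+1}-\Delta_t$ after dividing by $\eta$. No gaps; this is the same argument the paper uses.
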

\begin{proof}
We start with the smoothness of $f$

\begin{align*}
\Delta_{t+1}-\Delta_{t} & \leq\left\langle \nabla f(x_{t}),x_{t+1}-x_{t}\right\rangle +\frac{L}{2}\norm{x_{t+1}-x_{t}}^{2}\\
 & =-\eta\sum_{i=1}^{d}\frac{\nabla_{t,i}\hn_{t,i}}{b_{t,i}}+\frac{\eta^{2}L}{2}\sum_{i=1}^{d}\frac{\hn_{t,i}^{2}}{b_{t,i}^{2}}\\
 & =-\eta\sum_{i=1}^{d}\frac{\nabla_{t,i}^{2}}{b_{t,i}}-\eta\sum_{i=1}^{d}\frac{\nabla_{t,i}\xi_{t,i}}{b_{t,i}}+\frac{\eta^{2}L}{2}\sum_{i=1}^{d}\frac{\hn_{t,i}^{2}}{b_{t,i}^{2}}\\
 & =-\eta\sum_{i=1}^{d}\frac{\nabla_{t,i}^{2}}{b_{t,i}}-\eta\sum_{i=1}^{d}\frac{\nabla_{t,i}\xi_{t,i}}{a_{t,i}}+\eta\sum_{i=1}^{d}\left(\frac{1}{a_{t,i}}-\frac{1}{b_{t,i}}\right)\nabla_{t,i}\xi_{t,i}+\frac{\eta^{2}L}{2}\sum_{i=1}^{d}\frac{\hn_{t,i}^{2}}{b_{t,i}^{2}}.
\end{align*}
Similarly to Lemma \ref{lem:adagrad-proxy-difference}, we have:
\[
\left|\frac{1}{a_{t,i}}-\frac{1}{b_{t,i}}\right|\leq\frac{\left|\xi_{t,i}\right|}{a_{t,i}b_{t,i}}.
\]
Then
\begin{align*}
\Delta_{t+1}-\Delta_{t} & \leq-\eta\sum_{i=1}^{d}\frac{\nabla_{t,i}^{2}}{b_{t,i}}-\eta\sum_{i=1}^{d}\frac{\nabla_{t,i}\xi_{t,i}}{a_{t,i}}+\eta\sum_{i=1}^{d}\left(\frac{1}{a_{t,i}}-\frac{1}{b_{t,i}}\right)\nabla_{t,i}\xi_{t,i}+\frac{\eta^{2}L}{2}\sum_{i=1}^{d}\frac{\hn_{t,i}^{2}}{b_{t,i}^{2}}\\
 & \leq-\eta\sum_{i=1}^{d}\frac{\nabla_{t,i}^{2}}{b_{t,i}}-\eta\sum_{i=1}^{d}\frac{\nabla_{t,i}\xi_{t,i}}{a_{t,i}}+\eta\sum_{i=1}^{d}\frac{\left|\xi_{t,i}\right|}{a_{t,i}b_{t,i}}\left|\nabla_{t,i}\xi_{t,i}\right|+\frac{\eta^{2}L}{2}\sum_{i=1}^{d}\frac{\hn_{t,i}^{2}}{b_{t,i}^{2}}\\
 & \leq-\eta\sum_{i=1}^{d}\frac{\nabla_{t,i}^{2}}{b_{t,i}}-\eta\sum_{i=1}^{d}\frac{\nabla_{t,i}\xi_{t,i}}{a_{t,i}}+\eta\sum_{i=1}^{d}\left|\xi_{t,i}\right|\left[\frac{\nabla_{t,i}^{2}}{2a_{t,i}^{2}}+\frac{\xi_{t,i}^{2}}{2b_{t,i}^{2}}\right]+\frac{\eta^{2}L}{2}\sum_{i=1}^{d}\frac{\hn_{t,i}^{2}}{b_{t,i}^{2}}.
\end{align*}
Rearranging and summing give us the Lemma. 
\end{proof}

Next, we present an analogous per-coordinate result to Lemma \ref{sec:AdaGrad-Norm-second-proof}. 
\begin{lem}
\label{lem:adagrad-per-coord-martingale-dif}With $M_{T,i}=\max_{t\le T}\left|\xi_{t,i}\right|,\sigma_{\max}=\max_{i\in[d]}\sigma_{i}$,
and for any $w>0$, we have with probability at least $1-2d\delta$
\[
\frac{1}{\left\Vert b_{T}\right\Vert _{1}}\sum_{t=1}^{T}\left\Vert \nabla f(x_{t})\right\Vert _{1}^{2}\leq\frac{\Delta_{1}}{\eta}+d\sigma_{\max}\sqrt{\log\frac{1}{\delta}}+\left(8\left\Vert \sigma\right\Vert _{1}\sqrt{\log\frac{T}{\delta}}+d\eta L\right)\log\left(\frac{\left\Vert b_{T}\right\Vert _{1}}{\min b_{0,i}}\right)+\sum_{i=1}^{d}6\sigma_{i}\sqrt{\log\frac{T}{\delta}}\sum_{t=1}^{T}\frac{\xi_{t,i}^{2}}{b_{t,i}^{2}}.
\]
\end{lem}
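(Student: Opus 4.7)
The plan is to take Lemma~\ref{lem:adagrad-coord-basics} as the coordinate-wise analogue of Lemma~\ref{lem:adagrad-basics} and then re-run the argument of Lemma~\ref{lem:martingale-dif-error-term} one coordinate at a time, aggregating via $\ell_1$-norms at the very end. First, I will lower bound the left-hand side of Lemma~\ref{lem:adagrad-coord-basics}: by Cauchy--Schwarz, $\norm{\nabla f(x_t)}_1^2 = \bigl(\sum_i \tfrac{|\nabla_{t,i}|}{\sqrt{b_{t,i}}}\sqrt{b_{t,i}}\bigr)^2 \le \bigl(\sum_i \nabla_{t,i}^2/b_{t,i}\bigr)\norm{b_t}_1$, and since $\norm{b_t}_1$ is nondecreasing in $t$, summing over $t$ yields $\sum_{t,i}\nabla_{t,i}^2/b_{t,i} \ge \norm{b_T}_1^{-1}\sum_t \norm{\nabla f(x_t)}_1^2$, which is exactly the LHS of the target inequality.

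Next I handle the three error terms on the right-hand side of Lemma~\ref{lem:adagrad-coord-basics} coordinate by coordinate. Since $a_{t,i}$ depends only on $b_{t-1,i}$ and $\nabla_{t,i}$, it is $\F_t$-measurable, so the scalar form of Lemma~\ref{lem:helper-taylor-vector} applied to the $\sigma_i$-sub-Gaussian $\xi_{t,i}$ gives $\E[\exp(-w\,\nabla_{t,i}\xi_{t,i}/a_{t,i} - 2\sigma_i^2 w^2\,\nabla_{t,i}^2/a_{t,i}^2)\mid \F_t]\le 1$ for any fixed $w>0$. Telescoping across $t$, applying Markov's inequality, and choosing $w=\sigma_i^{-1}\sqrt{\log(1/\delta)}$ produces, for each $i$ with probability at least $1-\delta$, the bound $-\sum_t\nabla_{t,i}\xi_{t,i}/a_{t,i} \le 2\sigma_i\sqrt{\log(1/\delta)}\sum_t \nabla_{t,i}^2/a_{t,i}^2 + \sigma_i\sqrt{\log(1/\delta)}$. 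The standard sub-Gaussian maximal inequality (already used after Lemma~\ref{lem:martingale-dif-error-term}) gives $M_{T,i}\le 2\sigma_i\sqrt{\log(T/\delta)}$ per coordinate with probability $\ge 1-\delta$. Union-bounding both families over $i\in[d]$ yields the stated probability $\ge 1-2d\delta$.

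I then recycle two deterministic telescoping estimates used in the AdaGrad-Norm proof, applied at the coordinate level: $\sum_t \hn_{t,i}^2/b_{t,i}^2 \le 2\log(b_{T,i}/b_{0,i})$, and, via $\nabla_{t,i}^2 \le 2\hn_{t,i}^2+2\xi_{t,i}^2$ together with the monotonicity of $x\mapsto x/(c+x)$, also $\sum_t \nabla_{t,i}^2/a_{t,i}^2 \le 4\log(b_{T,i}/b_{0,i}) + 2\sum_t \xi_{t,i}^2/b_{t,i}^2$. Substituting these into Lemma~\ref{lem:adagrad-coord-basics} and using $\sqrt{\log(1/\delta)}\le\sqrt{\log(T/\delta)}$ collapses the RHS into a linear combination of $\Delta_1/\eta$, terms $\sigma_i\sqrt{\log(1/\delta)}$, $\sigma_i\sqrt{\log(T/\delta)}\log(b_{T,i}/b_{0,i})$, $\sigma_i\sqrt{\log(T/\delta)}\sum_t\xi_{t,i}^2/b_{t,i}^2$, and $\eta L\log(b_{T,i}/b_{0,i})$.

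Finally I aggregate across coordinates using $b_{T,i}\le\norm{b_T}_1$: $\sum_i \log(b_{T,i}/b_{0,i})\le d\log(\norm{b_T}_1/\min_i b_{0,i})$, $\sum_i \sigma_i\log(b_{T,i}/b_{0,i})\le\norm{\sigma}_1\log(\norm{b_T}_1/\min_i b_{0,i})$, and $\sum_i\sigma_i\sqrt{\log(1/\delta)}\le d\sigma_{\max}\sqrt{\log(1/\delta)}$. Combining with the Cauchy--Schwarz lower bound on the LHS produces exactly the claimed inequality. The only delicate part is tracking the per-coordinate constants so that they fit inside the $8\norm{\sigma}_1$ and $6\sigma_i$ coefficients of the statement (which is where a small amount of slack from $\sqrt{\log(1/\delta)}\le\sqrt{\log(T/\delta)}$ is spent); conceptually there is nothing new beyond a per-coordinate repetition of Lemma~\ref{lem:martingale-dif-error-term} followed by $\ell_1$-aggregation.
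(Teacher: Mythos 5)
Your plan is, up to small choices, exactly the paper's proof: start from Lemma~\ref{lem:adagrad-coord-basics}, apply the scalar sub-Gaussian MGF inequality coordinate-by-coordinate to control $-\sum_t \nabla_{t,i}\xi_{t,i}/a_{t,i}$ (the proxy $a_{t,i}$ being $\F_t$-measurable is the key point), use the maximal bound $M_{T,i}\le 2\sigma_i\sqrt{\log(T/\delta)}$, the deterministic telescoping estimates $\sum_t \hn_{t,i}^2/b_{t,i}^2\le 2\log(b_{T,i}/b_{0,i})$ and $\sum_t \nabla_{t,i}^2/a_{t,i}^2\le 4\log(b_{T,i}/b_{0,i})+2\sum_t\xi_{t,i}^2/b_{t,i}^2$, union bound the two families of events to get $1-2d\delta$, and lower bound the LHS via Cauchy--Schwarz with $\|b_t\|_1$. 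The one genuine variation is your choice of a per-coordinate $w_i=\sigma_i^{-1}\sqrt{\log(1/\delta)}$: the paper instead fixes a single $w=\sigma_{\max}^{-1}\sqrt{\log(1/\delta)}$ uniformly across $i$, which after union bounding over $d$ coordinates yields the residual $\tfrac{d}{w}\log\tfrac{1}{\delta}=d\sigma_{\max}\sqrt{\log(1/\delta)}$ that appears in the statement. Your per-coordinate choice gives the tighter residual $\|\sigma\|_1\sqrt{\log(1/\delta)}\le d\sigma_{\max}\sqrt{\log(1/\delta)}$, so it is a mild improvement rather than a defect, but note that the paper then leans on its uniform-$w$ choice to massage $w\sigma_i^2 = \sigma_i^2\sqrt{\log(1/\delta)}/\sigma_{\max} \le \sigma_i\sqrt{\log(T/\delta)}$ so that everything can be expressed in terms of $\sigma_i\sqrt{\log(T/\delta)}$; your choice achieves the same simplification directly.

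One caution on the constants you wave at. Lemma~\ref{lem:helper-taylor-vector} with $b=0$ gives $\E[\exp(aX)]\le\exp(2a^2\sigma^2)$ — the factor is $2$. Carrying it through, the martingale step produces $-\sum_t \nabla_{t,i}\xi_{t,i}/a_{t,i}\le 2\sigma_i\sqrt{\log(1/\delta)}\sum_t\nabla_{t,i}^2/a_{t,i}^2 + \sigma_i\sqrt{\log(1/\delta)}$, and after substituting the telescoping bounds, the coefficient on $\log(b_{T,i}/b_{0,i})$ comes out to $12\sigma_i\sqrt{\log(T/\delta)}+\eta L$ and the coefficient on $\sum_t\xi_{t,i}^2/b_{t,i}^2$ to $7\sigma_i\sqrt{\log(T/\delta)}$, not the $8$ and $6$ in the statement. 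The paper's own proof reaches $8$ and $6$ because it silently drops that factor of $2$ when passing from the per-coordinate high-probability bound to the summed version (the displayed RHS reads $w\sigma_i^2$ where $2w\sigma_i^2$ should appear). So your proof is sound and essentially identical in method, but the $8\|\sigma\|_1$ and $6\sigma_i$ targets you say you can ``fit inside'' are not actually attainable with a faithful accounting, and the slack from $\sqrt{\log(1/\delta)}\le\sqrt{\log(T/\delta)}$ is already fully spent; either enlarge the constants in your statement or flag the factor-of-$2$ slip in the source.
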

\begin{proof}
We first take care of the term $\sum_{t=1}^{T}\sum_{i=1}^{d}\frac{\hn_{t,i}^{2}}{b_{t,i}^{2}}$
from Lemma \ref{lem:adagrad-coord-basics}:
\[
\sum_{t=1}^{T}\sum_{i=1}^{d}\frac{\hn_{t,i}^{2}}{b_{t,i}^{2}}=\sum_{i=1}^{d}\sum_{t=1}^{T}\frac{\hn_{t,i}^{2}}{b_{t,i}^{2}}=\sum_{i=1}^{d}\sum_{t=1}^{T}\frac{b_{t,i}^{2}-b_{t-1,i}^{2}}{b_{t,i}^{2}}\leq\sum_{i=1}^{d}2\log\frac{b_{T,i}}{b_{0,i}}.
\]
Next, we deal with $-\sum_{t=1}^{T}\sum_{i=1}^{d}\frac{\nabla_{t,i}\xi_{t,i}}{a_{t,i}}$
via our martingale argument. For any $w>0$, we have for each $i\in[d]$:
\begin{align*}
\E\left[\exp\left(-w\frac{\nabla_{t,i}\xi_{t,i}}{a_{t,i}}-2w^{2}\frac{\sigma_{i}^{2}\nabla_{t,i}^{2}}{a_{t,i}^{2}}\right)\mid\F_{t}\right] & =\exp\left(-2w^{2}\frac{\sigma_{i}^{2}\nabla_{t,i}^{2}}{a_{t,i}^{2}}\right)\E\left[\exp\left(-w\frac{\nabla_{t,i}\xi_{t,i}}{a_{t,i}}\right)\mid\F_{t}\right]\\
 & \leq1.
\end{align*}
Then a simple inductive argument gives with probability at least $1-\delta$:
\[
-w\sum_{t=1}^{T}\frac{\nabla_{t,i}\xi_{t,i}}{a_{t,i}}\leq2w^{2}\sum_{t=1}^{T}\frac{\sigma_{i}^{2}\nabla_{t,i}^{2}}{a_{t,i}^{2}}+\log\frac{1}{\delta}.
\]
By a union bound across all coordinate $d$, we have w.p. at least
$1-d\delta$:
\begin{equation}
-\sum_{t=1}^{T}\sum_{i=1}^{d}\frac{\nabla_{t,i}\xi_{t,i}}{a_{t,i}}\leq\sum_{t=1}^{T}\sum_{i=1}^{d}\frac{w\sigma_{i}^{2}\nabla_{t,i}^{2}}{a_{t,i}^{2}}+\frac{d}{w}\log\frac{1}{\delta}.\label{eq:bound-inner-prod-adagrad-coord}
\end{equation}
Let's call the event that (\ref{eq:bound-inner-prod-adagrad-coord})
happens $E_{1}$. Now, we deal with $\sum_{t=1}^{T}\sum_{i=1}^{d}\frac{\nabla_{t,i}^{2}}{a_{t,i}^{2}}$.
Note that
\[
\frac{\nabla_{t,i}^{2}}{a_{t,i}^{2}}=\frac{\nabla_{t,i}^{2}}{b_{t-1,i}^{2}+\nabla_{t,i}^{2}+\sigma_{i}^{2}}\leq\frac{2\hn_{t,i}^{2}+2\xi_{t,i}^{2}}{b_{t-1,i}^{2}+2\hn_{t,i}^{2}+2\xi_{t,i}^{2}+\sigma_{i}^{2}}\leq2\frac{\hn_{t,i}^{2}}{b_{t,i}^{2}}+2\frac{\xi_{t,i}^{2}}{b_{t,i}^{2}}.
\]
Under the even $E_{1}$ and the above result, we can bound Lemma \ref{lem:adagrad-coord-basics}
with probability at least $1-d\delta$:
\begin{align*}
\sum_{t=1}^{T}\sum_{i=1}^{d}\frac{\nabla_{t,i}^{2}}{b_{t,i}} & \leq\frac{\Delta_{1}}{\eta}+w\sum_{t=1}^{T}\sum_{i=1}^{d}\frac{\sigma_{i}^{2}\nabla_{t,i}^{2}}{a_{t,i}^{2}}+\frac{d}{w}\log\frac{1}{\delta}+\sum_{t=1}^{T}\sum_{i=1}^{d}\frac{M_{T,i}}{2}\left[\frac{\nabla_{t,i}^{2}}{a_{t,i}^{2}}+\frac{\xi_{t,i}^{2}}{b_{t,i}^{2}}\right]+\frac{\eta L}{2}\sum_{i=1}^{d}2\log\frac{b_{T,i}}{b_{0,i}}\\
 & =\frac{\Delta_{1}}{\eta}+\frac{d}{w}\log\frac{1}{\delta}+\sum_{t=1}^{T}\sum_{i=1}^{d}\left(2w\sigma_{i}^{2}+M_{T,i}\right)\frac{\hn_{t,i}^{2}}{b_{t,i}^{2}}+2\sum_{t=1}^{T}\sum_{i=1}^{d}\left(M_{T,i}+w\sigma_{i}^{2}\right)\frac{\xi_{t,i}^{2}}{b_{t,i}^{2}}+\eta L\sum_{i=1}^{d}\log\frac{b_{T,i}}{b_{0,i}}\\
 & =\frac{\Delta_{1}}{\eta}+\frac{d}{w}\log\frac{1}{\delta}+\sum_{i=1}^{d}\left(4w\sigma_{i}^{2}+2M_{T,i}+\eta L\right)\log\frac{b_{T,i}}{b_{0,i}}+2\sum_{i=1}^{d}\left(M_{T,i}+w\sigma_{i}^{2}\right)\sum_{t=1}^{T}\frac{\xi_{t,i}^{2}}{b_{t,i}^{2}}.
\end{align*}
Note that
\begin{align*}
\left(\sum_{i=1}^{d}\frac{\nabla_{t,i}^{2}}{b_{t,i}}\right)\left(\sum_{i=1}^{d}b_{t,i}\right) & \geq\left(\sum_{i=1}^{d}\left\Vert \nabla_{t,i}\right\Vert \right)^{2}=\left\Vert \nabla f(x_{t})\right\Vert _{1}^{2}\\
\Rightarrow\left(\sum_{i=1}^{d}\frac{\nabla_{t,i}^{2}}{b_{t,i}}\right) & \geq\frac{\left\Vert \nabla f(x_{t})\right\Vert _{1}^{2}}{\left\Vert b_{t}\right\Vert _{1}}\geq\frac{\left\Vert \nabla f(x_{t})\right\Vert _{1}^{2}}{\left\Vert b_{T}\right\Vert _{1}}.
\end{align*}
Hence, we have
\[
\frac{1}{\left\Vert b_{T}\right\Vert _{1}}\sum_{t=1}^{T}\left\Vert \nabla f(x_{t})\right\Vert _{1}^{2}\leq\sum_{t=1}^{T}\sum_{i=1}^{d}\frac{\nabla_{t,i}^{2}}{b_{t,i}}.
\]
Since it is known that with probability at least $1-\delta$, $\max_{t\in\left[T\right]}\left|\xi_{t,i}\right|\leq\sigma_{i}\sqrt{1+\log\frac{T}{\delta}}$
for each $i\in[d]$ \citet{li2020high,liu2022convergence}, a union
bound over all $d$ gives us that w.p.$\geq1-d\delta$
\begin{equation}
M_{T,i}\leq2\sigma_{i}\sqrt{\log\frac{T}{\delta}},\forall i\in\left[d\right].\label{eq:uniform-bound-over-xit-coord}
\end{equation}
Condition under this event and choosing $\frac{1}{w}=\frac{\sigma_{\max}}{\sqrt{\log\frac{1}{\delta}}}$
gives us with probability at least $1-2d\delta$
\begin{align*}
\frac{1}{\left\Vert b_{T}\right\Vert _{1}}\sum_{t=1}^{T}\left\Vert \nabla f(x_{t})\right\Vert _{1}^{2} & \leq\frac{\Delta_{1}}{\eta}+d\sigma_{\max}\sqrt{\log\frac{1}{\delta}}+\sum_{i=1}^{d}\left(\frac{4\sigma_{i}^{2}}{\sigma_{\max}}\sqrt{\log\frac{1}{\delta}}+4\sigma_{i}\sqrt{\log\frac{T}{\delta}}+\eta L\right)\log\frac{b_{T,i}}{b_{0,i}}\\
 & \quad+2\sum_{i=1}^{d}\left(2\sigma_{i}\sqrt{\log\frac{T}{\delta}}+\frac{\sigma_{i}^{2}}{\sigma_{\max}}\sqrt{\log\frac{1}{\delta}}\right)\sum_{t=1}^{T}\frac{\xi_{t,i}^{2}}{b_{t,i}^{2}}\\
 & \leq\frac{\Delta_{1}}{\eta}+d\sigma_{\max}\sqrt{\log\frac{1}{\delta}}+\left(8\left\Vert \sigma\right\Vert _{1}\sqrt{\log\frac{T}{\delta}}+d\eta L\right)\log\left(\frac{\left\Vert b_{T}\right\Vert _{1}}{\min b_{0,i}}\right)+\sum_{i=1}^{d}6\sigma_{i}\sqrt{\log\frac{T}{\delta}}\sum_{t=1}^{T}\frac{\xi_{t,i}^{2}}{b_{t,i}^{2}}.
\end{align*}
\end{proof}

Finally, it remains to bound $\norm{b_{T}}_{1}$ and $\sum_{t=1}^{T}\frac{\xi_{t,i}^{2}}{b_{t,i}^{2}}$.
For this we use Lemma \ref{lem: xit-sq-bt-sq-coord} to show the following
bound on $\norm{b_{T}}_{1}$:
\begin{lem}
With probability at least $1-2d\delta$
\begin{align*}
\left\Vert b_{T}\right\Vert _{1} & \leq2\left\Vert b_{0}\right\Vert _{1}+\frac{4\Delta_{1}}{\eta}+\log\left(\frac{2}{\delta}\right)\sum_{i=1}^{d}\frac{8\sigma_{i}^{2}}{b_{0,i}}+4\sum_{i=1}^{d}\sqrt{\sigma_{i}^{2}T+\sigma_{i}^{2}\log\frac{2}{\delta}}+4\eta^{2}L\sum_{i=1}^{d}\log\frac{4\eta^{2}L}{b_{0,i}}\\
 & =O\left(\left\Vert \sigma\right\Vert _{1}\sqrt{T}+\left\Vert b_{0}\right\Vert _{1}+\frac{\Delta_{1}}{\eta}+\left\Vert \frac{\sigma^{2}}{b_{0}}\right\Vert _{1}\log\left(\frac{1}{\delta}\right)+\left\Vert \sigma\right\Vert _{1}\sqrt{\log\frac{1}{\delta}}+\eta^{2}L\sum_{i=1}^{d}\log\frac{\eta^{2}L}{b_{0,i}}\right).
\end{align*}
\end{lem}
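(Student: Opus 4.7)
The plan is to mirror the AdaGrad-Norm bound in Lemma \ref{lem:bound-on-bT} but run the entire argument coordinate-by-coordinate, then sum over $i \in [d]$ and apply a union bound to accommodate the $d$ independent sub-Gaussian error sequences. Concretely, I would begin from smoothness and the AdaGrad update $x_{t+1,i}=x_{t,i}-\frac{\eta}{b_{t,i}}\hn_{t,i}$ to obtain
\[
\Delta_{t+1}-\Delta_{t}\le -\eta\sum_{i=1}^{d}\frac{\hn_{t,i}^{2}}{b_{t,i}}+\eta\sum_{i=1}^{d}\frac{\hn_{t,i}\xi_{t,i}}{b_{t,i}}+\frac{\eta^{2}L}{2}\sum_{i=1}^{d}\frac{\hn_{t,i}^{2}}{b_{t,i}^{2}},
\]
after substituting $\nabla_{t,i}=\hn_{t,i}-\xi_{t,i}$. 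I would then apply Young's inequality $\frac{\hn_{t,i}\xi_{t,i}}{b_{t,i}}\le \frac{\hn_{t,i}^2}{4b_{t,i}}+\frac{\xi_{t,i}^2}{b_{t,i}}$ to absorb the cross term into the leading negative $-\eta\hn_{t,i}^2/b_{t,i}$ contribution (leaving a factor like $\eta/2$), parallel to the scalar proof.

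Next, for the $L$-term I would handle each coordinate $i$ separately by defining $\tau_i = \max\{t \le T : b_{t,i} \le 2\eta L\}$ and splitting the sum: for $t > \tau_i$ the factor $\frac{L\eta}{2b_{t,i}^2}-\frac{1}{4b_{t,i}}$ is non-positive, while on $t \le \tau_i$ a telescoping-log bound gives $\frac{L\eta}{2}\sum_{t\le \tau_i}\hn_{t,i}^2/b_{t,i}^2 \le L\eta \log(b_{\tau_i,i}/b_{0,i}) \le L\eta \log(2\eta L/b_{0,i})$. Summing over $i$ produces the $4\eta^2 L\sum_i \log(4\eta^2L/b_{0,i})$ term (up to constants). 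Rearranging, dividing by $\eta$, telescoping $\Delta_{t+1}-\Delta_t$, and using $\sum_t \hn_{t,i}^2/b_{t,i}\ge b_{T,i}-b_{0,i}$ yields
\[
\sum_{i=1}^{d}\bigl(b_{T,i}-b_{0,i}\bigr)\;\lesssim\;\frac{\Delta_{1}}{\eta}+\sum_{i=1}^{d}\sum_{t=1}^{T}\frac{\xi_{t,i}^{2}}{b_{t,i}}+\eta^{2}L\sum_{i=1}^{d}\log\frac{4\eta^{2}L}{b_{0,i}}.
\]

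Finally, the error sum $\sum_t \xi_{t,i}^2/b_{t,i}$ is controlled via the per-coordinate analog of Lemma \ref{lem:bound-on-eps-sq-by-bt} (which the appendix's Lemma \ref{lem: xit-sq-bt-sq-coord} provides), giving with probability at least $1-\delta/d$ per coordinate a bound of the form $\frac{8\sigma_i^2}{b_{0,i}}\log(2/\delta)-b_{0,i}+4\sqrt{\sum_t \xi_{t,i}^2}$; a second application (via a per-coordinate version of Lemma \ref{lem:sum-error-naive-bound}) controls $\sum_t \xi_{t,i}^2 \le \sigma_i^2 T + \sigma_i^2 \log(2/\delta)$. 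A union bound over the $d$ coordinates and over the two events per coordinate yields the total failure probability $2d\delta$ claimed. Summing these coordinate-wise bounds, moving $\|b_0\|_1$ to the other side, and absorbing a constant factor into the leading $2\|b_0\|_1$ produces the stated inequality.

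The main obstacle is keeping the Young's inequality constants and the $\tau_i$ thresholds aligned so that the coefficients in front of $\sum_t \hn_{t,i}^2/b_{t,i}$ remain strictly positive after absorption, which then licenses the lower bound $\sum_t \hn_{t,i}^2/b_{t,i} \ge b_{T,i}-b_{0,i}$; the rest is a careful per-coordinate application of the AdaGrad-Norm template combined with a union bound to pay the $\log(2/\delta)$-vs-$\log(2d/\delta)$ accounting.
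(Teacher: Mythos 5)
Your proposal is essentially correct, and the overall skeleton (smoothness $\to$ Young's $\to$ telescope $\to$ lower-bound the left side by $\|b_T\|_1-\|b_0\|_1$ $\to$ control $\sum_{t,i}\xi_{t,i}^2/b_{t,i}$ via the per-coordinate helper lemmas) matches the paper's. The one genuine point of departure is how you control the $L$-term $\sum_i\frac{\eta^2L}{2}\sum_t\hn_{t,i}^2/b_{t,i}^2$. You import the scalar AdaGrad-Norm device of Lemma \ref{lem:bound-on-bT}: introduce a per-coordinate stopping time $\tau_i=\max\{t\le T: b_{t,i}\le 2\eta L\}$, drop the tail $t>\tau_i$ (absorbed by the negative leading term), and bound the head by a telescoping log capped at $\log(2\eta L/b_{0,i})$. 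The paper instead keeps the full sum, bounds it by $2\log(b_{T,i}/b_{0,i})$, then splits the log as $\log\frac{b_{T,i}}{4\eta^2L}+\log\frac{4\eta^2L}{b_{0,i}}$ and absorbs the first piece into $b_{T,i}/2$ using $\log x\le x$. Both devices deliver the same kind of additive $\eta L\log(\eta L/b_{0,i})$ term; the paper's is slightly cleaner because it avoids introducing $d$ stopping times, while yours more directly parallels the scalar proof. A minor bookkeeping remark: after dividing by $\eta/2$, the correct coefficient on the log term is $O(\eta L)$, which is what your stopping-time computation actually yields; the $\eta^2L$ that appears in the paper's intermediate display and lemma statement stems from a slipped factor of $\eta$ there, so your version is in fact the internally consistent one. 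Your probability accounting (invoke Lemma \ref{lem: xit-sq-bt-sq-coord}, which already bundles the per-coordinate union bound into a $1-2d\delta$ event) is also correct.
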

\begin{proof}
We start via the smoothness of $f$
\begin{align*}
f(x_{t+1})-f(x_{t}) & \leq\left\langle \nabla f(x_{t}),x_{t+1}-x_{t}\right\rangle +\frac{L}{2}\norm{x_{t+1}-x_{t}}^{2}\\
 & =-\frac{\eta}{b_{t}}\left\langle \nabla f(x_{t}),\hn f(x_{t})\right\rangle +\frac{\eta^{2}L}{2}\sum_{i=1}^{d}\frac{\hn_{t,i}^{2}}{b_{t,i}^{2}}\\
 & =-\eta\sum_{i=1}^{d}\frac{\hn_{t,i}^{2}}{b_{t,i}}+\eta\sum_{i=1}^{d}\frac{\xi_{t,i}\hn_{t,i}}{b_{t,i}}+\frac{\eta^{2}L}{2}\sum_{i=1}^{d}\frac{\hn_{t,i}^{2}}{b_{t,i}^{2}}\\
 & \leq-\frac{\eta}{2}\sum_{i=1}^{d}\frac{\hn_{t,i}^{2}}{b_{t,i}}+\frac{\eta}{2}\sum_{i=1}^{d}\frac{\xi_{t,i}^{2}}{b_{t,i}}+\frac{\eta^{2}L}{2}\sum_{i=1}^{d}\frac{\hn_{t,i}^{2}}{b_{t,i}^{2}}.
\end{align*}
Summing up over $t$ we obtain
\begin{align*}
\sum_{t=1}^{T}\sum_{i=1}^{d}\frac{\hn_{t,i}^{2}}{b_{t,i}} & \leq\frac{2\Delta_{1}}{\eta}+\sum_{t=1}^{T}\sum_{i=1}^{d}\frac{\xi_{t,i}^{2}}{b_{t,i}}+\sum_{t=1}^{T}\sum_{i=1}^{d}\eta^{2}L\frac{\hn_{t,i}^{2}}{b_{t,i}^{2}}\\
 & \leq\frac{2\Delta_{1}}{\eta}+\sum_{t=1}^{T}\sum_{i=1}^{d}\frac{\xi_{t,i}^{2}}{b_{t,i}}+\sum_{i=1}^{d}2\eta^{2}L\log\frac{b_{T,i}}{b_{0,i}}.
\end{align*}
Note that the LHS of the above inequality is lower-bounded by $\left\Vert b_{T}\right\Vert _{1}-\left\Vert b_{0}\right\Vert _{1}$.
Thus
\begin{align*}
\left\Vert b_{T}\right\Vert _{1}-\left\Vert b_{0}\right\Vert _{1} & \leq\frac{2\Delta_{1}}{\eta}+\sum_{t=1}^{T}\sum_{i=1}^{d}\frac{\xi_{t,i}^{2}}{b_{t,i}}+\sum_{i=1}^{d}2\eta^{2}L\log\frac{b_{T,i}}{b_{0,i}}\\
 & \leq\frac{2\Delta_{1}}{\eta}+\sum_{t=1}^{T}\sum_{i=1}^{d}\frac{\xi_{t,i}^{2}}{b_{t,i}}+\sum_{i=1}^{d}2\eta^{2}L\left(\log\frac{b_{T,i}}{4\eta^{2}L}+\log\frac{4\eta^{2}L}{b_{0,i}}\right)\\
 & \leq\frac{2\Delta_{1}}{\eta}+\sum_{t=1}^{T}\sum_{i=1}^{d}\frac{\xi_{t,i}^{2}}{b_{t,i}}+\frac{\left\Vert b_{T}\right\Vert _{1}}{2}+\sum_{i=1}^{d}2\eta^{2}L\log\frac{4\eta^{2}L}{b_{0,i}};\\
\left\Vert b_{T}\right\Vert _{1} & \leq2\left\Vert b_{0}\right\Vert _{1}+\frac{4\Delta_{1}}{\eta}+2\sum_{t=1}^{T}\sum_{i=1}^{d}\frac{\xi_{t,i}^{2}}{b_{t,i}}+4\eta^{2}L\sum_{i=1}^{d}\log\frac{4\eta^{2}L}{b_{0,i}}.
\end{align*}
Note that by Lemma \ref{lem: xit-sq-bt-sq-coord}, with probability
at least $1-2d\delta$
\begin{align*}
\sum_{t=1}^{T}\sum_{i=1}^{d}\frac{\xi_{t,i}^{2}}{b_{t,i}} & \le\sum_{i=1}^{d}\frac{8\sigma_{i}^{2}\log\frac{1}{\delta}}{b_{0,i}}+4\sqrt{\sigma_{i}^{2}T+\sigma_{i}^{2}\log\frac{1}{\delta}}\\
 & =O\left(\left\Vert \frac{\sigma^{2}}{b_{0}}\right\Vert _{1}\log\frac{1}{\delta}+\left\Vert \sigma\right\Vert _{1}\left(\sqrt{\log\frac{1}{\delta}}+\sqrt{T}\right)\right).
\end{align*}
 Hence, under this event, we have that with probability at least $1-2d\delta$
\begin{align*}
\left\Vert b_{T}\right\Vert _{1} & \leq2\left\Vert b_{0}\right\Vert _{1}+\frac{4\Delta_{1}}{\eta}+O\left(\left\Vert \frac{\sigma^{2}}{b_{0}}\right\Vert _{1}\log\frac{1}{\delta}+\left\Vert \sigma\right\Vert _{1}\left(\sqrt{\log\frac{1}{\delta}}+\sqrt{T}\right)\right)+4\eta^{2}L\sum_{i}\log\frac{4\eta^{2}L}{b_{0,i}}.
\end{align*}
\end{proof}

Now we are ready to prove Theorem \ref{thm:adagrad-coord-nonconvex}.\begin{proof}[Proof of Theorem \ref{thm:adagrad-coord-nonconvex}]
Combining Lemma \ref{lem:adagrad-coord-basics} with Lemma \ref{lem: xit-sq-bt-sq-coord},
we get with probability at least $1-4d\delta$
\begin{align*}
\frac{1}{\left\Vert b_{T}\right\Vert _{1}}\sum_{t=1}^{T}\left\Vert \nabla f(x_{t})\right\Vert _{1}^{2} & \leq\frac{\Delta_{1}}{\eta}+d\sigma_{\max}\sqrt{\log\frac{1}{\delta}}+\left(8\left\Vert \sigma\right\Vert _{1}\sqrt{\log\frac{T}{\delta}}+d\eta L\right)\log\left(\frac{\left\Vert b_{T}\right\Vert _{1}}{\min b_{0,i}}\right)+\sum_{i=1}^{d}6\sigma_{i}\sqrt{\log\frac{T}{\delta}}\sum_{t=1}^{T}\frac{\xi_{t,i}^{2}}{b_{t,i}^{2}}.\\
 & \leq\frac{\Delta_{1}}{\eta}+d\sigma_{\max}\sqrt{\log\frac{1}{\delta}}+\left(8\left\Vert \sigma\right\Vert _{1}\sqrt{\log\frac{T}{\delta}}+d\eta L\right)\log\left(\frac{\left\Vert b_{T}\right\Vert _{1}}{\min b_{0,i}}\right)\\
 & \quad+6\sqrt{\log\frac{T}{\delta}}\sum_{i=1}^{d}\sigma_{i}\left(\frac{8\sigma_{i}^{2}}{b_{0,i}^{2}}\log\frac{1}{\delta}+2\log\left(1+\frac{\sigma_{i}^{2}T+\sigma_{i}^{2}\log\frac{1}{\delta}}{2b_{0,i}^{2}}\right)\right).
\end{align*}
Rearranging, combining this with the bound for $\left\Vert b_{T}\right\Vert _{1}$,
and replacing $\delta$ with $\frac{\delta}{6d}$ yield the Theorem. 
\end{proof}

\subsection{Additional helper lemmas}
\begin{lem}
\label{lem:bound-on-xit-coord}We have w.p. $\geq1-d\delta$
\[
\sum_{t=1}^{\tau}\xi_{t,i}^{2}\leq\sum_{t=1}^{\tau}\hn_{t,i}^{2}+4\sigma_{i}^{2}\log\frac{1}{\delta},\forall\tau\in\left[T\right],\forall i\in\left[d\right].
\]
\end{lem}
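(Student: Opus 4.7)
The plan is to mimic the proof of Lemma \ref{lem:upperbound-on-error} one coordinate at a time, and then pay a factor of $d$ via a union bound. The key algebraic identity, obtained by expanding $\hn_{t,i}^{2}=(\nabla_{t,i}+\xi_{t,i})^{2}$, is
\[
\nabla_{t,i}^{2}-\hn_{t,i}^{2}+\xi_{t,i}^{2}=-2\xi_{t,i}\nabla_{t,i}.
\]
Fix a coordinate $i\in[d]$. For a parameter $w>0$ to be chosen at the end, I will define the process
\[
U_{t+1,i}:=\exp\!\left(\sum_{s=1}^{t}w\bigl(\nabla_{s,i}^{2}-\hn_{s,i}^{2}+\xi_{s,i}^{2}\bigr)-4\sigma_{i}^{2}w^{2}\,\nabla_{s,i}^{2}\right),\qquad U_{1,i}=1,
\]
which is $\F_{t+1}$-measurable. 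Using the identity above and conditioning on $\F_{t}$ (so that $x_{t}$ and $\nabla_{t,i}$ are fixed), the conditional increment becomes $\exp(-2w\xi_{t,i}\nabla_{t,i}-4\sigma_{i}^{2}w^{2}\nabla_{t,i}^{2})$. Since $\xi_{t,i}$ is mean-zero and $\sigma_{i}$-sub-Gaussian, the one-dimensional case of Lemma \ref{lem:helper-taylor-vector} (or its Lemma \ref{lem:helper-taylor} source, with $b=0$ and $a=2w\nabla_{t,i}$) gives
\[
\E\!\left[\exp(-2w\xi_{t,i}\nabla_{t,i})\mid\F_{t}\right]\le\exp\!\left(4\sigma_{i}^{2}w^{2}\nabla_{t,i}^{2}\right),
\]
so $\E[U_{t+1,i}\mid\F_{t}]\le U_{t,i}$. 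Thus $(U_{t,i})_{t\ge1}$ is a nonneg supermartingale with $\E[U_{1,i}]=1$.

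Next I apply Doob's maximal (supermartingale) inequality to obtain a uniform-in-$\tau$ statement: for every $i\in[d]$,
\[
\Pr\!\left[\max_{1\le\tau\le T+1}U_{\tau,i}\ge\tfrac{1}{\delta}\right]\le\delta.
\]
On the complementary event, taking logarithms and rearranging yields, simultaneously for all $\tau\in[T]$,
\[
\sum_{s=1}^{\tau}w\,\xi_{s,i}^{2}\le\sum_{s=1}^{\tau}w\,\hn_{s,i}^{2}+\sum_{s=1}^{\tau}\bigl(4\sigma_{i}^{2}w^{2}-w\bigr)\nabla_{s,i}^{2}+\log\tfrac{1}{\delta}.
\]
Choosing $w=\tfrac{1}{4\sigma_{i}^{2}}$ makes the coefficient of $\nabla_{s,i}^{2}$ vanish, and dividing by $w$ produces exactly
\[
\sum_{s=1}^{\tau}\xi_{s,i}^{2}\le\sum_{s=1}^{\tau}\hn_{s,i}^{2}+4\sigma_{i}^{2}\log\tfrac{1}{\delta}.
\]
Finally, a union bound over the $d$ coordinates gives the claimed statement with failure probability at most $d\delta$, as required.

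The main obstacle, such as it is, lies in getting the inequality to hold \emph{simultaneously for all} $\tau\in[T]$ rather than for a single fixed $\tau$; this is the reason I invoke Doob's maximal inequality in place of ordinary Markov. The algebraic identity and the sub-Gaussian moment bound are entirely routine, and the per-coordinate structure means the same choice $w=1/(4\sigma_{i}^{2})$ works coordinate by coordinate, so no additional care is needed when combining across $i$.
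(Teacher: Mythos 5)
Your proposal is correct and takes essentially the same route as the paper: the paper's proof simply applies its Lemma \ref{lem:upperbound-on-error} to each coordinate and union bounds over $i\in[d]$, and your argument is exactly that lemma's proof written out per coordinate (the same exponential supermartingale $U_{t,i}$, Doob's maximal inequality to get the bound uniformly in $\tau$, the choice $w=1/(4\sigma_i^2)$, then the union bound). One cosmetic caveat you share with the paper's own proof of Lemma \ref{lem:upperbound-on-error}: the step $\E\left[\exp\left(-2w\xi_{t,i}\nabla_{t,i}\right)\mid\F_{t}\right]\le\exp\left(4\sigma_i^2w^2\nabla_{t,i}^2\right)$ uses a sub-Gaussian mgf constant of $1$, whereas Lemma \ref{lem:helper-taylor-vector} as stated gives constant $2$ (i.e.\ $\exp\left(8\sigma_i^2w^2\nabla_{t,i}^2\right)$), which would only change the additive term from $4\sigma_i^2\log\frac{1}{\delta}$ to $8\sigma_i^2\log\frac{1}{\delta}$.
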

\begin{proof}
We apply Lemma \ref{lem:upperbound-on-error} to each coordinate individually
and then union bound over all the dimensions to get the result. 
\end{proof}

\begin{lem}
\label{lem:bound-on-xit-coord-naive}We have w.p.$\geq1-d\delta$
\[
\sum_{t=1}^{T}\xi_{t,i}^{2}\leq\sigma_{i}^{2}T+\sigma_{i}^{2}\log\frac{1}{\delta},\forall i\in\left[d\right].
\]
\end{lem}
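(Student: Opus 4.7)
The plan is to mimic the proof of Lemma \ref{lem:sum-error-naive-bound} coordinatewise and then take a union bound. Fix a coordinate $i \in [d]$. By the per-coordinate sub-Gaussian assumption stated at the beginning of the AdaGrad (coordinate) section, $\xi_{t,i}$ is $\sigma_i$-sub-Gaussian, and plugging $\lambda = 1/\sigma_i$ into Definition \ref{def:subgaussian-random-variable} yields the single-step moment bound
\[
\E\left[\exp\left(\frac{\xi_{t,i}^{2}}{\sigma_{i}^{2}}\right) \,\middle|\, \F_t \right] \le \exp(1).
\]

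Next I would chain this across $t$ by iterated conditioning on the natural filtration $\F_t = \sigma(\xi_1,\ldots,\xi_{t-1})$. Writing $Y_T := \exp\bigl(\sum_{t=1}^{T} \xi_{t,i}^{2}/\sigma_i^2\bigr)$ and conditioning on $\F_T$, the inner factor is $\F_T$-measurable, so $\E[Y_T \mid \F_T] \le Y_{T-1}\cdot e$, and a straightforward induction gives $\E[Y_T] \le \exp(T)$. Markov's inequality then yields, for any $u > 0$,
\[
\Pr\!\left[\sum_{t=1}^{T} \xi_{t,i}^{2} \ge u\right] \;\le\; \frac{\E[Y_T]}{\exp(u/\sigma_i^2)} \;\le\; \exp\!\left(T - \frac{u}{\sigma_i^2}\right),
\]
and choosing $u = \sigma_i^2 T + \sigma_i^2 \log(1/\delta)$ makes the right-hand side equal $\delta$, giving the claim for this fixed coordinate with probability at least $1-\delta$.

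Finally, I would union bound over $i \in [d]$: the bad event for coordinate $i$ has probability at most $\delta$, so the probability that any coordinate fails is at most $d\delta$, establishing the statement with probability at least $1 - d\delta$ simultaneously for all $i$.

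There is no real obstacle here; this is the straightforward coordinatewise lift of Lemma \ref{lem:sum-error-naive-bound}, and the only point worth being careful about is that the argument is applied separately per coordinate (rather than to the whole vector $\xi_t$), which is exactly what makes the per-coordinate sub-Gaussian assumption—a strictly weaker hypothesis than $\|\xi_t\|$ being sub-Gaussian—sufficient for the bound stated in the lemma.
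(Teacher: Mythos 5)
Your proof is correct and is essentially the paper's argument: the paper proves this lemma by applying Lemma \ref{lem:sum-error-naive-bound} to each coordinate (valid since each $\xi_{t,i}$ is $\sigma_i$-sub-Gaussian under the per-coordinate assumption) and then union bounding over $i\in[d]$, which is exactly what you do, merely with the MGF/Markov argument of that lemma written out inline with the iterated-conditioning step made explicit.
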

\begin{proof}
We apply Lemma \ref{lem:sum-error-naive-bound} to each coordinate
individually and then union bound over all the dimensions to get the
result. 
\end{proof}

We can show a bound on $\sum_{t=1}^{T}\frac{\xi_{t,i}^{2}}{b_{t,i}}$
and $\sum_{t=1}^{T}\frac{\xi_{t,i}^{2}}{b_{t,i}^{2}}$ for each $i\in[d]$:
\begin{lem}
\label{lem: xit-sq-bt-sq-coord}We have
\begin{enumerate}
\item With probability at least $1-2d\delta$, we have for all $i\in[d]$
\begin{align*}
\sum_{t=1}^{T}\frac{\xi_{t,i}^{2}}{b_{t,i}} & \le\frac{8\sigma_{i}^{2}\log\frac{1}{\delta}}{b_{0,i}}+4\sqrt{\sigma_{i}^{2}T+\sigma_{i}^{2}\log\frac{1}{\delta}}.
\end{align*}
\item With probability at least $1-2d\delta$, we have for all $i\in[d]$
\begin{align*}
\sum_{t=1}^{T}\frac{\xi_{t,i}^{2}}{b_{t,i}^{2}} & \leq\frac{8\sigma_{i}^{2}}{b_{0,i}^{2}}\log\frac{1}{\delta}+2\log\left(1+\frac{\sigma_{i}^{2}T+\sigma_{i}^{2}\log\frac{1}{\delta}}{2b_{0,i}^{2}}\right).
\end{align*}
\end{enumerate}
\end{lem}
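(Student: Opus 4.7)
The plan is to establish both parts of the lemma by specializing the arguments used for AdaGrad-Norm (Lemma \ref{lem:bound-on-eps-sq-by-bt} and Lemma \ref{lem:bound-on-eps-sq-by-bt-sq}) to a single coordinate, then applying a union bound over the $d$ coordinates. The key point is that, for each fixed $i$, the coordinate-wise AdaGrad step size satisfies $b_{t,i}^2 = b_{0,i}^2 + \sum_{j=1}^t \hn_{j,i}^2$, which has exactly the same form as the scalar AdaGrad-Norm quantity $b_t^2 = b_0^2 + \sum_{j=1}^t \|\hn f(x_j)\|^2$; and Lemma \ref{lem:bound-on-xit-coord} provides the per-coordinate analogue of the scalar bound $\sum_{t=1}^\tau \|\xi_t\|^2 \le \sum_{t=1}^\tau \|\hn f(x_t)\|^2 + 4\sigma^2 \log(1/\delta)$ uniformly over $\tau$, at the price of a union bound giving probability $1 - d\delta$.

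Concretely, for part (1), I would fix a coordinate $i$ and, on the event from Lemma \ref{lem:bound-on-xit-coord}, deduce $b_{t,i}^2 \ge \sum_{j=1}^t \xi_{j,i}^2 - C_i$ with $C_i := 4\sigma_i^2 \log(1/\delta) - b_{0,i}^2$. Define the stopping time $\tau_i = \max(\{0\} \cup \{t \le T : \sum_{j=1}^t \xi_{j,i}^2 \le 2C_i\})$ and split the sum:
\begin{align*}
\sum_{t=1}^T \frac{\xi_{t,i}^2}{b_{t,i}}
  &= \sum_{t \le \tau_i} \frac{\xi_{t,i}^2}{b_{t,i}} + \sum_{t > \tau_i} \frac{\xi_{t,i}^2}{b_{t,i}}
   \le \frac{1}{b_{0,i}}\sum_{t \le \tau_i}\xi_{t,i}^2 + \sum_{t > \tau_i} \frac{\xi_{t,i}^2}{\sqrt{\tfrac{1}{2}\sum_{j\le t}\xi_{j,i}^2}},
\end{align*}
bound the first term by $2C_i/b_{0,i}$, and apply the standard telescoping inequality $\sum_t a_t/\sqrt{\sum_{j\le t} a_j} \le 2\sqrt{\sum_t a_t}$ to the second. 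This yields $\sum_{t=1}^T \xi_{t,i}^2/b_{t,i} \le 2C_i/b_{0,i} + 4\sqrt{\sum_{t=1}^T \xi_{t,i}^2}$, and plugging in the naive tail bound from Lemma \ref{lem:bound-on-xit-coord-naive} (another union bound, $1-d\delta$) together with dropping the harmless $-2 b_{0,i}$ gives the claimed inequality with total failure probability $\le 2d\delta$.

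Part (2) follows along the same lines: on the same good events, split at $\tau_i$ and write
\begin{align*}
\sum_{t=1}^T \frac{\xi_{t,i}^2}{b_{t,i}^2}
  &\le \frac{2C_i}{b_{0,i}^2} + 2\sum_{t > \tau_i}\frac{\xi_{t,i}^2}{\sum_{j \le t} \xi_{j,i}^2}
   \le \frac{2C_i}{b_{0,i}^2} + 2\log\!\left(1 + \tfrac{1}{2b_{0,i}^2}\sum_{t=1}^T \xi_{t,i}^2\right),
\end{align*}
where the second inequality uses the telescoping estimate $\sum_t a_t/\sum_{j\le t} a_j \le \log(1+\sum_t a_t / a_{\min})$ adapted to the normalisation $b_{0,i}^2$. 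Inserting the naive bound on $\sum_t \xi_{t,i}^2$ gives the desired expression, again with total failure probability $\le 2d\delta$ after the union bound.

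The step that requires a little care is the choice of the threshold $2C_i$ in the definition of $\tau_i$, because $C_i$ can be negative when $b_{0,i}$ is large; in that case $\tau_i = 0$ and the whole first piece vanishes, so the bound is still valid. No genuine obstacle is anticipated, since this is essentially the per-coordinate transcription of the AdaGrad-Norm arguments using the already-proved coordinate-wise concentration results.
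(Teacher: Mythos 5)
Your proposal is correct and follows essentially the same route as the paper: conditioning on the two good events from Lemma \ref{lem:bound-on-xit-coord} and Lemma \ref{lem:bound-on-xit-coord-naive} (union bound giving failure probability $2d\delta$), using the lower bound $b_{t,i}^2 \ge b_{0,i}^2 + (\sum_{s\le t}\xi_{s,i}^2 - 4\sigma_i^2\log(1/\delta))^+$, splitting the sum at the stopping index $\tau_i$, and applying the square-root and logarithmic telescoping bounds. The paper's own proof writes the resulting chain of inequalities more compactly (without naming $\tau_i$ explicitly, keeping the $b_{0,i}^2$ offset in the denominator throughout so the telescoping step applies cleanly to $\sum_t a_t/(A + \sum_{j\le t}a_j) \le \log(1 + \sum_t a_t/A)$ with $A = 2b_{0,i}^2$), but the argument and the constants are the same, and your remark about $\tau_i = 0$ when $C_i < 0$ is the right sanity check.
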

\begin{proof}
For (1), we have with probability at least $1-2d\delta$ 
\begin{align*}
\sum_{t=1}^{T}\frac{\xi_{t,i}^{2}}{b_{t,i}} & =\sum_{t=1}^{T}\frac{\xi_{t,i}^{2}}{\sqrt{b_{0,i}^{2}+\sum_{s=1}^{t}\hn_{s,i}^{2}}}\\
 & \overset{(1)}{\leq}\sum_{t=1}^{T}\frac{\xi_{t,i}^{2}}{\sqrt{b_{0,i}^{2}+\left(\sum_{s=1}^{t}\xi_{s,i}^{2}-4\sigma_{i}^{2}\log\frac{1}{\delta}\right)^{+}}}\\
 & \leq\frac{8\sigma_{i}^{2}\log\frac{1}{\delta}}{b_{0,i}}+2\sqrt{2}\sqrt{\sum_{s=1}^{T}\xi_{s,i}^{2}}\\
 & \overset{(2)}{\leq}\frac{8\sigma_{i}^{2}\log\frac{1}{\delta}}{b_{0,i}}+4\sqrt{\sigma_{i}^{2}T+\sigma_{i}^{2}\log\frac{1}{\delta}},
\end{align*}
where (1) is due to Lemma \ref{lem:bound-on-xit-coord} and (2) is
due to Lemma \ref{lem:bound-on-xit-coord-naive}.

For (2), we have with probability at least $1-2d\delta$ 
\begin{align*}
\sum_{t}\frac{\xi_{t,i}^{2}}{b_{t,i}^{2}} & =\sum_{t}\frac{\xi_{t,i}^{2}}{b_{0,i}^{2}+\sum_{s=1}^{t}\hn_{s,i}^{2}}\\
 & \overset{(1)}{\leq}\sum_{t}\frac{\xi_{t,i}^{2}}{b_{0,i}^{2}+\left(\sum_{s=1}^{t}\xi_{s,i}^{2}-4\sigma_{i}^{2}\log\frac{1}{\delta}\right)^{+}}\\
 & \leq\frac{8\sigma_{i}^{2}\log\frac{1}{\delta}}{b_{0,i}^{2}}+2\log\left(1+\frac{\sum_{t=1}^{T}\xi_{t,i}^{2}}{2b_{0,i}^{2}}\right)\\
 & \overset{(2)}{\leq}\frac{8\sigma_{i}^{2}\log\frac{1}{\delta}}{b_{0,i}^{2}}+2\log\left(1+\frac{\sigma_{i}^{2}T+\sigma_{i}^{2}\log\frac{1}{\delta}}{2b_{0,i}^{2}}\right),
\end{align*}
where (1) is due to Lemma \ref{lem:bound-on-xit-coord} and (2) is
due to Lemma \ref{lem:bound-on-xit-coord-naive}.
\end{proof}

\end{document}